\theoremstyle{plain}
\newtheorem{thm}{Theorem}[section]
\newtheorem{prop}[thm]{Proposition}
\newtheorem{lem}[thm]{Lemma}
\newtheorem{cor}[thm]{Corollary}
\theoremstyle{definition}
\newtheorem{dfn}[thm]{Definition}
\newtheorem{rmk}[thm]{Remark}
\newtheorem{exmp}[thm]{Example}
\newcommand{\ab}{\mathrm{ab}}
\newcommand{\Aut}{\mathrm{Aut}}
\newcommand{\chr}{\mathrm{char}}
\newcommand{\Coker}{\mathrm{Coker}}
\newcommand{\End}{\mathrm{End}}
\newcommand{\Fr}{\mathrm{Fr}}
\newcommand{\Gal}{\mathrm{Gal}}
\newcommand{\Hom}{\mathrm{Hom}}
\newcommand{\id}{\mathrm{id}}
\newcommand{\inv}{\mathrm{inv}}
\newcommand{\Img}{\mathrm{Im}}
\newcommand{\Isom}{\mathrm{Isom}}
\newcommand{\Ker}{\mathrm{Ker}}
\newcommand{\lcm}{\mathrm{lcm}}
\newcommand{\Lie}{\mathrm{Lie}}
\newcommand{\Nrd}{\mathrm{Nrd}}
\newcommand{\opp}{\mathrm{op}}
\newcommand{\prjt}{\mathrm{pr}}
\newcommand{\Ram}{\mathrm{Ram}}
\newcommand{\sep}{\mathrm{sep}}
\newcommand{\Spec}{\mathrm{Spec}}
\newcommand{\Sym}{\mathrm{Sym}}
\newcommand{\Tr}{\mathrm{Tr}}
\newcommand{\cC}{\mathcal{C}}
\newcommand{\cD}{\mathcal{D}}
\newcommand{\cE}{\mathcal{E}}
\newcommand{\cF}{\mathcal{F}}
\newcommand{\cL}{\mathcal{L}}
\newcommand{\cO}{\mathcal{O}}
\newcommand{\cP}{\mathcal{P}}
\newcommand{\cQ}{\mathcal{Q}}
\newcommand{\cR}{\mathcal{R}}
\newcommand{\cW}{\mathcal{W}}
\newcommand{\cZ}{\mathcal{Z}}
\newcommand{\Frob}{\mathrm{Frob}}
\newcommand{\Gr}{\mathrm{Gr}}
\newcommand{\sEll}{\mathscr{E}\ell\ell}
\newcommand{\Ell}{\mathrm{Ell}}
\newcommand{\ucE}{\underline{\mathcal{E}}}
\newcommand{\vep}{\varepsilon}
\newcommand{\bA}{\mathbb{A}}
\newcommand{\bF}{\mathbb{F}}
\newcommand{\bQ}{\mathbb{Q}}
\newcommand{\bR}{\mathbb{R}}
\newcommand{\bV}{\mathbb{V}}
\newcommand{\bZ}{\mathbb{Z}}
\newcommand{\sD}{\mathscr{D}}
\newcommand{\sX}{\mathscr{X}}
\newcommand{\frd}{\mathfrak{d}}
\newcommand{\frl}{\mathfrak{l}}
\newcommand{\frem}{\mathfrak{m}}
\newcommand{\frn}{\mathfrak{n}}
\newcommand{\frP}{\mathfrak{P}}
\newcommand{\frp}{\mathfrak{p}}
\newcommand{\frq}{\mathfrak{q}}
\newcommand{\frr}{\mathfrak{r}}
\newcommand{\frY}{\mathfrak{Y}}
\newcommand{\fry}{\mathfrak{y}}
\renewcommand{\p@enumii}{}
\begin{document}

\title[$\sD$-elliptic sheaves and the Hasse principle]{$\sD$-elliptic sheaves and the Hasse principle}
\author[Arai]{Keisuke Arai}
\address[Keisuke Arai]{Department of Mathematics, School of Science and Technology for Future Life, Tokyo Denki University, 5 Senju Asahi-cho, Adachi-ku, Tokyo 
120-8551, Japan}
\email{araik@mail.dendai.ac.jp}

\author[Hattori]{Shin Hattori}
\address[Shin Hattori]{Department of Natural Sciences, Tokyo City University, 1-28-1 Tamazutsumi, Setagaya-ku, Tokyo 158-8557, Japan}
\email{hattoris@tcu.ac.jp}

\author[Kondo]{Satoshi Kondo}
\address[Satoshi Kondo]{Middle East Technical University, Northern Cyprus Campus, 99738 Kalkanli, Guzelyurt, Mersin 10, Turkey}
\email{satoshi.kondo@gmail.com}

\author[Papikian]{Mihran Papikian}
\address[Mihran Papikian]{Department of Mathematics, Pennsylvania State University, University park, PA 16802, U.S.A.}
\email{papikian@psu.edu}


\date{\today}


\begin{abstract}
	Let $p$ be a rational prime, $q>1$ a power of $p$ and $F=\bF_q(t)$. For an integer $d\geq 2$,
	let $D$ be a central division algebra over $F$ of dimension $d^2$ which is split at $\infty$ 
	and has invariant $\inv_x(D)=1/d$ at any place $x$ of $F$ at which $D$ ramifies. 
	Let $X^D$ be the Drinfeld--Stuhler variety, the coarse moduli scheme of the algebraic stack over $F$ classifying $\sD$-elliptic sheaves. 
	In this paper, we establish various arithmetic properties of $\sD$-elliptic sheaves to give an explicit criterion for the non-existence of rational points
	of $X^D$ over a finite extension of $F$ of degree $d$. As an application, for $d=2$, we present explicit infinite families of quadratic extensions of $F$ 
	over which
	the curve $X^D$ violates the Hasse principle.
\end{abstract}

\maketitle
\tableofcontents





\section{Introduction}\label{Intro}

Let $p$ be a rational prime and let $q>1$ be a power of $p$. 
Let $A=\bF_q[t]$ be the polynomial 
ring over $\bF_q$ and let $F=\bF_q(t)$ be its fraction field.
We denote by $\infty$ the place of $F$ defined by $1/t$. 
Let $d\geq 2$ be an integer. Let $D$ be a central division algebra over $F$ of dimension $d^2$ which splits at $\infty$ 
and such that for any place $x$ of $F$ at which $D$ ramifies, the invariant of $D$ 
at $x$ is $1/d$.
For any global field $E$ and any place $v$ of $E$, we denote by $E_v$
the completion of $E$ at $v$ and by $E^\sep$ a separable closure of $E$.

A $\sD$-elliptic sheaf is a system of locally free sheaves equipped with an action of a sheafified version $\sD$ of $D$.
It is a function field analogue of a polarized abelian surface 
equipped with an action of an indefinite quaternion division algebra $B$ over $\bQ$.
The modular varieties of $\sD$-elliptic sheaves were studied by Laumon--Rapoport--Stuhler \cite{LRS}, with the aim of proving the local Langlands correspondence
for $\mathit{GL}(n)$ in positive characteristic. 

Let $X^D$ be the Drinfeld--Stuhler variety, the coarse moduli scheme of the algebraic stack over $F$ classifying $\sD$-elliptic sheaves. 
Then $X^D$ is proper of dimension $d-1$. 
When $d=2$, it is also smooth over $F$ and we call it the Drinfeld--Stuhler curve. 
It is a function field analogue of 
the quaternionic Shimura curve $V_B$ corresponding to $B$. For the latter, Jordan \cite{Jordan} 
proved criteria for the non-existence of quadratic points on $V_B$, 
and using them,
gave an example of $B$ such that the curve $V_B$ violates the Hasse principle 
over a quadratic number field $E$. Namely, in his example the curve $V_B$ has 
no $E$-valued 
point 
despite that $V_B$ has $E_v$-valued points for any place $v$ of $E$.

In this paper, we generalize Jordan's results to $X^D$. We have three objectives:
\begin{enumerate}
	\item\label{Aim_DES} Establish various arithmetic properties of 
	$\sD$-elliptic sheaves.
	\item\label{Aim_Criterion} Give an explicit criterion for the non-existence 
	of rational points on $X^D$ over finite extensions of $F$ of degree $d$, 
	using (\ref{Aim_DES}).
	\item\label{Aim_HPV} Produce examples of Drinfeld--Stuhler curves
	violating the Hasse principle over infinitely many quadratic extensions of 
	$F$, by combining (\ref{Aim_Criterion}) with criteria for the existence 
	of local points on Drinfeld--Stuhler curves obtained by the fourth author 
	\cite{Pap_loc}.
\end{enumerate}

For any field extension $K/F$, we denote by $X^D(K)$ the set of $K$-valued points of $X^D$ over $F$ (Definition \ref{DefSetRatPts}).
Then our main theorems are as follows.

\begin{thm}[Theorem \ref{ThmNonExistSplit}]\label{ThmIntroCriterion}
	Let $K/F$ be a field extension of degree $d$. Assume
	\begin{itemize}
		\item $D\otimes_F K\simeq M_d(K)$,
		\item there exists a place $\fry\neq \infty$ of $F$ which totally ramifies in $K$ and such that $D$ splits at $\fry$,
		\item there exists a place $\frp$ of $F$ such that $D$ ramifies at $\frp$ and $\frp\notin\cP(\fry)$,
		where $\cP(\fry)$ is a certain explicitly computable finite set of places of $F$ (Definition \ref{DefPy}),
		\item $D\otimes_F F(\sqrt[d]{\mu\fry})\not\simeq M_d(F(\sqrt[d]{\mu\fry}))$ for any $\mu\in \bF_q^\times$.
	\end{itemize} 
	Then $X^D(K)=\emptyset$.
\end{thm}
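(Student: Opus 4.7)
The plan is to argue by contradiction along the lines of Jordan's original approach for quaternionic Shimura curves. Suppose $P\in X^D(K)$ were a $K$-rational point. The first hypothesis that $D$ splits over $K$ should allow one to lift $P$ to an honest $\sD$-elliptic sheaf $\sE$ defined over $K$ (and not merely a geometric isomorphism class, i.e.\ a twist); the splitting assumption $D\otimes_F K\simeq M_d(K)$ is essential at this rigidification step. The remaining hypotheses are then engineered so as to place incompatible constraints on the endomorphism algebra of a reduction of $\sE$ modulo $\fry$.

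\textbf{Reduction modulo $\fry$.} Since $\fry$ is totally ramified in $K$, there is a unique place $\fry_K$ of $K$ above $\fry$, and the residue field extension $\kappa(\fry_K)/\kappa(\fry)$ is trivial. Because $D$ splits at $\fry$, the moduli stack of $\sD$-elliptic sheaves extends to a smooth integral model at $\fry$; together with the properness of $X^D$ and the arithmetic theory of $\sD$-elliptic sheaves developed earlier in the paper (aim~(1) of the introduction), this should show that $\sE$ specializes to a $\sD$-elliptic sheaf $\bar\sE$ defined over $\kappa(\fry)$.

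\textbf{Frobenius and field generated.} Let $\pi=\Frob_{\kappa(\fry)}$, viewed as an element of $\End(\bar\sE)\otimes_A F\subseteq D$. The central step is to identify the subfield $F(\pi)$. Total ramification of $K/F$ of degree $d$ at $\fry$ should force the Newton polygon of $\pi$ at $\fry$ to be of pure slope $1/d$, so $\pi^d$ is a uniformizer of $F_{\fry}$ up to a unit. Provided $\frp\notin\cP(\fry)$, the local behaviour of $\pi$ at the other places of $F$ can likewise be pinned down tightly enough to deduce $F(\pi)\simeq F(\sqrt[d]{\mu\fry})$ for some $\mu\in\bF_q^\times$. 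Since $F(\pi)\hookrightarrow D$, the standard splitting criterion for central simple algebras ($L/F$ of degree $d$ embeds into $D$ iff $L$ splits $D$) yields $D\otimes_F F(\sqrt[d]{\mu\fry})\simeq M_d(F(\sqrt[d]{\mu\fry}))$, contradicting the final hypothesis.

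\textbf{Main obstacle.} The delicate part is precisely the identification $F(\pi)\simeq F(\sqrt[d]{\mu\fry})$ under the assumption $\frp\notin\cP(\fry)$. It requires a place-by-place analysis of the local invariants of $\End(\bar\sE)\otimes_A F$: at $\fry$ via the Newton polygon of $\pi$, at $\infty$ via the splitting of $D$, and at the ramified primes of $D$ (notably $\frp$) via comparison of local invariants. The role of $\frp$ is to fix one nontrivial invariant of $F(\pi)$ inside $D$, and the exclusion $\frp\notin\cP(\fry)$ is what guarantees that this pinning is nondegenerate; the set $\cP(\fry)$ (Definition~\ref{DefPy}) must be chosen precisely so that outside it, the combinatorics of local invariants forces $\pi$ to generate a $d$-th root of a unit multiple of $\fry$ rather than some parasitic field. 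Carrying out this local calculation is where all of the earlier arithmetic machinery on $\sD$-elliptic sheaves — good reduction, isogeny classes, and description of the Frobenius action — is expected to be used.
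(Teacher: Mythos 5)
Your outer skeleton matches the paper's argument: assume a $K$-point, use the splitting $D\otimes_F K\simeq M_d(K)$ to produce an actual sound $\sD$-elliptic sheaf over $K$ (Theorem \ref{ThmDefField}), reduce at the unique place $\frY$ above $\fry$ after a totally ramified extension so the reduction lives over $\bF_\fry$ (Proposition \ref{PropPotGoodRed}), show the Frobenius $\pi$ generates $F(\sqrt[d]{\mu\fry})$, embed this field into $D$ (Proposition \ref{PropFtildeD}) and contradict the non-splitting hypothesis. However, there is a genuine gap at the central step, and one of your claims is wrong as stated: total ramification of $\fry$ in $K$ does \emph{not} force the $\fry$-adic Newton polygon of $\pi$ to be pure of slope $1/d$. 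What total ramification buys is only that the reduction is over the prime residue field $\bF_\fry$, whence (Corollary \ref{CorYTotRam}) the reduced characteristic polynomial $P(X)=X^d+a_1X^{d-1}+\cdots+a_d$ lies in $A[X]$ with $a_d=\mu\fry$ and $\deg(a_i)\le i\deg(\fry)/d$; the ``ordinary-type'' possibility (e.g.\ for $d=2$, $a_1\neq 0$ prime to $\fry$, giving $\fry$-adic slopes $0$ and $1$) is still entirely open and is exactly what must be excluded.

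The missing idea is the canonical isogeny character $\rho_{\ucE,\frp}:G_K\to\bF^\times$ attached to the $\Pi$-torsion of $\ucE[\frp]$, i.e.\ the Mazur--Jordan mechanism, which your proposal never invokes; the vague substitute you offer (``comparison of local invariants of $\End(\bar\sE)\otimes_A F$ at the ramified primes'') cannot work, since for $k=\bF_\fry$ that algebra is just the field $F(\pi)$ of degree $d$ (Corollary \ref{CorYTotRam}) and carries no extra local invariants to compare. In the paper, the local analysis of $\rho_{\ucE,\frp}$ (bounded inertia image away from $\frp\infty$, Proposition \ref{PropBoundMonodromy}; small image at $\infty$, Corollary \ref{CorControlInfty}; the Carlitz-character relation at $\frp$, Corollary \ref{CorCanIsogChar-n}) is fed into an id\`elic product formula to get $\rho_{\ucE,\frp}^n(\Fr_\frY^d)\equiv\fry^n\bmod\frp$ (Proposition \ref{PropCanIsogCharFrY}); combining this with the mod-$\frp$ description of the characteristic polynomial (Lemma \ref{LemRedCharPolyModP}) shows $\frp\mid N_{F(\pi_i)/F}(\pi_i^{dn}-\fry^n)$, and the hypothesis $\frp\notin\cP(\fry)$ — where $\cP(\fry)$ is by Definition \ref{DefPy} precisely the set of primes dividing such nonzero norms for $\pi\in\cW(\fry)$ — forces the exact equality $\pi_i^{dn}=\fry^n$. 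Only then does the $\fry$-adic Newton polygon become pure of slope $1/d$, killing the middle coefficients and giving $F(\pi)=F(\sqrt[d]{\mu\fry})$. Without introducing this character and its global congruence, your argument has no way to use $\frp\notin\cP(\fry)$ and no way to rule out the ordinary case, so the identification $F(\pi)\simeq F(\sqrt[d]{\mu\fry})$ remains unproved.
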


\begin{thm}[Theorem \ref{ThmExVHP}]\label{ThmIntroHPV}
	Let
	\[
	(q,\frp,\frq)\in\left\{\begin{gathered}
		(3,\ t^3 + t^2 + t + 2,\ t+1),\\
		(3,\ t^4 + t^3 + 2t + 1,\ t^2 + 1),\quad (3,\ t^5 + 2t + 1,\ t+2),\\
		(5,\ t^3 + t^2 + 4 t + 1,\ t+2),\quad (5,\ t^4 + 2,\ t^2 + t + 1),\\(7,\ t^3+2,\ t+3)
	\end{gathered}\right\}
	\]
	and let $D$ be the quaternion division algebra over $F$ which ramifies only at $\frp$ and $\frq$.
	Let $\frn\in A$ be any monic square-free polynomial which is coprime to $t\frp\frq$. Put
	\[
	S_\frn=\left\{\begin{array}{ll}
		\bF_q^\times\setminus (\bF_q^\times)^2& (\deg(\frn)\equiv 1\bmod 2),\\
		\bF_q^\times &  (\deg(\frn)\equiv 0\bmod 2).
	\end{array}\right.
	\]
	Define
	\[
	K=K_{\frn,\vep}:=F(\sqrt{\vep t\frp\frq\frn}),\quad \vep\in S_\frn.
	\]
	Then we have $X^D(K)=\emptyset$ and $X^D(K_v)\neq\emptyset$ for any place $v$ of $K$. 
\end{thm}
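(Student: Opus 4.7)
The plan is to combine Theorem \ref{ThmIntroCriterion} (to obtain $X^D(K)=\emptyset$) with the local-point criteria for Drinfeld--Stuhler curves from \cite{Pap_loc} (to obtain $X^D(K_v)\neq\emptyset$ at every place $v$ of $K$).

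For the global non-existence, I would apply Theorem \ref{ThmIntroCriterion} with $d=2$ and $\fry=t$, and verify its four hypotheses for each of the six listed tuples. The primes $t,\frp,\frq$ all occur with odd valuation in the radicand $\vep t\frp\frq\frn$, so each of them ramifies in $K/F$. Hence the completions of $K$ above $\frp$ and $\frq$ are (ramified) quadratic extensions of $F_\frp$ and $F_\frq$, which split $D$; this gives the first hypothesis. The prime $t$ is totally ramified in $K$ and, being distinct from $\frp$ and $\frq$, is unramified for $D$; this gives the second hypothesis with $\fry=t$. The third and fourth hypotheses form the arithmetic core: one must check by direct computation that $\{\frp,\frq\}\not\subseteq\cP(t)$, and that for every $\mu\in\bF_q^\times$ at least one of $\frp,\frq$ is non-split in $F(\sqrt{\mu t})/F$ so that $D\otimes_F F(\sqrt{\mu t})$ remains a division algebra. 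Both are finite Hilbert-symbol-type checks in $A=\bF_q[t]$ that have to be carried out separately for each of the six tuples.

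For the local existence, I would handle the places of $K$ one at a time. At places $v$ lying above a prime of $F$ outside $\{\infty,\frp,\frq\}$ that is moreover unramified in $K$, the variety $X^D$ has smooth proper reduction and $K_v$-points are supplied by Hensel-lifting of points on the smooth reduction. At the remaining, bad places of $K$ (those above $\{\infty,t,\frp,\frq\}$ or above the prime divisors of $\frn$), I would invoke the explicit local-point criteria of \cite{Pap_loc}. The completions $K_w/F_v$ above $\frp$ and $\frq$ are ramified quadratic extensions of local fields at ramified primes of $D$, which is precisely the setting in which \cite{Pap_loc} produces $K_w$-points; the cases above $t$ and above prime divisors of $\frn$ are handled analogously, using that these primes ramify in $K$. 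The restriction $\vep\in S_\frn$ enters exactly at the place above $\infty$: the parity of $\deg(\vep t\frp\frq\frn)=1+\deg\frp+\deg\frq+\deg\frn$ and the square-class of $\vep$ jointly control the behaviour of $\infty$ in $K$, and the definition of $S_\frn$ is tailored precisely to meet the local criterion of \cite{Pap_loc} at $\infty$.

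The main obstacle is the arithmetic verification of the last two hypotheses of Theorem \ref{ThmIntroCriterion}, namely $\{\frp,\frq\}\not\subseteq\cP(t)$ and the non-splitness of $D$ over every $F(\sqrt{\mu t})$, $\mu\in\bF_q^\times$. These conditions are restrictive, and the six tuples $(q,\frp,\frq)$ in the statement are precisely the examples in a small-degree range that survive both. Once those finite checks and the case-by-case application of \cite{Pap_loc} are carried out, the theorem follows.
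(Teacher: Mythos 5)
Your global half follows the paper: apply Theorem \ref{ThmNonExistSplit} with $d=2$, $\fry=t$, note that $t,\frp,\frq$ ramify in $K$ so that $K$ splits $D$ and $t$ is totally ramified, and verify by a finite computation that $\frp\notin\cP(t)$ and that $D\otimes_F F(\sqrt{\mu t})$ is non-split for every $\mu\in\bF_q^\times$; this is exactly what Examples \ref{ExNE3} and \ref{ExNE5} do.

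The local half, however, has a genuine gap at the places you propose to dispose of by Hensel lifting. At a place $v$ of $K$ lying over a prime $\frl\notin\{\infty,\frp,\frq\}$ that is unramified in $K$, good reduction of $X^D$ only gives a smooth proper curve over the residue field of $v$; Hensel's lemma produces a $K_v$-point only if that special fibre already has a rational point, and this is precisely what is in question. The residue field is $\bF_\frl$ when $\frl$ splits in $K$ (and similarly small when $\frl\mid\frn$ or $\frl=t$ ramifies), while the genus of $X^D$ for the listed $(\frp,\frq)$ is far too large for the Weil bound to apply over such small fields, so existence of a point on the reduction is not automatic and can in fact fail for general $D$. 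The paper handles exactly these places with the Jordan--Livn\'e-type criterion of \cite{Pap_loc} (Lemma \ref{LemPapL}): for every residue-degree-one place above any $\frl\neq\frp,\frq$ one must exhibit $a\in A$ and $c\in\bF_q^\times$ such that $x^2-ax+c\frl$ is irreducible over $F_\frp$, $F_\frq$ and $F_\infty$. This is condition (\ref{PropLocalPoints_unr}) of Proposition \ref{PropLocalPoints}; it is verified by PARI/GP for all $\frl$ of degree at most $2(\deg\frp+\deg\frq)-2$ (with Lemma \ref{LemFastAlgorithm} to speed this up) and by the elementary counting argument of Lemma \ref{LemELocalL} for larger degrees. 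Running the check over all small-degree $\frl$, rather than only over the divisors of $t\frp\frq\frn$, is also what makes the conclusion uniform in $\frn$ and hence yields an infinite family. Your plan omits this entire verification, which is the computational core of the local statement; the treatment of the places above $\infty$, $\frp$, $\frq$ in your sketch is otherwise in line with Lemmas \ref{LemPapInfty} and \ref{LemPapP}.
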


One significant difference between our work and \cite{Jordan} is that Theorem \ref{ThmIntroCriterion} is valid for any $d\geq 2$, not just quaternion algebras
and curves. In principle, Theorem \ref{ThmIntroHPV} can be extended to higher dimensional Drinfeld--Stuhler varieties once the results on local points 
in \cite{Pap_loc} are
extended to these higher dimensional varieties. 

We record here some known cases in which Shimura curves $V_B$ violate the Hasse principle.   
Jordan showed that $V_B$ for $B$ of discriminant $39$ is a counterexample to the Hasse principle over $\bQ(\sqrt{-13})$. 
	Other references for counterexamples over (finitely many) explicit quadratic fields are \cite{Skoro,RdVP}.
Arai \cite[Proposition 2.6 (1)]{Arai} found an explicit infinite
family of quartic number fields. 
The method we found applies in the number field case as well, and we can obtain an explicit infinite family of quadratic number fields 
(using the Weil bound and \cite[Example 6.4]{Jordan}).

Let us give an outline of the proof of Theorem \ref{ThmIntroCriterion}. 
Let $D$, $K$, $\fry$ and $\frp$ be as in the theorem and let $\bF_\frp$ be the residue field of $\frp$.
Suppose $X^D(K)\neq \emptyset$. 
First of all, we show that 
any element of $X^D(K)$ yields a $\sD$-elliptic sheaf $\ucE$ over $K$ (Theorem 
\ref{ThmDefField}). 
This follows from a Galois descent argument due to Shimura \cite{Shimura}, 
once we know that any point of $X^D(K)$ gives rise to a $\sD$-elliptic sheaf 
over a separable extension of $K$. 
We deduce the separability from the fact that the automorphism group of a 
$\sD$-elliptic sheaf over a finite extension of $F$ is a finite group of order 
prime to $p$ 
(Lemma \ref{LemAutGen}).

Next we attach to $\ucE$ a character 
\[
\rho_{\ucE,\frp}:\Gal(K^\sep/K)\to \bF^\times
\]
valued in the extension $\bF/\bF_\frp$ of degree $d$, and show that $\rho_{\ucE,\frp}$ has very restrictive properties at 
each place
of $K$. This eventually leads to a contradiction and we obtain 
$X^D(K)=\emptyset$. The strategy of using a character to show the non-existence 
of rational points is 
standard and originally due to Mazur \cite{Mazur}. We loosely follow its adaptation in \cite{Jordan}.

Let $\Pi$ be a prime element in the maximal order of the completion of $D$ at 
$\frp$. 
Let $\ucE[\frp]$ be the $\frp$-torsion of the 
abelian $t$-module 
associated with
$\ucE$.
We define $\rho_{\ucE,\frp}$ as the Galois 
representation of the $\Pi$-torsion in $\ucE[\frp](K^\sep)$  
(\S\ref{SubsecDefCanIsogChar}),
and call $\rho_{\ucE,\frp}$ the 
canonical isogeny character of $\ucE$. 

Now our main task is local analysis of the character $\rho_{\ucE,\frp}$ at any place $v$ of $K$, 
to which a large part of the paper is devoted. Let $G_v$ be the decomposition group and 
$I_v$ the inertia subgroup of $\Gal(K^\sep/K)$ at $v$.

When $v\nmid \frp\infty$, we bound the order of 
$\rho_{\ucE,\frp}(I_v)$ independently of $v$
(Proposition \ref{PropBoundMonodromy}). 
For this it is enough to bound
the degree of an extension over which any $\sD$-elliptic sheaf over $K_v$ 
acquires good reduction (Proposition \ref{PropPotGoodRed}).
Since any $\sD$-elliptic sheaf over $K_v$ is known to have potentially good 
reduction \cite{LRS,Hau}, a standard 
argument reduces it to bounding the order of the automorphism group of 
a $\sD$-elliptic sheaf over a finite field (Proposition \ref{PropAutFinite}).

When $v\mid \frp$, we relate $\rho_{\ucE,\frp}|_{I_v}$ to the Carlitz character 
(Corollary \ref{CorLafCarlitz}).
In Jordan's case \cite{Jordan}, a corresponding property is shown by the use of 
polarization, whereas we do not have a suitable notion of polarization on 
$\sD$-elliptic sheaves.
Instead, we employ the determinant of $t$-motives to obtain an explicit 
description of a small power of $\rho_{\ucE,\frp}|_{I_v}$ (Proposition 
\ref{PropLafDet}),
which is enough for our purpose.

Somewhat delicate is the case of $v\mid \infty$. Contrary to the number field 
setting where the absolute Galois group of $\bR$ is of order two,
that of $K_v$ for $v\mid\infty$ is of infinite order. 
Nonetheless 
we have a good control of $\rho_{\ucE,\frp}$ at 
$v\mid\infty$: 
by combining the lemma of the critical index \cite[Lemma 3.3.1]{BS} with 
the descent lemma of Drinfeld 
\cite[Proposition 1.1]{Dri_F}, we show that $\rho_{\ucE,\frp}(G_v)$ has a 
rather small 
order (Corollary \ref{CorControlInfty}).

As a consequence of these local analyses we conclude 
that, for a positive integer $n$ which is sufficiently smaller than the order of $\bF^\times$, 
the $d n$-th power of the image by 
$\rho_{\ucE,\frp}$ of a Frobenius element over $\fry$ has a very special form
(Proposition \ref{PropCanIsogCharFrY}), from which we derive the contradiction as desired.

The organization of the paper is as follows. In \S\ref{SecDEll}, we recall 
definitions and basic properties concerning $\sD$-elliptic sheaves. 
In \S\ref{SecCoarse}, we study the automorphism group of a $\sD$-elliptic sheaf over a finite extension $K$ of $F$ and 
apply it to produce a $\sD$-elliptic sheaf over $K$ from an element of $X^D(K)$ when $K$ splits $D$.

In \S\ref{SecDEllFinField}, we investigate the structure of the endomorphism ring of the $t$-motive associated with a $\sD$-elliptic sheaf $\ucE$ 
over a finite extension $k/\bF_q$. By abuse of notation, we write $\End(\ucE)$ for this ring. (Warning: we consider endomorphisms as $t$-motive
and automorphisms as $\sD$-elliptic sheaf.)
In \cite{LRS, Hau}, similar structure theorems are proved for $\sD$-elliptic sheaves over an algebraic closure $\bar{k}$ of $k$. 
Basically we reduce to that case.
Note, however, that this reduction is not immediate, since we have 
$\End(\ucE)\subseteq \End(\ucE|_{\bar{k}})$, while the opposite containment 
holds for the centers of
these rings. Then we apply it to bound the degree of an extension over which a $\sD$-elliptic sheaf over a local field has good reduction.

In \S\ref{SecP-adic}, we introduce the $\frp$-adic Galois representation 
attached to a $\sD$-elliptic sheaf over a field, and study the reduced characteristic polynomial
of the Frobenius action on it using results in \S\ref{SecDEllFinField}. We also give a bound on the local monodromy of the mod $\frp$
Galois representation when the base field is a finite extension of $F$.

In \S\ref{SecDet}, we study the determinant of the $t$-motive associated with a $\sD$-elliptic sheaf and relate it to the Carlitz character.
In \S\ref{SecCanIsogChar}, we define the canonical isogeny character of a 
$\sD$-elliptic sheaf and show that the image of the decomposition group at $\infty$ under the character is small.
Its local property at $\frp$ is also deduced from results in \S\ref{SecDet}.

In \S\ref{SecGlobalPoints}, we put these results together to prove Theorem \ref{ThmIntroCriterion}. In \S\ref{SecHPV}, we combine it with \cite{Pap_loc} 
to obtain Theorem \ref{ThmIntroHPV}, with the help of computer calculation using PARI/GP. (The codes we used can be found at \cite{Ha_codes}.)  

\subsubsection*{Acknowledgments} 
K.A. and S.K. thank the organizers of the conference at Tuan Chau,
Vietnam in 2018 where a part of this work was done.
S.K. also thanks Seidai Yasuda for answering some questions and Kur\c{s}at Aker for some coding.
K.A. was supported by JSPS KAKENHI Grant Numbers 
JP16K17578, JP21K03187
and
Research Institute for Science and Technology of Tokyo Denki 
University Grant Number Q20K-01 / Japan.

S.H. thanks Fu-Tsun Wei and Chieh-Yu Chang for pointing out a gap in the proof of 
\cite[\S9.2, Lemma]{Taelman} on which an earlier draft of the paper 
 relies, Gebhard B\"{o}ckle for helpful comments on this issue and the National Center for Theoretical Sciences in Hsinchu, where part of this 
 work was carried 
out, for its hospitality. 
S.H. was supported by JSPS KAKENHI Grant Numbers JP20K03545, JP23K03078. 

M.P. thanks the National Center for Theoretical Sciences in Hsinchu and the Max Planck Institute for Mathematics in Bonn, where part 
of this work was carried out, for their hospitality, excellent working conditions, and financial support. 
M.P. was also supported in part by a Collaboration Grant for Mathematicians from the Simons Foundation, Award
No.~637364.


\section{$\sD$-elliptic sheaves}\label{SecDEll}

\subsection{Definition of $\sD$-elliptic sheaves}

Let $p$ be a rational prime and let $q>1$ be a power of $p$. We denote by $X$ the projective line over $\bF_q$
and by $|X|$ the set of closed points of $X$. 
For $\infty\in X$, put
$A=\Gamma(X\setminus\{\infty\},\cO_X)$ and we identify it with $\bF_q[t]$. 
Put $F=\bF_q(t)$. For any $x\in |X|$, we denote by $F_x$ the completion of $F$ at 
$x$ and by $\cO_x$ the valuation ring of $F_x$. 

For any two schemes $X_1$ and $X_2$ over $\bF_q$, we write their fiber product over $\bF_q$ as $X_1 \times X_2$.
Similarly, we denote by $\otimes$ the tensor product over $\bF_q$. For any valued field $L$, we write $\cO_L$ for its valuation ring.

Let $d\geq 2$ be an integer. Let $D$ be a central division $F$-algebra of dimension $d^2$ such that $D\otimes_F F_\infty$ splits
(that is, $D\otimes_F F_\infty \simeq M_d(F_\infty)$). 
Let $\cR=\Ram(D)$ be the subset of $|X|$ consisting of $x\in |X|$ such that $D_x=D\otimes_F F_x$ does not split.
We assume
\begin{equation}\label{EqnAssumpDx}
\inv(D_x)=1/d\quad\text{for any }x\in \cR.
\end{equation}
This assumption, in particular, implies that $D_x$ is a division algebra for $x\in \cR$.

Let $\sD$ be a locally free coherent $\cO_X$-algebra such that the stalk at the generic point of $X$ is equal to $D$ and that
for any $x\in |X|$, the completion $\sD_x=\sD\otimes_{\cO_{X,x}}\cO_x$ of the stalk at $x$ is a maximal order $\cO_{D_x}$ of $D_x$.  
Put $\cO_D=H^0(X\setminus\{\infty\},\sD)$, which is a maximal $A$-order of $D$.
For any monic irreducible polynomial $\frp\in A$, by abuse of notation, we also let $\frp$ denote the place of $F$ defined by $\frp$.
We write
\[
\bF_\frp:=A/(\frp)\quad\text{and}\quad |\frp|:=|\bF_\frp|.
\]

For any scheme $S$ over $\bF_q$, we denote by $\Frob_S$ the $q$-th power Frobenius morphism of $S$. For any $\cO_{X\times S}$-module $\cE$,
put ${}^\tau\!\cE=(\id_X\times \Frob_S)^*\cE$. For any $\bF_q$-algebra $R$, the $q$-th power Frobenius endomorphism of $R$ is denoted by $\sigma=\sigma_q$.

We define $\sD$-elliptic sheaves following \cite[Definition 2.2]{LRS}, 
except that we allow fibers at the infinity and ramified places by using \cite[Definition 4.4.1]{BS} and \cite[D\'{e}finition 3.5]{Hau}
(see also \cite[Definition 5.9]{Spi}).

\begin{dfn}\label{DfnDEll}
	A $\sD$-elliptic sheaf over an $\bF_q$-scheme $S$ is a sequence $\ucE=(\cE_i,j_i,t_i)_{i\in \bZ}$ consisting of
	locally free $\cO_{X\times S}$-modules $\cE_i$ of rank $d^2$ equipped with an $\cO_X$-linear right action of $\sD$ and injective
	$\cO_{X\times S}$-linear maps
	\[
	j_i: \cE_i\to \cE_{i+1},\quad t_i: {}^\tau\!\cE_i\to \cE_{i+1}
	\]
	compatible with $\sD$-actions, satisfying the following conditions for any $i\in \bZ$:
	\begin{enumerate}
		\item The diagram
		\[
		\xymatrix{
			\cE_i \ar[r]^{j_i}& \cE_{i+1}\\
			{}^\tau\!\cE_{i-1}\ar[r]_{{}^\tau\!j_{i-1}}\ar[u]^{t_{i-1}} & {}^\tau\!\cE_i\ar[u]_{t_i}
		}
		\]
		is commutative.
		
		\item\label{DfnPeriodicity} $\cE_{i+d}=\cE_i\otimes_{\cO_{X\times S}}(\cO_X(\infty)\boxtimes \cO_S)$ and the composite 
		\[
		j_{i+d-1}\circ\cdots \circ j_i:\cE_i\to \cE_{i+d}
		\]
		is
		induced by the natural map $\cO_X\to \cO_X(\infty)$. Here $\boxtimes$ denotes the external tensor product.
		
		\item For the projection $\prjt_S:X\times S\to S$, the direct image $(\prjt_S)_*(\Coker(j_i))$ is a locally free $\cO_S$-module 
		of rank $d$.
		
		\item\label{DfnZero} $\Coker(t_i)$ is supported by the graph of a morphism $i_{0}:S\to X$ over $\bF_q$ which is independent of $i$. 
		Moreover, $\Coker(t_i)$ 
		is the direct image of a locally free $\cO_S$-module of rank $d$
		via the graph $S\to X\times S$ of $i_{0}$. We refer to $i_0$ as the zero of the $\sD$-elliptic sheaf $\ucE$ and put
		\[
		\cZ(\ucE)=i_{0}(S).
		\]

		\item\label{DfnEuler} For any geometric point $s\in S$, the Euler-Poincar\'{e} characteristic $\chi(\cE_0|_{X\times s})$ lies in $[0,d^2)$. 
		
		\item\label{DfnSpecial} $\ucE$ is special in the sense of \cite[D\'{e}finition 3.5]{Hau}.
	\end{enumerate}
\end{dfn}

Let us recall the condition (\ref{DfnSpecial}) briefly. Take any $\frp\in \cR$.
Let $F_\frp^{(d)}$ be the unramified extension of degree $d$ of $F_\frp$. 
Note that the maximal order $\cO_{D_\frp}$ of $D_\frp$ contains $\cO_{\frp}^{(d)}=\cO_{F_\frp^{(d)}}$ as an $\cO_\frp$-subalgebra.
Let $\bF_\frp^{(d)}$ be the residue field of $F_\frp^{(d)}$.
Let $\ucE[\frp^\infty]$ be the $\frp$-divisible group associated with $\ucE$ (see \S\ref{SubsecDefPdiv}).
The condition (\ref{DfnSpecial}) means that,
for any $\frp\in \cR$ and any geometric point $s=\Spec(k(s))$ of $S$ satisfying $i_{0}(s)=\frp$, 
the $\cO_{\frp}^{(d)}$-action on $\Lie(\ucE[\frp^\infty]_s)$ is decomposed as the sum of $d$ embeddings
$\cO_{\frp}^{(d)}/\frp \cO_{\frp}^{(d)}=\bF_\frp^{(d)}\to k(s)$ of extensions of $\bF_\frp$.

For an $\bF_q$-algebra $R$, we refer to a $\sD$-elliptic sheaf over $\Spec(R)$ also as a $\sD$-elliptic sheaf over $R$. 
If $\infty\notin\cZ(\ucE)$, then the zero $i_{0}$ defines a homomorphism of $\bF_q$-algebras $A\to R$, by which we consider $R$ as an $A$-algebra.

When $\infty\notin\cZ(\ucE)$ and $R=K$ is a field, we refer to the kernel (or its monic generator) of the map $A\to K$ as 
the characteristic of $K$ and denote it by $\chr_A(K)$. 
If $\chr_A(K)=0$, we say $K$ is of generic characteristic.
From the definition of the zero $i_{0}$, we see that if $\infty\notin\cZ(\ucE)$ and $\chr_A(K)\notin \cR\cup \{0\}$, 
then $\cO_D\otimes_A K$ is isomorphic to $M_d(K)$.

\begin{dfn}\label{DefSound}
Let $R$ be an $\bF_q$-algebra equipped with a morphism $\Spec(R)\to X$ over $\bF_q$ and let $\ucE$ be a $\sD$-elliptic sheaf over $R$.
We say $\ucE$ is \textit{sound} if the zero $i_0:\Spec(R)\to X$ agrees with the given map. 
\end{dfn}

For example, when $K/F$ is a field extension,
we say a $\sD$-elliptic sheaf over $K$ of generic characteristic is sound if its zero agrees with the composite $\Spec(K)\to \Spec(F)\to X$ of natural maps.
Similarly, for any $x\in |X|$ we can consider sound $\sD$-elliptic sheaves over an $\cO_x$-algebra, in particular those over a field extension of the residue 
field at 
$x$. 



\begin{dfn}
	A morphism of $\sD$-elliptic sheaves $(\cE_i,j_i,t_i)_{i\in \bZ}\to (\cE'_i,j'_i,t'_i)_{i\in \bZ}$ is 
	a system of homomorphisms 
	$\{\psi_i:\cE_i\to \cE'_{i}\}_{i\in \bZ}$ of $\cO_{X\times S}$-modules which is compatible with the actions of $\sD$, $j_i$ and $t_i$.
\end{dfn}
For any $\sD$-elliptic sheaf $\ucE$, we denote its automorphism group by $\Aut(\ucE)$.

For the zero $i_0$ of a $\sD$-elliptic sheaf $\ucE$ over $S$, note that
\begin{itemize}
	\item the zero of any $\sD$-elliptic sheaf over $S$ which is isomorphic to $\ucE$ is $i_0$, and
	\item for any morphism $f:T\to S$ of $\bF_q$-schemes, the sequence
	\[
	\ucE|_T:=((1\times f)^*\cE_i,(1\times f)^*j_i,(1\times f)^*t_i)_{i\in \bZ}
	\]
	defines a $\sD$-elliptic sheaf over $T$ whose zero is $i_0\circ f$.
\end{itemize}

\begin{dfn}\label{DefGoodRed}
Let $v\in |X|$ and let $L/F_v$ be an extension of complete discrete valuation fields.
We say a sound $\sD$-elliptic sheaf $\ucE$ over $L$ of generic characteristic has good reduction 
if there exists a $\sD$-elliptic sheaf $\ucE_{\cO_L}$ over $\cO_L$ 
such that its restriction $\ucE_{\cO_L}|_L$ to $L$ is isomorphic to $\ucE$ as $\sD$-elliptic sheaves over $L$. 
Then $\ucE_{\cO_L}$ is also sound and we have $\cZ(\ucE_{\cO_L})\cap |X|=\{v\}$.
\end{dfn}


\subsection{Level $I$ structure and moduli schemes}\label{SubsecLevelStr}

Let $I$ be a finite closed subscheme of $\Spec(A)$. 
Let $S$ be a scheme over $\bF_q$.
Let $\underline{\cE}=(\cE_i,j_i,t_i)_{i\in \bZ}$ be a $\sD$-elliptic sheaf over $S$ 
satisfying $I\cap \cZ(\ucE)=\emptyset$.
Then $\cE_i|_{I\times S}$ and $t_i|_{I\times S}$ are independent of $i$.
Let us denote them by
$\cE|_{I\times S}$ and $\tilde{t}|_{I\times S}$.

Let $E_I$ be the functor from the category of schemes over $S$ to that of right $H^0(I,\sD)$-modules defined by
\[
T\mapsto \Ker(H^0(I\times T,\tilde{t}|_{I\times S}-\id_{\cE|_{I\times S}})).
\]
Then it is representable by a finite \'{e}tale $H^0(I,\sD)$-module scheme of rank one over $S$ \cite[Lemma 2.6]{LRS}.
Note that $E_I$ is also independent of $i$.
We consider the right action of $\sD$ on itself by the right translation (that is, the multiplication from the right).

\begin{dfn}
	Let $I$ be a finite closed subscheme of $\Spec(A)$. 
	Let $S$ be a scheme over $\bF_q$.
	Let $\underline{\cE}=(\cE_i,j_i,t_i)_{i\in \bZ}$ be a $\sD$-elliptic sheaf over $S$ 
	satisfying $I\cap \cZ(\ucE)=\emptyset$. A level $I$ structure on $\underline{\cE}$ is an isomorphism of 
	$\cO_{I\times S}$-modules 
	\[
	\iota: \sD|_I\boxtimes \cO_S\to \cE|_{I\times S}
	\]
	compatible with the right actions of $ \sD|_I$ such that the following diagram is commutative.
	\[
	\xymatrix{
		{}^\tau\!\cE|_{I\times S}\ar[rr]^{\tilde{t}|_{I\times S}} & & \cE|_{I\times S}\\
		& \sD|_I\boxtimes \cO_S \ar[lu]^{{}^\tau\!\iota} \ar[ru]_\iota&
	}
	\]
\end{dfn}

By \cite[Proposition 2.1]{Dri_F}, to give a level $I$ structure on $\ucE$ is the same as to give an isomorphism 
\[
\underline{\cO_D|_I}\to E_I
\]
of finite \'{e}tale right $\cO_D$-module schemes over $S$, where the source is the constant group scheme
with $\cO_D$-action defined by the right translation.

Let $\sEll_{\sD,I}$ be the fppf stack of $\sD$-elliptic sheaves with level $I$ structure over the category of $\bF_q$-schemes and 
put $\sEll_{\sD}=\sEll_{\sD,\emptyset}$, as in \cite[\S2]{LRS}.
The zero map $i_{0}$ defines a morphism
$\sEll_{\sD,I}\to X$, which factors as
\begin{equation}\label{EqnStrMap}
\sEll_{\sD,I}\to X\setminus I.
\end{equation}
Then $\sEll_{\sD,I}$ is a Deligne--Mumford stack which is smooth of relative dimension $d-1$ over $X\setminus (\{\infty\}\cup \cR \cup I)$
\cite[Theorem 4.1]{LRS}. 

Let $w$ be a place of $F$ satisfying $w\notin I$.
When $I\neq \emptyset$, the stack $\sEll_{\sD,I}$ is representable by a projective scheme 
$\Ell_{\sD,I}$ over $X\setminus ((\{\infty\}\cup \cR \cup I)\setminus\{w\})$.
This is proved in \cite[Corollary 6.2]{LRS} for $w\notin \{\infty\}\cup \cR$, \cite[Th\'{e}or\`{e}me 6.4]{Hau} for $w\in \cR$ and \cite[Theorem 4.4.8 and 
Theorem 
4.4.9]{BS}
for $w=\infty$ (see also \cite[Remark 4.12]{Spi}).
We note that, if $I\neq \emptyset$, then for any scheme $S$ over $X\setminus ((\{\infty\}\cup \cR \cup I)\setminus\{w\})$ 
each object of $\sEll_{\sD,I}(S)$ has no non-trivial automorphism.



\subsection{$\sD$-elliptic sheaves and $t$-motives}\label{SubsecTMotives}

Let $R$ be a (commutative) local $\bF_q$-algebra. 
We denote by $R[\tau]$ the skew polynomial ring defined by the relation $\tau b=b^q\tau$ for any $b\in R$.

Let $\ucE=(\cE_i,j_i,t_i)_{i\in \bZ}$ be a $\sD$-elliptic sheaf over $R$. As in \cite[(3.4)]{LRS}, put
\[
P=H^0((X\setminus\{\infty\})\otimes R, \cE_i),
\]
which is independent of $i$. The $A\otimes R$-module $P$ is locally free of rank $d^2$.
We consider $P$ as an $R[\tau]$-module, by letting $\tau$ act on $P$ via $t_i:{}^\tau\!\cE_i\to \cE_{i+1}$. 
Then the $R$-module $H^0(X\otimes R,\Coker(j_{i-1}))$ is free of rank $d$


Moreover, the $R[\tau]$-module $P$ admits a natural right $\cO_D$-action which commutes with the left $R[\tau]$-action.
It gives a homomorphism of $\bF_q$-algebras
\[
\varphi:\cO_D^\opp\to \End_{R[\tau]}(P),
\]
which is compatible with the natural action of the subring $A\subseteq \cO_D^\opp$ on the $R$-module $P$. 
We refer to $P$ as the $t$-motive associated with the $\sD$-elliptic sheaf $\ucE$.

When $R=L$ is a field, the $A\otimes L$-module $P$ is free of rank $d^2$.
The proof of \cite[Lemma 3.7]{LRS} works for this case and shows that the map $\varphi:\cO_D^\opp\to \End_{L[\tau]}(P)$ is injective.

If $\infty\notin \cZ(\ucE)$, then the zero $A\to R$ of $\ucE$ yields the commutative diagram
\begin{equation}\label{DiagVarphi}
	\xymatrix{
		\cO_D^\opp \ar[r]^-{\varphi}\ar[d]& \End_{R[\tau]}(P)\ar[d]\\
		\cO_D^\opp\otimes_A R \ar[r]&  \End_{R}(\Coker(\tau)),
	}
\end{equation}
where we consider $\tau$ as an $R$-linear map $\tau: (1\otimes \sigma)^*P\to P$.
If $\infty\notin \cZ(\ucE)$ and $R=L$ is a perfect field, then the $L[\tau]$-module $P$ is free of rank $d$ \cite[Lemma 3.5]{LRS}
(see also \cite[Proposition 1.4.4]{Anderson})
and we have $\Coker(\tau)=P/\tau P$.

\begin{lem}[\cite{Pap}, Lemma 2.5]\label{LemGenCot}
	Let $L/\bF_q$ be a field extension and let $\ucE$ be a $\sD$-elliptic sheaf over $L$ with $\infty\notin \cZ(\ucE)$. Suppose $\chr_A(L)\notin \cR$. 
	Then the map at the bottom of the diagram (\ref{DiagVarphi})
	\[
	\cO_D^\opp\otimes_A L\to \End_{L}(\Coker(\tau))\simeq M_d(L)
	\]
	is an isomorphism.
\end{lem}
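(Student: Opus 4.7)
The plan is to identify both sides as $L$-algebras of dimension $d^2$, observe that the map sends $1$ to the identity so is non-zero, and then conclude injectivity from the simplicity of the source followed by an $L$-dimension count.

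First I would verify the $L$-dimension of the two algebras. On the right, by item (\ref{DfnZero}) of Definition \ref{DfnDEll}, the $L$-module $\Coker(\tau)$ is locally free of rank $d$, hence $\End_L(\Coker(\tau)) \simeq M_d(L)$ has $L$-dimension $d^2$. On the left, I would argue that $\cO_D^\opp \otimes_A L$ is a simple $L$-algebra of dimension $d^2$ in both characteristics: if $\chr_A(L) = \frp \neq 0$ with $\frp \notin \cR$, then the splitting of $D$ at $\frp$ combined with maximality of $\cO_D$ gives $\cO_D \otimes_A \cO_\frp \simeq M_d(\cO_\frp)$, so $\cO_D^\opp \otimes_A L \simeq M_d(L)$; if $\chr_A(L) = 0$, then the map $A \to L$ extends to $F \to L$ and $\cO_D^\opp \otimes_A L = D^\opp \otimes_F L$, which is central simple over $L$ of dimension $d^2$ by stability of central simplicity under base change.

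The diagram (\ref{DiagVarphi}) shows that the map in question sends $1$ to $\id_{\Coker(\tau)}$, so it is non-zero. Since the source is a simple $L$-algebra, its kernel (a two-sided ideal) must vanish. The map is therefore injective, and equality of $L$-dimensions promotes this to an isomorphism. There is no real obstacle beyond these elementary observations; note that in the generic characteristic case this argument implicitly forces $L$ to split $D$, a by-product worth recording for later use.
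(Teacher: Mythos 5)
Your proposal is correct and follows essentially the same route as the paper: identify the source as $M_d(L)$ (for finite characteristic away from $\cR$) or $D^\opp\otimes_F L$ (in generic characteristic), note the map is a unital homomorphism out of a simple algebra so has trivial kernel, and conclude by comparing $L$-dimensions. Your added remarks (maximality of $\sD_\frp$ giving $M_d(\cO_\frp)$, and the by-product that $L$ splits $D$ in generic characteristic) are consistent with what the paper uses elsewhere.
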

\begin{proof}
	When $\chr_A(L)\neq 0$, the assumption shows that $\cO_D^\opp\otimes_A L$ is isomorphic to $M_d(L)$.
	When $\chr_A(L)=0$, the source equals $D^\opp\otimes_F L$.
	Since in both cases the map of the lemma is a homomorphism of unitary rings from a simple algebra, its kernel is trivial. 
	Since both sides have the same dimension over $L$, it is an isomorphism.
\end{proof}

Since $X\times\Spec(L)$ is an integral scheme, we have injections
\begin{equation}\label{EqnAutP}
\Aut(\ucE)\to \Aut_{L[\tau]}^{\cO_D}(P)\to \Aut_{L[\tau]}(P),
\end{equation}
where $\Aut_{L[\tau]}^{\cO_D}(P)$ denotes the group of automorphisms of the $L[\tau]$-module $P$ that commute with the $\cO_D$-action.

By abuse of notation, we write $\End(\ucE)$ for the endomorphism ring of the $t$-motive associated with a $\sD$-elliptic sheaf $\ucE$ over a field $L$:
\begin{equation}\label{EqnIdentifyEndP}
	\End(\ucE):=\End_{L[\tau]}^{\cO_D}(P).
\end{equation}

When $L$ is a finite extension of $\bF_q$, the $A\otimes L$-module $P$ is free of rank $d^2$ and thus $P$ is also free of finite rank as an $A$-module.
This implies that if $L/\bF_q$ is a finite extension, then the $A$-module $\End(\ucE)$ is free of finite rank.

\section{Coarse moduli scheme}\label{SecCoarse}

\subsection{Automorphisms of $\sD$-elliptic sheaves in generic characteristic}\label{SubsecAutGen}

Let $K/F$ be a field extension. Let $\ucE$ be a $\sD$-elliptic sheaf over $K$ of generic characteristic. Then its zero $i_{0}:A\to K$ factors through 
the natural inclusion $A\to F$.

\begin{lem}[\cite{Pap}, Lemma 2.12]\label{LemAutComm}
	Let $K/F$ be an extension and let $\ucE$ be a $\sD$-elliptic sheaf over $K$ of generic characteristic. 
	Let $\bar{K}$ be an algebraic closure of $K$. 
	Let $P$ and $\bar{P}=P\otimes_K \bar{K}$ be the $t$-motives associated with $\ucE$ and $\ucE|_{\bar{K}}$, respectively. 
	Then the natural map
	\[
	\Aut(\ucE)\to \Aut_{\bar{K}}(\bar{P}/\tau \bar{P})
	\]
	is injective and factors through $\bar{K}^\times$. In particular,
	$\Aut(\ucE)$ is an abelian subgroup of $\bar{K}^\times$ such that any element of finite order has an order prime to $p$.
\end{lem}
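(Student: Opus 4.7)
The claim has two parts — that the image of $\Aut(\ucE) \to \Aut_{\bar K}(\bar P/\tau\bar P)$ lies in the scalar subgroup $\bar K^\times$, and that the map is injective — and my plan is to handle these in turn, then read off the structural conclusions.

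For the factorization through $\bar K^\times$, I would apply Lemma \ref{LemGenCot} with $L = \bar K$. Since the generic characteristic hypothesis forces $\chr_A(\bar K) = 0 \notin \cR$, the lemma yields an isomorphism $\cO_D^\opp \otimes_A \bar K \xrightarrow{\sim} \End_{\bar K}(\bar P/\tau\bar P) \simeq M_d(\bar K)$. Any $f \in \Aut(\ucE)$ commutes with the $\cO_D$-action on $\bar P$, hence the induced automorphism of $\bar P/\tau\bar P$ lies in the centralizer of the full matrix algebra $M_d(\bar K)$, which is its center $\bar K$; so $f$ acts by a scalar in $\bar K^\times$.

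For injectivity, I would combine the injection $\Aut(\ucE) \hookrightarrow \Aut_{K[\tau]}^{\cO_D}(P)$ of (\ref{EqnAutP}) with faithful flatness of $K \to \bar K$ to get $\Aut(\ucE) \hookrightarrow \Aut_{\bar K[\tau]}^{\cO_D}(\bar P)$. Suppose $f \in \Aut(\ucE)$ acts trivially on $\bar P/\tau\bar P$. Then $(f - 1)(\bar P) \subseteq \tau \bar P$, and using the commutation $f\tau = \tau f$ one finds inductively that $(f - 1)^n(\bar P) \subseteq \tau^n \bar P$ for all $n \geq 1$. Since $\bar K$ is perfect, $\bar P$ is a free $\bar K[\tau]$-module of rank $d$, so $\bigcap_n \tau^n \bar P = 0$. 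To upgrade this topological nilpotency to the equality $f = 1$, I would invoke that $f$ is not merely a $\bar K[\tau]$-linear endomorphism of $\bar P$ but extends to an automorphism of the full $\sD$-elliptic sheaf $\ucE$, and is hence compatible with the fiber data of the $\cE_i$'s near $\infty$, where $\tau^{-1}$ is topologically nilpotent; matching the $t$-motive side with this opposite growth behavior at $\infty$ excludes the non-trivial $\tau$-unipotent automorphisms that exist a priori in $\Aut_{\bar K[\tau]}^{\cO_D}(\bar P)$.

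The main obstacle is exactly this last step: $\End_{\bar K[\tau]}^{\cO_D}(\bar P)$ is genuinely larger than $\bar K$ and can admit elements of the form $1 + \tau h$ that are trivial modulo $\tau$, so the reduction modulo $\tau$ on the $t$-motive alone fails to detect them. The rigidification needed comes from the behavior at $\infty$, which is the content of Papikian's argument \cite{Pap}. Once injectivity is established, the remaining structural statement is immediate: $\bar K^\times$ is abelian and has no non-trivial $p$-torsion since $\chr(\bar K) = p$, and both properties pass to the subgroup $\Aut(\ucE)$.
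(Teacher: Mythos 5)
Your factorization step is fine and matches the paper: Lemma \ref{LemGenCot} with $L=\bar K$ identifies $\End_{\bar K}(\bar P/\tau\bar P)$ with $M_d(\bar K)$ compatibly with the $\cO_D$-action, so $\cO_D$-equivariance forces the image of any automorphism into the center $\bar K^\times$, and the concluding structural remarks (abelian, no $p$-torsion) follow as you say. The reduction to $K=\bar K$ via (\ref{EqnAutP}) is also harmless.

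The gap is in injectivity. You correctly observe that $(f-1)^n(\bar P)\subseteq\tau^n\bar P$ only gives topological nilpotency of $f-1$, but your way out --- an appeal to the fibers of the $\cE_i$ near $\infty$ and an unexplained ``matching of growth behavior'' --- is not an argument, and the premise behind it is mistaken: no input from $\infty$ is needed, and in generic characteristic the ring $\End_{\bar K[\tau]}^{\cO_D}(\bar P)$ contains \emph{no} element of the form $1+\tau h$ other than the identity, contrary to what you assert. The paper's proof (following Papikian) gets this directly from commutation with $\varphi(t)$: since $\bar K$ is perfect, $\bar P$ is free of rank $d$ over $\bar K[\tau]$, so write $f=I_d+\sum_{i\geq m}B_i\tau^i$ with $m\geq 1$ and $B_m\neq 0$, and note that by the commutativity of (\ref{DiagVarphi}) one has $\varphi(t)=\theta I_d+\sum_{i\geq 1}A_i\tau^i$, where $\theta=i_0(t)\in\bar K$ is transcendental over $\bF_q$ precisely because $\ucE$ has generic characteristic. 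Comparing the coefficients of $\tau^m$ in $f\varphi(t)=\varphi(t)f$ yields $(\theta^{q^m}-\theta)B_m=0$, hence $B_m=0$, a contradiction. So the rigidification you were looking for is simply the $A$-equivariance of $f$ (through $\varphi$) combined with the transcendence of $\theta$; as written, your proposal leaves the decisive step unproved and points to the wrong source for it.
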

\begin{proof}
	Since $\Aut(\ucE)\subseteq \Aut(\ucE|_{\bar{K}})$,
	we may assume $K=\bar{K}$.

	Consider the diagram (\ref{DiagVarphi}).
	Since $K=\bar{K}$ is perfect, the $K[\tau]$-module $P$ is free of rank $d$ and the ring $\End_{K[\tau]}(P)$ is identified with the 
	matrix ring
	$M_d(K[\tau]^\opp)$. For any $a\in A$, by the commutativity of (\ref{DiagVarphi}) we can write
	\[
	\varphi(a)=aI_d+\sum_{i\geq 1} A_i \tau^i, \quad A_i\in M_d(K),
	\]
	where $I_d\in M_d(K)$ is the identity matrix .
	
	To show that the natural map $\Aut(\ucE)\to \Aut_{K}(P/\tau P)$ is injective,
	suppose that there exists $f\neq \id$ in the kernel of this map. 
	Using (\ref{EqnAutP}), we identify $f$ with an element of $\End_{{K}[\tau]}({P})$ which we write
	\[
	f=I_d+\sum_{i\geq m} B_i \tau^i, \quad B_i\in M_d(K),\quad B_m\neq 0
	\]
	with some positive integer $m$. 
	Since $f$ commutes with the $\cO_D$-action, it also commutes with $\varphi(t)$.
	This yields $tB_m=t^{q^m}B_m$ and $B_m=0$, which is a contradiction.
	
	Now Lemma \ref{LemGenCot} implies that 
	the image of $f$ in $\Aut_K(P/\tau P)$
	lies in its center, namely $K^\times$. Thus we obtain an injection $\Aut(\ucE)\to K^\times$.
	Then the lemma follows since $K^\times $ has no non-trivial element of $p$-power order. 
\end{proof}

For any positive integer $n$, let
\[
l_q(n)=\lcm(q^i-1\mid 1\leq i\leq n)
\]
be the least common multiple.
We have
\[
p\nmid l_q(n)\quad\text{and}\quad l_q(2)=q^2-1.
\]

\begin{lem}\label{LemGCyclicSub}
	Let $\pi\in A$ be an irreducible polynomial of degree one.
	Let $H$ be a cyclic subgroup of $(\cO_D/\pi \cO_D)^\times$ of order prime to $p$. 
	Then $|H|$ divides $l_q(d)$.
\end{lem}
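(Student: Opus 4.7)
The plan is to analyze $\cO_D/\pi \cO_D$ as an $\bF_q$-algebra (note that $A/(\pi) = \bF_q$ since $\deg(\pi) = 1$) and reduce $H$ to a subgroup of the units of its semisimple quotient. I will split into two cases according to whether $\pi \in \cR$.

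\textbf{Step 1: Structure of $\cO_D/\pi\cO_D$.} First I would use $\cO_D/\pi\cO_D = \cO_{D_\pi}/\pi\cO_{D_\pi}$ to reduce to a local computation at $\pi$. If $\pi \notin \cR$, then $D_\pi \simeq M_d(F_\pi)$ and the maximal order satisfies $\cO_{D_\pi} \simeq M_d(\cO_\pi)$, so $\cO_D/\pi\cO_D \simeq M_d(\bF_q)$, which is semisimple. If $\pi \in \cR$, then by the assumption (\ref{EqnAssumpDx}) the invariant is $1/d$, so $D_\pi$ is a division algebra whose maximal order $\cO_{D_\pi}$ contains $\cO_\pi^{(d)}$ and has a unique two-sided maximal ideal $\frem = \Pi\cO_{D_\pi}$ with $\frem^d = \pi \cO_{D_\pi}$ and residue field $\cO_{D_\pi}/\frem \simeq \bF_{q^d}$.

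\textbf{Step 2: Reduction to a $p'$-quotient.} Since $\cO_D/\pi\cO_D$ is a finite $\bF_q$-algebra, its Jacobson radical $J$ is nilpotent and $1+J$ is a finite $p$-group. Because $H$ has order prime to $p$, the composite $H \hookrightarrow (\cO_D/\pi\cO_D)^\times \to ((\cO_D/\pi\cO_D)/J)^\times$ is injective. In the split case the radical is trivial, and in the ramified case the semisimple quotient is $\bF_{q^d}^\times$; hence in the ramified case $|H|$ already divides $q^d - 1$, which divides $l_q(d)$.

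\textbf{Step 3: The split case.} Here it remains to show that any cyclic subgroup of $GL_d(\bF_q)$ of order prime to $p$ has order dividing $l_q(d)$. Let $g$ generate $H$; since the order of $g$ is prime to $p$, it is semisimple, hence diagonalizable over $\overline{\bF_q}$. The minimal polynomial of $g$ over $\bF_q$ has degree $\leq d$, so each eigenvalue lies in $\bF_{q^{n_i}}$ with $1 \leq n_i \leq d$. Each such eigenvalue is a root of unity whose order divides $q^{n_i} - 1$, and the order of $g$ is the least common multiple of the orders of its eigenvalues. Therefore $|H| = \mathrm{ord}(g)$ divides $\lcm(q^{n_i} - 1) \mid l_q(d)$, concluding the proof.

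\textbf{Main obstacle.} The only non-routine point is the local structural description in the ramified case, namely that the residue field of $\cO_{D_\pi}$ has degree exactly $d$ over $\bF_\pi$; this uses the invariant $1/d$ assumption. The rest is elementary linear algebra over finite fields.
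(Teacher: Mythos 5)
Your proof is correct and takes essentially the same route as the paper: kill the $p$-group $1+J$ (the paper's kernel $I_e+\Pi M_e(\cO_{\tilde{D}_x}/\pi\cO_{\tilde{D}_x})$), then bound the order of the resulting semisimple element of a general linear group over a finite field by noting that its eigenvalues lie in extensions of $\bF_q$ of degree at most $d$. The only cosmetic difference is that you split into the cases $\pi\notin\cR$ and $\pi\in\cR$ using the standing assumption (\ref{EqnAssumpDx}), whereas the paper argues uniformly by writing $D_x\simeq M_e(\tilde{D}_x)$ with $d=em$ and reducing to $\mathit{GL}_e(\bF_{q^m})$, so its proof does not need that assumption at all.
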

\begin{proof}
	Let $x\in X$ be the closed point that $\pi$ defines.
	Since $D_x$ is a central simple algebra over $F_x$, there exist integers $e,m$ satisfying $d=em$ and a central division algebra $\tilde{D}_x$ of degree 
	$m^2$ over $F_x$
	satisfying $D_x\simeq M_e(\tilde{D}_x)$. By assumption $\sD_x$ is a maximal order of $D_x$, and by \cite[Theorem 17.3 (ii)]{Rei}
	it is identified with $M_e(\cO_{\tilde{D}_x})$ for the maximal order $\cO_{\tilde{D}_x}$ of $\tilde{D}_x$. Thus we have
	\[
	\cO_D/\pi \cO_D\simeq M_e(\cO_{\tilde{D}_x}/\pi \cO_{\tilde{D}_x}).
	\]
	
	For the division algebra $\tilde{D}_x$, by \cite[Theorem 14.5]{Rei} we can write
	\[
	\cO_{\tilde{D}_x}/\pi \cO_{\tilde{D}_x}=\bigoplus_{i=0}^{m-1}\bF_{q^m}\Pi^i,\quad \Pi^m=0,\quad \Pi \omega=\omega^{q^r}\Pi
	\]
	for any $\omega\in \bF_{q^m}$, with some integer $r\in [1,m]$ which is coprime to $m$. 
	Then we have the exact sequence of groups
	\begin{equation}\label{EqnExactG}
	\xymatrix{
		1 \ar[r] & I_e+\Pi M_e(\cO_{\tilde{D}_x}/\pi \cO_{\tilde{D}_x}) \ar[r] & (\cO_D/\pi \cO_D)^\times \ar[r] & \mathit{GL}_e(\bF_{q^m}) \ar[r] &1,
	}
	\end{equation}
	where $I_e\in M_e(\bF_{q^m})$ is the identity matrix. 
	
	Since the first term is a group of $p$-power order, its intersection with $H$ is trivial. Thus we obtain an injection
	\[
	H\to \mathit{GL}_e(\bF_{q^m}).
	\]
	
	Take any element $h\in H$. Since $h$ is of order prime to $p$, its image in $\mathit{GL}_e(\bF_{q^m})$ is semisimple.
	Note that for any element of $\mathit{GL}_e(\bF_{q^m})$, its eigenvalue is a root of a monic polynomial of degree $e$ with coefficients in $
	\bF_{q^m}$.
	Thus its eigenvalue lies in an extension of $\bF_{q^m}$ of degree no more than $e$,
	hence in a finite extension over $\bF_q$ of degree no more than $d$. Therefore we obtain $h^{l_q(d)}=\id$. This proves the claim.
\end{proof}

For any ring $R$ which is not necessarily commutative and $z\in R$,
we denote by $z_l:R\to R$ the left translation of $z$,
so that $(z z')_l=z_l\circ z'_l$ for any $z,z'\in R$.

\begin{lem}\label{LemAutGen}
	Let $\ucE$ be a $\sD$-elliptic sheaf over $K$ of generic characteristic. 
	Then $\Aut(\ucE)$ is a cyclic group of order dividing $l_q(d)$.
\end{lem}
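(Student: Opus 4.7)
The plan is to inject $\Aut(\ucE)$ into $(\cO_D/\pi\cO_D)^\times$ for a degree-one prime $\pi\in A$ and then read off the conclusion from Lemmas \ref{LemAutComm} and \ref{LemGCyclicSub}.

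First I would choose an irreducible $\pi\in A$ of degree one (for instance $\pi=t$) and set $I=V(\pi)$. Since $\ucE$ is of generic characteristic, its zero factors through the generic point of $X$, so $I\cap\cZ(\ucE)=\emptyset$ and the finite \'{e}tale $\cO_D/\pi\cO_D$-module scheme $E_\pi$ recalled in \S\ref{SubsecLevelStr} is defined. After base change to $\bar K$ the scheme $E_\pi$ becomes constant, so the set $T$ of level-$\pi$ structures on $\ucE|_{\bar K}$ is non-empty and, via left multiplication on the source $\sD|_I\boxtimes\cO_{\Spec(\bar K)}$, carries a free transitive action of $(\cO_D/\pi\cO_D)^\times$. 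In other words $T$ is a $(\cO_D/\pi\cO_D)^\times$-torsor.

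Next I would build the embedding. The group $\Aut(\ucE|_{\bar K})$ acts on $T$ by post-composition ($f\cdot\iota:=f\circ\iota$), and this action commutes with the torsor action since the latter modifies only the source of $\iota$. Fixing a base point $\iota_0\in T$, for each $f\in\Aut(\ucE|_{\bar K})$ there is a unique $u_f\in(\cO_D/\pi\cO_D)^\times$ with $f\circ\iota_0=\iota_0\cdot u_f$, and $f\mapsto u_f$ is a group homomorphism. Its injectivity is precisely the rigidity of level-$\pi$ structures recalled in \S\ref{SubsecLevelStr}: since $I\neq\emptyset$ and $\Spec(\bar K)\to X$ lands at the generic point, which lies in $X\setminus((\{\infty\}\cup\cR\cup I)\setminus\{\infty\})$, the object $(\ucE|_{\bar K},\iota_0)$ of $\sEll_{\sD,I}(\bar K)$ has trivial automorphism group, meaning the $\Aut(\ucE|_{\bar K})$-action on $T$ is free. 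Restricting to $\Aut(\ucE)\subseteq\Aut(\ucE|_{\bar K})$ yields the desired embedding $\Aut(\ucE)\hookrightarrow(\cO_D/\pi\cO_D)^\times$.

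Finally, Lemma \ref{LemAutComm} exhibits $\Aut(\ucE)$ as an abelian subgroup of $\bar K^\times$ whose torsion elements have order prime to $p$; combined with its realization as a subgroup of the finite group $(\cO_D/\pi\cO_D)^\times$ it is finite, hence cyclic as a finite subgroup of $\bar K^\times$, and its order is prime to $p$. Lemma \ref{LemGCyclicSub} applied to $H=\Aut(\ucE)$ then bounds this order by $l_q(d)$, as desired. The principal point to verify with care is that the rigidity recalled in \S\ref{SubsecLevelStr} genuinely translates into a \emph{free} action on the torsor $T$; this is essentially the content of the representability of $\sEll_{\sD,I}$ for $I\neq\emptyset$.
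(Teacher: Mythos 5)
Your proof is correct and follows essentially the same route as the paper: choose a degree-one prime $\pi$, use the $(\cO_D/\pi\cO_D)^\times$-torsor of level-$\pi$ structures over $\bar K$ together with the rigidity of level structures to embed $\Aut(\ucE)$ into $(\cO_D/\pi\cO_D)^\times$, then invoke Lemma~\ref{LemAutComm} for cyclicity and order prime to $p$ and Lemma~\ref{LemGCyclicSub} for the bound by $l_q(d)$. The only cosmetic difference is that the paper first reduces to $K=\bar K$ via $\Aut(\ucE)\subseteq\Aut(\ucE|_{\bar K})$ while you keep $\Aut(\ucE|_{\bar K})$ explicit and restrict at the end.
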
 
\begin{proof}
	We may assume that $K$ is algebraically closed. Let $x$ be a closed point of $X\setminus\{\infty\}$ of degree one.
	Let $\pi\in A$ be an irreducible polynomial defining $x$. Note that $\ucE$ admits a level $x$ structure over $K=\bar{K}$.
	Let us identify it with an
	isomorphism of right $\cO_D$-modules
	\[
	\iota: \cO_D/\pi \cO_D\to E_x(K).
	\]
	
	Since $\cO_D/\pi \cO_D$ is a finite ring, any element with right inverse is invertible.
	Thus the set of level $x$ structures on $\ucE$ is an $(\cO_D/\pi \cO_D)^\times$-torsor,
	where the action of $g\in (\cO_D/\pi \cO_D)^\times$ is given by $\iota\mapsto \iota\circ g_l$.
	Since the group $\Aut(\ucE)$ acts on this set from the left,
	we have a homomorphism
	\[
	\Aut(\ucE)\to (\cO_D/\pi \cO_D)^\times,
	\]
	which is injective since any element of $\sEll_{\sD,x}(K)$ has no non-trivial automorphism.
	Thus $\Aut(\ucE)$ is a finite group, and Lemma \ref{LemAutComm} shows that it is a cyclic group of order prime to $p$.
	Now Lemma \ref{LemGCyclicSub} concludes the proof.
\end{proof}

\subsection{Galois descent for $\sD$-elliptic sheaves}\label{SubsecGaloisDescent}

Let $K/F$ be a finite extension such that there exists an isomorphism of $K$-algebras 
\[
\eta:\cO_D\otimes_A K\simeq M_d(K).
\]
Let $L/K$ be a finite Galois extension with the Galois group $G=\Gal(L/K)$.
Let $W$ be a right $\cO_D\otimes_A L$-module satisfying $\dim_L(W)=d$.
Let $L^d$ be the $L$-vector space of row vectors on which $M_d(L)$ acts naturally from the right.
Since any right $M_d(L)$-module of dimension $d$ over $L$ is isomorphic to $L^d$,
there exists an isomorphism
\[
\psi: W\simeq  L^d
\]
which is compatible with the actions of $\cO_D\otimes_A L$ and $M_d(L)$ under 
the isomorphism $\eta\otimes 1: \cO_D\otimes_A L\simeq M_d(L)$.

For any $g\in G$, consider the right $\cO_D\otimes_A L$-module $W\otimes_{L,g}L$. The action is given by
\[
(v\otimes_{L,g}1)(\delta\otimes a)=v\delta\otimes_{L,g}a,\quad \delta\in \cO_D,\ a\in L.
\]
We regard it as an $L$-vector space by the action on $L$ on the right factor.

Let $g\in G$ and let $\lambda: W\to W\otimes_{L,g}L$ be an isomorphism of right $\cO_D\otimes_A L$-modules.
We consider the following diagram:
\begin{equation}\label{EqnCanConst}
	\begin{gathered}
\xymatrix{
W\ar[r]^-{\lambda}\ar[d]_-{\psi} & W\otimes_{L,g}L\ar[dr]^{\psi\otimes_{L,g}1}&\\
L^d \ar[r]_f& L^d & L^d\otimes_{L,g}L. \ar[l]^-{\Psi}
}
\end{gathered}
\end{equation}
The maps $\lambda$, $\psi$ and $\psi\otimes_{L,g}1$ are isomorphisms 
and the isomorphism $\Psi$ is given by 
\[
\Psi((a_1,\ldots,a_d)\otimes a)=(ag(a_1),\ldots,ag(a_d)).
\]
We define $f$ to be the isomorphism that makes the diagram commutative.

We claim that $f$ is an isomorphism as $M_d(L)$-modules. Indeed, since the map $\psi$ is 
compatible with the isomorphism $\eta:\cO_D\otimes_A K\simeq M_d(K)$,
the right action of $\cO_D\otimes_A L$ on $W\otimes_{L,g}L$ is identified with
the right action of $M_d(K)\otimes_K L$ on $L^d\otimes_{L,g}L$ defined by
\[
((a_1,\ldots,a_d)\otimes_{L,g}1)(B\otimes a)=((a_1,\ldots,a_d)B)\otimes_{L,g} a,\quad B\in M_d(K),\ a\in L.
\]
This implies that the right $M_d(L)$-action on $L^d$ induced by the latter action via $\Psi$ agrees with the right multiplication.
Since the only endomorphism of the tautological right $M_d(L)$-module $L^d$ is a scalar multiple, 
the map $f$ is the multiplication by an element, say $c_W(\lambda,g)\in L^\times$.

Thus, for any $h\in G$ and $\lambda':W\to W\otimes_{L,h}L$ as above, we have
\begin{equation}\label{EqnTransCanConst}
c_W((\lambda'\otimes_{L,g}1)\circ \lambda,gh)=g(c_W(\lambda',h))c_W(\lambda,g).
\end{equation}

For any $\bF_q$-algebra $R$ and its automorphism $g\in \Aut_{\bF_q}(R)$ as an $\bF_q$-algebra,
we write 
\[
f_g=\id_X\times \Spec(g):X\times \Spec(R)\to X\times \Spec(R),
\]
so that $f_{gh}=f_h\circ f_g$ for any $g,h\in \Aut_{\bF_q}(R)$. Now the following lemma
can be proved in a manner similar to that of the proof of \cite[Theorem 9.5]{Shimura} (see also \cite[Proposition 1.3]{Jordan} and \cite[Theorem 6.13]{Pap}).

\begin{lem}\label{LemFieldModuli}
	Let $K/F$ be a finite extension satisfying $\cO_D\otimes_A K\simeq M_d(K)$.
	Let $L/K$ be a finite Galois extension 
	and let $\ucE$ be a sound $\sD$-elliptic sheaf over $L$
	of generic characteristic. Suppose that we are given an isomorphism of $\sD$-elliptic sheaves over $L$
	\[
	\theta_g: \ucE\to f_g^*\ucE
	\] 
	for any $g\in G=\Gal(L/K)$. 
	Then there exist a sound $\sD$-elliptic sheaf $\ucE'$ over $K$ of generic characteristic, a finite extension $L'/L$ and
	an isomorphism of $\sD$-elliptic sheaves $\ucE|_{L'}\simeq \ucE'|_{L'}$ over $L'$.
\end{lem}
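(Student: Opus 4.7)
The plan is to adapt the Galois descent argument of \cite{Shimura} (cf.~\cite{Jordan,Pap}) using the $t$-motive $P$ of $\ucE$ and the scalars $c_W(-,-)$ from (\ref{EqnCanConst}) as the tool to detect and trivialize the cocycle obstruction. The main subtlety is that the $\theta_g$ need not form a cocycle, and their failure to do so defines a class in $H^2(G, \Aut(\ucE))$ which has to be killed, forcing the passage to a suitable finite extension $L'/L$.

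First, I would form the $2$-cochain
\[
c(g,h) := \theta_{gh}^{-1} \circ f_g^*(\theta_h) \circ \theta_g \in \Aut(\ucE),
\]
using $f_{gh}^* = f_g^* \circ f_h^*$. By Lemma \ref{LemAutGen}, $\Aut(\ucE)$ is cyclic of some order $m$ dividing $l_q(d)$, in particular prime to $p$, and by Lemma \ref{LemAutComm} its action on $W := P/\tau P$ realizes it as $\mu_m(L) \subseteq L^\times$. The $\theta_g$ induce right $\cO_D \otimes_A L$-linear isomorphisms $\lambda_g \colon W \to W \otimes_{L,g} L$, to which (\ref{EqnCanConst}) attaches the scalars $b(g) := c_W(\lambda_g, g) \in L^\times$. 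Combining (\ref{EqnTransCanConst}) with the identity $(\lambda_h \otimes_{L,g} 1) \circ \lambda_g = c(g,h) \cdot \lambda_{gh}$ — which encodes that $c(g,h)$ acts on $W$ as the scalar $c(g,h) \in L^\times$ — a short calculation yields
\[
c(g,h) = g(b(h)) \cdot b(g) \cdot b(gh)^{-1},
\]
exhibiting $c$ as a $2$-coboundary in $C^2(G, L^\times)$.

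Next, since $c$ takes values in $\mu_m$, the $1$-cochain $b^m$ is a cocycle in $L^\times$, so Hilbert 90 produces $c_0 \in L^\times$ with $b(g)^m = g(c_0)/c_0$. I would choose a finite Galois extension $L'/K$ containing $L$ and an $m$-th root $a$ of $c_0$; then the twisted cochain $b'(g) := b(g) \cdot a/g(a)$ lies in $\mu_m(L')$, still satisfies $\delta b' = c$, and now lifts to a $1$-cochain with values in $\Aut(\ucE|_{L'})$ via the identification $\mu_m(L') = \mu_m(L) \subseteq \Aut(\ucE|_{L'})$. After extending the $\theta_g$ to descent data for $L'/K$ in the canonical way and modifying them by $b'(g)^{-1} \in \Aut(\ucE|_{L'})$, one obtains a genuine cocycle of isomorphisms $\theta'_{\tilde{g}} \colon \ucE|_{L'} \to f_{\tilde{g}}^*(\ucE|_{L'})$ for $\tilde{g} \in \Gal(L'/K)$.

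Finally, standard fppf Galois descent along $L'/K$, applied componentwise to the sequence $(\cE_i, j_i, t_i)_{i \in \bZ}$ together with its $\sD$-action — all of whose defining axioms are fpqc-local on the base — produces a sound $\sD$-elliptic sheaf $\ucE'$ over $K$ of generic characteristic with $\ucE|_{L'} \simeq \ucE'|_{L'}$. The hardest step is the $2$-cocycle trivialization in the second paragraph: $c$ is automatically a coboundary in $L^\times$, but upgrading it to a coboundary valued in $\Aut(\ucE)$ requires extracting an $m$-th root of $c_0 \in L^\times$, which is precisely what forces the passage to the finite extension $L'$ in the conclusion.
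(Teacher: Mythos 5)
Your proposal is correct and follows essentially the same route as the paper's proof: the obstruction cochain $\alpha_{g,h}\in\Aut(\ucE)$ is identified with a scalar via its action on $W=P/\tau P$, the relation $c_{gh}=g(c_h)c_g\,\partial\alpha_{g,h}$ from (\ref{EqnTransCanConst}) shows $c_g^{e}$ is a cocycle, Hilbert 90 plus extraction of an $e$-th root produces $L'$, and twisting the $\theta_g$ by the resulting automorphisms yields a genuine descent datum over $L'/K$. The only differences are cosmetic (an inverse convention for the $2$-cochain, asserting rather than writing out the final cocycle verification, and not explicitly adjoining the $l_q(d)$-th roots of unity, which is harmless since $\Aut(\ucE)$ already embeds into $L^\times$ so $\mu_e\subseteq L$).
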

\begin{proof}
	Take a finite Galois extension $\tilde{L}/K$ containing $L$ and all $l_q(d)$-th roots of unity.
	For any $g\in \Gal(\tilde{L}/K)$, we have $g|_L\in \Gal(L/K)$ and
	$\theta_{g|_L}$ induces an isomorphism of $\sD$-elliptic sheaves over $\tilde{L}$
	\[
	\theta_{g|_L}|_{\tilde{L}}: \ucE|_{\tilde{L}}\to (f_{g|_L}^*\ucE)|_{\tilde{L}}\simeq f_g^*(\ucE|_{\tilde{L}}).
	\] 
	Thus we may assume that $L$ contains all $l_q(d)$-th roots of unity.

	
	For any $g,h\in G$, define $\alpha_{g,h}\in \Aut(\ucE)$ by
	\[
	\theta_{gh}=f_g^*\theta_h\circ \theta_g\circ \alpha_{g,h}.
	\]
	Let $P$ be the $t$-motive associated with $\ucE$ and let $W$ be the cokernel of the map $\tau:(1\otimes \sigma)^*P\to P$. Then the map $\theta_g$ induces
	an isomorphism of right $\cO_D\otimes_A L$-modules 
	\[
	\lambda_g: W\to W\otimes_{L,g}L.
	\]
	
	Put $e=|\Aut(\ucE)|$ and let $\bar{L}$ be an algebraic closure of $L$. 
	By Lemma \ref{LemAutGen}, the group $\Aut(\ucE)$ is cyclic and $e\mid l_q(d)$.
	By Lemma \ref{LemAutComm}, the restriction to $W$ defines an injection 
	\[
	\partial: \Aut(\ucE)\to \bar{L}^\times
	\]
	whose image agrees with the subgroup $\mu_e(L)$ of $e$-th roots of unity in $L$.
	Thus the automorphism $\alpha_{g,h}$ induces the multiplication by 
	$\partial\alpha_{g,h}\in \mu_{e}(L)$ on $W$. 
	
	Put $c_g=c_W(\lambda_g,g)\in L^\times$. Then (\ref{EqnTransCanConst}) yields
	\[
	c_{gh}=g(c_h)c_g \partial\alpha_{g,h}
	\]
	and $c_{gh}^e=g(c_h^e)c_g^e$. Hence $g\mapsto c_g^e$ defines a $1$-cocycle $G\to L^\times$ and by Hilbert 90 
	there exists an element $a\in L^\times$ satisfying $c_g^e=\frac{g(a)}{a}$ for any $g\in G$. 
	
	Take $b\in L^\sep$ satisfying $b^e=a$ and put $L'=L(b)$. Then the extension $L'/K$ is Galois.
	Put $G'=\Gal(L'/K)$ and let $\pi: G'\to G$ be the natural projection. We have
	\[
		g(b)\in c_{\pi(g)} b \mu_e(L),\quad g\in G'.
	\]
	This implies that for any $g\in G'$, there exists a unique element $\alpha_g\in \Aut(\ucE)$ satisfying
	$c_{\pi(g)}=\frac{g(b)}{b}\partial\alpha_g$. Put
	\[
	\theta'_g:=(\theta_{\pi(g)}\alpha_g^{-1})|_{L'}:\ucE|_{L'}\to f_{g}^*\ucE|_{L'}.
	\]
	For any $g,h\in G'$, define $\alpha'_{g,h}\in \Aut(\ucE|_{L'})$ by
	\[
	\theta'_{gh}=f_g^*\theta'_h\circ \theta'_g\circ \alpha'_{g,h}.
	\]
	
	Put $W'=W\otimes_L L'$. For any $g\in G'$, we denote by
	\[
	\lambda'_g:W'\to W'\otimes_{L',g}L'
	\]
	the map induced by $\theta'_g$. By (\ref{EqnTransCanConst}), $c'_g=c_{W'}(\lambda'_g,g)$ satisfies
	\[
	c'_g=\frac{g(b)}{b},\quad c'_{gh}=g\left(\frac{h(b)}{b}\right)\frac{g(b)}{b}\partial \alpha'_{g,h}
	\]
	and thus $\partial \alpha'_{g,h}=1$. Since the map $\partial$ is injective, we obtain $\alpha'_{g,h}=\id$.
	Hence $\{\theta'_g\}_{g\in G'}$ defines a descent datum on $\ucE|_{L'}$.
	Now the lemma follows by Galois descent.
\end{proof}

\subsection{Coarse moduli scheme and its rational points}\label{SubsecCoarseModuli}

Since $\sEll_{\sD}|_F$ is a Deligne--Mumford stack, it admits a coarse moduli space. 

Let $x$ be the closed point of $X$ of degree one 
defined by an irreducible polynomial $\pi\in A$. Put 
\[
G=(\cO_D/\pi\cO_D)^\times. 
\]

We let $G$ act from the right on the moduli scheme
$Z:=\Ell_{\sD,x}|_F$ by 
\[
[g]:[(\ucE,\iota)]\mapsto [(\ucE,\iota\circ g_l)],\quad g\in G,
\]
where $[(\ucE,\iota)]$ denotes the isomorphism class of the pair $(\ucE,\iota)$
of a $\sD$-elliptic sheaf $\ucE$ and a level $x$ structure $\iota$ on it.
The forget-the-level-structure map $\sEll_{\sD,x}|_F\to \sEll_{\sD}|_F$ is representable, finite, \'{e}tale and surjective. 

Moreover, the morphism $\sEll_{\sD}|_F\to [Z/G]$, sending $\ucE$ to the $G$-torsor $\underline{\Isom}(\cO_D/\pi\cO_D,E_x)$ equipped with the tautological
map to $Z$, is an equivalence of categories.
Then \cite[Ch.~I, (8.2.2)]{DeRa} implies that the coarse moduli space of $\sEll_{\sD}|_F$ is represented by the quotient scheme $X^D:=Z/G$,
which we call the Drinfeld--Stuhler variety. 
Note that since $Z$ is projective over $F$ the quotient exists and $X^D$ is proper over $F$.
When $d=2$, \cite[p.~508, Theorem]{KM} implies that $X^D$ is a proper smooth curve over $F$.

\begin{dfn}\label{DefSetRatPts}
For any field extension $K/F$, we denote by $X^D(K)$ the set of morphisms $\Spec(K)\to X^D$ over $F$, where we consider
$\Spec(K)$ as an $F$-scheme via the natural inclusion $F\to K$ and $X^D$ via (\ref{EqnStrMap}). 
\end{dfn}

The authors learned the following lemma from \cite[p.~347]{CES} and \cite[p.~2084]{Ces}.

\begin{lem}\label{LemRingResGalois}
	Let $S$ be a local ring with maximal ideal $m_S$. Let $H$ be a finite group acting on the ring $S$ from the left.
	Put $R=S^H$. Note that $S$ is an integral extension of $R$ and thus $R$ is also a local ring.
	Let $L$ and $K$ be the residue fields of $S$ and $R$, respectively. Suppose that for the inertia subgroup
	\[
	H_i=\{h\in H\mid h\equiv \id \bmod m_S\},
	\]
	its order $|H_i|$ is invertible in $R$. Then the extension $L/K$ is finite Galois. Moreover, the action of $H$ on $S$
	induces a surjection
	\[
	H\to \Gal(L/K).
	\]
\end{lem}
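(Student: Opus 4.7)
The strategy is to combine the classical orbit-polynomial argument with an averaging trick over the inertia subgroup $H_i$, made possible by the hypothesis that $|H_i|$ is a unit.

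First I would set up the ideal theory: each $s\in S$ is a root of its orbit polynomial $\prod_{h\in H}(X-h(s))\in R[X]$, so $S/R$ is integral. Since both rings are local, $m_R=m_S\cap R$, giving a canonical inclusion $K\hookrightarrow L$; and because $m_S$ is the unique prime of $S$ above $m_R$ it is $H$-stable, so $H$ acts on $L$ by $K$-linear automorphisms, with kernel exactly $H_i$ by definition. In particular $H_i$ is normal in $H$, and $\bar H:=H/H_i$ embeds into $\Aut(L/K)$. Normality of $L/K$ follows from the orbit polynomial: for $\bar a\in L$ with lift $a\in S$, the reduction of $\prod_{h\in H}(X-h(a))$ modulo $m_R$ lies in $K[X]$, has $\bar a$ as a root, and splits completely in $L$; grouping by $H_i$-cosets and using $h'(a)\equiv a\pmod{m_S}$ for $h'\in H_i$, the reduction equals $\prod_{h\in H/H_i}(X-\overline{h(a)})^{|H_i|}$.

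The critical averaging step uses $|H_i|\in R^\times$: for any $x\in S$, the element $\tilde x:=|H_i|^{-1}\sum_{h\in H_i}h(x)$ lies in $T:=S^{H_i}$ and satisfies $\tilde x\equiv x\pmod{m_S}$. Thus $T$ is a local ring with maximal ideal $m_T=m_S\cap T$ whose residue field is all of $L$, the quotient $\bar H$ acts on $T$ with $T^{\bar H}=R$, and this action is faithful on $L$.

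Finally, Artin's theorem applied to the faithful $\bar H$-action on $L$ gives $L/L^{\bar H}$ finite Galois with Galois group $\bar H$, so it remains to prove $L^{\bar H}=K$. For $\bar y\in L^{\bar H}$ with lift $\tilde y\in T$ obtained by averaging, reducing $\prod_{\bar h\in\bar H}(X-\bar h(\tilde y))\in R[X]$ modulo $m_R$ yields $(X-\bar y)^{|\bar H|}\in K[X]$, which constrains $\bar y$ to be purely inseparable over $K$ (of $p$-power degree dividing $|\bar H|_p$); combining this with the normality of $L/K$---which forces any $L/K$-automorphism to act trivially on the purely inseparable closure of $K$ in $L$---and using again the averaging construction to transport any such purely inseparable element into $R$ modulo $m_S$, one rules out a nontrivial purely inseparable component of $L/K$. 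Hence $L/K$ is separable, Galois with $\Gal(L/K)=\bar H$, and the induced map $H\to\Gal(L/K)$ is the canonical projection $H\to H/H_i$. The most delicate step is this last one, where care is required to exclude a nontrivial purely inseparable part of $L/K$ using the integrality of $T$ over $R$ together with $T^{\bar H}=R$.
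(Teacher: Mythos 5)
Your architecture is sound up to the last step, and part of it is genuinely nicer than the paper's: the Reynolds operator $\tilde x=|H_i|^{-1}\sum_{h\in H_i}h(x)$ showing that $T=S^{H_i}$ is local with residue field all of $L$ is an elementary substitute for the paper's combination of \cite[Proposition A7.1.3 (4)]{KM} with the vanishing of $H^1(H_i,\cdot)$; the normality of $L/K$ and the surjection of $H$ onto the automorphism group via orbit polynomials (the paper quotes \cite[Lemma 15.110.9]{Stacks} for this) and the use of Artin's theorem for the faithful $\bar H=H/H_i$-action on $L$ are also fine.

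The gap is exactly the step you flag as delicate: proving $L^{\bar H}=K$, i.e.\ that $L/K$ has no nontrivial purely inseparable part. Neither of your tools can do this. Averaging is only licensed over $H_i$, so it produces elements of $T$, never of $R$; since $T$ already has residue field $L$, ``transporting a purely inseparable element into $R$ modulo $m_S$'' would require averaging over $H$ or over $\bar H$, whose order is not assumed invertible (and $p$ typically divides $|\bar H|$ precisely when the situation is interesting). The orbit polynomial of an $H_i$-average $\tilde y\in T$ over $R$ reduces to $(X-\bar y)^{|\bar H|}$, and its coefficients only pin down $K$-rationality of $p$-power powers of $\bar y$ (e.g.\ $\bar y^{p^a}\in K$ with $p^a$ the $p$-part of $|\bar H|$), which is the pure inseparability you already know; it never forces $\bar y\in K$. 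Normality is no help either: a nontrivial purely inseparable subextension is fully compatible with normality and with every automorphism acting trivially on it. So nothing in the proposal actually excludes it, and note that finiteness of $L/K$ also hangs on this point, since bounded degree of elements does not imply finiteness for inseparable extensions. This is precisely where the paper brings in a nontrivial input: $R\to R_i=S^{H_i}$ is \'etale at $m_{R_i}$ by \cite[Lemma 58.12.4]{Stacks}, so the residue extension of $R\to S^{H_i}$ is finite separable, which together with your (or the paper's) treatment of the inertia part finishes the proof. To close your argument you would need that \'etaleness statement, or a genuine replacement — for instance, showing that the obstruction to lifting an element of $L^{\bar H}$ to $R$, which lives in $H^1(\bar H,m_T)$ via the sequence $0\to m_T\to T\to L\to 0$, vanishes; that vanishing is not formal for a general local $T$ and is not supplied by the proposal.
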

\begin{proof}
	By \cite[Lemma 15.110.9]{Stacks}, the extension $L/K$ is algebraic normal and the natural map $H\to \Aut(L/K)$ is surjective. 
	It is enough to show that $L/K$ is finite separable.
	Let $R_i=S^{H_i}$, which is a local ring with maximal ideal $m_{R_i}=m_S\cap R_i$.
	By \cite[Lemma 58.12.4]{Stacks}, the map $R\to R_i$ is \'{e}tale at $m_{R_i}$ and thus the residue field $R_i/m_{R_i}$
	is finite separable over $K$. Hence we may assume $H=H_i$.
	
	Since $|H|=|H_i|$ is invertible in $R$, 
	by \cite[Proposition A7.1.3 (4)]{KM} we have 
	\[
	(S\otimes_R K)^{H}=K.
	\]
	Write $B=S\otimes_R K$. Let $m_B$ be the maximal ideal of the local ring 
	$B$, which is stable under the $H$-action.
	Note that the residue field of $B$ is $L$.
	The $H$-action on $B$ induces its action on the residue field $L$ which fixes its subfield $K$.
	
	Since $|H|$ is invertible in $B$, by \cite[Ch.~VIII, \S2, Corollary 1]{Serre_Local} we have the exact sequence of $K$-vector spaces
	\[
	\xymatrix{
		0 \ar[r] & m_B^{H}\ar[r] & B^{H}\ar[r] & L^{H}\ar[r] & H^1(H,m_B)=0.
	}
	\]
	Since $B^{H}=K$ and $L^{H}$ is nonzero, we obtain $L^{H}=K$. Now a classical theorem of Artin shows that
	the extension $L/K$ is finite Galois. This concludes the proof. 
\end{proof}

\begin{lem}\label{LemDSResGalois}
	Let $z_0\in Z$ be a closed point and let $w_0\in X^D$ be its image in $X^D$.
	Let $K(z_0)$ and $K(w_0)$ be the residue fields at $z_0$ and $w_0$, 
	respectively.
	We consider these residue fields as $F$-algebras by using the structure map 
	$X^D\to 
	\Spec(F)$.
	Then the finite extension $K(z_0)/K(w_0)$ is Galois. Moreover, the action 
	of $G$ on $Z$
	induces a surjection
	\[
	\eta: G_{z_0}\to \Gal(K(z_0)/K(w_0)),
	\]
	where $G_{z_0}$ is the stabilizer of $z_0$ in $G$.
\end{lem}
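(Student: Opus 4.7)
The plan is to apply Lemma \ref{LemRingResGalois} to the local ring $\cO_{Z,z_0}$ equipped with the natural action of $G_{z_0}$, after an affine reduction and the identification of the resulting residue field with $K(w_0)$. Since $Z$ is projective over $F$ and $G$ is finite, one may choose a $G$-stable affine open $U=\Spec(T)$ of $Z$ containing the orbit $Gz_0$ (by intersecting translates of any affine neighborhood of the orbit). Letting $\frq\subseteq T$ be the prime of $z_0$ and $\frp=\frq\cap T^G$ that of $w_0$, we have $\cO_{X^D,w_0}=(T^G)_\frp$ and $\cO_{Z,z_0}=T_\frq$, while $G_{z_0}$ is the decomposition group of $\frq$ and acts on $\cO_{Z,z_0}$ preserving its maximal ideal.

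To verify the hypothesis of Lemma \ref{LemRingResGalois} applied with $S=\cO_{Z,z_0}$ and $H=G_{z_0}$, I would observe that $G_{z_0}$ is the image of $\Aut(\ucE)\to G$ for the $\sD$-elliptic sheaf $\ucE$ over $K(z_0)$ represented by $z_0$. By Lemma \ref{LemAutGen}, $G_{z_0}$ is therefore cyclic of order dividing $l_q(d)$, in particular prime to $p$. Hence the inertia subgroup $H_i\subseteq G_{z_0}$ has order invertible in the $\bF_q$-algebra $R:=\cO_{Z,z_0}^{G_{z_0}}$, and Lemma \ref{LemRingResGalois} produces a finite Galois extension $K(z_0)/K'$ together with a surjection $G_{z_0}\twoheadrightarrow\Gal(K(z_0)/K')$, where $K'=R/m_R$ denotes the residue field of $R$.

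The main obstacle is the identification $K'=K(w_0)$. Using that $|G_{z_0}|$ is invertible in $T$, the Reynolds operator shows that $G_{z_0}$-invariants commute with localization, giving $R=(T^{G_{z_0}})_{\frq\cap T^{G_{z_0}}}$ and hence $K'=T^{G_{z_0}}/(\frq\cap T^{G_{z_0}})$; the standard decomposition theory for finite group actions on rings (cf.~Bourbaki, \emph{Commutative Algebra}, Ch.~V \S2) then gives that $K'/K(w_0)$ is purely inseparable. To conclude that this extension is trivial, one invokes the tame quotient property: because $|G_{z_0}|$ is coprime to the residue characteristic $p$, the natural map between henselizations $\cO_{X^D,w_0}^h\to(\cO_{Z,z_0}^h)^{G_{z_0}}$ is an isomorphism, so the two local rings share the same residue field. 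Once this identification is in place, Lemma \ref{LemRingResGalois} delivers precisely the desired Galois-ness of $K(z_0)/K(w_0)$ together with the surjection $\eta$.
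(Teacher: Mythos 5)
Your overall strategy---reduce to an affine local situation and apply Lemma~\ref{LemRingResGalois}---matches the paper's, but there is a genuine gap in the step where you control the stabilizer.

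You assert that $G_{z_0}$ itself is the image of $\Aut(\ucE)\to G$ and is therefore cyclic of order dividing $l_q(d)$, hence prime to $p$. This is not correct: the level-structure argument identifies the \emph{inertia subgroup} $G_{z_0,i}$, not the full stabilizer $G_{z_0}$, with a subgroup of $\Aut(\ucE)$. Concretely, $g\in G_{z_0}$ merely fixes the scheme-theoretic point $z_0$, so it may permute the $\sD$-elliptic sheaf $(\ucE,\iota)$ to its Galois conjugate over $K(z_0)$ rather than to itself; only those $g$ inducing the identity on $K(z_0)$ (i.e.\ $g\in G_{z_0,i}$) give an actual automorphism of $\ucE$ via $\iota\circ g_l=f\circ\iota$. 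The quotient $G_{z_0}/G_{z_0,i}$ maps onto $\Gal(K(z_0)/K(w_0))$, which is precisely what the lemma is describing, and there is no reason a priori for this quotient (and hence $G_{z_0}$) to have order coprime to $p$. So the hypothesis of Lemma~\ref{LemRingResGalois} that you need---$|H_i|$ prime to $p$---does hold, but your justification for it does not; you must restrict the level-structure argument to $G_{z_0,i}$.

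This error then undermines the second half of your argument: both the Reynolds-operator step (identifying $\cO_{Z,z_0}^{G_{z_0}}$ with the localization of $T^{G_{z_0}}$) and the tame henselization step invoke invertibility of $|G_{z_0}|$ in the relevant rings, which you have not established and which need not hold. The paper sidesteps exactly this difficulty by passing to the complete local ring $R=\hat{\cO}_{X^D,w_0}$ and base-changing $Z\to X^D$ along $\Spec(R)\to X^D$: since formation of the quotient by $G$ commutes with flat base change, one gets $R=S_{z_0}^{G_{z_0}}$ directly, with the residue field identification built in and no need for Reynolds averaging or a tameness assumption on the full stabilizer. If you want to repair your affine version, you would need an argument for $\cO_{X^D,w_0}\simeq(\cO_{Z,z_0})^{G_{z_0}}$ that does not rely on $p\nmid|G_{z_0}|$; the cleanest fix is to adopt the paper's completion argument.
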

\begin{proof}
	We denote by $R$ the complete local ring $\hat{\cO}_{X^D,w_0}$ of $X^D$ at $w_0$ and 
	write $Z\times_{X^D}\Spec(R)=\Spec(S)$.
	Then $S$ is a finite $R$-algebra and Hensel's lemma implies that 
	$S$ is the product of complete local rings of $S$ at the maximal ideals. 
	Since the formation of quotient by $G$ commutes with any flat base change \cite[Lemma 15.110.7]{Stacks}, 
	we have $R=S^G$. 
	
	We denote by $S_{z'_0}$ the complete local ring of $S$ at a closed point $z'_0\in \Spec(S)$. 
	Note that any $g\in G$ induces an isomorphism $S_{g(z_0)}\to S_{z_0}$ and we have $R=S_{z_0}^{G_{z_0}}$. 
	The point $z_0\in Z=\Ell_{\sD,x}|_F$ corresponds to the isomorphism class of a pair $(\ucE,\iota)$,
	where $\ucE$ is a sound $\sD$-elliptic sheaf over $K(z_0)$ and $\iota$ is a level $x$ structure on it.
	We identify $\iota$ with an isomorphism of right $\cO_D$-modules 
	\[
	\iota: \cO_D/\pi\cO_D\to E_x(K(z_0)^\sep).
	\]
	
	Since $K(z_0)$ is of generic characteristic, Lemma \ref{LemAutGen} shows that 
	$\Aut(\ucE)$ is cyclic of order dividing $l_q(d)$.
	Note that $g\in G_{z_0}$ lies in its inertia subgroup $G_{z_0,i}$ 
	if and only if the following diagram is commutative:
	\[
	\xymatrix{
		\Spec(K(z_0)) \ar[r]\ar@{=}[d] & Z\ar[d]^{[g]}\\
		\Spec(K(z_0)) \ar[r] & Z.
	}
	\]
	This is the same as saying that
	there exists an element $f\in \Aut(\ucE)$ satisfying
	\[
	\iota\circ g_l=f|_{E_x}\circ\iota.
	\]
	Since no non-trivial automorphism of $\ucE$ fixes $\iota$, such $f$ is unique 
	and we obtain a homomorphism
	\[
	G_{z_0,i}\to \Aut(\ucE),
	\]
	which is injective since the set of level $x$ structures is a $G$-torsor.
	Thus $G_{z_0,i}$ is also cyclic of order dividing $l_q(d)$.
	Since the order is prime to $p$, applying Lemma \ref{LemRingResGalois} to $(S,H)=(S_{z_0},G_{z_0})$ yields the lemma.
\end{proof}

For any algebraic closure $\bar{F}$ of $F$, the natural map $Z\to X^D$ induces a bijection between 
the set of isomorphism classes of sound $\sD$-elliptic sheaves over $\bar{F}$ of generic characteristic and $X^D(\bar{F})$.

\begin{thm}\label{ThmDefField}
	Let $K/F$ be a finite extension satisfying $\cO_D\otimes_A K\simeq M_d(K)$. Let $\bar{F}$ be an algebraic closure of $F$ containing $K$.
	For any $w\in X^D(K)$, there exists a sound $\sD$-elliptic sheaf over $K$ of generic characteristic which represents the isomorphism class corresponding
	to the image of $w$ by the natural map $X^D(K)\to X^D(\bar{F})$.
\end{thm}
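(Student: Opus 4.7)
The plan is to reduce the problem to Lemma \ref{LemFieldModuli} (Galois descent), using Lemma \ref{LemDSResGalois} to produce the descent isomorphisms from the $G$-action on $Z$.

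First, let $w_0\in X^D$ be the image of $w$, so that $w$ induces an embedding $K(w_0)\hookrightarrow K$, and choose any closed point $z_0\in Z$ lying above $w_0$. Fix an embedding $K(z_0)\hookrightarrow\bar{F}$ extending the composite $K(w_0)\hookrightarrow K\hookrightarrow\bar{F}$. The point $z_0$ corresponds to a pair $(\ucE,\iota)$ over $K(z_0)$, where $\ucE$ is a sound $\sD$-elliptic sheaf of generic characteristic. Set $L=K\cdot K(z_0)\subseteq\bar{F}$; since $K(z_0)/K(w_0)$ is Galois by Lemma \ref{LemDSResGalois} and $K(w_0)\subseteq K$, the extension $L/K$ is finite Galois.

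The central step is to produce, for each $g\in\Gal(L/K)$, an isomorphism $\theta_g\colon\ucE|_L\to f_g^*(\ucE|_L)$. Given $g$, its restriction $g|_{K(z_0)}$ lies in $\Gal(K(z_0)/K(w_0))$, and by the surjectivity of $\eta$ in Lemma \ref{LemDSResGalois} it lifts to some $\tilde{g}\in G_{z_0}$. Writing $\sigma_{\tilde{g}}:=\eta(\tilde{g})=g|_{K(z_0)}$, the automorphism $[\tilde{g}]\colon Z\to Z$ fixes $z_0$ scheme-theoretically and induces $\sigma_{\tilde{g}}$ on the residue field, so the composite $[\tilde{g}]\circ z_0\colon\Spec(K(z_0))\to Z$ admits two descriptions: on the one hand, from the definition of the $G$-action, it is represented by the pair $(\ucE,\iota\circ\tilde{g}_l)$; on the other, from the residue field twist, it is represented by the pullback $(f_{\sigma_{\tilde{g}}}^*\ucE,\,f_{\sigma_{\tilde{g}}}^*\iota)$. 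Comparing yields an isomorphism of $\sD$-elliptic sheaves $\theta_{\tilde{g}}\colon\ucE\to f_{\sigma_{\tilde{g}}}^*\ucE$ over $K(z_0)$, and because $g$ extends $\sigma_{\tilde{g}}$, pulling back along $K(z_0)\hookrightarrow L$ produces the desired $\theta_g$.

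Applying Lemma \ref{LemFieldModuli} to $\ucE|_L$ together with $\{\theta_g\}_{g\in\Gal(L/K)}$ then yields a sound $\sD$-elliptic sheaf $\ucE'$ over $K$ of generic characteristic, a finite extension $L'/L$, and an isomorphism $\ucE|_{L'}\simeq\ucE'|_{L'}$. Extending $L\hookrightarrow\bar{F}$ to $L'\hookrightarrow\bar{F}$, the resulting $\bar{F}$-point $\Spec(\bar{F})\to\Spec(L')\to Z$ lifts the image of $w$ in $X^D(\bar{F})$ and corresponds to $\ucE|_{L'}|_{\bar{F}}\simeq\ucE'|_{\bar{F}}$, so $\ucE'$ represents the required isomorphism class. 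The delicate point will be the bookkeeping in identifying the two descriptions of $[\tilde{g}]\circ z_0$: this amounts to pinning down consistent conventions for how the $G$-action on $Z$ interacts with the induced residue field automorphism, and once this is done, the remaining verifications are routine.
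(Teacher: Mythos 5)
Your proposal is correct and follows essentially the same route as the paper: both pass to a closed point $z_0\in Z$ above $w_0$, invoke Lemma~\ref{LemDSResGalois} to obtain the Galois residue extension $K(z_0)/K(w_0)$ together with the surjection $\eta$ from $G_{z_0}$, lift elements of $\Gal(L/K)$ through $\eta$ to elements of $G_{z_0}$ to manufacture the isomorphisms $\theta_g\colon\ucE|_L\to f_g^*(\ucE|_L)$, and then hand everything to Lemma~\ref{LemFieldModuli} to descend to $K$. The only cosmetic difference is that you construct $\theta_{\tilde g}$ over $K(z_0)$ first and then restrict to $L=K\cdot K(z_0)$, whereas the paper works directly over the compositum $M$; the bookkeeping you flag as delicate is handled in the paper by the displayed commutative diagram identifying $[\,h_g\,]\circ z_0$ with the $\Spec(g|_{K(z_0)})$-twist, which is exactly the identification you describe.
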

\begin{proof}
	Let $w_0\in X^D$ be the image of the map $w:\Spec(K)\to X^D$.
	Since the projection $Z=\Ell_{\sD,x}|_F\to X^D$ is finite and surjective, we can choose 
	a closed point $z_0\in Z$ above $w_0$.
	By Lemma \ref{LemDSResGalois}, the residue extension $K(z_0)/K(w_0)$ is finite Galois.
	
	Choose an embedding of $F$-algebras $K(z_0)\to \bar{F}$ and let $M$ be a composite field of $K(z_0)$ and $K$ over $K(w_0)$ inside $\bar{F}$. Then the 
	composite 
	\[
	\Spec(M)\to \Spec(K(z_0))\to Z=\Ell_{\sD,x}|_F
	\] 
	corresponds to an isomorphism class $[(\ucE_M,\iota)]$ of the pair consisting of a sound $\sD$-elliptic sheaf $\ucE_M$ over $M$ and
	a level $x$ structure $\iota$ on it.
	
	The extension $M/K$ is Galois, and we have a natural embedding
	\begin{equation*}
		\Gal(M/K)\simeq \Gal(K(z_0)/K\cap K(z_0))\subseteq \Gal(K(z_0)/K(w_0)).
	\end{equation*}
	We identify $\Gal(M/K)$ with a subgroup of $\Gal(K(z_0)/K(w_0))$ by this embedding.

	For any $g\in \Gal(M/K)$, its action on $M$ is induced by the action of some $h_g\in G_{z_0}$ on $Z$
	via the surjection $\eta$ of Lemma \ref{LemDSResGalois}. 
	Namely, we have the commutative diagram
	\[
	\xymatrix{
		\Spec(M)\ar[r]\ar[d]_{\Spec(g)} & \Spec(K(z_0))\ar[r]\ar[d]_{\Spec(g|_{K(z_0)})} & Z \ar[d]^{[h_g]}\\
		\Spec(M)\ar[r] & \Spec(K(z_0))\ar[r] & Z.
	}
	\]
	Hence, there exists an isomorphism of $\sD$-elliptic sheaves over $M$
	\[
	\theta_g: \ucE_M\to f_g^*\ucE_M 
	\]
	sending the level $x$ structure $\iota\circ (h_g)_l$ to $g^* \iota$.
	
	Now Lemma \ref{LemFieldModuli} implies that there exist a sound $\sD$-elliptic sheaf $\ucE'$ over $K$ of generic characteristic and an isomorphism $\ucE_M|
	_{\bar{F}}\simeq \ucE'|_{\bar{F}}$ over $\bar{F}$.
	Since the image of $w$ by the map $X^D(K)\to X^D(\bar{F})$ corresponds to the isomorphism class represented by $\ucE_M|_{\bar{F}}$, the theorem follows.
\end{proof}

\section{$\sD$-elliptic sheaves over finite fields}\label{SecDEllFinField}

For any global field $L$ over $\bF_q$ and any place $v$ of $L$, we identify $v$ with the normalized additive valuation which represents $v$.
We denote by $\deg(v)$ the degree of the residue field of $v$ over $\bF_q$. 
For any finite extension $L'/L$ and any place $v'$ of $L'$ over $v$, we write
\[
\deg(v'/v),\quad e(v'/v)
\]
for the residue degree and the ramification index of $v'$ over $v$.



\subsection{Endomorphism rings}\label{SubsecEndomRing}

Let $\fry\in A$ be an irreducible polynomial. Let $k$ be a finite extension of $\bF_\fry=A/(\fry)$ and write $|k|=q^n$.
Let $\ucE$ be a sound $\sD$-elliptic sheaf over $k$ of characteristic $\fry$
as in \cite[(9.1)]{LRS} and let $P$ be the associated $t$-motive. 
We defined $\End(\ucE)$ by (\ref{EqnIdentifyEndP}), which is an $A$-algebra. Put
\[
D':=F\otimes_A\End(\ucE),\quad \tilde{F}:=Z(D'),
\]
where $Z(D')$ denotes the center of $D'$.

Let $\bar{k}$ be an algebraic closure of $k$.
To study the structure of $D'$, we use corresponding results over $\bar{k}$ obtained in \cite[\S9]{LRS} and \cite[\S5]{Hau}. Put
\[
D'_0:=F\otimes_A \End(\ucE|_{\bar{k}}),\quad  \tilde{F}_0:=Z(D_0').
\]
Consider the natural injection
\[
D'=F\otimes_A \End(\ucE)\to D'_0=F\otimes_A \End(\ucE|_{\bar{k}}),\quad f\mapsto f|_{\bar{k}},
\]
by which we identify $D'$ with an $F$-subalgebra of $D'_0$. 

Put 
\[
P_{\bar{k}}=P\otimes_k \bar{k}=H^0((X\setminus\{\infty\})\otimes \bar{k},\cE_0|_{\bar{k}}).
\]
Note that $F\otimes_A P_{\bar{k}}$, equipped with the induced actions of $\tau$ and $D$, is equal to
the $\varphi$-space associated with $\ucE|_{\bar{k}}$ \cite[\S9.1]{LRS}, which we denote by $(V_0,\varphi_0,\iota_0)$.
We have
\[
\End(V_0,\varphi_0,\iota_0)=\End_{\bar{k}[\tau]}^D(F\otimes_A P_{\bar{k}})=F\otimes_A\End^{\cO_D}_{\bar{k}[\tau]}(P_{\bar{k}})=D'_0.
\]

By \cite[Propositon 9.9 (ii)]{LRS} and \cite[Corollary 9.10]{LRS} when $\fry\notin \cR$ and \cite[Proposition 5.2]{Hau} when $\fry\in \cR$,
there exists a unique place $
\tilde{\infty}_0$ of $\tilde{F}_0$ over the place $\infty$ of $F$, since in the latter case \cite[Proposition 5.2]{Hau} shows $\tilde{F}_0=F$.

Moreover, by \cite[Corollary 9.10]{LRS} and \cite[Proposition 5.2]{Hau}, we see that $D'_0$ is
a central division algebra over the finite extension $\tilde{F}_0$ of $F$ satisfying 
\[
[D'_0:\tilde{F}_0]=\left(\frac{d}{[\tilde{F}_0:F]}\right)^2\quad \text{and}\quad \inv_{\tilde{\infty}_0}(D'_0)=\frac{[\tilde{F}_0:F]}{d}.
\]
Thus the completion $D'_{0,\tilde{\infty}_0}$ of $D'_0$ at $\tilde{\infty}_0$ is a central division algebra over $
\tilde{F}_{0,\tilde{\infty}_0}$.

\begin{lem}\label{LemSubalgDivision}
	Let $L$ be a field. Let $\bar{D}$ be a division $L$-algebra of finite dimension.
	Then any $L$-subalgebra $B$ of $\bar{D}$ is also a division algebra.  
\end{lem}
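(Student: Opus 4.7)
The plan is to show that every nonzero element of $B$ admits a two-sided inverse inside $B$, using the standard ``injective-hence-surjective'' argument for linear maps on finite-dimensional vector spaces.

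First I would observe that $B$, as an $L$-subspace of the finite-dimensional $L$-vector space $\bar{D}$, is itself finite-dimensional over $L$. Fix any nonzero $b\in B$ and consider the two $L$-linear endomorphisms of $B$ given by left and right multiplication by $b$, namely $\lambda_b: B\to B$, $x\mapsto bx$, and $\rho_b: B\to B$, $x\mapsto xb$. Since $B$ is a subalgebra, these maps really do land in $B$.

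Next I would use the fact that $\bar{D}$ has no nonzero zero divisors (as it is a division algebra): if $bx=0$ for some $x\in B\subseteq \bar{D}$, then $x=0$, so $\lambda_b$ is injective. By the rank-nullity theorem applied to the finite-dimensional $L$-vector space $B$, $\lambda_b$ is therefore surjective. In particular there exists $c\in B$ with $bc=1$. The same argument applied to $\rho_b$ produces $c'\in B$ with $c'b=1$. A standard manipulation, $c'=c'(bc)=(c'b)c=c$, shows that $c=c'$ is a two-sided inverse of $b$ lying inside $B$. Hence $B$ is a division algebra.

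There is no real obstacle here beyond the remark that $B$ is unital, which is part of the definition of an $L$-subalgebra used throughout the paper, so that we may speak of $1\in B$ and of the equation $bc=1$ in $B$. The argument is purely linear-algebraic and does not use anything specific to the setting of $\sD$-elliptic sheaves; it will be invoked in the preceding discussion with $\bar{D}=D'_0$ and $B$ equal to various $F$-subalgebras such as $D'$ or $\tilde{F}$.
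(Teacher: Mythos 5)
Your proof is correct and follows essentially the same route as the paper's: left (and right) multiplication by a nonzero $b\in B$ is injective because $\bar{D}$ has no zero divisors, hence bijective on the finite-dimensional $L$-space $B$, giving one-sided inverses that coincide by the standard manipulation. No gaps to report.
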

\begin{proof}
	Take any nonzero $b\in B$. Since $\bar{D}$ is division, the left translation by $b$ is injective on $B$.
	Since $B$ is also of finite dimension over $L$, it is bijective and $b$ has the right inverse.
	The existence of the left inverse follows similarly.
\end{proof}

\begin{lem}\label{LemEndRingDivision}
	$D'$ is a division $F$-algebra of finite dimension and $\tilde{F}$ is a field extension of $F$ of finite degree.
\end{lem}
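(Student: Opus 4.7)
The plan is to deduce this immediately from the structural results over $\bar{k}$ recalled just above, together with Lemma \ref{LemSubalgDivision}.

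First I would check finite-dimensionality. As the paper already observed at the end of \S\ref{SubsecTMotives}, since $k$ is a finite extension of $\bF_q$ the $A$-module $\End(\ucE)$ is free of finite rank, so $D' = F \otimes_A \End(\ucE)$ is finite-dimensional over $F$. (Alternatively, this also follows from the injection $D' \hookrightarrow D'_0$ and the known finite-dimensionality of $D'_0$ over $F$, via $[D'_0:\tilde{F}_0] = (d/[\tilde{F}_0:F])^2$ and $[\tilde{F}_0:F] \mid d$.)

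Next I would verify that $D'$ is a division algebra. By the recalled results of \cite{LRS} and \cite{Hau}, $D'_0$ is a central division algebra over $\tilde{F}_0$, hence in particular a division $F$-algebra of finite dimension. The natural map $D' \to D'_0$ obtained by restriction of endomorphisms to $\bar{k}$ is an injective $F$-algebra homomorphism, so $D'$ is realized as an $F$-subalgebra of the division algebra $D'_0$. Lemma \ref{LemSubalgDivision} then gives that $D'$ is itself a division algebra.

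Finally, $\tilde{F} = Z(D')$ is the center of a (finite-dimensional) division $F$-algebra, hence is a field. Since $\tilde{F} \subseteq D'$ and $D'$ is finite-dimensional over $F$, the field $\tilde{F}$ is a finite extension of $F$. I do not anticipate a real obstacle here; the only small point to be careful about is to invoke Lemma \ref{LemSubalgDivision} with the correct ambient algebra (namely $D'_0$, whose division property comes from \cite{LRS,Hau}, not $D$ itself), and to make the finite-dimensionality explicit before passing to centers.
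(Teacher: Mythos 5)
Your proposal is correct and follows essentially the same route as the paper: the paper's proof is exactly the observation that $D'$ embeds as an $F$-subalgebra of the finite-dimensional division $F$-algebra $D'_0$ (whose structure comes from \cite{LRS,Hau}), so Lemma \ref{LemSubalgDivision} applies. Your additional remarks on finite-dimensionality and on $\tilde{F}=Z(D')$ being a finite field extension of $F$ merely make explicit what the paper leaves implicit.
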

\begin{proof}
	Since $D'_0$ is a division $F$-algebra of finite dimension, this follows from Lemma \ref{LemSubalgDivision}.	
\end{proof}

\begin{prop}\label{PropAutFinite}
	Let $\ucE$ be a sound $\sD$-elliptic sheaf over $k$ of characteristic $\fry$. Then $\Aut(\ucE)$ is a cyclic group of order dividing $q^d-1$.
\end{prop}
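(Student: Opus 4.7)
My plan is to show that $G := \Aut(\ucE)$ is finite and cyclic of order dividing $q^d-1$.

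For finiteness, I would fix an irreducible $\fry' \in A$ distinct from $\fry$ and not in $\cR$ (possible since $\cR$ is finite). Over the algebraic closure $\bar k$, the \'etale group scheme $E_{\fry'}$ is a free rank-one right $\cO_D/\fry'\cO_D$-module, so $\ucE|_{\bar k}$ admits a level $\fry'$ structure. By the representability of $\sEll_{\sD,\fry'}$ recalled in \S\ref{SubsecLevelStr}, pairs with level structure have trivial automorphism group, so $\Aut(\ucE|_{\bar k})$ acts freely on the set of level structures, which is a torsor for the finite group $(\cO_D/\fry'\cO_D)^\times$. Hence $G \subseteq \Aut(\ucE|_{\bar k})$ embeds into $(\cO_D/\fry'\cO_D)^\times$ and is finite.

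For cyclicity, $G$ is a finite subgroup of the multiplicative group of $D'$, a division $F$-algebra of characteristic $p$ (Lemma~\ref{LemEndRingDivision}). The $\bF_p$-subalgebra $\bF_p[G] \subseteq D'$ is a finite set, and as a subring of a division ring it is an integral domain; by Wedderburn's little theorem it is a finite field, whose multiplicative group is cyclic, so $G$ is cyclic.

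For the order bound, let $f$ be a generator of $G$ (the case $G = \{1\}$ being trivial). Since $\End(\ucE)$ is a finitely generated $A$-module, $f$ is integral over $A$ and $F(f) \subseteq D'$ is a finite field extension. I would establish two subclaims. First, $[F(f):F]$ divides $d$: the compositum $F(f) \cdot \tilde F_0 \subseteq D'_0$ is a commutative subring of a division ring, hence a subfield containing the center; since maximal subfields of $D'_0$ over $\tilde F_0$ have dimension $d/[\tilde F_0:F]$, we get $[F(f)\tilde F_0:\tilde F_0] \mid d/[\tilde F_0:F]$ and therefore $[F(f):F] \mid d$. Second, $F(f)$ has a unique place above $\infty$: since $\tilde F_0$ has a unique place $\tilde\infty_0$ above $\infty$, $D'_0 \otimes_F F_\infty = D'_0 \otimes_{\tilde F_0} \tilde F_{0,\tilde\infty_0}$ is a division algebra (either because $D'_0$ is properly ramified at $\tilde\infty_0$ when $[\tilde F_0:F] < d$, or because $D'_0 = \tilde F_0$ itself when $[\tilde F_0:F] = d$), and $F(f) \otimes_F F_\infty$ embeds into it as a commutative subring, hence is a field. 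The function field unit theorem then says that the integral closure of $A$ in $F(f)$ has unit group $\bF_{q^{s(f)}}^\times$, where $s(f)$ is the degree of the constant field of $F(f)$; since both $f$ and $f^{-1}$ are integral over $A$, $f \in \bF_{q^{s(f)}}^\times$, so $\ord(f) \mid q^{s(f)}-1$. Combining with $s(f) \mid [F(f):F] \mid d$ yields $|G| = \ord(f) \mid q^d-1$.

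The main obstacle I anticipate is the subfield-and-place analysis of $F(f)$ inside $D'_0$: one must carefully combine the structural results of \cite{LRS, Hau} on $D'_0$ at the unique place $\tilde\infty_0$ above $\infty$ with the global claim that $F(f)$ is ``definite at $\infty$'' in the appropriate sense, so that the function field unit theorem collapses the unit group to its constant subfield.
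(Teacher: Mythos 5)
Your finiteness argument via a level structure is exactly the paper's. For cyclicity and the order bound, however, you take a genuinely different route. The paper's proof works entirely locally at the place above $\infty$: it embeds $\Aut(\ucE)$ into $B^\times$, where $B = D'_{0,\tilde\infty_0}$ is the local division algebra, observes that $1+m_B$ is torsion-free so a finite subgroup of $B^\times$ injects into the residue field $\bF_B^\times$, and reads off both cyclicity and $|\Aut(\ucE)| \mid q^d-1$ from $[\bF_B:\bF_q] \mid d$. You instead establish cyclicity separately via Wedderburn's little theorem applied to $\bF_p[G] \subseteq D'$, and then bound the order of a generator $f$ by a global argument: $F(f)$ is a subfield of $D'_0$ of degree dividing $d$ (by the double-centralizer bound on subfields of the central division algebra $D'_0/\tilde F_0$), so its constant field degree $s(f)$ divides $d$, and the function-field unit theorem pins $f$ into $\bF_{q^{s(f)}}^\times$. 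Both routes ultimately rest on the same LRS/Hausberger structure results for $D'_0$ and $\tilde F_0$, but the paper localizes while you stay global. One small economy you could make: your subclaim that $F(f)$ has a unique place above $\infty$ is not actually needed, since $f$ has finite order and the torsion of the $S_\infty$-unit group is always $\bF_{q^{s(f)}}^\times$ regardless of $|S_\infty|$; the conclusion $f \in \bF_{q^{s(f)}}^\times$ follows directly. Your argument, like the paper's, is valid in both cases $\fry \in \cR$ and $\fry \notin \cR$, which matters since \S\ref{SubsecEndomRing} imposes no assumption on $\fry$; the structure facts about $D'_0$ you invoke hold in both regimes as recalled at the start of that subsection.
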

\begin{proof}
	Let $\pi\in A$ be an irreducible polynomial of degree one which is coprime to $\fry$. 
	Note that $\pi$ always exists since $A$ has at least two monic irreducible polynomials of degree one.
	Let $x$ be the place of $F$ which $\pi$ defines, and put $G=(\cO_D/\pi\cO_D)^\times$ as before.
	
	The set of level $x$ structures on $\ucE|_{\bar{k}}$ is a $G$-torsor on which $\Aut(\ucE)$ acts naturally.
	As in the proof of Lemma \ref{LemAutGen}, it yields an injective homomorphism
	\[
	\Aut(\ucE)\to G.
	\]
	Hence $\Aut(\ucE)$ is a finite group. 
	
	Consider the central division algebra $B=D'_{0,\tilde{\infty}_0}$ over $\tilde{F}_{0,\tilde{\infty}_0}$. We have 
	\[
	\deg(\tilde{\infty}_0/\infty)\mid [\tilde{F}_{0,\tilde{\infty}_0}:F_\infty]=[\tilde{F}_0:F].
	\]
	Let $N_{B/F_\infty}$ be the (usual) norm map and let $w=[B:F_\infty]^{-1}(\infty\circ N_{B/F_\infty})$, 
	which is the valuation on $B$ extending $\infty$ \cite[Theorem 12.10]{Rei}. We denote by
	\[
	\cO_B=\{b\in B\mid w(b)\geq 0\}\quad\text{and}\quad m_B=\{b\in B\mid w(b)>0\} 
	\]
	the valuation ring and the maximal ideal of $B$, respectively. By \cite[Theorem 14.3]{Rei}, the residue field $\bF_B=\cO_B/m_B$ satisfies
	\begin{equation}\label{EqnResFieldB}
	[\bF_B:\bF_q]=\left.\frac{d}{[\tilde{F}_0:F]}\deg(\tilde{\infty}_0/\infty)\ \right |\ d.
	\end{equation}
	
	By (\ref{EqnAutP}), we have inclusions
	\[
	\Aut(\ucE)\subseteq \End(\ucE)^\times \subseteq\End(\ucE|_{\bar{k}})^\times \subseteq B^\times.
	\]
	Since the multiplicative group $1+m_B$ is torsion free and we have shown that $\Aut(\ucE)$ is finite, 
	we obtain an injection $\Aut(\ucE)\to \bF_B^\times$. Then the proposition follows from (\ref{EqnResFieldB}).
\end{proof}

\subsection{Determination of the center}\label{SubsecCenter}

For the map $\tau$ on $P$, the element $\pi=\tau^n$ satisfies $\pi\in \End(\ucE)\subseteq D'$.
We call $\pi$ the $q^n$-th power Frobenius endomorphism of $\ucE$.
Since any element of $\End(\ucE)$ commutes with $\pi$, the $F$-subalgebra $F[\pi]$ of $D'=F\otimes_A \End(\ucE)$ generated by $\pi$ is commutative.
By Lemma \ref{LemSubalgDivision} and Lemma \ref{LemEndRingDivision}, we see that $F[\pi]$ is a field extension of $F$ of finite degree
satisfying
\begin{equation}\label{EqnContainFPi}
	F[\pi]\subseteq \tilde{F}.
\end{equation}

Since the $k[\tau]$-module $P$ is free and $\dim_k(P/\tau P)=d$, we see that the action of $\pi$ on $P$ is neither zero nor invertible. 
This shows that $\pi$ is transcendental over $\bF_q$. Indeed, if $\pi$ is algebraic over $\bF_q$, 
then we have $\pi^m=\id$ for some integer $m\geq 1$ and $\pi$ is invertible on $P$, which is a contradiction.

Let $k(\tau)$ be the fraction field of $k[\tau]$ \cite[Lemma 3.2]{LRS}
and put
\[
E=\End_{k(\tau)}(V),\quad V=k(\tau)\otimes_{k[\tau]}P.
\]
Since $E$ is isomorphic to a matrix algebra over the division ring $k(\tau)^\opp$, it is a central simple algebra over its
center $\bF_q(\pi)$.
By \cite[Corollary 3.8]{LRS}, we have an injection $\varphi:D^\opp\to \End_{k(\tau)}(V)=E$, by which we identify $D^\opp$ and $F$ with subrings of 
$E$. 

\begin{lem}\label{LemPVIsom}
	The natural map
	\[
	\bF_q(\pi)\otimes_{\bF_q[\pi]} \End_{k[\tau]}^{\cO_D}(P)\to \End_{k(\tau)}^{D}(V).
	\]
	is an isomorphism.
\end{lem}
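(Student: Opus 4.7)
\medskip

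My plan is to verify the two directions of the claim separately: injectivity will be formal, while surjectivity reduces to a clearing-denominators argument that exploits the centrality of $\bF_q[\pi]$ in $k[\tau]$.

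For injectivity, I would observe that since $P$ is a free $k[\tau]$-module of rank $d$ (\cite[Lemma 3.5]{LRS}, applicable because $k$ is perfect), the natural map $P\to V=k(\tau)\otimes_{k[\tau]}P$ is injective. Any $\phi\in \End_{k[\tau]}^{\cO_D}(P)$ extends uniquely to a $k(\tau)$-linear endomorphism of $V$ commuting with $D$, and this extension vanishes on $V$ iff $\phi$ vanishes on $P$. Hence $\End_{k[\tau]}^{\cO_D}(P)\hookrightarrow \End_{k(\tau)}^{D}(V)$. Since localization at the multiplicative set $\bF_q[\pi]\setminus\{0\}$ is exact, the map in question is injective.

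For surjectivity, I would first prove the following auxiliary claim: for every $h\in k[\tau]\setminus\{0\}$, there exist $h'\in k[\tau]$ and $g(\pi)\in \bF_q[\pi]\setminus\{0\}$ with $h\cdot h'=g(\pi)$. Indeed, since $\tau^n=\pi$ is central in $k[\tau]$, the ring $k[\tau]$ is free of rank $n^2$ as a module over its center $\bF_q[\pi]$; applying the Cayley–Hamilton theorem to the $\bF_q[\pi]$-linear endomorphism given by left multiplication by $h$ (which is injective since $k(\tau)$ is a division ring, hence has nonzero determinant over $\bF_q[\pi]$), I obtain a monic relation whose constant term lies in $\bF_q[\pi]\setminus\{0\}$ and realizes the desired factorization. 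Consequently $h^{-1}\in \frac{1}{g(\pi)}k[\tau]$ inside $k(\tau)$.

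Now let $\phi\in\End_{k(\tau)}^{D}(V)$. Fix a $k[\tau]$-basis of $P$; this is simultaneously a $k(\tau)$-basis of $V$, so $\phi$ is represented by a matrix with entries in $k(\tau)$. Write each entry as $ah^{-1}$ with $a,h\in k[\tau]$, apply the auxiliary claim to clear denominators, and take the product of the resulting $g(\pi)$'s to obtain a single $g(\pi)\in\bF_q[\pi]\setminus\{0\}$ with $g(\pi)\phi(P)\subseteq P$. Since $g(\pi)$ lies in the center of $k(\tau)$, the composition $g(\pi)\phi$ is still $k(\tau)$-linear on $V$ and still commutes with the $D$-action; restricted to $P$ it therefore lies in $\End_{k[\tau]}^{\cO_D}(P)$. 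Hence $\phi=g(\pi)^{-1}\otimes (g(\pi)\phi)$ lies in the image, completing surjectivity.

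The only delicate point is the Cayley–Hamilton step: one must verify that the constant term of the characteristic polynomial of left multiplication by $h$ is nonzero, which follows because $k(\tau)$ is a (skew) field and hence left multiplication by any nonzero element is a unit, but care is needed because $k[\tau]$ is noncommutative—this is why I argue over the commutative subring $\bF_q[\pi]$ rather than attempting a direct reduced-norm argument inside $k(\tau)$.
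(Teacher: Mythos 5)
Your proof is correct, and its overall strategy --- clearing denominators by nonzero central elements of $\bF_q[\pi]$ --- is the same as the paper's; the difference is in how the key input is justified. The paper quotes the fact that $k[\tau]$ is a maximal $\bF_q[\pi]$-order in the central division $\bF_q(\pi)$-algebra $k(\tau)$ (\cite[Lemma 4.12.6]{Goss_BOOK}), from which $\bF_q(\pi)\otimes_{\bF_q[\pi]}M_d(k[\tau]^\opp)\simeq M_d(k(\tau)^\opp)$ and hence the denominator-clearing statement follow at once; you instead prove that statement from scratch by Cayley--Hamilton for left multiplication by $h$ on $k[\tau]$, viewed as a free $\bF_q[\pi]$-module of rank $n^2$, which is a nice self-contained substitute. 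Two small remarks. First, the cleanest way to see that the constant term of the characteristic polynomial is nonzero is that an injective endomorphism of a finite free module over the domain $\bF_q[\pi]$ has nonzero determinant (injectivity holding because $k[\tau]$ has no zero divisors); your alternative phrasing via ``left multiplication by a nonzero element of $k(\tau)$ is a unit'' implicitly uses $\bF_q(\pi)\otimes_{\bF_q[\pi]}k[\tau]\simeq k(\tau)$, which is essentially what the auxiliary claim is establishing, so it is better avoided. Second, you assert without argument that the extension to $V$ of an element of $\End_{k[\tau]}^{\cO_D}(P)$ commutes with all of $D$, i.e.\ that the map really lands in $\End_{k(\tau)}^{D}(V)$; the paper spells this out (its final step $\End_{k(\tau)}^{\cO_D}(V)=\End_{k(\tau)}^{D}(V)$), and it follows in one line since every element of $D=F\otimes_A\cO_D$ is of the form $a^{-1}\delta$ with $\delta\in\cO_D$ and $a\in A\setminus\{0\}$ acting invertibly on $V$. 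Neither point is a genuine gap, but both are worth making explicit.
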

\begin{proof}
	First note that the $\bF_q[\pi]$-algebra $k[\tau]$ is a maximal $\bF_q[\pi]$-order of the central division 
	$\bF_q(\pi)$-algebra $k(\tau)$ \cite[Lemma 4.12.6]{Goss_BOOK}. Thus we have a natural isomorphism
	\[
	\bF_q(\pi)\otimes_{\bF_q[\pi]} M_d(k[\tau]^\opp)\to M_d(k(\tau)^\opp),
	\]
	which implies that the natural map
	\[
	\bF_q(\pi)\otimes_{\bF_q[\pi]} \End_{k[\tau]}(P)\to \End_{k(\tau)}(V)
	\]
	is an isomorphism. 
	
	In particular, for any $g\in \End_{k(\tau)}(V)$ there exists a non-zero element $a\in \bF_q[\pi]$ 
	satisfying $a g\in \End_{k[\tau]}(P)$. Since the $\cO_D$-action on $V$ commutes with that of $k[\tau]$, this shows that
	we also have an isomorphism
	\[
	\bF_q(\pi)\otimes_{\bF_q[\pi]} \End_{k[\tau]}^{\cO_D}(P)\to \End_{k(\tau)}^{\cO_D}(V).
	\]
	
	Since $V$ is a torsion free $A$-module, if $f\in E$ commutes with any element of $\cO_D$, then it commutes with
	any element of $D$. Thus we obtain the equality $\End_{k(\tau)}^{\cO_D}(V)=\End_{k(\tau)}^{D}(V)$ of subalgebras of $E$.
	This concludes the proof.
\end{proof}

\begin{lem}\label{LemCommutantsF}
	The natural map
	\[
	D'=F\otimes_A \End_{k[\tau]}^{\cO_D}(P)\to \End_{k(\tau)}^{D}(V)
	\]
	is an isomorphism.
\end{lem}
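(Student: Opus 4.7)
The plan is to use Lemma~\ref{LemPVIsom} to reduce the claim to comparing two localizations of $\End(\ucE)$, and then to show these localizations agree via the commutative subring of $\End(\ucE)$ generated by $A$ and $\pi$.

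First, Lemma~\ref{LemPVIsom} identifies $\End_{k(\tau)}^D(V)$ with $\bF_q(\pi) \otimes_{\bF_q[\pi]} \End(\ucE)$. Under this identification, the map in the lemma becomes the natural comparison map
\[
\psi \colon F \otimes_A \End(\ucE) \to \bF_q(\pi) \otimes_{\bF_q[\pi]} \End(\ucE),
\]
so it suffices to prove $\psi$ is an isomorphism. Both sides are localizations of $\End(\ucE)$ at central multiplicative subsets, since $A$ (via the $\cO_D$-action) and $\bF_q[\pi]$ (via $\pi = \tau^n$) are both central in $\End(\ucE)$.

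Next, I would introduce the commutative subring $A[\pi] \subseteq \End(\ucE)$ generated by $A$ and $\pi$. It is an integral domain as a subring of the division algebra $D'$ (Lemma~\ref{LemEndRingDivision}), and in fact it is contained in the center $\tilde{F}$ of $D'$. Since $\tilde{F}/F$ is a finite extension, $\pi$ is algebraic over $F$; combined with the transcendence of $\pi$ over $\bF_q$ (established just before Lemma~\ref{LemPVIsom}), a transcendence-degree computation shows that $t$ is likewise algebraic over $\bF_q(\pi)$. Consequently both $F \otimes_A A[\pi]$ and $\bF_q(\pi) \otimes_{\bF_q[\pi]} A[\pi]$ are fields, each canonically isomorphic to the compositum $\bF_q(t,\pi)$ inside $\tilde{F}$.

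Finally, by associativity of tensor products,
\[
F \otimes_A \End(\ucE) = \bF_q(t,\pi) \otimes_{A[\pi]} \End(\ucE) = \bF_q(\pi) \otimes_{\bF_q[\pi]} \End(\ucE),
\]
and $\psi$ is the identity under these identifications. The main subtlety is the second identification $\bF_q(\pi) \otimes_{\bF_q[\pi]} A[\pi] = \bF_q(t,\pi)$: one must extract from the minimal polynomial of $\pi$ over $F$ (after clearing denominators to land in $A[Y]$) a non-trivial algebraic relation for $t$ over $\bF_q(\pi)$, using the transcendence of $\pi$ over $\bF_q$ to rule out the degenerate case where all coefficients vanish.
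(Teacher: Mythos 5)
Your proof is correct, and it rests on the same key input as the paper --- Lemma \ref{LemPVIsom} together with the algebraicity of $\pi$ over $F$ (equivalently, that $F[\pi]$ is a field) --- but the mechanics are genuinely different. The paper argues injectivity and surjectivity separately: injectivity because $\End(\ucE)$ is an $A$-subalgebra of the $F$-algebra $\End_{k(\tau)}^{D}(V)$, and surjectivity because, by Lemma \ref{LemPVIsom}, $\End_{k(\tau)}^{D}(V)$ is generated over $\bF_q(\pi)$ by $\End(\ucE)$, while $\bF_q(\pi)\subseteq F[\pi]\subseteq D'$, so the $\bF_q(\pi)$-coefficients are already absorbed into $D'$. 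You instead make the comparison symmetric: both sides are central localizations of $\End(\ucE)$, and you show they agree by proving that the two corresponding localizations of the central subring $A[\pi]$ both equal $\Frac(A[\pi])=F(\pi)=\bF_q(t,\pi)$. This costs you one extra (true and easy) fact that the paper never needs for this lemma, namely that $t$ is algebraic over $\bF_q(\pi)$, which you correctly extract from the transcendence-degree argument using that $\pi$ is transcendental over $\bF_q$; in exchange you get injectivity for free from the chain of base-change isomorphisms rather than from a flatness/subalgebra argument. The only step you leave implicit is the routine check that your chain of identifications is compatible with the map of the lemma (all maps involved are multiplication maps into $E$, so the composite sends $c\otimes f$ to $cf$, as does the lemma's map followed by the isomorphism of Lemma \ref{LemPVIsom}); with that observed, your $\psi$ is indeed the paper's map. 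Both routes are sound; the paper's is shorter, while yours makes explicit that the same fraction field of $A[\pi]$ arises from either localization.
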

\begin{proof}
	Since $\End_{k[\tau]}^{\cO_D}(P)$ is an $A$-subalgebra of the $F$-algebra $\End_{k(\tau)}^{D}(V)$,
	the map is injective.
	
	On the other hand, Lemma \ref{LemPVIsom} implies that the $\bF_q(\pi)$-algebra $\End_{k(\tau)}^{D}(V)$ is generated by its subring 
	$\End_{k[\tau]}^{\cO_D}(P)$, which contains $\pi$. Since $F[\pi]$ is a field, 
	it contains $\bF_q(\pi)$ as a subring. Hence the map of the lemma is surjective.
\end{proof}

Note that any element of $D^\opp$ commutes with $\pi$.
Let $D^\opp[\pi]$ be the image of the natural map
\[
D^\opp\otimes_F F[\pi]\to E,
\]
which is a subalgebra of $E$.

\begin{lem}\label{LemDoppPi}
	$D^\opp[\pi]$ is a central simple algebra of dimension $d^2$ over $F[\pi]$.
\end{lem}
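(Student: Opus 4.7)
The plan is to deduce this from the standard fact that tensoring a central simple algebra with a field extension of its center yields a central simple algebra over the new center. More precisely, I would proceed as follows.

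First I would check that the natural map $D^\opp \otimes_F F[\pi] \to E$ is a well-defined $F$-algebra homomorphism. The $D^\opp$-action on $V$ comes from $\varphi$, and since $\varphi(\cO_D^\opp) \subseteq \End_{k[\tau]}(P)$ by construction, every element of $\varphi(D^\opp)$ commutes with $\tau$, hence with $\pi = \tau^n$ and with the whole subring $F[\pi] \subseteq E$. Since $F \subseteq F[\pi]$ sits in the center of $E$, the two commuting $F$-algebra maps $D^\opp \to E$ and $F[\pi] \to E$ assemble into the desired algebra map.

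Next I would argue this map is injective. Because $D$ is a central division algebra over $F$ of dimension $d^2$, so is $D^\opp$. By a standard result (e.g.\ \cite[Theorem 7.6]{Rei}), for any field extension $L/F$ the tensor product $D^\opp \otimes_F L$ is a central simple $L$-algebra of dimension $d^2$ over $L$. Applying this to $L = F[\pi]$ (which is a field extension of $F$ of finite degree by (\ref{EqnContainFPi}) and Lemma \ref{LemEndRingDivision}), we conclude $D^\opp \otimes_F F[\pi]$ is central simple of dimension $d^2$ over $F[\pi]$. In particular it is a simple ring, so any nonzero ring homomorphism out of it is injective; since our map sends $1$ to $1$, it is nonzero, hence injective.

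Finally, by injectivity, the image $D^\opp[\pi] \subseteq E$ is isomorphic to $D^\opp \otimes_F F[\pi]$ as $F[\pi]$-algebras, so it inherits the properties of being central simple of dimension $d^2$ over $F[\pi]$. No step here is really an obstacle, provided one recognises that $F[\pi]$ is a genuine field extension of $F$ (so that the classical tensor-product-of-central-simple result applies); this is ensured by the earlier observations that $\pi$ is transcendental over $\bF_q$ and that $F[\pi] \subseteq \tilde{F}$ is contained in the finite-dimensional division $F$-algebra $D'$.
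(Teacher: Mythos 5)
Your argument is correct and is essentially the paper's proof: both use that $D^\opp\otimes_F F[\pi]$ is central simple of dimension $d^2$ over the field $F[\pi]$, so the natural map onto $D^\opp[\pi]\subseteq E$ (being a nonzero map out of a simple ring) is an isomorphism onto its image. The extra verifications you include (that $D^\opp$ commutes with $\pi=\tau^n$, and that $F[\pi]$ is a field by Lemmas \ref{LemSubalgDivision} and \ref{LemEndRingDivision}) are exactly the facts the paper records just before the lemma.
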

\begin{proof}
	Since $D^\opp$ is a central simple algebra of dimension $d^2$ over $F$, so is $D^\opp\otimes_F F[\pi]$ over $F[\pi]$.
	Since we have a surjection
	\[
	D^\opp\otimes_F F[\pi]\to D^\opp[\pi]
	\]
	and the left-hand side is simple, it is an isomorphism. This concludes the proof.
\end{proof}

\begin{lem}\label{LemCenterDprime}
	\[
	\tilde{F}=F[\pi],\quad [E:D']=[D^\opp[\pi]:\bF_q(\pi)].
	\]
\end{lem}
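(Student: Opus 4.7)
The plan is to apply the double centralizer theorem inside the central simple $\bF_q(\pi)$-algebra $E=\End_{k(\tau)}(V)$. By Lemma \ref{LemDoppPi}, the subalgebra $D^\opp[\pi]\subseteq E$ is central simple over $F[\pi]$; in particular it is simple and contains $\bF_q(\pi)$ in its center. So the double centralizer theorem applies to the pair $(D^\opp[\pi],E)$, and it will suffice to identify the centralizer of $D^\opp[\pi]$ in $E$ with $D'$.

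The key step is to show $Z_E(D^\opp[\pi])=D'$. An element of $E$ lies in this centralizer if and only if it commutes with both $D^\opp$ and $\pi$. The first condition is exactly membership in $\End_{k(\tau)}^{D}(V)$, which equals $D'$ by Lemma \ref{LemCommutantsF}. The second condition is then automatic: by (\ref{EqnContainFPi}) we have $\pi\in F[\pi]\subseteq \tilde{F}=Z(D')$, so $\pi$ is central in $D'$ and every element of $D'$ commutes with it. Hence $Z_E(D^\opp[\pi])=D'$.

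The double centralizer theorem now gives
\[
Z(D')=Z(Z_E(D^\opp[\pi]))=Z(D^\opp[\pi])=F[\pi],
\]
where the last equality is the centrality of $D^\opp[\pi]$ over $F[\pi]$ from Lemma \ref{LemDoppPi}. This yields the first asserted equality $\tilde{F}=F[\pi]$. The dimension part of the same theorem reads
\[
[D^\opp[\pi]:\bF_q(\pi)]\cdot [D':\bF_q(\pi)]=[E:\bF_q(\pi)],
\]
which, after dividing by $[D':\bF_q(\pi)]$, gives the second equality $[E:D']=[D^\opp[\pi]:\bF_q(\pi)]$. The only substantive point requiring care is the verification that the hypotheses of the double centralizer theorem are met, and this is handled by the two preceding lemmas (Lemma \ref{LemPVIsom} ensures $E$ is the central simple $\bF_q(\pi)$-algebra $M_d(k(\tau)^\opp)$, and Lemma \ref{LemDoppPi} ensures $D^\opp[\pi]$ is simple with center containing $\bF_q(\pi)$).
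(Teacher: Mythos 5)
Your proof is correct and follows essentially the same route as the paper: identify $C_E(D^\opp[\pi])=D'$ via Lemma \ref{LemCommutantsF} together with the fact that $\pi$ is central for $D'$, then invoke the double centralizer theorem in the central simple $\bF_q(\pi)$-algebra $E$ for both the center statement and the dimension count $[E:\bF_q(\pi)]=[D^\opp[\pi]:\bF_q(\pi)][D':\bF_q(\pi)]$. The only cosmetic difference is that you read off $\tilde{F}=Z(D')=Z(D^\opp[\pi])=F[\pi]$ directly from $Z(C_E(B))=Z(B)$, whereas the paper obtains $\tilde{F}\subseteq Z(D^\opp[\pi])=F[\pi]$ from $C_E(D')=D^\opp[\pi]$ and combines it with the containment (\ref{EqnContainFPi}); both are immediate consequences of the same theorem.
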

\begin{proof}
	This follows similarly to \cite[Proposition 2.2.2 (i)]{Lau}. 
	For any subset $S$ of $E$, we denote by $C_E(S)$ the commutant of $S$ in $E$. 
	By Lemma \ref{LemCommutantsF}, we have
	\[
	D'=C_E(D^\opp)=C_E(D^\opp[\pi]).
	\]
	
	Since $F[\pi]$ is a field, it contains $\bF_q(\pi)$.
	Thus $D^\opp[\pi]$ is an algebra over the center $\bF_q(\pi)$ of $E$,
	and it is simple by Lemma \ref{LemDoppPi}.
	Then \cite[Theorem 7.11]{Rei} yields
	\[
	D^\opp[\pi]=C_E(D')\supseteq \tilde{F}.
	\]
	Since any element of $D^\opp[\pi]$ commutes with any element of $D'$, 
	Lemma \ref{LemDoppPi} implies 
	$\tilde{F}\subseteq Z(D^\opp[\pi])=F[\pi]$.
	By (\ref{EqnContainFPi}), the first equality of the lemma follows. 
	Moreover, \cite[Corollary 7.13]{Rei} gives
	\[
	[E:\bF_q(\pi)]=[D^\opp[\pi]:\bF_q(\pi)][D':\bF_q(\pi)],
	\]
	which yields the second equality.
\end{proof}

\subsection{Structure of the endomorphism ring}\label{SubsecStrEndRing}

In this subsection, we assume $\fry\notin\cR$.

By \cite[(A.4) and Corollary 9.10]{LRS},
there exists a positive integer $N$ satisfying $\tilde{F}_0=F[\pi^N]$.
In particular, we have
\begin{equation}\label{EqnEmbFtilde0D0Prime}
F\subseteq \tilde{F}_0\subseteq \tilde{F}\subseteq D'\subseteq D_0'.
\end{equation}
Moreover, put
\[
\pi_0:=\pi^N,\quad \tilde{\Pi}_0:=\pi_0^{\frac{1}{Nn}}\in \tilde{F}_0^\times\otimes \bQ. 
\]
Then $(\tilde{F}_0,\tilde{\Pi}_0)$ is the $\varphi$-pair associated with the $\varphi$-space $(V_0,\varphi_0,\iota_0)$
\cite[(A.4)]{LRS}.

For the field $\tilde{F}_0$, by \cite[Proposition 9.9 (ii)]{LRS} the unique place $\tilde{\infty}_0$ of $\tilde{F}_0$
over $\infty$ satisfies
\begin{equation}\label{EqnUniquePlaceInfty_LRS}
	\deg(\tilde{\infty}_0)\tilde{\infty}_0(\tilde{\Pi}_0)=-\frac{[\tilde{F}_0:F]}{d}.
\end{equation}

\begin{lem}\label{LemFtildeUniquePlaces_infty}
	\begin{enumerate}
		\item\label{LemFtildeUniquePlaces_infty_d} $[\tilde{F}:F]$ divides $d$.
		\item\label{LemFtildeUniquePlaces_infty_unique} 
		There exists a unique place $\tilde{\infty}$ of $\tilde{F}$ over $\infty$. It satisfies
		\[
		\deg(\tilde{\infty})\tilde{\infty}(\pi)=-\frac{n[\tilde{F}:F]}{d}.
		\]
	\end{enumerate}
\end{lem}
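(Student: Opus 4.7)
The plan is to reduce both parts to the corresponding statement over $\bar{k}$, recalled just before the lemma as (\ref{EqnUniquePlaceInfty_LRS}), via the chain $F\subseteq \tilde{F}\subseteq \tilde{F}_0$ given by (\ref{EqnEmbFtilde0D0Prime}).

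For part (\ref{LemFtildeUniquePlaces_infty_d}), since $D'_0$ is a central division algebra over $\tilde{F}_0$ of dimension $(d/[\tilde{F}_0:F])^2$, the integer $[\tilde{F}_0:F]$ divides $d$. The tower $F\subseteq \tilde{F}\subseteq \tilde{F}_0$ then gives $[\tilde{F}:F]\mid [\tilde{F}_0:F]\mid d$.

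For part (\ref{LemFtildeUniquePlaces_infty_unique}), uniqueness is immediate: any place of $\tilde{F}$ over $\infty$ extends to some place of $\tilde{F}_0$ over $\infty$, but there is only one such place, namely $\tilde{\infty}_0$. Hence the restriction $\tilde{\infty}:=\tilde{\infty}_0|_{\tilde{F}}$ is the unique place of $\tilde{F}$ over $\infty$. For the valuation formula, I would first translate (\ref{EqnUniquePlaceInfty_LRS}) from $\tilde{\Pi}_0$ to $\pi$: since $\tilde{\Pi}_0^{Nn}=\pi_0=\pi^N$ in $\tilde{F}_0^\times\otimes\bQ$, one has $\tilde{\infty}_0(\pi)=n\tilde{\infty}_0(\tilde{\Pi}_0)$, giving
\[
\deg(\tilde{\infty}_0)\tilde{\infty}_0(\pi)=-\frac{n[\tilde{F}_0:F]}{d}.
\]
Then I would use the local invariants of the extension $\tilde{F}_0/\tilde{F}$ at $\tilde{\infty}_0/\tilde{\infty}$: one has $\tilde{\infty}_0|_{\tilde{F}}=e(\tilde{\infty}_0/\tilde{\infty})\tilde{\infty}$ and $\deg(\tilde{\infty}_0)=\deg(\tilde{\infty}_0/\tilde{\infty})\deg(\tilde{\infty})$, so
\[
\deg(\tilde{\infty}_0)\tilde{\infty}_0(\pi)=e(\tilde{\infty}_0/\tilde{\infty})\deg(\tilde{\infty}_0/\tilde{\infty})\deg(\tilde{\infty})\tilde{\infty}(\pi).
\]
Because $\tilde{\infty}_0$ is the only place of $\tilde{F}_0$ above $\tilde{\infty}$ (uniqueness was already used above $\infty$ but it applies a fortiori above $\tilde{\infty}$), the product $e(\tilde{\infty}_0/\tilde{\infty})\deg(\tilde{\infty}_0/\tilde{\infty})$ equals the total local degree $[\tilde{F}_0:\tilde{F}]$. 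Dividing by $[\tilde{F}_0:\tilde{F}]$ and using $[\tilde{F}_0:F]/[\tilde{F}_0:\tilde{F}]=[\tilde{F}:F]$ yields the desired equality
\[
\deg(\tilde{\infty})\tilde{\infty}(\pi)=-\frac{n[\tilde{F}:F]}{d}.
\]

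No step looks genuinely difficult; the only care needed is to keep the bookkeeping between $\pi$, $\pi_0=\pi^N$, and $\tilde{\Pi}_0=\pi_0^{1/(Nn)}$ straight when translating (\ref{EqnUniquePlaceInfty_LRS}) into a statement about $\pi$.
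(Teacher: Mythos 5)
Your argument rests on a misreading of (\ref{EqnEmbFtilde0D0Prime}): the containment there is $F\subseteq\tilde{F}_0\subseteq\tilde{F}\subseteq D'\subseteq D'_0$, i.e.\ $\tilde{F}$ \emph{contains} $\tilde{F}_0$, not the other way around (this is exactly the subtlety flagged in the introduction: $\End(\ucE)\subseteq\End(\ucE|_{\bar{k}})$, but the centers satisfy the opposite containment, since $\tilde{F}=F[\pi]\supseteq F[\pi^N]=\tilde{F}_0$). With the correct tower, your part (1) collapses: the inclusion only gives $[\tilde{F}_0:F]\mid[\tilde{F}:F]$, which says nothing about $[\tilde{F}:F]\mid d$. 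Likewise, uniqueness of the place of $\tilde{F}$ over $\infty$ does not follow from uniqueness of $\tilde{\infty}_0$: places of $\tilde{F}$ \emph{restrict} to (rather than extend) places of $\tilde{F}_0$, and a priori several places of $\tilde{F}$ could lie over the single place $\tilde{\infty}_0$. Finally, the quantity $\tilde{\infty}_0(\pi)$ you manipulate is not defined, since $\pi$ need not lie in $\tilde{F}_0$ (only $\pi^N=\pi_0$ does).

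The missing idea, and the route the paper takes, is to work locally at infinity inside the division algebra: the embedding $\tilde{F}\to D'_0$ induces $F_\infty\otimes_F\tilde{F}\hookrightarrow F_\infty\otimes_F D'_0=D'_{0,\tilde{\infty}_0}$ (this identification uses that $\tilde{\infty}_0$ is the unique place of $\tilde{F}_0$ over $\infty$), and $D'_{0,\tilde{\infty}_0}$ is a central division algebra over $\tilde{F}_{0,\tilde{\infty}_0}$ of index $d/[\tilde{F}_0:F]$. By Lemma \ref{LemSubalgDivision}, $F_\infty\otimes_F\tilde{F}$ is then a field, which gives the uniqueness of $\tilde{\infty}$, and its degree over $\tilde{F}_{0,\tilde{\infty}_0}$ divides $d/[\tilde{F}_0:F]$; since $[\tilde{F}_{0,\tilde{\infty}_0}:F_\infty]=[\tilde{F}_0:F]$, this yields $[\tilde{F}:F]\mid d$. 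Your translation of (\ref{EqnUniquePlaceInfty_LRS}) into $\deg(\tilde{\infty}_0)\tilde{\infty}_0(\pi_0)=-\frac{Nn}{d}[\tilde{F}_0:F]$ is correct in substance, but the final bookkeeping must be run in the direction $\tilde{\infty}\mid\tilde{\infty}_0$: using $\deg(\tilde{\infty})=\deg(\tilde{\infty}/\tilde{\infty}_0)\deg(\tilde{\infty}_0)$, $\tilde{\infty}(\pi_0)=e(\tilde{\infty}/\tilde{\infty}_0)\tilde{\infty}_0(\pi_0)$, $\pi_0=\pi^N$, and $e(\tilde{\infty}/\tilde{\infty}_0)\deg(\tilde{\infty}/\tilde{\infty}_0)=[\tilde{F}:\tilde{F}_0]$ (valid because $\tilde{\infty}$ is the unique place over $\tilde{\infty}_0$), one obtains $\deg(\tilde{\infty})\tilde{\infty}(\pi)=-\frac{n}{d}[\tilde{F}:F]$.
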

\begin{proof}
	As mentioned in \S\ref{SubsecEndomRing}, the $\tilde{F}_{0,\tilde{\infty}_0}$-algebra $D'_{0,\tilde{\infty}_0}$ is a central division algebra
	satisfying $[D'_{0,\tilde{\infty}_0}:\tilde{F}_{0,\tilde{\infty}_0}]=[D'_0:\tilde{F}_0]=(d/[\tilde{F}_0:F])^2$.
	Since the $\tilde{F}_0$-linear embedding $\tilde{F}\to D_0'$ of (\ref{EqnEmbFtilde0D0Prime})
	induces an $F_\infty$-linear injection
	\[
	F_\infty\otimes_F \tilde{F}\to F_\infty\otimes_F D'_0=(F_\infty\otimes_F \tilde{F}_0)\otimes_{\tilde{F}_0} D'_0
	=\tilde{F}_{0,\tilde{\infty}_0}\otimes_{\tilde{F}_0} D'_0=D'_{0,\tilde{\infty}_0},
	\]
	Lemma \ref{LemSubalgDivision} shows that $F_\infty\otimes_F \tilde{F}$ is a field extension of $\tilde{F}_{0,\tilde{\infty}_0}$ of degree dividing 
	$d/[\tilde{F}_0:F]$.
	This implies the first assertion of (\ref{LemFtildeUniquePlaces_infty_unique}) and
	\[
	\left.\frac{[\tilde{F}:F]}{[\tilde{F}_{0,\tilde{\infty}_0}:F_\infty]} \middle|\  \frac{d}{[\tilde{F}_0:F]}\right..
	\]
	Since $[\tilde{F}_{0,\tilde{\infty}_0}:F_\infty]=[\tilde{F}_0:F]$, we obtain (\ref{LemFtildeUniquePlaces_infty_d}).
	
	On the other hand, the equality
	\[
	\tilde{\infty}_0(\tilde{\Pi}_0)=\frac{1}{N n}\tilde{\infty}_0(\pi_0)
	\] 
	and (\ref{EqnUniquePlaceInfty_LRS}) yield
	\[
	\deg(\tilde{\infty}_0)\tilde{\infty}_0(\pi_0)=-\frac{N n}{d}[\tilde{F}_0:F].
	\]
	Hence we obtain
	\[
	\begin{aligned}
		\deg(\tilde{\infty})\tilde{\infty}(\pi)&=\frac{1}{N}\deg(\tilde{\infty}/\tilde{\infty}_0)
		\deg(\tilde{\infty}_0)e(\tilde{\infty}/\tilde{\infty}_0)\tilde{\infty}_0(\pi_0)\\
		&=-\frac{n}{d}[\tilde{F}:\tilde{F}_0][\tilde{F}_0:F]=-\frac{n}{d}[\tilde{F}:F].
	\end{aligned}
	\]
	Thus the second assertion of (\ref{LemFtildeUniquePlaces_infty_unique}) follows.
\end{proof}

\begin{cor}\label{CorReplaceRH}
	For any $a\in \tilde{F}$, we denote by $|a|_\infty$ its normalized absolute value defined by $\infty$,
	namely 
	\[
	|a|_\infty=q^{-\tilde{\infty}(a)e(\tilde{\infty}/\infty)^{-1}}.
	\]
	Then we have $|\pi|_\infty=|k|^{1/d}$.
\end{cor}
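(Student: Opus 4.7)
The plan is to derive the corollary directly from the valuation formula established in Lemma \ref{LemFtildeUniquePlaces_infty}. There is essentially no new idea needed, only a careful comparison between the product $\deg(\tilde{\infty})\tilde{\infty}(\pi)$ and the exponent $\tilde{\infty}(\pi) e(\tilde{\infty}/\infty)^{-1}$ that appears in the normalized absolute value.

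First, I would observe that since $\infty$ is a place of $F=\bF_q(t)$ with residue field $\bF_q$, we have $\deg(\infty)=1$, and therefore $\deg(\tilde{\infty}) = \deg(\tilde{\infty}/\infty)$. Next, because $\tilde{\infty}$ is the \emph{unique} place of $\tilde{F}$ over $\infty$ (by Lemma \ref{LemFtildeUniquePlaces_infty}(\ref{LemFtildeUniquePlaces_infty_unique})), the standard $efg$ relation for the extension $\tilde{F}/F$ simplifies to
\[
[\tilde{F}:F]=e(\tilde{\infty}/\infty)\deg(\tilde{\infty}/\infty).
\]

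With these two identities, I would substitute the formula of Lemma \ref{LemFtildeUniquePlaces_infty}(\ref{LemFtildeUniquePlaces_infty_unique}) into the definition of $|\pi|_\infty$. Solving for $\tilde{\infty}(\pi)$ gives
\[
\tilde{\infty}(\pi)=-\frac{n[\tilde{F}:F]}{d\deg(\tilde{\infty}/\infty)},
\]
and dividing by $e(\tilde{\infty}/\infty)$ yields
\[
\frac{\tilde{\infty}(\pi)}{e(\tilde{\infty}/\infty)}=-\frac{n[\tilde{F}:F]}{d\,\deg(\tilde{\infty}/\infty)\,e(\tilde{\infty}/\infty)}=-\frac{n}{d}.
\]
Therefore $|\pi|_\infty=q^{n/d}=|k|^{1/d}$, as asserted.

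I do not expect any obstacle here; the content is entirely bookkeeping with ramification and residue data, once Lemma \ref{LemFtildeUniquePlaces_infty} is in hand. The only mild point is remembering that $\infty$ has residue degree one over $\bF_q$ so that $\deg(\tilde{\infty})$ equals $\deg(\tilde{\infty}/\infty)$, and invoking the uniqueness of $\tilde{\infty}$ to replace the usual sum $\sum e_i f_i$ by the single term $e(\tilde{\infty}/\infty)\deg(\tilde{\infty}/\infty)$.
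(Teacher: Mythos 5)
Your proof is correct and follows essentially the same route as the paper: it starts from the valuation formula in Lemma \ref{LemFtildeUniquePlaces_infty}(\ref{LemFtildeUniquePlaces_infty_unique}), uses $\deg(\infty)=1$ together with the uniqueness of $\tilde{\infty}$ to convert $[\tilde{F}:F]/\deg(\tilde{\infty}/\infty)$ into $e(\tilde{\infty}/\infty)$, and then divides to get $\tilde{\infty}(\pi)/e(\tilde{\infty}/\infty)=-n/d$. The paper's proof is just a more compressed version of the same bookkeeping.
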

\begin{proof}
	Lemma \ref{LemFtildeUniquePlaces_infty} yields
	\[
	\begin{aligned}
		\tilde{\infty}(\pi)&=-\frac{n[\tilde{F}:F]}{d\deg(\tilde{\infty}/\infty)}=-\frac{n}{d}e(\tilde{\infty}/\infty),
	\end{aligned}
	\]
	which gives the equality of the corollary.
\end{proof}

By \cite[Proposition 9.9 (iii)]{LRS}, 
there exists a unique place $\tilde{\fry}_0\neq \tilde{\infty}_0$ of $\tilde{F}_{0}$ satisfying $\tilde{\fry}_0(\tilde{\Pi}_0)\neq 0$.
Moreover, $\tilde{\fry}_0$ lies over $\fry$. It is shown in \cite[p.~265]{LRS} that we have
\[
\frac{1}{h}=\frac{\deg(\tilde{\fry}_0)\tilde{\fry}_0(\tilde{\Pi}_0)}{[\tilde{F}_{0,\tilde{\fry}_0}:F_\fry]}
\]
with some positive integer $h$. In particular, 
\begin{equation}\label{EqnYPiPositive}
	\tilde{\fry}_0(\tilde{\Pi}_0)>0.
\end{equation}

\begin{lem}\label{LemNormPi}
	The element $\pi\in \tilde{F}$ is integral over $A$ and $N_{\tilde{F}/F}(\pi)\in A$. 
	Moreover, the only prime divisor of $N_{\tilde{F}/F}(\pi)$ is $\fry$. 
\end{lem}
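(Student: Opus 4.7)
The plan is to prove the three claims in order: integrality of $\pi$ over $A$, then $N_{\tilde{F}/F}(\pi) \in A$, then the restriction on its prime factorization.

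For integrality, I would observe that the $t$-motive $P$ is a free $A\otimes k = k[t]$-module of rank $d^2$, and since $[k:\bF_q]=n$ is finite, $P$ is free of rank $d^2 n$ over $A$. The endomorphism $\pi=\tau^n$ is $A$-linear: $\tau$ commutes with the $A$-action, which comes from the first tensor factor, and $\tau^n$ acts on $k$ as the $|k|$-th power Frobenius, i.e., as the identity. Applying Cayley--Hamilton to the action of $\pi$ on $P$ then produces a monic polynomial in $A[X]$ annihilating $\pi$, proving integrality.

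For the second claim, since $\pi$ is integral over $A$ and $\tilde{F}/F$ is finite, $N_{\tilde{F}/F}(\pi)$ lies in $F$ and is integral over $A$ (being a product of conjugates of the integral element $\pi$). Since $A=\bF_q[t]$ is integrally closed in $F$, the norm lies in $A$.

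For the final claim, the key input is \cite[Proposition 9.9]{LRS}, which gives: for any place $\tilde{x}_0$ of $\tilde{F}_0$ distinct from $\tilde{\infty}_0$ and $\tilde{\fry}_0$ one has $\tilde{x}_0(\tilde{\Pi}_0)=0$, equivalently $\tilde{x}_0(\pi_0)=0$ (recall $\pi_0 = \pi^N \in \tilde{F}_0 \subseteq \tilde{F}$ by (\ref{EqnEmbFtilde0D0Prime})). I would then take any place $\tilde{x}$ of $\tilde{F}$ lying over a finite place $x\ne \fry$ of $F$: its restriction to $\tilde{F}_0$ is a place over $x$, hence distinct from both $\tilde{\fry}_0$ and $\tilde{\infty}_0$, so $\tilde{x}(\pi^N) = e_{\tilde{x}/\tilde{x}_0}\tilde{x}_0(\pi_0) = 0$ and therefore $\tilde{x}(\pi)=0$. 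Substituting into the standard formula
\[
x(N_{\tilde{F}/F}(\pi)) = \sum_{\tilde{x}\mid x} f_{\tilde{x}/x}\, \tilde{x}(\pi)
\]
shows that the norm has trivial $x$-valuation at every finite place $x\ne \fry$, so no such $x$ divides $N_{\tilde{F}/F}(\pi)\in A$, as desired.

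I do not foresee a substantive obstacle; the main step is simply to transport the valuation-by-valuation information about $\pi_0$ on $\tilde{F}_0$ (provided by \cite{LRS}) to information about $\pi$ on $\tilde{F}$ using the inclusion $\tilde{F}_0\subseteq \tilde{F}$ and multiplicativity of valuations under $\pi_0=\pi^N$.
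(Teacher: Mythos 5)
Your proof is correct and follows essentially the same route as the paper: integrality comes from $A$-finiteness (the paper uses that $\End(\ucE)$ is a finite $A$-module containing $A[\pi]$, you use Cayley--Hamilton on the $A$-finite module $P$), and the prime-divisor claim rests on the same key input, \cite[Proposition 9.9 (iii)]{LRS} applied to $\tilde{\Pi}_0$, i.e.\ to $\pi_0=\pi^N$ on $\tilde{F}_0$. The only difference is bookkeeping: you transport the vanishing of valuations up to places of $\tilde{F}$ and apply the norm--valuation formula there, whereas the paper pushes the norm down via $N_{\tilde{F}/F}(\pi)^N=N_{\tilde{F}_0/F}(\pi^N)^{[\tilde{F}:\tilde{F}_0]}$ and argues with places of $\tilde{F}_0$; both are valid.
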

\begin{proof}
	By (\ref{EqnIdentifyEndP}), the $A$-module $\End(\ucE)$ is finitely generated and contains $A[\pi]$ as a subring.
	Thus $\pi$ is integral over $A$ and we obtain $N_{\tilde{F}/F}(\pi)\in A$.

	Since $\pi^N\in \tilde{F}_0$, we have
	\[
	N_{\tilde{F}/F}(\pi)^N=N_{\tilde{F}/F}(\pi^N)=N_{\tilde{F}_0/F}(\pi^N)^{[\tilde{F}:\tilde{F}_0]}.
	\]
	Thus it is enough to show that the only prime divisor of $N_{\tilde{F}_0/F}(\pi^N)$ is $\fry$. 
	For this, let $v_0$ be any place of $\tilde{F}_0$ which
	is not over $\infty$. If $v_0(\pi^N)>0$, then we also have $v_0(\tilde{\Pi}_0)>0$.
	Hence, \cite[Proposition 9.9 (iii)]{LRS} implies $v_0=\tilde{\fry}_0$ and $v_0\mid \fry$. 
	Thus every place of $\tilde{F}_0$ dividing $N_{\tilde{F}_0/F}(\pi^N)$ is a conjugate of $v_0$, which divides $\fry$.
	This yields the lemma.
\end{proof}

We have a diagram of field extensions
\[
\xymatrix{
	\tilde{F}\ar@{-}[dd]\ar@{-}[dr] & \\
	& \bF_q(\pi)\ar@{-}[dd]\\
	\tilde{F}_0\ar@{-}[dd]\ar@{-}[dr] &\\
	& \bF_q(\pi_0)\\
	F. &
}
\]
Since $\tilde{F}$ is an $\bF_q(\pi)$-subalgebra of $E$, it is a finite extension of $\bF_q(\pi)$.

Let $\infty_{\pi}$ be the place of $\bF_q(\pi)$ defined by $1/\pi$, and let $\infty_{\pi_0}$ be a similar place of
$\bF_q(\pi_0)$. Then $\infty_\pi$ lies over $\infty_{\pi_0}$.
By Lemma \ref{LemFtildeUniquePlaces_infty} (\ref{LemFtildeUniquePlaces_infty_unique}) and (\ref{EqnUniquePlaceInfty_LRS}),
the values $\tilde{\infty}(\pi)$ and $\tilde{\infty}_0(\pi_0)$ are negative. Thus we have 
\[
\tilde{\infty}\mid\infty_{\pi},\quad \tilde{\infty}_0\mid\infty_{\pi_0}.
\]

\begin{lem}\label{LemPlaceInftyPiUnique}
	The place $\tilde{\infty}$ is the unique place of $\tilde{F}$ which lies over $\infty_\pi$.
\end{lem}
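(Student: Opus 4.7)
The plan is to combine Lemma \ref{LemNormPi} with the uniqueness of the place over $\infty$ established in Lemma \ref{LemFtildeUniquePlaces_infty} (\ref{LemFtildeUniquePlaces_infty_unique}). The guiding observation is that any place of $\tilde{F}$ lying over $\infty_\pi$ must assign a negative value to $\pi$, and this forces it to lie over $\infty$ of $F$.

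More precisely, I would argue as follows. Let $w$ be any place of $\tilde{F}$ with $w\mid\infty_\pi$. Since $\infty_\pi$ is the place of $\bF_q(\pi)$ defined by $1/\pi$, we have $\infty_\pi(\pi)<0$, and hence $w(\pi)<0$. On the other hand, Lemma \ref{LemNormPi} tells us that $\pi$ is integral over $A$. Consequently $\pi$ lies in the integral closure of $A$ in $\tilde{F}$, which is the intersection of the valuation rings $\cO_{\tilde{F},v}$ as $v$ runs over the places of $\tilde{F}$ not over $\infty$. Therefore $v(\pi)\geq 0$ for every place $v$ of $\tilde{F}$ with $v\nmid \infty$. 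Combined with $w(\pi)<0$, this forces $w\mid \infty$.

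Now apply Lemma \ref{LemFtildeUniquePlaces_infty} (\ref{LemFtildeUniquePlaces_infty_unique}), which asserts that $\tilde{\infty}$ is the unique place of $\tilde{F}$ lying over $\infty$. We conclude $w=\tilde{\infty}$, completing the proof. (The fact that $\tilde{\infty}$ itself lies over $\infty_\pi$ was already recorded just before the statement of the lemma, so uniqueness of a place over $\infty_\pi$ is exactly what we need.)

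I do not expect a serious obstacle: the argument is essentially a two-line deduction once Lemma \ref{LemNormPi} and Lemma \ref{LemFtildeUniquePlaces_infty} are in hand. The only minor point to be careful about is the elementary fact that if $w$ restricts to $\infty_\pi$ on $\bF_q(\pi)$, then $w(\pi)$ and $\infty_\pi(\pi)$ have the same sign (they differ by a positive factor, namely the ramification index $e(w/\infty_\pi)$), which is where the integrality of $\pi$ over $A$ comes in to provide the crucial contradiction.
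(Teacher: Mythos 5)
Your argument is correct, and it takes a genuinely different route for the key step than the paper does. The paper restricts a place $v\mid\infty_\pi$ of $\tilde{F}$ to $\tilde{F}_0$, computes $v_0(\tilde{\Pi}_0)<0$, and then invokes \cite[Proposition 9.9 (iii)]{LRS} together with (\ref{EqnYPiPositive}): the only place of $\tilde{F}_0$ other than $\tilde{\infty}_0$ at which $\tilde{\Pi}_0$ has nonzero valuation is $\tilde{\fry}_0$, where the valuation is positive, so $v_0=\tilde{\infty}_0$ and hence $v\mid\infty$; Lemma \ref{LemFtildeUniquePlaces_infty} (\ref{LemFtildeUniquePlaces_infty_unique}) then gives $v=\tilde{\infty}$. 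You instead bypass $\tilde{F}_0$ and the $\varphi$-pair entirely: from Lemma \ref{LemNormPi} the element $\pi$ is integral over $A$, so it lies in the integral closure of $A$ in $\tilde{F}$, i.e.\ in $\bigcap_{v\nmid\infty}\cO_{\tilde{F},v}$, whence $v(\pi)\geq 0$ at every place not over $\infty$; since a place over $\infty_\pi$ must satisfy $w(\pi)<0$, it lies over $\infty$, and the same final appeal to Lemma \ref{LemFtildeUniquePlaces_infty} (\ref{LemFtildeUniquePlaces_infty_unique}) finishes. Both proofs share the last step; yours replaces the external input from \cite{LRS} (classification of the places where $\tilde{\Pi}_0$ is nonvanishing, plus positivity at $\tilde{\fry}_0$) by the elementary integrality observation, which makes the argument shorter and more self-contained — note that the integrality statement in Lemma \ref{LemNormPi} is proved directly from the finite generation of $\End(\ucE)$ as an $A$-module, so no circularity is introduced. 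The paper's version, on the other hand, stays within the $\varphi$-pair formalism that it needs anyway for the surrounding results (e.g.\ the second half of Lemma \ref{LemNormPi}), so the extra input costs it nothing in context.
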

\begin{proof}
	Let $v$ be a place of $\tilde{F}$ over $\infty_\pi$ and put $v_0=v|_{\tilde{F}_0}$.
	Then $v_0$ lies over $\infty_{\pi_0}$ and
	\[
	v_0(\tilde{\Pi}_0)=\frac{1}{N n}v_0(\pi_0)=-\frac{1}{N n}e(v_0/\infty_{\pi_0})<0.
	\]
	By \cite[Proposition 9.9 (iii)]{LRS} and (\ref{EqnYPiPositive}), we obtain 
	$v_0=\tilde{\infty}_0$. Then Lemma \ref{LemFtildeUniquePlaces_infty} (\ref{LemFtildeUniquePlaces_infty_unique}) yields $v=\tilde{\infty}$.
\end{proof}

\begin{prop}\label{PropFtildeDivd}
	\[
	[D':\tilde{F}]=\left(\frac{d}{[\tilde{F}:F]}\right)^2.
	\]
\end{prop}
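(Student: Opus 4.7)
The plan is to apply the double centralizer theorem inside the central simple $\bF_q(\pi)$-algebra $E$, in the same spirit as the proof of Lemma~\ref{LemCenterDprime}. Since $D^\opp[\pi]$ is a simple $\bF_q(\pi)$-subalgebra of $E$ whose centralizer is exactly $D'$, \cite[Corollary 7.13]{Rei} gives
\[
[E:\bF_q(\pi)] = [D^\opp[\pi]:\bF_q(\pi)]\cdot[D':\bF_q(\pi)].
\]
From Lemma~\ref{LemDoppPi} and $\tilde{F} = F[\pi]$ we have $[D^\opp[\pi]:\bF_q(\pi)] = d^2[\tilde{F}:\bF_q(\pi)]$, so the task reduces to computing $[E:\bF_q(\pi)]$ and comparing $[\tilde{F}:\bF_q(\pi)]$ with $[\tilde{F}:F]$.

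To compute $[E:\bF_q(\pi)]$, I would use that $k$ is perfect, so the $k[\tau]$-module $P$ is free of rank $d$; hence $V = k(\tau)\otimes_{k[\tau]}P$ is a left $k(\tau)$-vector space of dimension $d$ and $E \cong M_d(k(\tau)^\opp)$. Since $\pi = \tau^n$ and $[k:\bF_q] = n$, the ring $k[\tau]$ is free of rank $n^2$ over $\bF_q[\pi]$, so $k(\tau)$ is a central division $\bF_q(\pi)$-algebra of dimension $n^2$ (this is the content of \cite[Lemma~4.12.6]{Goss_BOOK} already cited in Lemma~\ref{LemPVIsom}). Hence $[E:\bF_q(\pi)] = d^2 n^2$, and substitution yields
\[
[D':\tilde{F}] = \frac{[D':\bF_q(\pi)]}{[\tilde{F}:\bF_q(\pi)]} = \frac{n^2}{[\tilde{F}:\bF_q(\pi)]^2}.
\]

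The remaining identity to verify is $d\,[\tilde{F}:\bF_q(\pi)] = n\,[\tilde{F}:F]$. Here I exploit that $\tilde{F}$ contains both rational subfields $F = \bF_q(t)$ and $\bF_q(\pi)$, and that by Lemma~\ref{LemFtildeUniquePlaces_infty} and Lemma~\ref{LemPlaceInftyPiUnique}, $\tilde{\infty}$ is simultaneously the unique place of $\tilde{F}$ above $\infty$ and above $\infty_\pi$. Applying $\sum ef = [\tilde{F}:F]$ (resp.\ $[\tilde{F}:\bF_q(\pi)]$) with a single place upstairs, and using $\infty(t) = \infty_\pi(\pi) = -1$, gives
\[
[\tilde{F}:F] = -\tilde{\infty}(t)\deg(\tilde{\infty}),\qquad [\tilde{F}:\bF_q(\pi)] = -\tilde{\infty}(\pi)\deg(\tilde{\infty}).
\]
Taking the ratio and inserting the formula $\deg(\tilde{\infty})\tilde{\infty}(\pi) = -n[\tilde{F}:F]/d$ from Lemma~\ref{LemFtildeUniquePlaces_infty} yields $[\tilde{F}:\bF_q(\pi)]/[\tilde{F}:F] = n/d$, which is the desired equality. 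Feeding this back into the expression for $[D':\tilde{F}]$ produces $(d/[\tilde{F}:F])^2$.

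The main content is already packaged in the earlier lemmas: the centralizer identifications inside $E$ and the existence of a unique place $\tilde{\infty}$ above both $\infty$ and $\infty_\pi$ with a prescribed value $\tilde{\infty}(\pi)$. Once these are in hand, the proof is a routine bookkeeping of dimensions and of residue degrees times ramification indices, and I expect no real obstacle beyond keeping the two rational subfields $F$ and $\bF_q(\pi)$ straight.
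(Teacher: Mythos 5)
Your proposal is correct and follows essentially the same route as the paper: the double centralizer identity $[E:\bF_q(\pi)]=[D^\opp[\pi]:\bF_q(\pi)][D':\bF_q(\pi)]$ (the content of Lemma \ref{LemCenterDprime} via \cite[Corollary 7.13]{Rei}), the computation $[E:\bF_q(\pi)]=d^2n^2$ from $E\simeq M_d(k(\tau)^\opp)$ with $[k(\tau):\bF_q(\pi)]=n^2$, and the identity $[\tilde{F}:\bF_q(\pi)]=-\deg(\tilde{\infty})\tilde{\infty}(\pi)=n[\tilde{F}:F]/d$ obtained from Lemmas \ref{LemPlaceInftyPiUnique} and \ref{LemFtildeUniquePlaces_infty}. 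The only cosmetic difference is your redundant detour through $[\tilde{F}:F]=-\tilde{\infty}(t)\deg(\tilde{\infty})$, which the paper skips by reading off $[\tilde{F}:\bF_q(\pi)]$ directly.
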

\begin{proof}
	By Lemma \ref{LemPlaceInftyPiUnique} and Lemma \ref{LemFtildeUniquePlaces_infty} (\ref{LemFtildeUniquePlaces_infty_unique}), we have
	\[
	\begin{aligned}
		[\tilde{F}:\bF_q(\pi)]&=\deg(\tilde{\infty}/\infty_\pi)e(\tilde{\infty}/\infty_\pi)
		=-\deg(\tilde{\infty})\tilde{\infty}(\pi)
		=\frac{n[\tilde{F}:F]}{d}.
	\end{aligned}
	\]
	
	On the other hand, 
	Lemma \ref{LemDoppPi} and Lemma \ref{LemCenterDprime} yield
	\[
	[E:D']=[D^\opp[\pi]:\bF_q(\pi)]=d^2[\tilde{F}:\bF_q(\pi)].
	\]
	Since $E\simeq M_d(k(\tau)^\opp)$ and $[k(\tau):\bF_q(\pi)]=n^2$, we obtain
	\[
	\begin{aligned}
		[D':\bF_q(\pi)]&=\frac{[E:\bF_q(\pi)]}{[E:D']}=\frac{d^2n^2}{d^2[\tilde{F}:\bF_q(\pi)]}=\frac{n^2}{[\tilde{F}:\bF_q(\pi)]},
	\end{aligned}
	\]
	which gives
	\[
	\begin{aligned}
		[D':\tilde{F}]&=\frac{[D':\bF_q(\pi)]}{[\tilde{F}:\bF_q(\pi)]}=
		\frac{n^2}{[\tilde{F}:\bF_q(\pi)]^2}=\left(\frac{d}{[\tilde{F}:F]}\right)^2.
	\end{aligned}
	\]
\end{proof}

\begin{prop}\label{PropFtildeD}
	There exists an embedding of $F$-algebras $\tilde{F}\to D$.
\end{prop}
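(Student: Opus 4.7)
The plan is to apply the Albert--Brauer--Hasse--Noether embedding criterion: since $[\tilde{F}:F]$ divides $d$ by Lemma \ref{LemFtildeUniquePlaces_infty}(\ref{LemFtildeUniquePlaces_infty_d}), the field $\tilde{F}$ embeds into $D$ as an $F$-algebra if and only if the index of the central simple $\tilde{F}$-algebra $D\otimes_F\tilde{F}$ divides $d/[\tilde{F}:F]$. By Hasse's local-global principle for the index, this is equivalent to the corresponding local divisibility at every place $w$ of $\tilde{F}$. At a place $w$ over $v\notin\cR$ the local invariant vanishes and the local index is $1$, so the only nontrivial places are those over some $v\in\cR$; by the standing assumption $\fry\notin\cR$ and the splitting of $D$ at $\infty$, such $v$ satisfy $v\neq\fry,\infty$.

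To handle those $w$, I would derive the Brauer-class identity
\[
[D\otimes_F\tilde{F}]=[D']-[k(\tau)^\opp\otimes_{\bF_q(\pi)}\tilde{F}]\quad\text{in }\mathrm{Br}(\tilde{F})
\]
via the ambient central simple $\bF_q(\pi)$-algebra $E$. By (the proof of) Lemma \ref{LemCenterDprime}, $D'$ and $D^\opp[\pi]$ are mutually centralizing central simple $\tilde{F}$-subalgebras of $E$; using Lemma \ref{LemDoppPi} and Proposition \ref{PropFtildeDivd}, a $\tilde{F}$-dimension count identifies the natural multiplication map $D'\otimes_{\tilde{F}} D^\opp[\pi]\to E$ with the centralizer $C_E(\tilde{F})$. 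The standard Brauer identity $[C_A(L)]=[A\otimes_Z L]$ for a subfield $L$ of a central simple $Z$-algebra $A$ then gives $[C_E(\tilde{F})]=[E\otimes_{\bF_q(\pi)}\tilde{F}]=[k(\tau)^\opp\otimes_{\bF_q(\pi)}\tilde{F}]$ (using $E\cong M_d(k(\tau)^\opp)$), and combining with $[D^\opp[\pi]]=-[D\otimes_F\tilde{F}]$ yields the displayed identity.

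Now the central division algebra $k(\tau)/\bF_q(\pi)$ has its Brauer invariants supported only at the two places $0_\pi$ and $\infty_\pi$. By Lemmas \ref{LemNormPi} and \ref{LemPlaceInftyPiUnique}, every place of $\tilde{F}$ above $\{0_\pi,\infty_\pi\}$ lies above $\{\fry,\infty\}$, so for any $w$ over $v\in\cR\setminus\{\fry,\infty\}$ we have $\inv_w(k(\tau)^\opp\otimes_{\bF_q(\pi)}\tilde{F})=0$. The identity then gives $\inv_w(D\otimes_F\tilde{F})=\inv_w(D')$, and since $D'$ is central division over $\tilde{F}$ of degree $d/[\tilde{F}:F]$ by Proposition \ref{PropFtildeDivd}, its local index at $w$ divides $d/[\tilde{F}:F]$, and hence so does that of $D\otimes_F\tilde{F}$. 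This completes the verification of the embedding criterion. The main obstacle is the double centralizer identification $C_E(\tilde{F})=D'\otimes_{\tilde{F}} D^\opp[\pi]$: one must carefully match $\tilde{F}$-dimensions using $[\tilde{F}:\bF_q(\pi)]=n[\tilde{F}:F]/d$ from the proof of Proposition \ref{PropFtildeDivd} against $\dim_{\bF_q(\pi)}E=d^2n^2$, and confirm that $\tilde{F}\subset E$ is a simple subalgebra to which the double centralizer theorem applies.
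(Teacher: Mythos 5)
Your proposal is correct, but it takes a genuinely different route from the paper at the key step. The paper also ends with the ABHN-type criterion (it cites [Lau, Corollary A.3.4], which is the integrality form $\frac{d}{[\tilde F:F]}[\tilde F_{\tilde x}:F_x]\inv_x(D)\in\bZ$ of your index condition), but it obtains the local conditions at places of $\cR$ by importing the computed invariants of $D'_0$ over $\tilde F_0$ from [LRS, Corollary 9.10] and applying the same criterion \emph{twice}: once to the already existing embedding $\tilde F\to D'_0$ of (\ref{EqnEmbFtilde0D0Prime}) to extract the integrality away from $\tilde\fry_0\tilde\infty_0$, and once in the converse direction to produce $\tilde F\to D$; places over $\fry$ and $\infty$ are disposed of by $\inv_x(D)\in\bZ$ together with Lemma \ref{LemFtildeUniquePlaces_infty} (\ref{LemFtildeUniquePlaces_infty_d}). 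You instead stay over the finite field $k$ and re-derive the needed invariant information inside $E$: your identification $D'\otimes_{\tilde F}D^\opp[\pi]\simeq C_E(\tilde F)$ is sound (the map is injective since the source is central simple over $\tilde F$, and the $\tilde F$-dimensions agree, $\bigl(d/[\tilde F:F]\bigr)^2d^2=d^4/[\tilde F:F]^2=[E:\bF_q(\pi)]/[\tilde F:\bF_q(\pi)]^2$ using $[\tilde F:\bF_q(\pi)]=n[\tilde F:F]/d$ from the proof of Proposition \ref{PropFtildeDivd}), the centralizer relation $[C_E(\tilde F)]=[E\otimes_{\bF_q(\pi)}\tilde F]$ is the standard refinement of the double centralizer theorem, and $k(\tau)$ is indeed ramified only at the zero and pole of $\pi$, whose places in $\tilde F$ lie over $\fry$ and $\infty$ by Lemmas \ref{LemNormPi}, \ref{LemFtildeUniquePlaces_infty} and \ref{LemPlaceInftyPiUnique}, hence away from $\cR$; so $\inv_w(D\otimes_F\tilde F)=\inv_w(D')$ there, and the bound follows since $D'$ is division of degree $d/[\tilde F:F]$ over $\tilde F$ by Lemma \ref{LemEndRingDivision} and Proposition \ref{PropFtildeDivd}. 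What the paper's route buys is brevity, since the local invariants of $D'_0$ are already available from [LRS]; what yours buys is independence from that computation (you essentially re-prove the relevant piece of it over $k$ rather than $\bar k$) and an explicit Brauer-class formula relating $D\otimes_F\tilde F$, $D'$ and $k(\tau)$, at the cost of invoking the centralizer-vs-scalar-extension lemma and the ramification of $k(\tau)$, which the paper never needs to state.
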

\begin{proof}
	For any place $\tilde{x}$ of $\tilde{F}$, we denote by $\tilde{x}_0$ and $x$ the places of $\tilde{F}_0$ and $F$ below $\tilde{x}$.
	By \cite[Corollary 9.10]{LRS}, the $\tilde{F}_0$-algebra $D'_0$ is a central division algebra of dimension $(d/[\tilde{F}_0:F])^2$.
	Moreover, for any place $\tilde{x}$ of $\tilde{F}$ satisfying $\tilde{x}\nmid \tilde{\fry}_0\tilde{\infty}_0$, we have
	\begin{equation}\label{EqnInvDprime0}
	\inv_{\tilde{x}_0}(D'_0)=[\tilde{F}_{0,\tilde{x}_0}:F_x] \inv_x(D).
	\end{equation}

	Applying \cite[Corollary A.3.4]{Lau} to the $\tilde{F}_0$-linear embedding $\tilde{F}\to D'_0$
	of (\ref{EqnEmbFtilde0D0Prime}),
	(\ref{EqnInvDprime0}) yields
	\[
	\begin{aligned}
	\frac{d}{[\tilde{F}:F]}&[\tilde{F}_{\tilde{x}}:F_x]\inv_x(D)\\
	&=\frac{d/[\tilde{F}_0:F]}{[\tilde{F}:\tilde{F}_0]}
	[\tilde{F}_{\tilde{x}}:\tilde{F}_{0,\tilde{x}_0}][\tilde{F}_{0,\tilde{x}_0}:F_x] \inv_x(D)\in \bZ
	\end{aligned}
	\]
	for any place $\tilde{x}$ of $\tilde{F}$ satisfying $\tilde{x}\nmid \tilde{\fry}_0\tilde{\infty}_0$. 
	When $\tilde{x}\mid \tilde{\fry}_0\tilde{\infty}_0$, 
	we have $\inv_x(D)\in \bZ$ by assumption and Lemma \ref{LemFtildeUniquePlaces_infty} (\ref{LemFtildeUniquePlaces_infty_d})
	shows that the same integrality holds. Now the proposition follows from \cite[Corollary A.3.4]{Lau}.
\end{proof}



\subsection{Potentially good reduction of $\sD$-elliptic sheaves}\label{SubsecPotGoodRed}

\begin{lem}\label{LemDescentDataIntegral}
	Let $v\in |X|$ and $z\in |X|\setminus\{v,\infty\}$.
	Let $K/F_v$ be an extension of complete discrete valuation fields and let $L/K$ be a finite Galois extension.
	Let $H$ be a subgroup of $\Gal(L/K)$.
	Let $\ucE$ be a sound $\sD$-elliptic sheaf over $L$ of generic characteristic with a level $z$ structure $\iota$.
	\begin{enumerate}
		\item\label{LemDescentDataIntegral_Good} There exist a sound $\sD$-elliptic sheaf $\ucE_{\cO_L}$ over $\cO_L$ satisfying
		$\cZ(\ucE_{\cO_L})\cap|X|=\{v\}$ with a level $z$ structure $\iota_{\cO_L}$
		and an isomorphism $\xi:\ucE\simeq \ucE_{\cO_L}|_L$ sending $\iota$ to $\iota_{\cO_L}|_L$.
		\item\label{LemDescentDataIntegral_Data}  Let $\{\theta_h:\ucE\to f_h^*\ucE\}_{h\in H}$ be a family of isomorphisms of $\sD$-elliptic sheaves 
		over $L$ satisfying the cocycle condition.
		Then it extends to a family of isomorphisms $\{\Theta_h:\ucE_{\cO_L}\to f_h^*\ucE_{\cO_L}\}_{h\in H}$ 
		of $\sD$-elliptic sheaves 
		over $\cO_L$ satisfying the cocycle condition.
	\end{enumerate}
\end{lem}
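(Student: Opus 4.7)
For part (\ref{LemDescentDataIntegral_Good}), the plan is to apply the valuative criterion of properness to the moduli scheme. Recall from \S\ref{SubsecLevelStr} that $Y:=\Ell_{\sD,z}$ is projective over the open subscheme $U:=X\setminus((\{\infty\}\cup\cR\cup\{z\})\setminus\{v\})$ of $X$, and that $v\in U$ because $v\neq z$. The pair $(\ucE,\iota)$ defines an $L$-valued point of $Y$ lying above the generic point of $U$, while the given map $\Spec(\cO_L)\to X$ factors through $U$ (its closed point being $v\in U$), so the valuative criterion yields a unique $\cO_L$-valued point of $Y$ extending the given $L$-point and compatible with the chosen map to $U$. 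The corresponding pair $(\ucE_{\cO_L},\iota_{\cO_L})$ has zero equal to this map $\Spec(\cO_L)\to X$, which gives soundness and $\cZ(\ucE_{\cO_L})\cap|X|=\{v\}$; the isomorphism $\xi$ is built into the construction.

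For part (\ref{LemDescentDataIntegral_Data}), the idea is to realise each $\theta_h$ as witnessing an equality of two $L$-points of $Y$ and then to invoke the separatedness of $Y$. Since $(\cO_D/z\cO_D)^\times$ acts simply transitively on the set of level $z$ structures on $\ucE$, there is a unique $g_h\in(\cO_D/z\cO_D)^\times$ with $\theta_h^{-1}\circ(f_h^*\iota)=\iota\circ(g_h)_l$; set $\iota^{(h)}:=\iota\circ(g_h)_l$ and $\iota_{\cO_L}^{(h)}:=\iota_{\cO_L}\circ(g_h)_l$. By construction, $\theta_h$ is an isomorphism of \emph{pairs} $(\ucE,\iota^{(h)})\simeq(f_h^*\ucE,f_h^*\iota)$, so the two $L$-valued points of $Y$ defined by $(\ucE,\iota^{(h)})$ and $(f_h^*\ucE,f_h^*\iota)$ coincide. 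Their respective integral extensions $(\ucE_{\cO_L},\iota_{\cO_L}^{(h)})$ and $(f_h^*\ucE_{\cO_L},f_h^*\iota_{\cO_L})$ are then $\cO_L$-valued points of $Y$ sharing the same generic fibre, hence equal in $Y(\cO_L)$ by separatedness. Unpacking this equality produces the desired isomorphism $\Theta_h:\ucE_{\cO_L}\to f_h^*\ucE_{\cO_L}$ of $\sD$-elliptic sheaves over $\cO_L$; this isomorphism is unique, and its generic fibre must be $\theta_h$ by representability of $Y$ on the generic fibre.

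The cocycle condition for $\{\Theta_h\}_{h\in H}$ then follows by uniqueness of extension: both $\Theta_{h_1h_2}$ and $f_{h_1}^*\Theta_{h_2}\circ\Theta_{h_1}$ are isomorphisms $\ucE_{\cO_L}\to f_{h_1h_2}^*\ucE_{\cO_L}$ sharing the same generic fibre (namely $\theta_{h_1h_2}=f_{h_1}^*\theta_{h_2}\circ\theta_{h_1}$), and a morphism of locally free sheaves on the integral scheme $X\times\Spec(\cO_L)$ is determined by its restriction to the generic fibre. The main technical point I anticipate is the precise choice of the modifying element $g_h$: this is exactly what ensures that the level $z$ rigidification matches $\theta_h$, thereby promoting the given abstract isomorphism of $\sD$-elliptic sheaves to a genuine identification of two $\cO_L$-points of the representable moduli scheme $Y$, which is then forced to be a single point by separatedness.
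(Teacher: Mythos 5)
Your proposal is correct and follows essentially the same route as the paper: part (1) is the valuative criterion of properness for $\Ell_{\sD,z}$ over $X\setminus((\{\infty\}\cup\cR\cup\{z\})\setminus\{v\})$, and part (2) twists the level structure by the unique $g_h\in(\cO_D/\pi_z\cO_D)^\times$ matching $\theta_h$ and uses injectivity of $\Ell_{\sD,z}(\cO_L)\to\Ell_{\sD,z}(L)$ together with rigidity. The only cosmetic difference is the cocycle check: the paper deduces it from $h\mapsto\nu(h)$ being a homomorphism plus the absence of non-trivial automorphisms fixing a level $z$ structure, while you use that a morphism of locally free sheaves on the integral scheme $X\times\Spec(\cO_L)$ is determined by its restriction to the dense generic fibre --- both work.
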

\begin{proof}
	We have $[(\ucE,\iota)]\in \Ell_{\sD,z}(L)$.
	Since $\ucE$ is sound, the natural map $\Spec(\cO_{L})\to X\setminus (\cR\cup\{z,\infty\}\setminus\{v\})$ fits into the commutative diagram
	\[
	\xymatrix{
		\Ell_{\sD,z} \ar[r] & X\setminus(\cR\cup\{\infty,z\}\setminus\{v\})\\
		\Spec(L)\ar[r]\ar[u]^{[(\ucE,\iota)]}& \Spec(\cO_{L}).\ar[u]
	}
	\]
	Since the map $\Ell_{\sD,z}\to X\setminus ((\cR\cup \{\infty,z\})\setminus \{v\})$ is proper,
	the valuative criterion of properness implies that 
	there exists an element $[(\ucE_{\cO_L},\iota_{\cO_L})]\in \Ell_{\sD,z}(\cO_L)$ which agrees with
	$[(\ucE,\iota)]$ over $L$. Hence (\ref{LemDescentDataIntegral_Good}) follows.
	
	For (\ref{LemDescentDataIntegral_Data}), let $\pi_z\in A$ be the monic irreducible polynomial defining $z$.
	Note that the isomorphism $\theta_h:\ucE\to f_h^* \ucE$ sends $\iota\circ \nu(h)^{}_l$ to $f_h^*\iota$ 
	with some $\nu(h)\in (\cO_D/\pi_z\cO_D)^\times$. 
	Since the map $\Ell_{\sD,z}(\cO_L)\to \Ell_{\sD,z}(L)$ is 
	injective, we have
	\[
	[(\ucE_{\cO_L},\iota_{\cO_L}\circ \nu(h)^{}_l)]=[(f_h^*\ucE_{\cO_L},f_h^*\iota_{\cO_L})].
	\]
	Hence there exists an isomorphism 
	\[
	\Theta_h: \ucE_{\cO_L}\to f_h^*\ucE_{\cO_L}
	\]
	of $\sD$-elliptic sheaves over $\cO_L$ sending $\iota_{\cO_L}\circ \nu(h)^{}_l$ to $f_h^*\iota_{\cO_L}$.
	Since there is no non-trivial automorphism fixing a level $z$ structure, we see that the restriction of $\Theta_h$ to $L$
	is identified with $\theta_h$ under the isomorphism $\xi$ of (\ref{LemDescentDataIntegral_Good}). 
	From the cocycle condition satisfied by $\theta_h$, it follows that 
	$\nu:H\to (\cO_D/\pi_z\cO_D)^\times$ is a homomorphism and thus the cocycle condition also holds for $\Theta_h$. This concludes the proof.
\end{proof}

\begin{lem}\label{LemLevelxStrOverTame}
	Let $\fry\neq \infty\in |X|$ and let $x\in |X|\setminus\{\fry,\infty\}$ be a closed point of degree one.
	Let $K/F_\fry$ be a finite extension and let $\ucE$ be a sound $\sD$-elliptic sheaf over $K$ of generic characteristic.
	Then there exists a finite Galois extension $L/K$ such that its inertia subgroup is cyclic of order dividing $q^d-1$ and 
	$\ucE|_L$ admits a level $x$ structure.
\end{lem}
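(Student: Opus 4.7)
The plan is to take $L$ to be the splitting field of the finite étale $\cO_D$-module scheme $E_x$. Let $\pi_x \in A$ be the monic irreducible polynomial of degree one defining $x$. Since $\ucE$ is sound of generic characteristic, $x \notin \cZ(\ucE)$, hence $E_x$ is a finite étale right $\cO_D/\pi_x\cO_D$-module scheme of rank one over $K$, and the Galois action on its geometric points yields a continuous homomorphism
\[
\rho \colon \Gal(K^\sep/K) \to \Aut_{\cO_D}(E_x(K^\sep)) = (\cO_D/\pi_x\cO_D)^\times.
\]
Setting $L := (K^\sep)^{\ker \rho}$ gives a finite Galois extension of $K$, and by construction an $\cO_D$-linear isomorphism $\cO_D/\pi_x\cO_D \to E_x(L)$ exists, so $\ucE|_L$ admits a level $x$ structure. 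The inertia subgroup of $\Gal(L/K)$ at the unique place of $L$ above that of $K$ is precisely $\rho(I_K)$, where $I_K$ is the inertia subgroup of $\Gal(K^\sep/K)$, so the problem reduces to showing $\rho(I_K)$ is cyclic of order dividing $q^d - 1$.

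For this bound I would apply Proposition \ref{PropPotGoodRed} to obtain a finite Galois extension $K_1/K$ such that $\ucE|_{K_1}$ has good reduction and the inertia subgroup $I(K_1/K) \subseteq \Gal(K_1/K)$ is cyclic of order dividing $q^d - 1$. Lemma \ref{LemDescentDataIntegral} (\ref{LemDescentDataIntegral_Good}) extends $\ucE|_{K_1}$ to a sound $\sD$-elliptic sheaf $\ucE_{\cO_{K_1}}$ over $\cO_{K_1}$ with $\cZ(\ucE_{\cO_{K_1}}) \cap |X| = \{\fry\}$; since $x \neq \fry, \infty$, the associated $E_x$ for $\ucE_{\cO_{K_1}}$ is finite étale over $\cO_{K_1}$ (see \S\ref{SubsecLevelStr}), which forces $\rho|_{\Gal(K_1^\sep/K_1)}$ to be unramified. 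In particular $\rho(I_{K_1}) = 1$.

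Combining these with the standard identification $I_K / I_{K_1} \simeq I(K_1/K)$ valid for any finite Galois extension $K_1/K$, we conclude that $\rho(I_K)$ is a quotient of $I(K_1/K)$, hence itself cyclic of order dividing $q^d - 1$ as required.

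The main obstacle is that the argument hinges on Proposition \ref{PropPotGoodRed} providing a good-reduction extension whose inertia is cyclic of order dividing $q^d - 1$, not merely of bounded size; this cyclicity reflects the bound on $|\Aut(\bar{\ucE})|$ for $\bar{\ucE}$ over a finite residue field (Proposition \ref{PropAutFinite}), transported to tame inertia by a Néron--Ogg--Shafarevich-type specialization argument. Once that proposition is granted, the remainder is a purely formal manipulation of inertia subgroups and no further difficulties arise.
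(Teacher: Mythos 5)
Your choice of $L$ (the splitting field of the $G_K$-action on $E_x(K^\sep)$) and the reduction of the problem to bounding $\rho(I_K)$ agree with the paper, but the step that does all the work is circular. You invoke Proposition \ref{PropPotGoodRed} to produce a good-reduction Galois extension $K_1/K$ whose inertia is cyclic of order dividing $q^d-1$; in the paper, however, Proposition \ref{PropPotGoodRed} is \emph{deduced from} Lemma \ref{LemLevelxStrOverTame} (its proof begins by applying this very lemma to get a level $x$ structure over a tame extension and then uses Lemma \ref{LemDescentDataIntegral}). What is available independently of the lemma is only potentially good reduction over some unspecified finite extension (cited from \cite{LRS,Hau}); that gives finiteness of $\rho(I_K)$, which you already have since $L/K$ is finite, but not the divisibility by $q^d-1$ --- the precise tame cyclic bound is exactly the content to be proved. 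Your closing remark that the cyclicity reflects the bound on the automorphism group over the residue field, ``transported to tame inertia by a N\'eron--Ogg--Shafarevich-type specialization argument,'' names the missing work without carrying it out.

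The paper performs that transport directly for $L$ itself, and the delicate point is not the existence of the reduction map but its injectivity on inertia. Using Lemma \ref{LemDescentDataIntegral} (which needs the level $x$ structure available over $L$, so it cannot be applied over an arbitrary good-reduction extension), one extends $\ucE|_L$ together with the canonical descent isomorphisms indexed by the inertia group $I$ of $\Gal(L/K)$ to $\cO_L$, reduces modulo the maximal ideal to get $\psi\colon I\to \Aut(\ucE_{\cO_L}|_{k_L})$, and bounds $I/\Ker(\psi)$ by Proposition \ref{PropAutFinite}. One must then show $\Ker(\psi)=1$: this is done by comparing the reduction of the descent data with the reduction of the \'etale scheme $E_{\cO_L,x}$ (via the embeddings into the vector bundles $\bV_*(\cE_i|_{x\times\Spec(\cO_L)})$), so that $\Ker(\psi)$ acts trivially on $E_x(L)$, and then the injectivity of $\Gal(L/K)\to\Aut(E_x(L))$ forces $\Ker(\psi)$ to be trivial. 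To repair your outline you would have to supply this specialization--injectivity argument yourself rather than cite Proposition \ref{PropPotGoodRed}; once it is supplied, your group-theoretic bookkeeping with $I_K/I_{K_1}$ becomes unnecessary, since the bound is obtained for the inertia of $L/K$ directly.
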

\begin{proof}
	Let $\pi\in A$ be the monic irreducible polynomial that defines $x$ and put $G=(\cO_D/\pi\cO_D)^\times$ as before.
	Let $K^\sep$ be a separable closure of $K$ and let $G_K=\Gal(K^\sep/K)$ be the Galois group.
	We denote by $E_x$ the finite \'{e}tale right $\cO_D/\pi\cO_D$-module scheme over $K$ defined in \S\ref{SubsecLevelStr}.
	Let $L/K$ be the finite Galois extension corresponding to the kernel of the $G_K$-action on $E_x(K^\sep)$,
	so that we have a natural injection
	\begin{equation}\label{EqnGalExInj}
		\Gal(L/K)\to \Aut(E_x(L)).
	\end{equation}
	Then $\ucE|_{L}$ admits a level $x$ structure $\iota$.
	
	Let $I$ be the inertia subgroup of $\Gal(L/K)$.
	By Lemma \ref{LemDescentDataIntegral}, there exists a sound $\sD$-elliptic sheaf $\ucE_{\cO_L}$ over $\cO_L$
	satisfying $\cZ(\ucE_{\cO_L})\cap|X|=\{\fry\}$
	such that the canonical isomorphism $[h]_L:\ucE|_L\to f_h^* \ucE|_L$ for any $h\in I$ extends to an isomorphism 
	\[
	[h]: \ucE_{\cO_L}\to f_h^*\ucE_{\cO_L}
	\]
	of $\sD$-elliptic sheaves over $\cO_L$ satisfying the cocycle condition.

	We denote by $m_L$ the maximal ideal of $\cO_L$ and by $k_L$ the residue field of $\cO_L$.
	Since $h\in I$, the reduction of $[h]$ modulo $m_L$ defines an element $[h]_{k_L}\in \Aut(\ucE_{\cO_L}|_{k_L})$ and 
	by the cocycle condition on $[h]$ we obtain a homomorphism
	\[
	\psi: I\to \Aut(\ucE_{\cO_L}|_{k_L}),\quad h\mapsto [h^{-1}]_{k_L}.
	\]
	Since $k_L$ is a finite field of characteristic $\fry$ and $\ucE_{\cO_L}|_{k_L}$ is sound, Proposition \ref{PropAutFinite} implies
	that $I/\Ker(\psi)$ is a cyclic group of order dividing $q^d-1$.
	
	Let $E_{\cO_L,x}$ be the group scheme defined in a manner similar to $E_x$ for $\ucE_{\cO_L}$.
	Since $E_{\cO_L,x}$ is \'{e}tale over $\cO_L$,
	we have an isomorphism of $\bF_q$-vector spaces
	\begin{equation}\label{EqnExRed}
	E_x(L)\simeq (E_{\cO_L,x}|_{k_L})(k_L).
	\end{equation}
	The action of $h\in I$ on the right-hand side of (\ref{EqnExRed}) is described as follows: 
	for the canonical isomorphism $[h]_{x,L}: E_x|_L\to f_h^*(E_x|_L)$, the reduction modulo $m_L$ of its unique extension 
	$[h]_x:E_{\cO_L,x}\to f_{h}^* E_{\cO_L,x}$ defines an element $[h]_{x,k_L}\in \Aut(E_{\cO_L,x}|_{k_L})$. Then the action of $h$ agrees with
	$[h^{-1}]_{x,k_L}$.

	For any scheme $S$ and any locally free $\cO_S$-module $\cL$, we denote by $\bV_*(\cL)$ the covariant vector bundle associated with $\cL$,
	which represents the functor
	\[
	T\mapsto H^0(T,\cL|_T)
	\]
	over $S$.
	Note that $x=\Spec(\bF_q)$ and we have natural closed immersions 
	\[
	E_x|_L\to \bV_*(\cE_i|_{x\times \Spec(L)}),\quad E_{\cO_L,x}\to \bV_*(\cE_{\cO_L,i}|_{x\times \Spec(\cO_L)})
	\]
	which are independent of $i$. 
	By functoriality and the uniqueness of the extension $[h]_x$, we have commutative diagrams
	\[
	\begin{gathered}
	\xymatrix{
		E_x|_L\ar[r]^{[h]_{x,L}}\ar[d] & f_h^*(E_x|_L)\ar[d] \\
		\bV_*(\cE_i|_{x\times \Spec(L)})\ar[r]_-{\bV_*([h]_L)} & \bV_*(f_h^*(\cE_i|_{x\times \Spec(L)})),
	}\\
\xymatrix{
	E_{\cO_L,x}\ar[d]\ar[r]^{[h]_x}& f_{h}^* E_{\cO_L,x}\ar[d]\\
	\bV_*(\cE_{\cO_L,i}|_{x\times \Spec(\cO_L)})\ar[r]_-{\bV_*([h])} & \bV_*(f_h^*(\cE_{\cO_L,i}|_{x\times \Spec(\cO_L)})).
}
\end{gathered}
	\]
This implies that the map
	$[h^{-1}]_{x,k_L}$ agrees with the automorphism of $E_{\cO_L,x}|_{k_L}$ induced by $[h^{-1}]_{k_L}$,
	and thus $\Ker(\psi)$ acts trivially on $E_x(L)$. Since the map (\ref{EqnGalExInj}) is injective,
	it follows that $\Ker(\psi)$ is trivial and $I$ is cyclic of order dividing $q^d-1$. This concludes the proof.
\end{proof}

\begin{prop}\label{PropPotGoodRed}
	Let $\fry\neq \infty\in |X|$.
	Let $K/F_\fry$ be a finite extension and let $\ucE$ be a sound $\sD$-elliptic sheaf over $K$ of generic characteristic.
	\begin{enumerate}
		\item\label{PropPotGoodRed-Gal} $\ucE$ has good reduction over a finite Galois extension $L/K$ with cyclic inertia subgroup of order dividing $q^d-1$.
		\item\label{PropPotGoodRed-Tot} $\ucE$ has good reduction over a finite totally ramified extension $K'/K$ with ramification index $e(K'/K)$ dividing 
		$q^d-1$.
	\end{enumerate}
\end{prop}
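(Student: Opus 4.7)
My plan is to deduce both statements from Lemma~\ref{LemLevelxStrOverTame}, which supplies a finite Galois extension $L/K$ whose inertia subgroup $\bar I$ is cyclic of order $e$ dividing $q^d-1$, together with a level $x$ structure $\iota$ on $\ucE|_L$ for some closed point $x \in |X|$ of degree one disjoint from $\{\fry,\infty\}$.

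For (\ref{PropPotGoodRed-Gal}), the conclusion is essentially immediate: Lemma~\ref{LemDescentDataIntegral}(\ref{LemDescentDataIntegral_Good}) applied to $(\ucE|_L,\iota)$ produces a sound $\sD$-elliptic sheaf $\ucE_{\cO_L}$ over $\cO_L$ whose restriction to $L$ recovers $\ucE|_L$, which is precisely the definition of good reduction over $L$.

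For (\ref{PropPotGoodRed-Tot}), I use the same $L$ and exploit the tameness of $L/K$, which holds because $e$ is coprime to $p$. The standard description of the maximal tame quotient of the absolute Galois group of $K$ as $\varprojlim_{p\nmid n}\mu_n\rtimes\hat{\bZ}$ descends to a semidirect product decomposition $\Gal(L/K) = \bar I \rtimes H$ with $H$ cyclic of order $f = [L^\ur : K]$ and generated by a lift of Frobenius, where the compatibility $q^f \equiv 1 \pmod{e}$ required for such a splitting follows from the inclusion $\mu_e \subseteq k_L = k_{L^\ur}$. Setting $K' := L^H$, a direct computation gives
\[
K' \cap L^\ur = L^{\langle H, \bar I\rangle} = L^{\Gal(L/K)} = K,
\]
so $K'/K$ is totally ramified of degree $e$ dividing $q^d-1$, while $L/K'$ is unramified Galois of degree $f$ with group $H$.

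It then remains to show that $\ucE|_{K'}$ has good reduction. Since $\ucE$ is defined over $K$, the tautological isomorphisms $\theta_h : \ucE|_L \to f_h^*\ucE|_L$ for $h \in H$ automatically satisfy the cocycle condition, and Lemma~\ref{LemDescentDataIntegral}(\ref{LemDescentDataIntegral_Data}) lifts them to a cocycle $\{\Theta_h : \ucE_{\cO_L} \to f_h^*\ucE_{\cO_L}\}_{h \in H}$ on the integral model. As $L/K'$ is unramified, the ring extension $\cO_{K'} \to \cO_L$ is finite étale Galois with group $H$, and effective étale descent for the Deligne--Mumford stack $\sEll_{\sD}$ then yields a sound $\sD$-elliptic sheaf over $\cO_{K'}$ restricting to $\ucE|_{K'}$, establishing the asserted good reduction. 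The most delicate step in the plan is the semidirect product decomposition of $\Gal(L/K)$, since it is what guarantees the existence of a totally ramified subfield $K' \subset L$ of the required degree.
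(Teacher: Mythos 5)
Your proof of part (\ref{PropPotGoodRed-Gal}) is correct and matches the paper: combine Lemma~\ref{LemLevelxStrOverTame} with Lemma~\ref{LemDescentDataIntegral}(\ref{LemDescentDataIntegral_Good}).

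For part (\ref{PropPotGoodRed-Tot}), however, the claimed semidirect product decomposition $\Gal(L/K)=\bar I\rtimes H$ is a genuine gap. The congruence $|k_K|^f\equiv 1\pmod e$ is \emph{necessary} for the inertia character to land in $\mu_e(k_L)$, but it is not \emph{sufficient} for the exact sequence $1\to\bar I\to\Gal(L/K)\to\Gal(k_L/k_K)\to 1$ to split. A tamely ramified Galois group is metacyclic, with presentation $\langle\sigma,\tau\mid\tau^e=1,\ \sigma\tau\sigma^{-1}=\tau^{|k_K|},\ \sigma^f=\tau^c\rangle$ for some $c$; the existence of a complement to $\bar I$ is exactly the vanishing of the class $c$, which can fail. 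Concretely, take $K=\bF_3((T))$, let $\zeta$ generate $\bF_9^\times$, and set $L=\bF_9((T))(\sqrt{\zeta T})$. One checks that the Frobenius lift $\sigma$ satisfies $\sigma(\sqrt{\zeta T})=\zeta\sqrt{\zeta T}$ and so has order~$4$; thus $\Gal(L/K)\simeq\bZ/4\bZ$ with $\bar I$ the unique subgroup of order $2$, which has no complement. So there is no subfield $K'\subset L$ that is totally ramified of degree $e$ over $K$ with $L/K'$ unramified, and the rest of your argument has nothing to descend from. (A second, minor, slip: the residue field of $K$ is a finite extension of $\bF_\fry$, so the relevant modulus is $|k_K|^f-1$, not $q^f-1$.)

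The paper's proof of (\ref{PropPotGoodRed-Tot}) sidesteps this obstruction by \emph{not} looking for $K'$ inside $L$. Instead it writes $\varpi_L^e=\varpi u$ for uniformizers $\varpi_L,\varpi$ of $L,K$ and $u\in\cO_L^\times$, sets $K':=K(\varpi^{1/e})$ directly (manifestly totally ramified of degree $e\mid q^d-1$), and then uses Hensel's lemma (tameness gives $p\nmid e$) to find an unramified extension $N/K$ so that $L':=LN$ contains all $e$-th roots of $u$, whence $L'/K'$ is unramified. The integral descent datum supplied by Lemma~\ref{LemDescentDataIntegral}(\ref{LemDescentDataIntegral_Data}) for $\Gal(L'/K')$ then descends the integral model $\ucE_{\cO_{L}}|_{\cO_{L'}}$ along the finite \'etale Galois covering $\Spec(\cO_{L'})\to\Spec(\cO_{K'})$, which is the same final step you envisioned. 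If you replace the semidirect-product argument with this direct construction of $K'$ and the unramified enlargement of $L$, your proof goes through.
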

\begin{proof}
	Let $x\in |X|\setminus\{\fry,\infty\}$ be a closed point of degree one.
	By Lemma \ref{LemLevelxStrOverTame}, there exists a finite Galois extension $L/K$ with cyclic inertia subgroup 
	of order dividing $q^d-1$ such that
	$\ucE|_L$ admits a level $x$ structure $\iota$. Then 
	Lemma \ref{LemDescentDataIntegral} (\ref{LemDescentDataIntegral_Good}) yields  (\ref{PropPotGoodRed-Gal}).

	For (\ref{PropPotGoodRed-Tot}), put $e=e(L/K)$. Let $\varpi_{L}$ and $\varpi$ be uniformizers of $L$ and $K$. Write $\varpi_{L}^e=\varpi u$ with some $u\in 
	\cO_{L}^\times$.
	Since $p\nmid e$, Hensel's lemma shows that there exists an unramified extension $N/K$ 
	such that the composite field $L'=L N$ contains all $e$-th root of $u$.
	This implies that $L'$ is unramified over $K'=K(\varpi^{1/e})$.
	
	By Lemma \ref{LemDescentDataIntegral} (\ref{LemDescentDataIntegral_Data}), the canonical descent datum on $\ucE|_{L'}$ for the Galois extension $L'/K'$
	extends to that on $\ucE_{\cO_{L}}|_{\cO_{L'}}$ for the Galois covering $\Spec(\cO_{L'})\to \Spec(\cO_{K'})$ with Galois group $\Gal(L'/K')$.
	Hence it descends to a $\sD$-elliptic sheaf over $\cO_{K'}$ such that its restriction to $K'$ is naturally isomorphic to $\ucE|_{K'}$. 
	This concludes the proof of the proposition.
\end{proof}

\section{$\frp$-adic properties of $\sD$-elliptic sheaves}\label{SecP-adic}

In this section, we fix $\frp\in |X|\setminus\{\infty\}$.

\subsection{The functor $\Gr$}\label{SubsecGr}

Let $R$ be a local $\bF_q$-algebra and let $\sigma=\sigma_q$ be the $q$-th power Frobenius map on $R$.
We say a pair $(M,\tau)$ is a (finite) $\varphi$-sheaf over $R$ if $M$ is a finite locally free $R$-module 
and $\tau: \sigma^*M\to M$ is an $R$-linear map \cite[\S2]{Dri_F}.
By \cite[Proposition 2.1]{Dri_F}, we can associate with it
a finite locally free $\bF_q$-module scheme over $R$ which we denote by $\Gr_R(M)$.
Here we briefly recall the construction.

For any $R$-algebra $S$, we denote by $F_S$ the $q$-th power Frobenius map of $S$.
Put $S_M=\Sym_R(M)$. 
Let $J_M$ be its ideal generated by $(F_{S_M}\otimes 1-\tau)(\sigma^*M)$. Define
\[
\Gr_R(M)=\Spec(S_M/J_M).
\]
Note that $\Sym_R(M)$ has the following universal property: for any (commutative) $R$-algebra $S$,
the natural map
\[
\Hom_{R\text{-alg.}}(\Sym_R(M),S)\to \Hom_R(M,S)
\]
is a bijection. This yields a bijection
\begin{equation}\label{EqnGrVP}
\Gr_R(M)(S)\to \{f\in \Hom_R(M,S)\mid f(m)^q=f(\tau(\sigma^*(m)))\text{ for any }m\in M\}.
\end{equation}
Thus $\Gr_R(M)$ has a natural structure of an $\bF_q$-module scheme, which is compatible with the one on $\Spec(\Sym_R(M))$.
Its zero section is defined by the zero map $M\to S$. 
Then $\Gr_R$ gives an exact functor from the category of $\varphi$-sheaves over $R$ to that of finite locally free
$\bF_q$-module schemes over $R$ \cite[Proposition 2.1]{Dri_F}.

A $\varphi$-sheaf $(M,\tau)$ is said to be \'{e}tale if $\tau:\sigma^*M\to M$ is an isomorphism. From \cite[Proposition 2.1]{Dri_F},
we see that the functor $\Gr_R$ defines an anti-equivalence of categories from the category of \'{e}tale $\varphi$-sheaves over $R$ to 
that of finite \'{e}tale $\bF_q$-module schemes over $R$.

When $M$ admits a right $\cO_D$-action which commutes with $\tau$, the action induces a left $\cO_D$-action on the group scheme $\Gr_R(M)$.



\subsection{$\frp$-divisible groups of $\sD$-elliptic sheaves}\label{SubsecDefPdiv}

Let $\ucE$ be a $\sD$-elliptic sheaf over a local $\bF_q$-algebra $R$.
Let $P$ be the $t$-motive associated with $\ucE$.
It is a locally free $A\otimes R$-module of rank $d^2$ equipped with an $\cO_D$-action given by
\[
\varphi:\cO_D^\opp\to \End_{R[\tau]}(P). 
\]
For any positive integer $n$, the pair $(P/\varphi(\frp^n)P,\tau)$ defines a $\varphi$-sheaf over $R$.
We write
\[
\ucE[\frp^n]:=\Gr_R(P/\varphi(\frp^n)P).
\]

Since $P$ is a torsion free $A$-module, we have an exact sequence of $\varphi$-sheaves over $R$
\[
\xymatrix{
	0 \ar[r]& P/\varphi(\frp^i) P \ar[r]^{\varphi(\frp^n)}& P/\varphi(\frp^{n+i})P\ar[r] & P/\varphi(\frp^n) P\ar[r] & 0,
}
\]
which is compatible with the right $\cO_D$-actions. Note that the $\cO_D$-action on $P/\varphi(\frp^n) P$ induces 
a right $\cO_{D_\frp}$-action on it, and the exact sequence above is also compatible with the $\cO_{D_\frp}$-actions.

By \cite[Proposition 2.1]{Dri_F}, the functor $\Gr_R$ gives an exact sequence
\[
\xymatrix{
	0 \ar[r]& \ucE[\frp^n] \ar[r]^{\iota_{n,i}} & \ucE[\frp^{n+i}]\ar[r]^{\pi_{n,i}} & \ucE[\frp^i]\ar[r] & 0
}
\]
of finite locally free $\bF_q$-module schemes over $R$ which is compatible with the left $\cO_{D_\frp}$-actions. 
Since the multiplication by
$\frp^i$ factors as
\begin{equation}\label{EqnPdiv}
	\ucE[\frp^n] \overset{\pi_{i,n-i}}{\to} \ucE[\frp^{n-i}] \overset{\iota_{n-i,i}}{\to} \ucE[\frp^n],
\end{equation}
we have a natural isomorphism
\[
\ucE[\frp^i] \simeq \Ker(\frp^i: \ucE[\frp^n]\to \ucE[\frp^n])
\]
for any $n\geq i$.

Thus the group schemes $\ucE[\frp^n]$ define a $\frp$-divisible group over $R$ of height $d^2$ in the sense of \cite[\S1.2]{Tag_ss},
which we denote by
\[
\ucE[\frp^\infty]:=\varinjlim_n \ucE[\frp^n].
\]
Since the functor $\Gr_R$ commutes with base change, the formation of $\ucE[\frp^\infty]$ also commutes with base extension of 
local $\bF_q$-algebras.

\subsection{Tate modules attached to $\sD$-elliptic sheaves}\label{SubsecTateMod}

Suppose that $R=L$ is a field. Let $\ucE$ be a $\sD$-elliptic sheaf over $L$ satisfying $\infty\notin \cZ(\ucE)$ and $\chr_A(L)\neq \frp$.
Then the group scheme $\ucE[\frp^n]$ is \'{e}tale over $L$ by \cite[Proposition 2.1]{Dri_F}.
Let $L^\sep$ be a separable closure of $L$ and put $G_L=\Gal(L^\sep/L)$.

\begin{lem}\label{LemEPfree}
	Let $\ucE$ be a $\sD$-elliptic sheaf over $L$ satisfying $\infty\notin \cZ(\ucE)$ and $\chr_A(L)\neq \frp$.
	Then the $A/(\frp^n)$-module $\ucE[\frp^n](L^\sep)$ is free of rank $d^2$.
\end{lem}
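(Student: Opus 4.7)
The plan is to combine two cardinality computations with the structure theorem for modules over the chain ring $A/(\frp^n)$: I will compute the total order of $\ucE[\frp^n](L^\sep)$ and the $\bF_\frp$-dimension of its $\frp$-torsion, and the freeness claim will follow formally.

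First I would verify that $\ucE[\frp^n]$ is finite \'{e}tale over $L$ of order $|\bF_\frp|^{d^2 n}$. Since $L$ is a field, the $t$-motive $P$ is free of rank $d^2$ over $A\otimes L$ (as recorded in \S\ref{SubsecTMotives}), so $M_n := P/\varphi(\frp^n)P$ is free of rank $d^2$ over $(A/\frp^n)\otimes L$, hence of $L$-dimension $d^2 n\deg(\frp)$. I would then show that the map $\tau_n : \sigma^* M_n \to M_n$ induced by $\tau$ is an isomorphism. Its cokernel is identified with $(P/\tau P)/\varphi(\frp^n)(P/\tau P)$, and from (\ref{DiagVarphi}) the $A$-action on $P/\tau P$ factors through the zero $A\to L$; the hypothesis $\chr_A(L)\neq \frp$ then forces $\varphi(\frp)$ to act invertibly on $P/\tau P$, so this cokernel vanishes. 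Since $\sigma^*M_n$ and $M_n$ have the same finite $L$-dimension, $\tau_n$ is a surjection between $L$-vector spaces of equal dimension, hence an isomorphism, and $(M_n,\tau_n)$ is \'{e}tale. The construction of $\Gr_L$ recalled in \S\ref{SubsecGr} then yields $|\ucE[\frp^n](L^\sep)| = q^{\dim_L M_n} = |\bF_\frp|^{d^2 n}$.

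Next I would read off the $\bF_\frp$-dimension of the $\frp$-torsion. The identification $\ucE[\frp]\simeq\Ker(\frp:\ucE[\frp^n]\to\ucE[\frp^n])$ noted after (\ref{EqnPdiv}) gives $\ucE[\frp^n](L^\sep)[\frp] = \ucE[\frp](L^\sep)$, and specializing the previous step to $n = 1$ shows this common group has order $|\bF_\frp|^{d^2}$, so it is an $\bF_\frp$-vector space of dimension $d^2$.

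To conclude, since $A/(\frp^n)$ is an Artinian local principal ideal ring, every finite module over it decomposes as $\bigoplus_{i=1}^{k} A/(\frp^{a_i})$ with $a_i\in\{1,\ldots,n\}$. Applied to $\ucE[\frp^n](L^\sep)$, counting $\frp$-torsion forces $k = d^2$ and counting orders forces $\sum_i a_i = d^2 n$, so every $a_i$ equals $n$; hence $\ucE[\frp^n](L^\sep)$ is free of rank $d^2$ over $A/(\frp^n)$. The only delicate step is the \'{e}taleness argument: it reduces to extracting the invertibility of $\varphi(\frp)$ on $P/\tau P$ from (\ref{DiagVarphi}), and once that is in hand the remainder is bookkeeping with short exact sequences and the structure theorem.
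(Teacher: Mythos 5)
Your proof is correct and follows essentially the same skeleton as the paper's: identify a one-step reduction of $N := \ucE[\frp^n](L^\sep)$ with the $n=1$ case, and combine that with the total count $|N| = |\bF_\frp|^{d^2 n}$ to force freeness. The paper carries this out by looking at the quotient $N/\frp N \simeq \ucE[\frp](L^\sep)$ and applying Nakayama's lemma plus a length comparison, whereas you look at the socle $N[\frp] \simeq \ucE[\frp](L^\sep)$ and invoke the structure theorem over the chain ring $A/(\frp^n)$; these are interchangeable bookkeeping devices for the same counting argument. You also spell out the \'{e}taleness of $(M_n,\tau_n)$ and the computation of $|N|$ directly from the cokernel of $\tau$ and the diagram (\ref{DiagVarphi}), whereas the paper takes \'{e}taleness from \cite[Proposition 2.1]{Dri_F} (stated just before the lemma) and records the length $d^2 n$ without further comment; your derivation is a correct unpacking of what the paper leaves implicit.
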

\begin{proof}
	Put $N=\ucE[\frp^n](L^\sep)$, which is an $\cO_\frp$-module of length $d^2n$.
	By (\ref{EqnPdiv}), we have an isomorphism of $\cO_\frp$-modules
	\[
	N/\frp N\to \ucE[\frp](L^\sep).
	\]
	Lifting a basis of the $\bF_\frp$-vector space on the right-hand side to $N$, we obtain a homomorphism of $\cO_\frp$-modules $(A/(\frp^n))^{d^2}\to N$.
	By Nakayama's lemma and comparing the length, we see that the map is an isomorphism.
\end{proof}

Put
\[
T_\frp(\ucE)=\varprojlim_n \ucE[\frp^n](L^\sep),
\]
where the inverse limit is taken with respect to the map $\pi_{1,n}$. It is a left $\cO_{D_\frp}$-module 
such that the natural left $G_L$-action commutes with the $\cO_{D_\frp}$-action. 

\begin{lem}\label{LemTPfree}
	Let $\ucE$ be a $\sD$-elliptic sheaf over $L$ satisfying $\infty\notin \cZ(\ucE)$ and $\chr_A(L)\neq \frp$.
	Then the $\cO_{D_\frp}$-module $T_\frp(\ucE)$ is free of rank one. In particular, for any $n$ we have an isomorphism of left $\cO_D$-modules
	\[
	\cO_D/\frp^n\cO_D\to \ucE[\frp^n](L^\sep).
	\]
\end{lem}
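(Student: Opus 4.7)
The plan is to first produce $T_\frp(\ucE)$ as a free $\cO_\frp$-module of rank $d^2$, then upgrade this to freeness of rank $1$ over $\cO_{D_\frp}$ via the structure theory of maximal orders, and finally pass to quotients modulo $\frp^n$ to obtain the second assertion.

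First I would observe that the transition maps $\pi_{1,n}: \ucE[\frp^{n+1}](L^\sep) \to \ucE[\frp^n](L^\sep)$ are surjective, as follows from applying the exact functor $\Gr_R$ to the short exact sequence of $\varphi$-sheaves $0 \to P/\frp^n P \to P/\frp^{n+1} P \to P/\frp P \to 0$ (the first map being multiplication by $\frp$, which is injective because $P$ is $A$-torsion-free). Combined with Lemma \ref{LemEPfree}, a standard Nakayama-type lifting argument (pick an $\bF_\frp$-basis of $\ucE[\frp](L^\sep)$, lift it compatibly through the tower, and check that the resulting elements form an $\cO_\frp$-basis) then shows that $T_\frp(\ucE)$ is a free $\cO_\frp$-module of rank $d^2$, with natural identifications $T_\frp(\ucE)/\frp^n T_\frp(\ucE) \simeq \ucE[\frp^n](L^\sep)$ as $\cO_\frp$-modules.

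Next I would promote this to freeness over $\cO_{D_\frp}$. The right $\cO_D$-action on the $t$-motive $P$ induces, via the contravariance of $\Gr_R$, a left $\cO_{D_\frp}$-action on each $\ucE[\frp^n](L^\sep)$ and hence on $T_\frp(\ucE)$. Since $\cO_{D_\frp}$ is a maximal $\cO_\frp$-order in the central simple $F_\frp$-algebra $D_\frp$ and $\cO_\frp$ is a complete discrete valuation ring, the classical structure theory of lattices over maximal orders implies that any finitely generated $\cO_{D_\frp}$-module which is $\cO_\frp$-torsion-free is free over $\cO_{D_\frp}$; comparing $\cO_\frp$-ranks ($d^2$ on both sides) then forces the $\cO_{D_\frp}$-rank of $T_\frp(\ucE)$ to equal $1$. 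If a self-contained argument is preferred, the two cases can be treated by hand: when $\frp \notin \cR$, $\cO_{D_\frp} \simeq M_d(\cO_\frp)$ and Morita equivalence reduces the claim to the triviality that finitely generated torsion-free $\cO_\frp$-modules are free; when $\frp \in \cR$, assumption (\ref{EqnAssumpDx}) makes $\cO_{D_\frp}$ a noncommutative discrete valuation ring with uniformizer $\Pi$ satisfying $\Pi^d \cO_{D_\frp} = \frp \cO_{D_\frp}$ and residue field $\bF_\frp^{(d)}$, and noncommutative Nakayama reduces the problem to showing $\dim_{\bF_\frp^{(d)}}(T_\frp(\ucE)/\Pi T_\frp(\ucE)) = 1$, which follows because multiplication by $\Pi$ induces $\bF_q$-linear isomorphisms between the $d$ successive quotients of the $\Pi$-adic filtration of $T_\frp(\ucE)/\frp T_\frp(\ucE)$, whose total $\bF_q$-dimension is $d^2 \deg(\frp)$. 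The ``in particular" statement then follows by reducing the isomorphism $T_\frp(\ucE) \simeq \cO_{D_\frp}$ modulo $\frp^n$ and using $\cO_D/\frp^n \cO_D = \cO_{D_\frp}/\frp^n \cO_{D_\frp}$. The main obstacle is the step invoking the structure theory of maximal orders, which is by far the deepest input required.
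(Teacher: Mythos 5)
Your proof is correct and follows essentially the same route as the paper: the paper also first records that $T_\frp(\ucE)$ is $\cO_\frp$-free of rank $d^2$ (which is exactly your limit/Nakayama argument built on Lemma \ref{LemEPfree} and the surjective transition maps) and then invokes the structure theory of lattices over a maximal order over a complete discrete valuation ring, citing \cite[Theorem 18.7]{Rei}, to conclude freeness of rank one over $\cO_{D_\frp}$. One small caveat: your blanket claim that every finitely generated $\cO_\frp$-torsion-free $\cO_{D_\frp}$-module is $\cO_{D_\frp}$-free is false when $\frp\notin\cR$ (the column lattice $\cO_\frp^{d}$ over $M_d(\cO_\frp)$ is torsion-free but not free); the correct general statement is that every lattice is a direct sum of copies of the unique indecomposable lattice, so it is the rank count that forces freeness of rank one --- and since under assumption (\ref{EqnAssumpDx}) the algebra $D_\frp$ is either split or division, your Morita argument in the split case and your noncommutative-DVR argument in the ramified case supply precisely this, so the proof stands as written.
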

\begin{proof}
	Since the $\cO_\frp$-module $T_\frp(\ucE)$ is free of rank $d^2$,
	from \cite[Theorem 18.7]{Rei} it follows that the $\cO_{D_\frp}$-module $T_\frp(\ucE)$ is free of rank one.
\end{proof}

\subsection{Reduced characteristic polynomial of the Frobenius automorphism}\label{SubsecRCP}
In this subsection, we assume $\frp\in \cR$.

Let $\fry\neq \frp$ be a monic irreducible polynomial in $A$ satisfying $\fry\notin \cR$. 
Let $k$ be a finite extension of $\bF_\fry$ and write $|k|=q^n$, as in \S\ref{SecDEllFinField}.
Let $\bar{k}$ be an algebraic closure of $k$ and put $G_k=\Gal(\bar{k}/k)$.

Let $\ucE$ be a sound $\sD$-elliptic sheaf over $k$ of characteristic $\fry$.

\begin{lem}\label{LemTateInj}
	The natural ring homomorphism
	\[
	j_\frp: \cO_\frp\otimes_A\End(\ucE)^\opp \to \End_{\cO_{D_\frp}}(T_\frp(\ucE))
	\]
	is injective.
\end{lem}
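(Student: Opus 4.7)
The plan is to reduce the injectivity of $j_\frp$ to the injectivity of its $F_\frp$-linear extension, which will follow from a simplicity argument based on the structure of $D'$ at $\frp$.

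Since $\End(\ucE)$ sits inside the division algebra $D'$ (Lemma \ref{LemEndRingDivision}) and is finitely generated over the PID $A=\bF_q[t]$, it is free of finite rank over $A$. Consequently $\cO_\frp\otimes_A\End(\ucE)^\opp$ is a free (in particular torsion-free) $\cO_\frp$-module, and hence embeds into its localization $F_\frp\otimes_F D'^\opp$. By Lemma \ref{LemTPfree} the Tate module $T_\frp(\ucE)$ is free of rank one over $\cO_{D_\frp}$, so that $\End_{\cO_{D_\frp}}(T_\frp(\ucE))=\cO_{D_\frp}^\opp$ and, after extending scalars, $\End_{D_\frp}(V_\frp(\ucE))=D_\frp^\opp$, where $V_\frp(\ucE)=F_\frp\otimes_{\cO_\frp}T_\frp(\ucE)$. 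Since $\cO_{D_\frp}^\opp$ embeds into $D_\frp^\opp$, the injectivity of $j_\frp$ will follow from the injectivity of the induced $F_\frp$-linear ring homomorphism
\[
j_{F_\frp}\colon F_\frp\otimes_F D'^\opp\to D_\frp^\opp.
\]

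To establish this, I first show that $\tilde F$ has a unique place above $\frp$. By Proposition \ref{PropFtildeD} there exists an $F$-algebra embedding $\tilde F\hookrightarrow D$; tensoring with $F_\frp$ and using flatness of $F_\frp$ over $F$ gives an $F_\frp$-algebra embedding $\tilde F\otimes_F F_\frp\hookrightarrow D_\frp$. Because $\frp\in\cR$, the completion $D_\frp$ is a division algebra, and Lemma \ref{LemSubalgDivision} forces the commutative subalgebra $\tilde F\otimes_F F_\frp$ to be a division algebra, hence a field. Since $\tilde F\otimes_F F_\frp$ a priori decomposes as $\prod_{\tilde v\mid\frp}\tilde F_{\tilde v}$, there must be a unique place $\tilde\frp$ of $\tilde F$ lying over $\frp$, and $F_\frp\otimes_F\tilde F=\tilde F_{\tilde\frp}$. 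Consequently $F_\frp\otimes_F D'^\opp=D'^\opp\otimes_{\tilde F}\tilde F_{\tilde\frp}$ is a central simple algebra over the field $\tilde F_{\tilde\frp}$, in particular a simple ring. The map $j_{F_\frp}$ is unital (as it sends $1\otimes 1$ to the identity endomorphism) and therefore nonzero, so its kernel is a proper two-sided ideal of a simple ring and must vanish. This yields the injectivity of $j_{F_\frp}$, and combined with the torsion-freeness reduction of the first paragraph, the injectivity of $j_\frp$.

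The main obstacle is the unique-place assertion for $\tilde F$ above $\frp$: without it $F_\frp\otimes_F D'^\opp$ would split as a nontrivial product of central simple algebras, and $j_{F_\frp}$ could kill an entire factor, so the reduction above would not suffice. The key input that rules this out is Proposition \ref{PropFtildeD}, which relies in turn on the invariant computations in Section \ref{SubsecStrEndRing} together with the hypothesis $\inv_\frp(D)=1/d$ forcing $D_\frp$ to be division.
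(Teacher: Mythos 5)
Your proof is correct, but it takes a genuinely different route from the paper's. You rationalize: after noting that $\cO_\frp\otimes_A\End(\ucE)^\opp$ is $\cO_\frp$-free and hence embeds into $F_\frp\otimes_F D'^\opp$, you reduce to the injectivity of the induced map into $D_\frp^\opp$ and obtain it from simplicity of $F_\frp\otimes_F D'$, which you deduce from the embedding $\tilde{F}\to D$ of Proposition \ref{PropFtildeD}, the hypothesis $\inv_\frp(D)=1/d$ (so $D_\frp$ is division) and Lemma \ref{LemSubalgDivision}. The paper argues integrally instead: both sides of $j_\frp$ are $\frp$-adically complete, so it suffices to prove injectivity of $A/(\frp^m)\otimes_A\End(\ucE)\to\End(\ucE[\frp^m](\bar{k}))$ for every $m$; if $f\in\End(\ucE)$ kills $\ucE[\frp^m](\bar{k})$, then by \'{e}taleness it kills the group scheme $\ucE[\frp^m]$, by \cite[Proposition 2.1 (5)]{Dri_F} it vanishes on $P/\frp^m P$, and torsion-freeness of $P$ gives $f=\frp^m g$ with $g\in\End(\ucE)$, so $1\otimes f=0$. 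The paper's argument is elementary, needs neither $\frp\in\cR$ nor the structure theory behind Proposition \ref{PropFtildeD}, and yields faithfulness at every finite level $\frp^m$; yours is shorter at the rational level but imports the comparatively deep global input (the LRS classification and Lau's embedding criterion underlying Proposition \ref{PropFtildeD}) together with the ramification hypothesis. The logical order also differs: the paper later uses this lemma, in Proposition \ref{PropRedCharPolyMinPoly}, to see that $F_\frp\otimes_F\tilde{F}$ is a field, whereas you establish that field property beforehand from Proposition \ref{PropFtildeD}; since that proposition is proved independently of the present lemma, no circularity arises. One cosmetic remark: the decomposition $\tilde{F}\otimes_F F_\frp\simeq\prod_{\tilde{v}\mid\frp}\tilde{F}_{\tilde{v}}$ (and hence the ``unique place'' phrasing) is not actually needed and deserves a word of care if $\tilde{F}/F$ were inseparable; all your argument uses is that $\tilde{F}\otimes_F F_\frp$ is a field, which Lemma \ref{LemSubalgDivision} already provides.
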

\begin{proof}
	By (\ref{EqnIdentifyEndP}) and Lemma \ref{LemTPfree}, the source and target are $\frp$-adically complete.
	Hence it is enough to show the injectivity of the $A$-linear map
	\[
	A/(\frp^m)\otimes_A \End(\ucE)\to \End(\ucE[\frp^m](\bar{k}))
	\]
	for any $m$.
	
	Take any $f\in \End(\ucE)$ which induces the zero map on $\ucE[\frp^m](\bar{k})$. 
	Since $\ucE[\frp^m]$ is \'{e}tale over $k$, it is the same as saying that $f$ defines the zero map on $\ucE[\frp^m]$.
	By \cite[Proposition 2.1 (5)]{Dri_F}, we see that $f=0$ on $P/\frp^m P$, in particular $\Img(f)\subseteq \frp^m P$.
	Since $P$ is $\frp$-torsion free, by (\ref{EqnIdentifyEndP}) we can write $f=\frp^m g$ with some $g\in \End(\ucE)$.
	Thus $1\otimes f=\frp^m\otimes g=0$ in $A/(\frp^m)\otimes_A \End(\ucE)$.
\end{proof}

By Lemma \ref{LemTPfree}, choosing a basis of the left $\cO_{D_\frp}$-module $T_\frp(\ucE)$,
we see that the $G_k$-action on $T_\frp(\ucE)$ defines a homomorphism
\[
i_\frp:G_k\to \Aut_{D_\frp}(F_\frp\otimes_{\cO_\frp}T_\frp(\ucE))\simeq (D_\frp^\opp)^\times.
\]
Let $\Fr_k\in G_k$ be the $q^n$-th power Frobenius automorphism of $\bar{k}$.
Let
\[
P_{\ucE,k}(X):=\Nrd_{D_\frp^\opp/F_\frp}(X-i_\frp(\Fr_k))
\]
be the reduced characteristic polynomial of $i_\frp(\Fr_k)\in D_\frp^\opp$ over $F_\frp$
\cite[(9.2)]{Rei}, which is of degree $d$.

On the other hand, we have the $q^n$-th power Frobenius endomorphism $\pi\in \End(\ucE)\subseteq D'=F\otimes_A \End(\ucE)$.
By Lemma \ref{LemNormPi}, we see that $\pi$ is an integral element of the center $\tilde{F}$ of $D'$.
We denote the minimal polynomial of $\pi\in \tilde{F}$ over $F$ by
\[
M_{\ucE,k}(X)\in A[X].
\]

\begin{prop}\label{PropRedCharPolyMinPoly}
	\[
	P_{\ucE,k}(X)=M_{\ucE,k}(X)^{d/[\tilde{F}:F]}\in A[X].
	\]
\end{prop}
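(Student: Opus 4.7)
The plan is to identify $i_\frp(\Fr_k)$ with $j_\frp(\pi)$ in $D_\frp^\opp$ and then invoke a standard fact from central simple algebra theory. First, for the identification, I would show that the Galois action of $\Fr_k$ on $T_\frp(\ucE)$ coincides with the action induced by $\pi \in \End(\ucE)$. Using the description \eqref{EqnGrVP}, a $\bar{k}$-point of $\ucE[\frp^m] = \Gr_k(P/\frp^m P)$ is a $k$-linear map $f : P/\frp^m P \to \bar{k}$ satisfying $f(m)^q = f(\tau(\sigma^* m))$. Iterating this relation $n$ times and using that $\sigma^n = \id$ on $k = \bF_{q^n}$ (so that $\pi = \tau^n$ is an honest $k$-linear endomorphism of $P/\frp^m P$), one obtains $f(\pi m) = f(m)^{q^n} = \Fr_k(f(m))$, so that $f \mapsto f \circ \pi$ and $f \mapsto \Fr_k \circ f$ define the same self-map of $\ucE[\frp^m](\bar{k})$. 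Passing to the inverse limit gives $i_\frp(\Fr_k) = j_\frp(\pi)$ in $D_\frp^\opp$, whence
\[
P_{\ucE,k}(X) = \Nrd_{D_\frp^\opp/F_\frp}(X - j_\frp(\pi)).
\]

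Next, I would show that $M_{\ucE,k}(X)$ is both the minimal polynomial of $j_\frp(\pi)$ in $D_\frp^\opp$ over $F_\frp$ and \emph{irreducible} over $F_\frp$. Since $\pi$ lies in the commutative subring $A[\pi] \subseteq \End(\ucE)$, tensoring the injection of Lemma \ref{LemTateInj} with $F_\frp$ yields an injection of $F_\frp$-algebras $F_\frp \otimes_F \tilde{F} = F_\frp[X]/(M_{\ucE,k}(X)) \hookrightarrow D_\frp^\opp$, which forces $M_{\ucE,k}$ to be the minimal polynomial of $j_\frp(\pi)$. For irreducibility, the key input is the hypothesis $\frp \in \cR$ with $\inv_\frp(D) = 1/d$, which forces $D_\frp^\opp$ to be a division algebra; its commutative subring $F_\frp \otimes_F \tilde{F}$ is then a finite-dimensional integral domain over $F_\frp$, necessarily a field, so $M_{\ucE,k}(X)$ is irreducible of degree $[\tilde{F}:F]$ (and this degree divides $d$ by Lemma \ref{LemFtildeUniquePlaces_infty}).

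Finally, I would invoke the following standard fact: for an element $b$ of a central simple algebra $B$ of reduced degree $n$ over a field $L$ whose minimal polynomial $m(X) \in L[X]$ is irreducible of degree $r$, one has $r \mid n$ and $\Nrd_{B/L}(X - b) = m(X)^{n/r}$. The short verification is by base change to an algebraic closure $\bar{L}$: the element $b$ becomes a diagonalizable matrix in $M_n(\bar{L})$ whose set of eigenvalues is a single $\Gal(\bar{L}/L)$-orbit of size $r$, and $L$-rationality of the characteristic polynomial forces the multiplicities to be constant along this orbit, hence each equal to $n/r$. Applied to $B = D_\frp^\opp$, $b = j_\frp(\pi)$, $L = F_\frp$, $n = d$, $r = [\tilde{F}:F]$, and $m(X) = M_{\ucE,k}(X)$, this yields $P_{\ucE,k}(X) = M_{\ucE,k}(X)^{d/[\tilde{F}:F]}$, which lies in $A[X]$ since $M_{\ucE,k} \in A[X]$ by Lemma \ref{LemNormPi}. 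The main step requiring care is the Frobenius/Galois compatibility in the first paragraph, which ultimately boils down to iterating the defining $\varphi$-relation $n$ times.
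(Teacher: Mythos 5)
Your overall route is the same as the paper's: identify the Frobenius action on $T_\frp(\ucE)$ with that of $\pi$ via (\ref{EqnGrVP}), use Lemma \ref{LemTateInj} to embed $F_\frp\otimes_F\tilde{F}$ as a subfield of the division algebra $D_\frp^\opp$, and compute the reduced characteristic polynomial of $j_\frp(\pi)$ in terms of the minimal polynomial $M_{\ucE,k}$. The first two steps are carried out correctly and in essentially the same way as in the paper.

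The gap is in your proof of the auxiliary fact on central simple algebras. The step where $b$ ``becomes a diagonalizable matrix in $M_n(\bar{L})$ whose set of eigenvalues is a single $\Gal(\bar{L}/L)$-orbit of size $r$'' tacitly assumes that the minimal polynomial $m(X)$ is \emph{separable}. Nothing in the paper excludes $p\mid d$, and in positive characteristic an irreducible $M_{\ucE,k}$ could a priori be inseparable (for instance a polynomial of the shape $X^d-\mu\fry$ with $p\mid d$, as appears in Corollary \ref{CorYTotRam}); in that case $j_\frp(\pi)$ is not semisimple after base change and the $\bar{L}$-roots of $m$ do not form $r$ distinct Galois conjugates, so the multiplicity count you describe breaks down. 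The identity $\Nrd_{B/L}(X-b)=m(X)^{n/r}$ is nevertheless correct without any separability hypothesis, and the paper's argument is precisely the separability-free version: regard $D_\frp^\opp$, via left multiplication through $j_\frp$, as a vector space of dimension $d^2/[\tilde{F}:F]$ over the field $\tilde{F}_v=F_\frp\otimes_F\tilde{F}$; the ordinary characteristic polynomial of $\pi$ on $D_\frp^\opp$ over $F_\frp$ is then $M_{\ucE,k}(X)^{d^2/[\tilde{F}:F]}$, and \cite[Theorem 9.5]{Rei} identifies it with $P_{\ucE,k}(X)^d$; comparing monic polynomials in $F_\frp[X]$ finishes the proof. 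You should either replace the diagonalization step by this dimension count, or supply a separate argument that $M_{\ucE,k}$ is separable over $F_\frp$ --- the paper neither proves nor needs the latter.
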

\begin{proof}
	By Lemma \ref{LemTateInj}, we have an injection of $F_\frp$-algebras
	\[
	j_\frp: (F_\frp\otimes_F D')^\opp\to D_\frp^\opp
	\]
	which induces an injection $j_\frp:F_\frp\otimes_F \tilde{F}\to D_\frp^\opp$.
	Since $\frp\in \cR$, the assumption (\ref{EqnAssumpDx}) implies that 
	$D_\frp$ is a division algebra and by Lemma \ref{LemSubalgDivision} 
	we see that 
	$\tilde{F}_v:=F_\frp\otimes_F \tilde{F}$ is a field.
	With the left multiplication via the map $j_\frp$,
	we consider $D_\frp^\opp$ as an $\tilde{F}_v$-vector space
	which is of dimension $d^2/[\tilde{F}:F]$.

	For any $F_\frp$-algebra $R$ of finite dimension and any element $a\in R$, we denote by
	$\chr_{F_\frp}(a;R)$ the characteristic polynomial over $F_\frp$ of the 
	left multiplication of $a$ on $R$.
	By (\ref{EqnGrVP}), the action of $\Fr_k$ 
	on $T_\frp(\ucE)$ agrees 
	with that of $\pi\in D'$. 
	Note that we have $\tilde{F}=F[\pi]$ by Lemma \ref{LemCenterDprime}.
	Then \cite[Theorem 9.5]{Rei} shows
	\[
	\begin{aligned}
		P_{\ucE,k}(X)^d&=\chr_{F_\frp}(j_\frp(1\otimes \pi);D_\frp^\opp)
		=\chr_{F_\frp}(\pi;\tilde{F}_v)^{d^2/[\tilde{F}:F]}\\
		&=M_{\ucE,k}(X)^{d^2/[\tilde{F}:F]}.
	\end{aligned}
	\]
	Since $P_{\ucE,k}(X)$ and $M_{\ucE,k}(X)^{d/[\tilde{F}:F]}$ are monic, the 
	proposition follows. 
\end{proof}

\begin{lem}\label{LemNPInfty}
	The polynomial $M_{\ucE,k}(X)$ is irreducible over $F_\infty$. 
	Moreover, the $\infty$-adic Newton polygon of $P_{\ucE,k}(X)$ has the unique slope $n/d$.
\end{lem}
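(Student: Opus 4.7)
The plan is to deduce both statements from the structural results about $\tilde{F}$ established in the previous subsection, chiefly the uniqueness of the place $\tilde{\infty}$ (Lemma \ref{LemFtildeUniquePlaces_infty}) and the valuation of $\pi$ (Corollary \ref{CorReplaceRH}).

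For the irreducibility of $M_{\ucE,k}(X)$ over $F_\infty$, I would argue as follows. Since $\tilde{F}=F[\pi]$ by Lemma \ref{LemCenterDprime}, $M_{\ucE,k}(X)$ is the minimal polynomial over $F$ of a generator of $\tilde{F}/F$, so $\deg M_{\ucE,k}=[\tilde{F}:F]$. The factorization of $M_{\ucE,k}(X)$ over $F_\infty$ corresponds via the Chinese Remainder Theorem to the decomposition
\[
\tilde{F}\otimes_F F_\infty \simeq \prod_{v\mid\infty}\tilde{F}_v,
\]
so irreducibility is equivalent to there being a unique place of $\tilde{F}$ over $\infty$. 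That is exactly Lemma \ref{LemFtildeUniquePlaces_infty} (\ref{LemFtildeUniquePlaces_infty_unique}), which moreover tells me that $\tilde{F}\otimes_F F_\infty=\tilde{F}_{\tilde{\infty}}$ is a field extension of $F_\infty$ of degree $[\tilde{F}:F]$.

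For the Newton polygon assertion, I would compute the $\infty$-adic valuation of the roots. Let $v_\infty$ denote the normalized valuation on $F_\infty$, uniquely extended to an algebraic closure, so that for $a\in\tilde{F}$ we have $v_\infty(a)=\tilde{\infty}(a)/e(\tilde{\infty}/\infty)$. By Corollary \ref{CorReplaceRH}, $|\pi|_\infty=q^{n/d}$, which reads $v_\infty(\pi)=-n/d$. Since $M_{\ucE,k}(X)$ is irreducible over $F_\infty$, all its roots are Galois conjugates of $\pi$ over $F_\infty$ and share the same $v_\infty$-valuation $-n/d$. Hence the $\infty$-adic Newton polygon of $M_{\ucE,k}(X)$ is a single segment of slope $n/d$.

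Finally, by Proposition \ref{PropRedCharPolyMinPoly} we have $P_{\ucE,k}(X)=M_{\ucE,k}(X)^{d/[\tilde{F}:F]}$, whose roots (with multiplicity) are exactly those of $M_{\ucE,k}(X)$ each repeated $d/[\tilde{F}:F]$ times. In particular every root of $P_{\ucE,k}(X)$ still has $v_\infty$-valuation $-n/d$, so its Newton polygon has the unique slope $n/d$. The entire argument is essentially a bookkeeping of the consequences of Lemma \ref{LemFtildeUniquePlaces_infty} and Corollary \ref{CorReplaceRH}; I do not anticipate a serious obstacle, the only subtle point being to keep the valuation normalizations consistent when translating between $\tilde{\infty}$ on $\tilde{F}$ and $v_\infty$ on the algebraic closure of $F_\infty$.
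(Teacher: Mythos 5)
Your proof is correct and follows essentially the same route as the paper: irreducibility from $\tilde{F}=F[\pi]$ (Lemma \ref{LemCenterDprime}) together with the uniqueness of the place over $\infty$ (Lemma \ref{LemFtildeUniquePlaces_infty}), and the slope $n/d$ from Corollary \ref{CorReplaceRH} combined with $P_{\ucE,k}=M_{\ucE,k}^{d/[\tilde{F}:F]}$ (Proposition \ref{PropRedCharPolyMinPoly}). The only difference is cosmetic: you make the place--factor correspondence explicit via $\tilde{F}\otimes_F F_\infty\simeq\prod_{v\mid\infty}\tilde{F}_v$, where the paper states the implication directly.
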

\begin{proof}
	By Lemma \ref{LemCenterDprime}, the extension $\tilde{F}/F$ is generated by $\pi$. 
	By Lemma \ref{LemFtildeUniquePlaces_infty} (\ref{LemFtildeUniquePlaces_infty_unique}), there is only one place $\tilde{\infty}$ of $\tilde{F}$
	over $\infty$. The first assertion follows from this.
	This also implies that the roots of $M_{\ucE,k}(X)$ in an algebraic closure of $F_\infty$ are conjugate to each other over $F_\infty$
	and thus their $\infty$-adic valuations are the same. 
	From Proposition \ref{PropRedCharPolyMinPoly}, it follows that the $\infty$-adic Newton polygon of $P_{\ucE,k}(X)$ has a unique slope.
	It is equal to $n/d$ by Corollary \ref{CorReplaceRH}.
\end{proof}

\begin{lem}\label{LemConstTermRedCharPoly}
	The ideal generated by $P_{\ucE,k}(0)$ in $A$ is $(\fry^{[k:\bF_\fry]})$.
\end{lem}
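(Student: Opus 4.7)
The plan is to pin down $P_{\ucE,k}(0)$ by computing it in two different ways and then comparing. First, I would use Proposition \ref{PropRedCharPolyMinPoly} to write
\[
P_{\ucE,k}(0) = (-1)^d\, M_{\ucE,k}(0)^{d/[\tilde{F}:F]},
\]
and note that $M_{\ucE,k}(0) = (-1)^{[\tilde{F}:F]} N_{\tilde{F}/F}(\pi)$. Since $\pi \neq 0$ and $M_{\ucE,k}$ is the minimal polynomial of $\pi$, we have $M_{\ucE,k}(0)\neq 0$. Lemma \ref{LemNormPi} then gives that $N_{\tilde{F}/F}(\pi) \in A$ has $\fry$ as its only prime divisor, so $P_{\ucE,k}(0) = c\,\fry^m$ for some $c\in \bF_q^\times$ and some positive integer $m$. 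It remains to identify $m$.

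Next I would carry out an $\infty$-adic size comparison. Let $|\cdot|_\infty$ denote the canonical extension to $\bar{F}$ of the normalized absolute value on $F_\infty$, for which $|a|_\infty = q^{\deg a}$ for $a\in A$. Factoring $P_{\ucE,k}(X) = \prod_{i=1}^d (X-\alpha_i)$ in $\bar F[X]$, Lemma \ref{LemNPInfty} asserts that $M_{\ucE,k}(X)$ is irreducible over $F_\infty$, so its roots form a single Galois orbit over $F_\infty$ and therefore share a common $\infty$-adic absolute value; by Proposition \ref{PropRedCharPolyMinPoly}, the same holds for all the $\alpha_i$. Since $\pi$ itself is a root of $M_{\ucE,k}$, Corollary \ref{CorReplaceRH} gives $|\alpha_i|_\infty = |\pi|_\infty = q^{n/d}$ for every $i$, hence
\[
|P_{\ucE,k}(0)|_\infty = \prod_{i=1}^d |\alpha_i|_\infty = q^n.
\]

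Finally, comparing the two expressions, $|c\,\fry^m|_\infty = q^{m\deg(\fry)}$ must equal $q^n$, whence $m = n/\deg(\fry) = [k:\bF_\fry]$. This yields the desired equality of ideals $(P_{\ucE,k}(0)) = (\fry^{[k:\bF_\fry]})$ in $A$. No step should present a real obstacle; the only subtlety is making sure all roots of $P_{\ucE,k}$ (not just of the irreducible factor $M_{\ucE,k}$) have the common $\infty$-adic absolute value, which is immediate from Proposition \ref{PropRedCharPolyMinPoly} once one recalls that $P_{\ucE,k}$ is a power of $M_{\ucE,k}$.
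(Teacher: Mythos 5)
Your proposal is correct and takes essentially the same route as the paper: Proposition \ref{PropRedCharPolyMinPoly} and Lemma \ref{LemNormPi} show that $P_{\ucE,k}(0)$ is a unit times $\fry^m$, and the common $\infty$-adic absolute value $q^{n/d}$ of the roots (which is exactly the Newton-polygon statement of Lemma \ref{LemNPInfty}, itself obtained via Corollary \ref{CorReplaceRH}) forces $m=[k:\bF_\fry]$. The only harmless slip is the factor $(-1)^d$: the proposition gives $P_{\ucE,k}(0)=M_{\ucE,k}(0)^{d/[\tilde{F}:F]}$ with no sign, but this does not affect the ideal.
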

\begin{proof}
	By Proposition \ref{PropRedCharPolyMinPoly}, we have
	\[
	P_{\ucE,k}(0)=\pm N_{\tilde{F}/F}(\pi)^{d/[\tilde{F}:F]}.
	\]
	By Lemma \ref{LemNormPi}, we can write $(P_{\ucE,k}(0))=(\fry^s)$ with some integer $s\geq 0$.
	Now Lemma \ref{LemNPInfty} yields
	\[
	-s\deg(\fry)=\infty(P_{\ucE,k}(0))=-n=-[k:\bF_q],
	\]
	which gives $s=[k:\bF_\fry]$.
\end{proof}

\begin{cor}\label{CorYTotRam}
	Assume $k=\bF_\fry$. Then we have
	\[
	P_{\ucE,k}(X)=M_{\ucE,k}(X),\quad F\otimes_A\End(\ucE)=\tilde{F}
	\]
	and $\tilde{F}$ is an extension of $F$ of degree $d$ with a unique place over $\infty$.
	Moreover, if we write
	\[
	P_{\ucE,k}(X)=X^d+a_1 X^{d-1}+\cdots+a_d,
	\]
	then $\deg(a_i)\leq i \deg(\fry)/d$ for any $i\in [1,d]$ and $a_d=\mu \fry$ for some $\mu\in \bF_q^\times$.
\end{cor}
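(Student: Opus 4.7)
The plan is to combine the various structural results already obtained in this section. Since $k=\bF_\fry$, we have $n=[k:\bF_q]=\deg(\fry)$ and $[k:\bF_\fry]=1$, so Lemma~\ref{LemConstTermRedCharPoly} gives $(P_{\ucE,k}(0))=(\fry)$. On the other hand, Proposition~\ref{PropRedCharPolyMinPoly} yields
\[
P_{\ucE,k}(0)=M_{\ucE,k}(0)^{d/[\tilde{F}:F]}.
\]
Since $\pi$ is integral over $A$ by Lemma~\ref{LemNormPi}, $M_{\ucE,k}(0)\in A$, and the ideal it generates, raised to the power $d/[\tilde{F}:F]$, equals the prime ideal $(\fry)$. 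Because $\fry$ is irreducible in the PID $A$, this forces $d/[\tilde{F}:F]=1$ and $M_{\ucE,k}(0)=\mu_0\fry$ for some $\mu_0\in\bF_q^\times$. Hence $[\tilde{F}:F]=d$ and $P_{\ucE,k}(X)=M_{\ucE,k}(X)$.

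Next, the equality $F\otimes_A\End(\ucE)=\tilde{F}$ follows immediately from Proposition~\ref{PropFtildeDivd}, which now gives $[D':\tilde{F}]=(d/d)^2=1$, and the uniqueness of the place of $\tilde{F}$ above $\infty$ is part of Lemma~\ref{LemFtildeUniquePlaces_infty}~(\ref{LemFtildeUniquePlaces_infty_unique}).

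It remains to establish the bounds on $\deg(a_i)$ and the precise form of $a_d$. By Lemma~\ref{LemNPInfty}, the $\infty$-adic Newton polygon of $P_{\ucE,k}(X)$ has the unique slope $n/d=\deg(\fry)/d$. Equivalently, in an algebraic closure of $F_\infty$, every root $\alpha$ of $P_{\ucE,k}$ satisfies $\infty(\alpha)=-\deg(\fry)/d$. Since $a_i$ is, up to sign, the $i$-th elementary symmetric polynomial in the roots, the ultrametric inequality gives
\[
\infty(a_i)\geq -i\deg(\fry)/d,\qquad\text{i.e.,}\qquad \deg(a_i)\leq i\deg(\fry)/d.
\]
For $i=d$, this yields $\deg(a_d)\leq \deg(\fry)$, and combined with $(a_d)=(P_{\ucE,k}(0))=(\fry)$ we conclude $a_d=\mu\fry$ for some $\mu\in\bF_q^\times$.

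There is no real obstacle here: the whole corollary is an assembly of Proposition~\ref{PropRedCharPolyMinPoly}, Proposition~\ref{PropFtildeDivd}, Lemma~\ref{LemNPInfty}, and Lemma~\ref{LemConstTermRedCharPoly}, with the one substantive observation being that the irreducibility of $\fry$ in $A$ forces the exponent $d/[\tilde{F}:F]$ in Proposition~\ref{PropRedCharPolyMinPoly} to equal one when $k=\bF_\fry$. The most delicate step, if any, is matching $\infty$-adic valuations to degrees in $A=\bF_q[t]$ carefully when reading off the Newton polygon inequality.
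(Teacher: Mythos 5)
Your proof is correct and follows essentially the same route as the paper's: Lemma \ref{LemConstTermRedCharPoly} plus Proposition \ref{PropRedCharPolyMinPoly} and the irreducibility of $\fry$ force $[\tilde{F}:F]=d$ and $P_{\ucE,k}=M_{\ucE,k}$, then Proposition \ref{PropFtildeDivd}, Lemma \ref{LemFtildeUniquePlaces_infty} and Lemma \ref{LemNPInfty} give the remaining assertions. Your only addition is to spell out the Newton-polygon computation that the paper leaves implicit, and that computation is carried out correctly.
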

\begin{proof}
	If $k=\bF_\fry$, then Lemma \ref{LemConstTermRedCharPoly} implies $P_{\ucE,k}(0)=\mu\fry$ for some $\mu\in \bF_q^\times$.
	In particular, it is irreducible in $A$.
	On the other hand, Proposition \ref{PropRedCharPolyMinPoly} shows 
	\[
	P_{\ucE,k}(0)=M_{\ucE,k}(0)^{d/[\tilde{F}:F]},
	\] 
	from which it follows that $d=[\tilde{F}:F]$ and $P_{\ucE,k}(X)=M_{\ucE,k}(X)$. 
	Then Proposition \ref{PropFtildeDivd} implies $D'=\tilde{F}$.
	The assertion on $\infty$ follows from Lemma \ref{LemFtildeUniquePlaces_infty} (\ref{LemFtildeUniquePlaces_infty_unique}).
	Lemma \ref{LemNPInfty} shows the assertion on $\deg(a_i)$.
\end{proof}

\subsection{Bounding the local monodromy}\label{SubsecMonodromy}

Let $K/F$ be a finite extension. For any place $v$ of $K$, let $K_v^\sep$ be a separable closure of $K_v$.
We denote the inertia subgroup of $G_{K_v}=\Gal(K_v^\sep/K_v)$ by $I_v$.
We fix an embedding $K^\sep\to K_v^\sep$ extending $K\to K_v$.

\begin{prop}\label{PropBoundMonodromy}
	Let $K/F$ be a finite extension and let $\ucE$ be a sound $\sD$-elliptic sheaf over $K$ of generic characteristic. 
	Then, for any place $v$ of $K$ satisfying $v\nmid \frp\infty$, the image of the natural map
	\[
	\psi_{\frp,v}:I_v\to \Aut(\ucE[\frp](K^\sep))
	\]
	is a cyclic group of order dividing $q^d-1$.
\end{prop}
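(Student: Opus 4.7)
The plan is to reduce to the case of good reduction via Proposition \ref{PropPotGoodRed}(\ref{PropPotGoodRed-Gal}), and then exploit the étaleness of the $\frp$-torsion in that setting. Let $\frq$ denote the place of $F$ below $v$; by hypothesis $\frq \notin \{\frp,\infty\}$. First I would apply Proposition \ref{PropPotGoodRed}(\ref{PropPotGoodRed-Gal}) to the base change $\ucE|_{K_v}$, which is a sound $\sD$-elliptic sheaf over $K_v$ of generic characteristic. This yields a finite Galois extension $L/K_v$ whose inertia subgroup $I \subseteq \Gal(L/K_v)$ is cyclic of order dividing $q^d-1$, together with a sound model $\ucE_{\cO_L}$ of $\ucE|_L$ over $\cO_L$ satisfying $\cZ(\ucE_{\cO_L}) \cap |X| = \{\frq\}$.

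Next I would verify that $\ucE_{\cO_L}[\frp]$ is a finite étale $\bF_q$-module scheme over $\cO_L$. Since the zero morphism $\Spec(\cO_L) \to X$ factors through $\Spec(\cO_\frq)$ and $\frp \neq \frq$, the support of $\Coker(\tau)$ on $X \times \Spec(\cO_L)$ is disjoint from $\frp \times \Spec(\cO_L)$. Hence $\tau$ induces an isomorphism on the quotient $P_{\cO_L}/\varphi(\frp) P_{\cO_L}$, so the corresponding $\varphi$-sheaf over $\cO_L$ is étale and \cite[Proposition 2.1]{Dri_F} gives the claim.

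For the conclusion, a finite étale group scheme over the henselian local ring $\cO_L$ has all of its geometric points rational over the strict henselization, whose fraction field equals $L^{\ur} = L \cdot K_v^{\ur}$. Therefore $\ucE[\frp](K_v^\sep) = \ucE_{\cO_L}[\frp](L^{\ur})$, so the action of $G_{K_v}$ on $\ucE[\frp](K_v^\sep)$ factors through $\Gal(L^{\ur}/K_v)$. Restricting to $I_v = \Gal(K_v^\sep/K_v^{\ur})$ and using the canonical isomorphism $\Gal(L^{\ur}/K_v^{\ur}) \simeq I$, the image $\psi_{\frp,v}(I_v)$ appears as a quotient of the cyclic group $I$, hence cyclic of order dividing $q^d-1$. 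I do not anticipate any serious obstacle: the only delicate point is to keep the soundness and characteristic conventions consistent when passing between $K$, $K_v$, $L$, and $\cO_L$.
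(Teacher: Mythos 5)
Your proposal is correct and follows essentially the same route as the paper: invoke Proposition \ref{PropPotGoodRed} (\ref{PropPotGoodRed-Gal}) to get a Galois extension $L/K_v$ with cyclic inertia of order dividing $q^d-1$ and a good model over $\cO_L$, note that $\ucE_{\cO_L}[\frp]$ is \'etale since $\frp\neq\frq$, and conclude that $\psi_{\frp,v}$ factors through the inertia subgroup of $\Gal(L/K_v)$. Your extra details on the \'etaleness and the strict henselization are just expansions of the paper's one-line justification.
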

\begin{proof}
	Let $\frq\notin\{\frp,\infty\}$ be the place of $F$ below $v$.
	By Proposition \ref{PropPotGoodRed} (\ref{PropPotGoodRed-Gal}), there exist a finite Galois extension $L/K_v$ with cyclic inertia subgroup of
	order dividing $q^d-1$ and a $\sD$-elliptic sheaf $\ucE_{\cO_L}$ over $\cO_L$ 
	satisfying $\cZ(\ucE_{\cO_L})\cap |X|=\{\frq\}$
	with an isomorphism $\ucE_{\cO_L}|_L\simeq \ucE|_L$.
	Since $\frp\neq \frq$, the finite group scheme $(\ucE_{\cO_L})[\frp]$ is \'{e}tale over $\cO_L$ and
	the $G_L$-module $\ucE[\frp](K^\sep_v)$ is unramified. Thus the map $\psi_{\frp,v}$ factors through the inertia subgroup of $\Gal(L/K_v)$,
	which is cyclic of order dividing $q^d-1$. Hence the proposition follows.
\end{proof}



\section{Determinant of $\sD$-elliptic sheaves}\label{SecDet}

In this section, we fix $\frp\in \cR$ and put $|\frp|=q^r$.

Let $L$ be a field over $\bF_q$ and let $\ucE$ be a $\sD$-elliptic sheaf over $L$ satisfying $\infty\notin\cZ(\ucE)$
and $\chr_A(L)\neq \frp$. 
Consider the $\bF_\frp[G_L]$-module $\ucE[\frp](L^\sep)$.
As an $\bF_\frp$-vector space, it is of dimension $d^2$.
Thus the $G_L$-action on $\bigwedge^{d^2}_{\bF_\frp}\ucE[\frp](L^\sep)$ defines a character
\[
\delta_{\ucE,\frp}:G_L\to \bF_\frp^\times.
\]
The aim of this section is to compute $\delta_{\ucE,\frp}$ when $L$ contains $\bF_\frp$.

\subsection{Determinant of $\varphi$-sheaves}\label{SubsecDetPhiShv}

\begin{dfn}
	Let $L$ be a field containing $\bF_\frp$ and let $h$ be a positive integer.
	An $(\bF_\frp,\varphi)$-sheaf of rank $h$ over $L$ is a $\varphi$-sheaf $(M,\tau)$ over $L$ 
	equipped with an $L$-linear $\bF_\frp$-action on $M$ 
	compatible with $\tau$ such that the $\bF_\frp\otimes L$-module $M$ is free of rank $h$.
	
	The compatibility condition means that for any $\lambda\in \bF_\frp$, the action $[\lambda]$ of $\lambda$ on $M$ makes the following 
	diagram commutative:
	\[
	\xymatrix{
		\sigma^* M \ar[r]^-{\tau}\ar[d]_{\sigma^*[\lambda]} & M \ar[d]^{[\lambda]}\\
		\sigma^* M\ar[r]_-{\tau} & M.
	}
	\]
	An $(\bF_\frp,\varphi)$-sheaf $(M,\tau)$ is said to be \'{e}tale if $\tau$ is an isomorphism.
\end{dfn}

Let $(M,\tau)$ be an \'{e}tale $(\bF_\frp,\varphi)$-sheaf of rank $h$ over $L$. Then we have the $\bF_\frp$-vector space
\[
V(M):=\Gr_L(M)(L^\sep)
\]
of dimension $h$, on which $G_L$ acts $\bF_\frp$-linearly. 

On the other hand, the isomorphism
\begin{equation}\label{EqnFpKDecomp}
\bF_{\frp}\otimes L\to \prod_{i\in \bZ/r\bZ}L, \quad a\otimes b\mapsto (a^{q^i}b)_i
\end{equation}
induces a decomposition as an $L$-vector space
\begin{equation}\label{EqnDecompM}
	M=\bigoplus_{i\in \bZ/r\bZ}M_i,\quad M_i=\{m\in M\mid [\lambda](m)=\lambda^{q^i} m\text{ for any }\lambda\in \bF_\frp\}.
\end{equation}
Since the $\bF_\frp\otimes L$-module $M$ is free, the $L$-vector space $M_i$ is of dimension $h$ and $\tau$ induces an
$L$-linear isomorphism
\[
\tau_i:\sigma^* M_i\to M_{i+1}.
\] 

By taking the exterior product $\bigwedge^h_L$ over $L$, we define a pair
\[
\bigwedge^h M:=(\bigoplus_{i\in \bZ/r\bZ} \bigwedge^h_L M_i,\bigoplus_{i\in \bZ/r\bZ}  \bigwedge^h_L\tau_{i}).
\]
It is an \'{e}tale $(\bF_\frp,\varphi)$-sheaf of rank one over $L$,
where the $\bF_\frp$-action is defined by
\[
[\lambda](m)=\lambda^{q^i}m,\quad m\in \bigwedge^h_L M_i.
\]
Thus we have the $\bF_\frp$-vector space
\[
V(\bigwedge^h M):=\Gr_L(\bigwedge^h M)(L^\sep)
\]
of dimension one, on which $G_L$ acts $\bF_\frp$-linearly.

\begin{lem}\label{LemDetPhiMod}
	For any \'{e}tale $(\bF_\frp,\varphi)$-sheaf $(M,\tau)$ of rank $h$ over $L$,
	we have a natural isomorphism of $\bF_\frp[G_L]$-modules
	\[
	\bigwedge^h_{\bF_\frp} V(M)\simeq V(\bigwedge^h M).
	\]
\end{lem}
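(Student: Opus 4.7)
The plan is to build a natural $\bF_\frp$-multilinear alternating $G_L$-equivariant map
\[
\Phi : V(M)^h \longrightarrow V(\bigwedge\nolimits^h M)
\]
by taking componentwise determinants under the decomposition (\ref{EqnDecompM}), and then to deduce the desired isomorphism by observing that both sides are one-dimensional $\bF_\frp$-vector spaces. Concretely, via (\ref{EqnGrVP}) each $f \in V(M)$ corresponds to a tuple $(f^{(i)} : M_i \to L^\sep)_{i \in \bZ/r\bZ}$ of $L$-linear maps satisfying $f^{(i)}(m)^q = f^{(i+1)}(\tau_i(\sigma^* m))$, and given $f_1, \ldots, f_h \in V(M)$ I would set
\[
\Phi(f_1, \ldots, f_h)^{(i)}(m_1 \wedge \cdots \wedge m_h) := \det\bigl(f_j^{(i)}(m_k)\bigr)_{1 \le j, k \le h}.
\]
The verification that $\Phi(f_1, \ldots, f_h) \in V(\bigwedge^h M)$ is routine: since Frobenius commutes with $\det$ in characteristic $p$ and each $f_j$ satisfies the $\varphi$-condition, $\det(f_j^{(i)}(m_k))^q = \det(f_j^{(i+1)}(\tau_i(\sigma^* m_k)))$, which is the required compatibility for $\bigwedge^h M$.

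Next I would check that $\Phi$ is $G_L$-equivariant (immediate from naturality), alternating (from properties of $\det$), and $\bF_\frp$-multilinear. The last point requires matching two $\bF_\frp$-actions: the $\mu$-action on $V(M)$ induced by precomposition with $[\mu] : M \to M$ sends $(f^{(i)})_i$ to $(\mu^{q^i} f^{(i)})_i$, while the $\mu$-action on $V(\bigwedge^h M)$ induced by the $\bF_\frp$-structure on $\bigwedge^h M$ defined in the statement sends $(g^{(i)})_i$ to $(\mu^{q^i} g^{(i)})_i$. Scaling the $j$-th argument of $\Phi$ by $\mu$ multiplies row $j$ of the matrix on the $i$-th component by $\mu^{q^i}$ and hence the determinant by $\mu^{q^i}$, so the two actions agree. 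Thus $\Phi$ descends to a $G_L$-equivariant $\bF_\frp$-linear map
\[
\overline{\Phi} : \bigwedge\nolimits^h_{\bF_\frp} V(M) \longrightarrow V(\bigwedge\nolimits^h M)
\]
between one-dimensional $\bF_\frp$-vector spaces.

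The main obstacle is proving $\overline{\Phi}$ is nonzero. I would argue by base change to $L^\sep$: there, by an analogue of Lang's theorem, one can choose an $L^\sep$-basis $e_1, \ldots, e_h$ of $M_0 \otimes_L L^\sep$ for which the iterated composition $\tau_{r-1} \circ \sigma^* \tau_{r-2} \circ \cdots \circ (\sigma^{r-1})^* \tau_0 : (\sigma^r)^*(M_0 \otimes L^\sep) \to M_0 \otimes L^\sep$ sends each $e_k \otimes 1$ back to $e_k$. The $L^\sep$-linear maps $f_k^{(0)} : M_0 \otimes L^\sep \to L^\sep$ defined by $f_k^{(0)}(e_\ell) = \delta_{k\ell}$ then extend uniquely via the $\varphi$-condition to elements $f_k \in V(M) = V(M \otimes L^\sep)$, and the fixed-point property of the $e_k$ ensures the extension closes up consistently. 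The $f_k$ form an $\bF_\frp$-basis of $V(M)$, and by construction $\Phi(f_1, \ldots, f_h)^{(0)}(e_1 \wedge \cdots \wedge e_h) = \det(\delta_{jk}) = 1 \neq 0$. Hence $\overline{\Phi}$ is nonzero, and the claimed isomorphism of $\bF_\frp[G_L]$-modules follows.
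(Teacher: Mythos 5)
Your argument is correct and is essentially the paper's proof presented functorially: the map $\Phi$ you define agrees, in coordinates, with the paper's assignment $z_1\wedge\cdots\wedge z_h\mapsto\det(Z)$, so the real content in both cases is the nonvanishing, and that is where the presentations differ. The paper proves $\det(Z)\neq 0$ self-containedly by the minimality trick from \cite[Proposition A1.2.6]{Fon}: if a nonzero row vector $(a_j)$ with $(a_j)Z=0$ has minimal support, the relation $Z^{(q^r)}=ZC$ forces $(a_j-a_j^{q^r})Z=0$, hence by minimality $a_j\in\bF_\frp$ for all $j$, contradicting the $\bF_\frp$-linear independence of the rows of $Z$. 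You instead invoke ``an analogue of Lang's theorem'' to produce, over $L^\sep$, a basis on which the $r$-fold composite of the $\tau_i$ becomes the identity. That assertion is equivalent to the invertibility of $Z$ and is precisely what the lemma turns on; stating it without proof or precise citation leaves a gap exactly at the crux. The fact is genuine and standard — it amounts to the triviality of \'etale $\varphi$-sheaves over a separably closed field, extractable from the anti-equivalence \cite[Proposition 2.1]{Dri_F} together with the constancy of finite \'etale $\bF_\frp$-module schemes over $L^\sep$ — so either cite it as such or reproduce the Fontaine-style computation. Once that is in place, your detour of extending the dual basis to elements $f_k\in V(M)$ is correct but unnecessary: the same Lang-type fact already guarantees $\det\bigl(f_j^{(0)}(e_k)\bigr)\neq 0$ for \emph{any} $\bF_\frp$-basis $(f_j)$ of $V(M)$ and any $L^\sep$-basis $(e_k)$ of $M_0\otimes L^\sep$.
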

\begin{proof}
	Let $e_{i,1},\ldots, e_{i,h}$ be a basis of the $L$-vector space $M_i$. Write
	\[
	\tau(1\otimes e_{i,1},\ldots,1\otimes e_{i,h})=(e_{i+1,1},\ldots,e_{i+1,h})C_i,\quad C_i\in \mathit{GL}_h(L).
	\]
	For any matrix $B=(b_{ij})\in M_h(L)$, write 
	$B^{(q^l)}=(b_{ij}^{q^l})$.
	Put
	\[
	C= C_{r-1}\cdots C_1^{(q^{r-2})}C_0^{(q^{r-1})}\in\mathit{GL}_h(L).
	\]
	Then we have an isomorphism of $\bF_\frp[G_L]$-modules
	\[
	\begin{aligned}
		V(M)&\to \{(z_j)_j\in (L^\sep)^h\mid (z_j^{q^r})_j=(z_j)_j C\},\\
		(f:M\to L^\sep)&\mapsto (f(e_{0,j}))_j,
	\end{aligned}
	\]
	where the right-hand side is an $\bF_\frp$-vector space consisting of row vectors 
	with the $\bF_\frp$-action given by $[\lambda](z_j)_j=(\lambda z_j)_j$.

	Let $z_1=(z_{1,j})_j,\ldots,z_h=(z_{h,j})_j$ be a basis of the $\bF_\frp$-vector space $V(M)$. 
	Put $Z=(z_{l,j})_{l,j}\in M_h(L^\sep)$. It satisfies
	\[
	Z^{(q^r)}=Z C,\quad \det(Z)^{q^r}=\det(Z)\det(C).
	\]

	We claim
	\[
	Z\in\mathit{GL}_h(L^\sep).
	\]
	The argument below is similar to the one in the proof of \cite[Proposition A1.2.6]{Fon}. 
	Indeed, it is enough to show that $z_1,\ldots,z_h$ are linearly independent over $L^\sep$.
	Suppose the contrary. Consider the set of non-zero row vectors $(a_j)_j\in L^\sep$ satisfying $(a_j)_j Z=0$, and take $(a_j)_j$ with minimal number of 
	non-zero entries. Let $a_{j_0}$ be a non-zero entry. Multiplying its inverse, we may assume $a_{j_0}=1$. Then we have
	\[
	0=(a_j^{q^r})_j Z^{(q^r)}=(a_j^{q^r})_j Z C,
	\] 
	which yields $(a_j^{q^r})_j Z=0$ and $(a_j-a_j^{q^r})_j Z=0$. 
	This contradicts the minimality unless $a_j=a_j^{q^r}$ for any $j$.
	In this case, we have $a_j\in \bF_\frp$, which is a contradiction since $z_1,\ldots,z_h$ are linearly independent over $\bF_\frp$.
	
	Next we consider the $\varphi$-sheaf $\bigwedge^h M$. Note that $\delta_i=e_{i,1}\wedge\cdots \wedge e_{i,h}$
	is a basis of the $L$-vector space $\bigwedge^h_L M_i$. Since
	\[
	\left(\bigwedge^h\tau_{i}\right)(1\otimes \delta_i)=\det(C_i)\delta_{i+1},
	\] 
	we have an isomorphism of $\bF_\frp[G_L]$-modules
	\begin{equation}\label{EqnDetEquation}
	\begin{aligned}
		V(\bigwedge^h M)&\to \{w\in L^\sep\mid w^{q^r}=\det(C)w\},\\
		(f:\bigwedge^h M\to L^\sep)&\mapsto f(\delta_0),
	\end{aligned}
	\end{equation}
	where the $\bF_\frp$-action on the source is given by $[\lambda](w)=\lambda w$.
	
	Now we have an $\bF_\frp$-linear map
	\begin{equation}\label{EqnIsomDetZ}
		\bigwedge^h_{\bF_\frp} V(M)\to V(\bigwedge^h M),\quad z_1\wedge\cdots\wedge z_h\mapsto \det(Z).
	\end{equation}
	Since $\det(Z)$ is non-zero and the source and target are $\bF_\frp$-vector spaces of dimension one, it is an isomorphism. 
	
	For any $g\in G_L$, write
	\[
	g(z_1,\ldots,z_h)=(z_1,\ldots,z_h)\rho(g),\quad \rho(g)\in \mathit{GL}_h(\bF_\frp).
	\]
	Then we have $g(Z)={}^{t}\! \rho(g) Z$ and $g(\det(Z))=\det(\rho(g))\det(Z)$, which shows that the isomorphism (\ref{EqnIsomDetZ}) is $G_L$-equivariant.
\end{proof}

\begin{lem}\label{LemGenLafforgue}
	Let $(M,\tau)$ be an \'{e}tale $(\bF_\frp,\varphi)$-sheaf of rank $h$ over $L$.
	Then the $(\bF_\frp,\varphi)$-sheaf $\bigwedge^h M$ is isomorphic to
	\[
	(\bigwedge^h_{\bF_\frp\otimes L} M, \bigwedge^h_{\bF_\frp\otimes L}\tau).
	\]
\end{lem}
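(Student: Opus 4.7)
The plan is to exploit the product decomposition \eqref{EqnFpKDecomp} of $\bF_\frp \otimes L$ together with the general fact that exterior powers over a product of rings factor through the product: if $R = \prod_{i} R_i$ and $N = \bigoplus_{i} N_i$ is an $R$-module whose $i$-th summand is annihilated by the idempotents complementary to the $i$-th factor, then the cross terms in $\bigwedge^{\bullet}_R N$ vanish, and one obtains a canonical isomorphism $\bigwedge^h_R N \simeq \bigoplus_i \bigwedge^h_{R_i} N_i$. Indeed, writing $\varepsilon_i$ for the $i$-th idempotent, for $n \in N_i$ and $n' \in N_j$ with $i \neq j$ one has $n \wedge n' = (\varepsilon_i n) \wedge n' = n \wedge (\varepsilon_i n') = 0$ by $R$-linearity of the wedge. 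Applied to $R = \bF_\frp \otimes L$ (identified with $\prod_{i \in \bZ/r\bZ} L$ via \eqref{EqnFpKDecomp}) and the weight decomposition \eqref{EqnDecompM} of $M$, this yields an isomorphism of $\bF_\frp \otimes L$-modules
\[
\bigwedge^h_{\bF_\frp \otimes L} M \;\simeq\; \bigoplus_{i \in \bZ/r\bZ} \bigwedge^h_L M_i.
\]

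What remains is to verify that this identification is compatible with the $\bF_\frp$-action and with the structural map. For the $\bF_\frp$-action, on the left-hand side $\lambda \in \bF_\frp$ acts via the image of $\lambda \otimes 1$ in $\bF_\frp \otimes L$, which by \eqref{EqnFpKDecomp} corresponds to $(\lambda^{q^i})_{i}$ in $\prod_i L$; hence it acts on the summand $\bigwedge^h_L M_i$ by scalar multiplication by $\lambda^{q^i}$, matching the definition of $[\lambda]$ on $\bigwedge^h M$ in the paper.

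For the structural map, I would observe that $\tau \colon M \to M$ is $(1 \otimes \sigma)$-semilinear for the $\bF_\frp \otimes L$-module structure (it is $\bF_\frp$-linear and $\sigma$-semilinear over $L$), so its linearization is a map from $(1 \otimes \sigma)^* M$ to $M$. Since $1 \otimes \sigma$ on $\bF_\frp \otimes L$ becomes componentwise $q$-th power on $\prod_i L$, the pullback $(1 \otimes \sigma)^* M$ decomposes as $\bigoplus_i \sigma^* M_i$. The semilinear map $\tau$ carries $M_i$ into $M_{i+1}$ (because for $m \in M_i$ we have $[\lambda] \tau(m) = \tau([\lambda] m) = \tau(\lambda^{q^i} m) = \lambda^{q^{i+1}} \tau(m)$), so its linearization is precisely $\bigoplus_i \tau_i$. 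Taking $\bigwedge^h_{\bF_\frp \otimes L}$ of this and combining it with the previous decomposition of exterior powers identifies $\bigwedge^h_{\bF_\frp \otimes L} \tau$ with $\bigoplus_i \bigwedge^h_L \tau_i$, as required.

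The main obstacle is purely bookkeeping: one has to keep track simultaneously of the two Frobenii on $\bF_\frp \otimes L$ (namely $\sigma \otimes 1$, which realises the cyclic shift $M_i \mapsto M_{i+1}$, and $1 \otimes \sigma$, which realises the Frobenius pullback factor by factor), and confirm that the index shift in $\tau_i \colon \sigma^* M_i \to M_{i+1}$ is compatible with how the summands match up after pulling back. Once these identifications are fixed, the computation is essentially formal.
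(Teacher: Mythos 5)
Your proof is correct and follows essentially the same route as the paper: both rest on the decomposition of $\bF_\frp\otimes L$ into its $r$ factors via the orthogonal idempotents $\vep_i$, so that cross terms in the exterior power vanish and $\bigwedge^h_{\bF_\frp\otimes L}M$ is identified with $\bigoplus_{i}\bigwedge^h_L M_i$ compatibly with the $\bF_\frp$-action and the structure map. The only slight imprecision is the claim that $1\otimes\sigma$ becomes the componentwise $q$-th power on $\prod_i L$ --- it is the componentwise $q$-th power composed with a cyclic shift (it sends $\vep_i$ to $\vep_{i+1}$) --- but this is exactly the index shift you flag at the end, and it is what makes $\tau$ linearize to $\bigoplus_i\tau_i:\sigma^*M_i\to M_{i+1}$, so the argument is unaffected.
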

\begin{proof}
	Let $\vep_i\in \bF_\frp\otimes L$ be the idempotent corresponding to the $i$-th factor of (\ref{EqnFpKDecomp}). Since $M_i=\vep_i M$
	and $\vep_i\vep_{i'}=0$ for any $i\neq i'$, the lemma follows by using
	the natural isomorphism
	\[
	\bigwedge^h_{\bF_\frp\otimes L}M\simeq \bigoplus_{j_0+\cdots+j_{r-1}=h}\left(\bigotimes_{i\in \bZ/r\bZ}\bigwedge^{j_i}_{\bF_\frp\otimes L} M_i\right).
	\]
\end{proof}

\subsection{Determinant of $t$-motives}\label{SubsecDetTMot}

Let $L/F$ be a field extension containing $\bF_\frp$ and let $\ucE$ be a sound $\sD$-elliptic sheaf over $L$ of generic characteristic.
Let $P$ be the $t$-motive associated with $\ucE$.
Recall that $P$ is free of rank $d^2$ over the principal ideal domain $A\otimes L=L[t]$,
and for the map $\tau:(1\otimes \sigma)^*P\to P$, we have $\dim_L(\Coker(\tau))=d$. 
Put
\[
Q=\bigwedge^{d^2}_{A\otimes L}P.
\]
Then $Q$ is a free $A\otimes L$-module of rank one. 
The map $\tau$ induces an $A\otimes L$-linear injection
\[
\bigwedge^{d^2}_{A\otimes L}\tau:(1\otimes \sigma)^*Q\to Q,
\]
which we also denote by $\tau$.

Let $\theta$ be the image of $t$ by the natural inclusion $A\to L$. Since $\ucE$ is sound,
we have $\theta=i_0(t)$ for the zero $i_{0}:A\to L$ of $\ucE$.


\begin{lem}\label{LemElDivP}
	Let $\mathbf{e}$ be a basis of the $A\otimes L$-module $Q$. Then we have
	\[
	\tau((1\otimes\sigma)^*\mathbf{e})=c(\theta-t)^d \mathbf{e},\quad c\in L^\times.
	\]
\end{lem}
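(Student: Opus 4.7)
The plan is to deduce the formula from elementary divisor theory over the principal ideal domain $A\otimes L = L[t]$, applied to the injective map $\tau:(1\otimes\sigma)^*P\to P$ between free modules of rank $d^2$.

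First I would observe that since $(1\otimes\sigma)^*P$ and $P$ are both free of rank $d^2$ over $L[t]$, their top exterior powers $(1\otimes\sigma)^*Q$ and $Q$ are free of rank one, and the induced map $\tau:(1\otimes\sigma)^*Q\to Q$ acts on a chosen basis by multiplication by an element of $L[t]$ which (up to $L^\times$) equals $\det(\tau)$ with respect to any pair of bases of $(1\otimes\sigma)^*P$ and $P$. The injectivity of the original $\tau$ from Definition \ref{DfnDEll} guarantees $\det(\tau)\ne 0$.

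Next I would pin down $\det(\tau)$ using the structure of its cokernel. By Definition \ref{DfnDEll}(\ref{DfnZero}), $\Coker(\tau)$ is the direct image along the graph $\Spec(L)\to X\times \Spec(L)$ of the zero $i_0$ of a locally free $\cO_L$-module of rank $d$. Since $\ucE$ is sound, $i_0(t)=\theta$, so the graph cuts out the closed subscheme $V(t-\theta)$ in $(X\setminus\{\infty\})\times \Spec(L)=\Spec(L[t])$; hence $\Coker(\tau)$ is annihilated by $(t-\theta)$ and is an $L$-vector space of dimension $d$. By the Smith normal form theorem over the PID $L[t]$, the elementary divisors of $\tau$ are $d^2-d$ units together with $d$ factors equal to $(t-\theta)$ up to units in $L^\times$. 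Therefore $\det(\tau)=c'(t-\theta)^d$ for some $c'\in L^\times$.

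Applying this to the basis $\mathbf{e}$ of $Q$ gives $\tau((1\otimes\sigma)^*\mathbf{e})=c'(t-\theta)^d\mathbf{e}$, and absorbing the sign $(-1)^d$ into a new unit $c:=(-1)^d c'\in L^\times$ yields the desired equality $\tau((1\otimes\sigma)^*\mathbf{e})=c(\theta-t)^d\mathbf{e}$. No step looks particularly delicate here; the only thing to watch is that the cokernel description from Definition \ref{DfnDEll}(\ref{DfnZero}) really does translate to annihilation by the single prime $(t-\theta)$ in $L[t]$, which is immediate once one remembers that the graph of $i_0$ is the vanishing locus of $t-\theta$ on the affine open $X\setminus\{\infty\}$.
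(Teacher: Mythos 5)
Your proof is correct and takes essentially the same route as the paper: both reduce to identifying the elementary divisors of $\tau$ over the PID $L[t]$, using that $\Coker(\tau)$ is $d$-dimensional over $L$ and annihilated by $t-\theta$, then take the top exterior power. The only cosmetic difference is that you derive the annihilation of $\Coker(\tau)$ by $t-\theta$ directly from Definition \ref{DfnDEll}(\ref{DfnZero}) together with soundness, whereas the paper cites the diagram (\ref{DiagVarphi}); these amount to the same observation.
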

\begin{proof}
	Consider the $A\otimes L$-linear injection $\tau:(1\otimes \sigma)^*P\to P$.
	From the diagram (\ref{DiagVarphi}), we see that the element $\theta-t$ annihilates $\Coker(\tau)$.
	Thus any elementary divisor of the $L[t]$-linear map $\tau$ divides $\theta-t$, that is, it lies in either of $L^\times(\theta-t)$ or $L^\times$.
	Since $\Coker(\tau)$ is an $L$-vector space of dimension $d$, the former appears exactly $d$ times.
	Taking the determinant yields the lemma.
\end{proof}

We denote by $\bar{t}$ the image of $t$ by the natural map $A\to \bF_\frp$.

\begin{prop}\label{PropLafDet}
	The $\bF_\frp[G_L]$-module $\bigwedge^{d^2}_{\bF_\frp}\ucE[\frp](L^\sep)$ is identified with
	the set of roots in $L^\sep$ of the equation
	\[
	z^{q^r}=c^{\frac{q^r-1}{q-1}}(\theta-\bar{t})^d(\theta^q-\bar{t})^d\cdots (\theta^{q^{r-1}}-\bar{t})^dz
	\]
	for some $c\in L^\times$,
	where the action of $\lambda\in \bF_\frp$ is given by $[\lambda](z)=\lambda z$. 
	In particular, for any such root $z$ and any $g\in G_L$, we have
	\[
	g(z)=\delta_{\ucE,\frp}(g)z.
	\]
\end{prop}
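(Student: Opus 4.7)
The plan is to apply Lemma \ref{LemDetPhiMod} together with Lemma \ref{LemGenLafforgue} to the $(\bF_\frp,\varphi)$-sheaf $M:=P/\varphi(\frp)P$. First I would check that $M$ is an \'{e}tale $(\bF_\frp,\varphi)$-sheaf of rank $d^2$ over $L$: freeness over $\bF_\frp\otimes L$ follows from freeness of $P$ over $A\otimes L$ via $(A\otimes L)/\frp=\bF_\frp\otimes L$, and \'{e}taleness of the induced $\tau$ follows from the fact that $\ucE[\frp]=\Gr_L(M)$ is \'{e}tale over $L$, since $\chr_A(L)=0\neq\frp$. Noting $V(M)=\ucE[\frp](L^\sep)$, Lemma \ref{LemDetPhiMod} then yields an isomorphism of $\bF_\frp[G_L]$-modules
\[
\bigwedge^{d^2}_{\bF_\frp}\ucE[\frp](L^\sep)\simeq V(\bigwedge^{d^2}M).
\]

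Next I would identify $\bigwedge^{d^2}M$ explicitly. By Lemma \ref{LemGenLafforgue} it is isomorphic to $(\bigwedge^{d^2}_{\bF_\frp\otimes L}M,\bigwedge^{d^2}_{\bF_\frp\otimes L}\tau)$, and since exterior powers commute with base change, the underlying module is naturally $Q/\frp Q$ with the induced Frobenius. Lemma \ref{LemElDivP} supplies a basis $\mathbf{e}$ of $Q$ and an element $c\in L^\times$ with $\tau((1\otimes\sigma)^*\mathbf{e})=c(\theta-t)^d\mathbf{e}$. I would then run the proof of Lemma \ref{LemDetPhiMod} with $h=1$ for this rank-one $(\bF_\frp,\varphi)$-sheaf: using the idempotents $\vep_i\in\bF_\frp\otimes L$ corresponding to (\ref{EqnFpKDecomp}), set $\bar{e}_i=\vep_i\bar{\mathbf{e}}\in(Q/\frp Q)_i$. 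Since $1\otimes\theta-\bar{t}\otimes 1$ corresponds to $(\theta-\bar{t}^{q^i})_i$ under the decomposition, extracting the $(i+1)$-eigencomponent of
\[
\tau(\sigma^*\bar{\mathbf{e}})=c(1\otimes\theta-\bar{t}\otimes 1)^d\bar{\mathbf{e}}=c\sum_{i}(\theta-\bar{t}^{q^i})^d\bar{e}_i
\]
identifies $C_i=c(\theta-\bar{t}^{q^{i+1}})^d$ in the notation of the proof of Lemma \ref{LemDetPhiMod}.

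Substituting these into $C=C_{r-1}C_{r-2}^{(q)}\cdots C_0^{(q^{r-1})}$ and using $\bar{t}^{q^r}=\bar{t}$ telescopes to
\[
C=c^{(q^r-1)/(q-1)}(\theta-\bar{t})^d(\theta^q-\bar{t})^d\cdots(\theta^{q^{r-1}}-\bar{t})^d.
\]
The description (\ref{EqnDetEquation}) then presents $V(\bigwedge^{d^2}M)$ as the set of $z\in L^\sep$ with $z^{q^r}=Cz$ and $\bF_\frp$-action $[\lambda](z)=\lambda z$; since $C\in L$, this one-dimensional $\bF_\frp$-line is $G_L$-stable and $G_L$ acts on it by $\delta_{\ucE,\frp}$ by definition, which gives the final claim. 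The main obstacle will be the bookkeeping in the second step---tracking how the Frobenius twist $\sigma^*$ shifts the $\bF_\frp$-grading on $M$, and verifying that the geometric series of $q$-power exponents on $c$ and on $\bar{t}$ collapses cleanly to the stated product once $\bar{t}^{q^r}=\bar{t}$ is invoked. Once these index computations are carried out, the rest is an immediate application of the cited lemmas and of (\ref{EqnDetEquation}).
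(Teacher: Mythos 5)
Your proposal is correct and follows essentially the same route as the paper's proof: both apply Lemma \ref{LemDetPhiMod} and Lemma \ref{LemGenLafforgue} to $\bar{P}=\bF_\frp\otimes_A P$ and $\bar{Q}=\bF_\frp\otimes_A Q$, use Lemma \ref{LemElDivP} together with the idempotent decomposition, and conclude via (\ref{EqnDetEquation}). The only cosmetic difference is that you extract the individual $C_i=c(\theta-\bar{t}^{q^{i+1}})^d$ and multiply them, whereas the paper computes the $r$-fold iterate of $\tau$ on $\vep_0\bar{\mathbf{e}}$ directly; the resulting scalar $c^{(q^r-1)/(q-1)}\prod_{i}(\theta^{q^i}-\bar{t})^d$ is the same.
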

\begin{proof}
	Put $\bar{P}=\bF_\frp\otimes_A P$. Since $L$ is of generic characteristic, 
	the pair $(\bar{P},\tau)$ defines an \'{e}tale $(\bF_\frp,\varphi)$-module of rank $d^2$ over $L$ satisfying $
	\ucE[\frp]=\Gr_L(\bar{P})$.
	Write
	\[
	\bar{Q}=\bigwedge^{d^2}_{\bF_\frp\otimes L} \bar{P}=\bF_\frp\otimes_A Q.
	\]
	Then the map $\tau$ induces on $\bar{Q}$ a structure of an \'{e}tale $(\bF_\frp,\varphi)$-module of rank one over $L$.
	Then Lemma \ref{LemDetPhiMod} and Lemma \ref{LemGenLafforgue}
	yield an isomorphism of $\bF_\frp[G_L]$-modules
	\[
	\bigwedge^{d^2}_{\bF_\frp}\ucE[\frp](L^\sep)\simeq \Gr_L(\bar{Q})(L^\sep).
	\]
	
	Let $\mathbf{e}$ be a basis of the free $A\otimes L$-module $Q$ of rank one and let $\bar{\mathbf{e}}$ be the image of $\mathbf{e}$ in $\bar{Q}$. 
	Let $\vep_i\in \bF_\frp\otimes L$ be the $i$-th idempotent as before. 
	Then $\vep_0\bar{\mathbf{e}},\vep_1\bar{\mathbf{e}},\ldots,
	\vep_{r-1}\bar{\mathbf{e}}$ form a basis of the $L$-vector space $\bar{Q}$ which satisfies
	\[
	\tau((1\otimes \sigma)^*(\vep_i\bar{\mathbf{e}}))=\vep_{i+1}\tau((1\otimes \sigma)^*(\bar{\mathbf{e}})).
	\]
	Then $\vep_i \bar{Q}=\bar{Q}_i$, where $\bar{Q}_i$ is the direct summand as in (\ref{EqnDecompM}).
	
	Now Lemma \ref{LemElDivP} implies
	\[
	\tau^r((1\otimes \sigma^r)^*(\vep_0\bar{\mathbf{e}}))=\left(\prod_{i=0}^{r-1}c^{q^i}(\theta^{q^i}-\bar{t})^d \right)\vep_0\bar{\mathbf{e}}
	\]
	for some $c\in L^\times$.
	Since the $\varphi$-sheaf $\bar{Q}$ is \'{e}tale, as (\ref{EqnDetEquation}) the $\bF_\frp[G_L]$-module $\Gr_L(\bar{Q})(L^\sep)$
	is identified with the set of roots of the equation
	\[
	z^{q^r}=\left(\prod_{i=0}^{r-1}c^{q^i}(\theta^{q^i}-\bar{t})^d \right) z
	\]
	with prescribed $\bF_\frp$-action. This concludes the proof.
\end{proof}

\subsection{Determinant at $\frp$ and the Carlitz character}\label{SubsecDetCarlitz}

Let $K/F$ be a finite extension.
In this subsection, let $w$ be a place of $K$ which lies over $\frp$. 
We fix a separable closure $K^\sep_w$ of $K_w$ and an embedding $K^\sep\to K_w^\sep$ extending $K\to K_w$.
We denote by $I_w$ the inertia subgroup of $G_K=\Gal(K^\sep/K)$ at $w$. 

Since $\bF_\frp$ is perfect, we have the canonical section $\bF_\frp\to\cO_\frp$ of the reduction map
$\cO_\frp\to \bF_\frp$.
We consider $\bF_\frp$ as a subfield of $F_\frp$ and $K_w$ by this map.

Let $v_w$ be the $\frp$-adic additive valuation on $K_w^\sep$ satisfying $v_{w}(K_w^\times)=\bZ$.
Put $e=v_w(\frp)$. Let $m_{K_w^\sep}$ be the maximal ideal of $\cO_{K_w^\sep}$ and
let $\bar{k}$ be the residue field of $\cO_{K_w^\sep}$. 
We consider $\bar{k}$ as an $\bF_\frp$-algebra via the reduction map of $\cO_\frp\to \cO_{K_w^\sep}$.
For any positive rational number $l$, put
\[
m_{K_w^\sep}^{\geqslant l}=\{z\in K_w^\sep\mid v_w(z)\geq l\},\quad m_{K_w^\sep}^{> l}=\{z\in K_w^\sep\mid v_w(z)> l\}.
\]
Let
\[
\Theta_l=m_{K_w^\sep}^{\geqslant l}/m_{K_w^\sep}^{>l}.
\]
It is a $\bar{k}$-vector space of dimension one on which $I_w$ acts $\bar{k}$-linearly.
Thus it defines a character
\[
\theta_l:I_w\to \bar{k}^\times.
\]
For any positive rational numbers $l_1$ and $l_2$, the multiplication induces a natural isomorphism of $\bar{k}[I_w]$-modules
$\Theta_{l_1}\otimes_{\bar{k}} \Theta_{l_2}\simeq \Theta_{l_1+l_2}$. Thus we have
\begin{equation}\label{EqnGradedPieceProd}
	\theta_{l_1}\theta_{l_2}=\theta_{l_1+l_2}.
\end{equation}

Let $C=\Spec(\cO_K[Z])$ be the Carlitz module. It is a Drinfeld $A$-module of rank one defined by $[t]_C(Z)=tZ+Z^q$.
Then the $\frp$-torsion subgroup $C[\frp](K^\sep)$ is an $\bF_\frp$-vector space of dimension one. The $G_K$-action on it 
defines a character
\[
\chi_{C,\frp}:G_K\to \bF_\frp^\times,
\]
which we refer to as the mod $\frp$ Carlitz character.

\begin{lem}\label{LemCarlitzTameChar}
	\[
	\chi_{C,\frp}|_{I_w}=\theta_j\quad\text{where}\quad j=\frac{e}{q^r-1}.
	\]
\end{lem}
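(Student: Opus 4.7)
The plan is to produce an element $\lambda \in K_w^\sep$ that simultaneously generates the $\bF_\frp$-line $C[\frp](K^\sep)$ and a $\bar k$-line in $\Theta_j$, and then to compare the Galois actions in the two realisations.

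First I would take any nonzero $\lambda \in C[\frp](K_w^\sep)$ and establish $v_w(\lambda) = j = e/(q^r-1)$. Write
\[
[\frp]_C(Z) = \frp Z + a_1 Z^q + \cdots + a_{r-1} Z^{q^{r-1}} + Z^{q^r},\qquad a_i \in A,
\]
so that $g(Z) := [\frp]_C(Z)/Z$ is a monic polynomial of degree $q^r-1$ whose roots are precisely the $q^r-1$ nonzero elements of $C[\frp](K^\sep)$. Since $[\frp]_C$ is monic over $\cO_K$, every such root satisfies $v_w(\lambda) \geq 0$. For any two nonzero roots $\lambda_1,\lambda_2$ there is $a \in A$ coprime to $\frp$ with $\lambda_2 = [a]_C(\lambda_1)$. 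Writing $[a]_C(\lambda_1) = a\lambda_1 + b_1\lambda_1^q + \cdots + \lambda_1^{q^n}$ with $b_i \in A$ and observing that, when $v_w(\lambda_1)>0$, the leading summand $a\lambda_1$ has strictly smaller $v_w$-valuation than every $\lambda_1^{q^i}$ ($i\geq 1$), we get $v_w(\lambda_2) = v_w(\lambda_1)$. Hence either every nonzero root is a unit, or none is; the former case is excluded by the product identity $\prod_i \lambda_i = \pm \frp$, whose $v_w$-value equals $e>0$. The common positive value of $v_w$ on the $q^r-1$ nonzero roots must therefore equal $e/(q^r-1)=j$.

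Next, with $v_w(\lambda)=j$, the image $\bar\lambda$ of $\lambda$ in $\Theta_j$ is a $\bar k$-basis. Given $g \in I_w$, choose any lift $a \in A$ of $\chi_{C,\frp}(g) \in \bF_\frp = A/(\frp)$. By definition of $\chi_{C,\frp}$, the $G_K$-action and the $A/(\frp)$-action on $C[\frp](K^\sep)$ satisfy $g(\lambda) = [a]_C(\lambda) = a\lambda + b_1 \lambda^q + \cdots + \lambda^{q^n}$. Since $v_w(\lambda^{q^i}) = jq^i > j$ for $i\geq 1$, this yields $g(\lambda) \equiv a\lambda \pmod{m_{K_w^\sep}^{>j}}$. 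Reducing in $\Theta_j$ gives $\theta_j(g)\,\bar\lambda = \bar a\,\bar\lambda$ in $\bar k^\times$. The two identifications $\bF_\frp = A/(\frp) \hookrightarrow \bar k$ (via $A \to \cO_\frp \to \bar k$) and $\bF_\frp \hookrightarrow \bar k$ (via the canonical section $\bF_\frp \to \cO_\frp$ followed by reduction) agree, so $\bar a$ equals $\chi_{C,\frp}(g)$ in $\bar k^\times$. This gives $\theta_j(g) = \chi_{C,\frp}(g)$ for every $g \in I_w$.

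The main obstacle is pinning down $v_w(\lambda)$ without computing the middle coefficients of $[\frp]_C$ or appealing to black-box Lubin–Tate theory; the argument above achieves this by coupling the $A$-module transitivity between nonzero roots with the elementary product-of-roots identity. Once that valuation is in hand, the identification of characters is a direct reduction of the defining equation modulo $m_{K_w^\sep}^{>j}$.
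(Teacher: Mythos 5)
Your proof is correct, and it departs from the paper's at a meaningful step. The paper quotes Hayes's explicit description of $[\frp]_C(Z)=\frp Z+\sum_{i=1}^r b_iZ^{q^i}$ with $b_1,\dots,b_{r-1}\in\frp\cO_\frp$; this Eisenstein shape makes the Newton polygon of $[\frp]_C(Z)/Z$ a single segment of slope $j$, hence all nonzero $\frp$-torsion points have valuation $j$ at once. You instead prove the common valuation by an elementary two-step argument: $\bF_\frp$-cyclicity of $C[\frp]$ shows any two nonzero torsion points are related by some $[a]_C$ with $a$ prime to $\frp$, and a strict ultrametric estimate shows $[a]_C$ preserves any positive valuation; the product-of-roots identity $\prod_{\lambda\neq 0}\lambda=\pm\frp$ then rules out the all-units case and fixes the common value at $e/(q^r-1)$. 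This buys you independence from Hayes's structure theorem for $[\frp]_C$ (you only use that the constant term of $[\frp]_C(Z)/Z$ is $\frp$), at the cost of a slightly longer valuation argument. Once the valuation is known, your comparison of $\theta_j$ with $\chi_{C,\frp}$ is essentially the paper's: reduce $g(\lambda)=[a]_C(\lambda)$ modulo $m_{K_w^\sep}^{>j}$ and check the two embeddings of $\bF_\frp$ into $\bar{k}$ agree; the paper phrases this as checking $\iota$ is $\bF_\frp$-linear via the $t$-action alone, but the content is the same. (One cosmetic point: the top coefficient of $[a]_C$ is the leading coefficient of $a$, not necessarily $1$, but since it lies in $\bF_q^\times$ it is a $w$-adic unit, so your estimate is unaffected.)
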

\begin{proof}
	Since $\Theta_j$ is generated by the image of ${\frp}^{1/(q^r-1)}$, the character $\theta_j$ factors through $\bF_\frp^\times\subseteq \bar{k}^\times$.

	The action of $\frp$ on the Carlitz module $C$ is given by the monic polynomial
	\[
	[\frp]_C(Z)=\frp Z+\sum_{i=1}^r b_i Z^{q^i},\quad b_i\in A
	\]
	satisfying $b_1,\ldots,b_{r-1}\in \frp\cO_\frp$ \cite[Proposition 2.4]{Hayes}. 
	Thus the abelian group $C[\frp](K_w^\sep)$
	is identified with the set of $z\in \cO_{K_w^\sep}$ satisfying $[\frp]_C(z)=0$. Since any of its non-zero elements has valuation $j$,
	we have an injection of $\bF_q$-vector spaces
	\[
	\iota: C[\frp](K_w^\sep)\to \Theta_j,\quad z\mapsto z\bmod m_{K_w^\sep}^{>j}
	\]
	which is compatible with the $I_w$-actions.
	
	We claim that $\iota$ is compatible with the natural $\bF_\frp$-actions. The $\bF_\frp$-action on $C[\frp](K_w^\sep)$ is induced by
	that of $A=\bF_q[t]$. Since $\bF_\frp=A/(\frp)$ is generated over $\bF_q$ by the image of $t$, it suffices to show that $\iota$ is compatible with the 
	natural actions of $t$. This follows from the equality
	\[
	[t]_C(z)=tz+z^q\equiv tz \bmod m_{K_w^\sep}^{>j}
	\]
	for any $z\in C[\frp](K^\sep_w)$. Thus we obtain $\chi_{C,\frp}|_{I_w}=\theta_j$.
\end{proof}

Let $\theta$ be the image of $t$ by the natural inclusion $A\to \cO_\frp$ as before.

\begin{lem}\label{LemPFactorKw}
	For $\bar{t}\in \bF_\frp\subseteq K_w$, we have
	\[
	v_w(\theta-\bar{t})=e\quad\text{and}\quad v_w(\theta-\bar{t}^{q^i})=0
	\]
	for any integer $i\in [1,r-1]$.
\end{lem}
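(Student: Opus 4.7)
The proof reduces to a short computation in the complete discrete valuation ring $\cO_\frp\subseteq \cO_{K_w}$. Recall that $\bF_\frp$ is embedded into $\cO_\frp$ via the canonical section of the reduction map, characterized among lifts by the identity $x^{q^r}=x$ for $x\in\bF_\frp\subseteq\cO_\frp$. In particular each $y\in\bF_\frp\subseteq\cO_\frp$ reduces to $y\in\bF_\frp$ itself, and since $\theta\in\cO_\frp$ reduces to $\bar t$ by the very definition of $\bar t$, we have $\theta\equiv \bar t\pmod{\frp\cO_\frp}$. Because $\frp$ has degree $r$ over $\bF_q$, the Frobenius orbit $\bar t,\bar t^q,\ldots,\bar t^{q^{r-1}}\in\bF_\frp$ consists of $r$ pairwise distinct elements.

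For $i\in[1,r-1]$, we thus have $\bar t^{q^i}\not\equiv \bar t\equiv \theta\pmod{\frp\cO_\frp}$, so the element $\theta-\bar t^{q^i}$ reduces to a nonzero element of $\bF_\frp$ and is therefore a unit in $\cO_\frp$. This immediately gives $v_w(\theta-\bar t^{q^i})=0$, which is the second statement.

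For the first statement I would apply Hensel's lemma to factor $\frp$. Over $\bF_\frp$ the polynomial $\frp(X)\in\bF_q[X]\subseteq\cO_\frp[X]$ splits as $\prod_{i=0}^{r-1}(X-\bar t^{q^i})$ with pairwise distinct roots, so Hensel's lemma lifts this to a factorization
\[
\frp(X)=\prod_{i=0}^{r-1}\bigl(X-\bar t^{q^i}\bigr)\in\cO_\frp[X],
\]
where on the right each $\bar t^{q^i}$ is viewed inside $\cO_\frp$ via the canonical section. Substituting $X=\theta=t$ and using $\frp(t)=\frp$ yields $\frp=\prod_{i=0}^{r-1}(\theta-\bar t^{q^i})$. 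Taking $v_w$ of both sides, using $v_w(\frp)=e$ together with the fact from the previous paragraph that the factors for $i\geq 1$ are units in $\cO_\frp$, forces $v_w(\theta-\bar t)=e$, as required. There is no real obstacle here; the one point deserving care is simply the convention that $\bar t\in K_w$ denotes the image of $\bar t\in\bF_\frp$ under the canonical section, i.e.\ the unique lift fixed by $x\mapsto x^{q^r}$.
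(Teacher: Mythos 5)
Your proof is correct and takes essentially the same route as the paper's: both arguments rest on the factorization $\frp=\prod_{i=0}^{r-1}(\theta-\bar{t}^{q^i})$ in $\cO_\frp\subseteq\cO_{K_w}$ together with the observation that the Frobenius conjugates of $\bar{t}$ are pairwise distinct in the residue field, so the factors with $i\geq 1$ are units and additivity of $v_w$ gives $v_w(\theta-\bar{t})=v_w(\frp)=e$. The only difference is that the paper treats the factorization as immediate (the canonical section embeds $\bF_\frp$ as a coefficient field of $\cO_\frp$, so the splitting of $\frp(X)$ over $\bF_\frp$ is already a splitting over $\cO_\frp$), whereas you justify it via Hensel's lemma, implicitly using that the Hensel lifts of the simple roots coincide with the canonical-section lifts — a harmless variation.
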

\begin{proof}
	Let $k_w$ be the residue field of $K_w$.
	Consider the commutative diagram
	\[
	\xymatrix{
		A \ar[r]\ar[rd] & \cO_\frp \ar[r]\ar[d]& \cO_{K_w} \ar[d]\\
		& \bF_\frp \ar[r]& k_w,
	}
	\]
	where the upper horizontal arrows are natural inclusions.
	Then the image of $t\in A$ by the upper horizontal composite is $\theta$ and that by the oblique arrow is $\bar{t}$.
	This implies $v_w(\theta-\bar{t})>0$. 
	
	For any integer $i\in [1,r-1]$, we have $\bar{t}\neq \bar{t}^{q^i}$ in $k_w$ since the extension $\bF_\frp/\bF_q$ is generated by $\bar{t}$.
	Hence we obtain $v_w(\bar{t}-\bar{t}^{q^i})=0$ and $v_w(\theta-\bar{t}^{q^i})=0$. This yields
	\[
	e=v_w(\frp)=v_w((\theta-\bar{t})(\theta-\bar{t}^q)\cdots (\theta-\bar{t}^{q^{r-1}}))=v_w(\theta-\bar{t})
	\]
	as claimed.
\end{proof}

\begin{cor}\label{CorLafCarlitz}
	Let $w$ be a place of $K$ which lies over $\frp$ and let $I_w$ be the inertia subgroup of $G_K$ at $w$.
	Let $\ucE$ be a sound $\sD$-elliptic sheaf over $K_w$ of generic characteristic.
	Then we have 
	\[
	(\delta_{\ucE,\frp}|_{I_w})^{q-1}=(\chi_{C,\frp}|_{I_w})^{d(q-1)}.
	\]
\end{cor}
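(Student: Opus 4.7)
The plan is to apply Proposition \ref{PropLafDet} over $L=K_w$ to get an explicit Kummer-type equation whose root generates $\bigwedge^{d^2}_{\bF_\frp}\ucE[\frp](K_w^\sep)$, and then to extract the character $\delta_{\ucE,\frp}^{q-1}|_{I_w}$ by comparing this generator with an appropriate power of a uniformizer whose tame character is computed by Lemma \ref{LemCarlitzTameChar}.

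Proposition \ref{PropLafDet} produces some $c\in K_w^\times$ and a generator $z_0\in K_w^\sep$ of the one-dimensional $\bF_\frp[G_{K_w}]$-module $\bigwedge^{d^2}_{\bF_\frp}\ucE[\frp](K_w^\sep)$ satisfying
\[
z_0^{q^r-1}=c^{(q^r-1)/(q-1)}\prod_{i=0}^{r-1}(\theta^{q^i}-\bar t)^d,
\]
with $g(z_0)=\delta_{\ucE,\frp}(g)z_0$ for $g\in G_{K_w}$. I would then compute the $w$-adic valuation of the right-hand side. Since $\bF_\frp=\bF_q(\bar t)$, the $\bF_q$-conjugates $\bar t,\bar t^q,\ldots,\bar t^{q^{r-1}}$ are pairwise distinct in $\bF_\frp\subseteq k_w$; because $\theta$ reduces to $\bar t$ modulo $w$, the element $\theta^{q^i}-\bar t$ reduces to $\bar t^{q^i}-\bar t$, which is nonzero for $i\in[1,r-1]$. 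Combined with $v_w(\theta-\bar t)=e$ from Lemma \ref{LemPFactorKw}, this gives $v_w\bigl(\prod_{i=0}^{r-1}(\theta^{q^i}-\bar t)^d\bigr)=de$, so $(q^r-1)v_w(z_0)=\tfrac{q^r-1}{q-1}v_w(c)+de$; hence $v_w(z_0^{q-1})=v_w(c)+\tfrac{de(q-1)}{q^r-1}$.

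Next, I fix $\pi_w\in K_w^\sep$ with $\pi_w^{q^r-1}=\frp$, so that $v_w(\pi_w^{d(q-1)})=\tfrac{de(q-1)}{q^r-1}$, and I set $u:=z_0^{q-1}/(c\,\pi_w^{d(q-1)})\in\cO_{K_w^\sep}^\times$. For $g\in I_w$, the ratio $g(\pi_w)/\pi_w$ is a $(q^r-1)$-st root of unity lying in $\bF_\frp^\times\subseteq K_w$, and by definition of the tame character it reduces to $\theta_{e/(q^r-1)}(g)$; by Lemma \ref{LemCarlitzTameChar} this equals $\chi_{C,\frp}(g)$, so $g(\pi_w^{d(q-1)})=\chi_{C,\frp}(g)^{d(q-1)}\pi_w^{d(q-1)}$. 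Using also $g(c)=c$ and $g(z_0^{q-1})=\delta_{\ucE,\frp}(g)^{q-1}z_0^{q-1}$, one obtains
\[
g(u)=\bigl(\delta_{\ucE,\frp}(g)^{q-1}\,\chi_{C,\frp}(g)^{-d(q-1)}\bigr)\,u.
\]
Reducing modulo the maximal ideal of $\cO_{K_w^\sep}$, both $\bar u\in\bar k^\times$ and the scalar (which lies in $\bF_\frp^\times\subseteq\bar k^\times$) are well-defined, and since $g\in I_w$ fixes $\bar k$ we must have $\delta_{\ucE,\frp}(g)^{q-1}=\chi_{C,\frp}(g)^{d(q-1)}$, proving the corollary.

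The only delicate point is the valuation computation for the product $\prod_i(\theta^{q^i}-\bar t)^d$: one has to notice that only the $i=0$ factor contributes, which rests on the fact that $\bar t$ generates $\bF_\frp/\bF_q$ together with Lemma \ref{LemPFactorKw}. Raising to the $(q-1)$-th power is necessary to dispose of the ambiguity introduced by the exponent $(q^r-1)/(q-1)$ on $c$ in Proposition \ref{PropLafDet}; without it one would only control the character up to a cocycle with values in $(q-1)$-th roots of $c$. Once these observations are in place, the rest is a routine tame inertia calculation.
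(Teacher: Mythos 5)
Your proposal is correct and follows essentially the same route as the paper: apply Proposition \ref{PropLafDet} over $L=K_w$, kill the ambiguity in $c$ by raising to the $(q-1)$-st power, compute the $w$-adic valuation of the product using Lemma \ref{LemPFactorKw} (only the $i=0$ factor contributes), and identify the resulting tame character with $\chi_{C,\frp}|_{I_w}$ via Lemma \ref{LemCarlitzTameChar}. Your explicit unit $u=z_0^{q-1}/(c\,\pi_w^{d(q-1)})$ and reduction modulo the maximal ideal is just an unwinding of the paper's use of the graded-piece characters $\theta_l$ and their multiplicativity (\ref{EqnGradedPieceProd}), so there is no substantive difference.
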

\begin{proof}
	Since $\bF_\frp\subseteq K_w$, we may apply Proposition \ref{PropLafDet} to $L=K_w$.
	Then, for any $g\in G_{K_w}$ and any root $z\in K_w^\sep$
	of the equation
	\[
	z^{q^r-1}=c^{q^r-1}((\theta-\bar{t})(\theta^q-\bar{t})\cdots (\theta^{q^{r-1}}-\bar{t}))^{d(q-1)},
	\quad c\in K_w^\times,
	\]
	we have $g(z)=\delta_{\ucE,\frp}(g)^{q-1}z$.
	Replacing $z$ by $z/c$, we see that the same relation holds for any root $z\in K_w^\sep$ of the equation
	\[
	z^{q^r-1}=((\theta-\bar{t})(\theta^q-\bar{t})\cdots (\theta^{q^{r-1}}-\bar{t}))^{d(q-1)}.
	\]

	Now Lemma \ref{LemPFactorKw} and (\ref{EqnGradedPieceProd}) show
	\[
	(\delta_{\ucE,\frp}|_{I_w})^{q-1}=\theta_j^{d(q-1)}
	\]
	with $j=e/(q^r-1)$.
	Hence Lemma \ref{LemCarlitzTameChar} yields the corollary.
\end{proof}



\section{Canonical isogeny character}\label{SecCanIsogChar}

As in the previous section, we fix $\frp\in \cR$ and put $|\frp|=q^r$.

\subsection{Definition of the canonical isogeny character}\label{SubsecDefCanIsogChar}

Let $L/\bF_q$ be a field extension and let $\ucE$ be a $\sD$-elliptic sheaf over $L$ satisfying $\infty\notin\cZ(\ucE)$ and $\chr_A(L)\neq \frp$.

We denote by $\bF$ the extension of $\bF_\frp$ of degree $d$. 
By the assumption (\ref{EqnAssumpDx}), the completion $D_\frp$ is a division algebra and
it contains an unramified extension of $F_\frp$ of degree $d$. 
In particular, we have an injective ring homomorphism $\bF\to \cO_{D}/\frp \cO_{D}$, which we fix once and for all.

By the assumption that $\inv(D_\frp)=1/d$ and \cite[Theorem 14.5]{Rei}, we can write
\begin{equation}\label{EqnODP}
	\cO_{D}/\frp \cO_{D}=\bigoplus_{i=0}^{d-1}\bF\Pi^i,\quad \Pi^d=0,\quad \Pi \omega=\omega^{|\frp|}\Pi
\end{equation}
for any $\omega\in\bF$, with some prime element $\Pi\in \cO_{D_\frp}$.

By Lemma \ref{LemTPfree}, the left
$\cO_{D}/\frp \cO_{D}$-module $\ucE[\frp](L^\sep)$ is identified with $\cO_{D}/\frp \cO_{D}$.
Consider its $\Pi$-torsion submodule $\ucE[\frp](L^\sep)[\Pi]$.
By (\ref{EqnODP}), it is an $\bF$-vector space of dimension one on which $G_L$-acts $\bF$-linearly.
Hence it defines 
a character
\[
\rho_{\ucE,\frp}:G_L\to \bF^\times,
\]
which we call the canonical isogeny character.


\subsection{Relationship with the determinant}\label{SubsecCanIsogCharWithDet}

Let $L/\bF_q$ be a field extension and let $\ucE$ be a $\sD$-elliptic sheaf over $L$ satisfying $\infty\notin \cZ(\ucE)$ and $\chr_A(L)\neq \frp$.
Let $\rho_{\ucE,\frp}:G_L\to \bF^\times$ be its canonical isogeny character.
Consider the $\bF[G_L]$-module $\ucE[\frp](L^\sep)$, which is an $\bF$-vector space of dimension $d$. This gives a representation
\[
\pi_{\ucE,\frp}:G_L\to \Aut_{\bF}(\ucE[\frp](L^\sep)).
\]
Then the $G_L$-action on 
$\bigwedge_{\bF}^d \ucE[\frp](L^\sep)$ defines a character
\[
\det_{\bF}(\pi_{\ucE,\frp}):G_L\to \bF^\times.
\]

\begin{lem}\label{LemCanIsogCharNorm}
	For any $g\in G_L$, the characteristic polynomial of $\pi_{\ucE,\frp}(g)$ (over $\bF$) equals
	\[
	\prod_{i=0}^{d-1}\left(X-\rho_{\ucE,\frp}(g)^{|\frp|^i}\right)\in \bF[X].
	\]
	In particular, the polynomial lies in $\bF_\frp[X]$ and
	\[
	\det_{\bF}(\pi_{\ucE,\frp})=\rho_{\ucE,\frp}^{\frac{|\frp|^d-1}{|\frp|-1}}=N_{\bF/\bF_\frp}\circ\rho_{\ucE,\frp}.
	\]
\end{lem}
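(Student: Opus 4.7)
The plan is to exploit the $\Pi$-adic filtration on $M := \ucE[\frp](L^\sep)$. By Lemma \ref{LemTPfree}, $M$ is free of rank one as a left $\cO_{D}/\frp\cO_D$-module, so (\ref{EqnODP}) produces a decreasing filtration
\[
M \supset \Pi M \supset \Pi^2 M \supset \cdots \supset \Pi^{d-1} M \supset \Pi^d M = 0,
\]
whose successive quotients $M_i := \Pi^i M/\Pi^{i+1} M$ are one-dimensional $\bF$-vector spaces. In particular $\Pi^{d-1} M$ is the $\Pi$-torsion of $M$, so the $G_L$-character on $M_{d-1}$ is exactly $\rho_{\ucE,\frp}$ by definition.

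Since the $G_L$-action commutes with the left $\cO_D/\frp\cO_D$-action, it preserves the filtration and acts $\bF$-linearly on each $M_i$, producing characters $\chi_i : G_L \to \bF^\times$ with $\chi_{d-1} = \rho_{\ucE,\frp}$. To relate the $\chi_i$, I would iterate the commutation relation in (\ref{EqnODP}) to obtain $\Pi^i \omega = \omega^{|\frp|^i} \Pi^i$ for $\omega \in \bF$. This makes left multiplication by $\Pi^i$ a $G_L$-equivariant bijection $M_0 \to M_i$ that is $\bF$-semilinear via the $i$-th power of Frobenius, whence $\chi_i = \chi_0^{|\frp|^i}$. Combining with $\chi_{d-1} = \rho_{\ucE,\frp}$ and inverting $|\frp|^{d-1}$ modulo $|\frp|^d - 1 = |\bF^\times|$ (giving $|\frp|^{-(d-1)} \equiv |\frp|$), I get $\chi_i = \rho_{\ucE,\frp}^{|\frp|^{i+1}}$. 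Using $\rho_{\ucE,\frp}^{|\frp|^d} = \rho_{\ucE,\frp}$, the multiset $\{\chi_i\}_{i=0}^{d-1}$ coincides with $\{\rho_{\ucE,\frp}^{|\frp|^j}\}_{j=0}^{d-1}$.

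Finally, since $\pi_{\ucE,\frp}(g)$ preserves the filtration, its characteristic polynomial over $\bF$ equals the product of the characteristic polynomials on the graded pieces, giving
\[
\det\bigl(X \cdot \id - \pi_{\ucE,\frp}(g)\bigr) = \prod_{i=0}^{d-1}\bigl(X - \rho_{\ucE,\frp}(g)^{|\frp|^i}\bigr).
\]
The root set is stable under the $|\frp|$-power Frobenius of $\bF$, so the coefficients are $\Gal(\bF/\bF_\frp)$-invariant and lie in $\bF_\frp$. Multiplying the roots yields
\[
\det{}_{\bF}(\pi_{\ucE,\frp}(g)) = \rho_{\ucE,\frp}(g)^{1 + |\frp| + \cdots + |\frp|^{d-1}} = \rho_{\ucE,\frp}(g)^{(|\frp|^d - 1)/(|\frp| - 1)},
\]
which is $N_{\bF/\bF_\frp}(\rho_{\ucE,\frp}(g))$ by the standard formula for the norm in a cyclic extension of finite fields. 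The only delicate point is the bookkeeping of Frobenius twists in the semilinear isomorphism $M_0 \simeq M_i$, but this is a direct consequence of (\ref{EqnODP}); everything else is formal.
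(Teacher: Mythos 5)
Your proof is correct and is essentially the paper's own argument: both use the $G_L$-stable filtration $V\supseteq \Pi V\supseteq\cdots\supseteq\Pi^{d-1}V\supseteq 0$ together with the semilinearity $\Pi^j\omega=\omega^{|\frp|^j}\Pi^j$ to identify the characters on the graded pieces with Frobenius twists of $\rho_{\ucE,\frp}$, and then read off the characteristic polynomial and determinant. The only difference is bookkeeping: the paper maps each graded piece to $V[\Pi]$ by $x\mapsto\Pi^{d-1-i}x$, while you map $M_0\to M_i$ by $\Pi^i$ and invert the exponent modulo $|\frp|^d-1$; the resulting multiset of characters is the same.
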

\begin{proof}
	Put $V=\ucE[\frp](L^\sep)$ and $R=\cO_D/\frp\cO_D$. 
	Note that the filtration by $R$-submodules
	\[
	V\supseteq \Pi V\supseteq \Pi^2 V\supseteq \cdots \supseteq \Pi^{d-1}V \supseteq 0
	\]
	is stable under the $G_L$-action. 
	Moreover, each graded piece is an $\bF$-vector space of dimension one, and we have an isomorphism of $\bF_\frp$-vector spaces
	\[
	\psi_i:\Pi^i V/\Pi^{i+1}V\to V[\Pi],\quad x\mapsto \Pi^{d-1-i}x
	\]
	which is compatible with the $G_L$-actions. 
	
	Since the action of $\lambda\in \bF$ satisfies $\psi_i(\lambda x)=\lambda^{|\frp|^{d-1-i}}\psi_i(x)$,
	the $G_L$-action on $\Pi^i V/\Pi^{i+1}V$ is given by the character
	$\rho_{\ucE,\frp}^{|\frp|^{i+1-d}}$. This concludes the proof.
\end{proof}

\begin{lem}\label{LemCanIsogCharDet}
	\[
	\rho_{\ucE,\frp}^{d\frac{|\frp|^d-1}{|\frp|-1}}=\delta_{\ucE,\frp},
	\]
	where $\delta_{\ucE,\frp}$ was defined at the beginning of \S\ref{SecDet}.
\end{lem}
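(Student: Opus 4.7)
The plan is to deduce the identity from Lemma \ref{LemCanIsogCharNorm} together with the standard linear-algebra relation between determinants under restriction of scalars. Set $V = \ucE[\frp](L^\sep)$. This is an $\bF$-vector space of dimension $d$, hence an $\bF_\frp$-vector space of dimension $d^2$. The $G_L$-representation $\pi_{\ucE,\frp}$ is $\bF$-linear, and by definition $\delta_{\ucE,\frp}(g) = \det_{\bF_\frp}(\pi_{\ucE,\frp}(g))$ while Lemma \ref{LemCanIsogCharNorm} identifies $\det_{\bF}(\pi_{\ucE,\frp})$ with the character $\rho_{\ucE,\frp}^{(|\frp|^d-1)/(|\frp|-1)}$, which actually takes values in $\bF_\frp^\times \subseteq \bF^\times$.

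The key step is the classical identity
\[
\det_{\bF_\frp}(g) = N_{\bF/\bF_\frp}\!\left(\det_\bF(g)\right) \quad \text{for any } g \in \End_\bF(V),
\]
valid for any finite separable extension $\bF/\bF_\frp$ and any finite-dimensional $\bF$-vector space $V$. One way to prove this is to base change to an algebraic closure $\overline{\bF_\frp}$: the decomposition $\bF \otimes_{\bF_\frp} \overline{\bF_\frp} \simeq \prod_{i=0}^{d-1}\overline{\bF_\frp}$ by the $d$ embeddings of $\bF$ (given by powers of $\Fr$) induces a decomposition $V \otimes_{\bF_\frp}\overline{\bF_\frp} = \bigoplus_{i=0}^{d-1} V^{(i)}$, and the $\bF_\frp$-determinant of $g$ equals the product of the $\overline{\bF_\frp}$-determinants of $g$ acting on each summand, i.e.\ $\prod_{i=0}^{d-1} \Fr^i(\det_\bF g)$, which is the norm.

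Applying this identity to $g = \pi_{\ucE,\frp}(h)$ for $h \in G_L$ and using Lemma \ref{LemCanIsogCharNorm} yields
\[
\delta_{\ucE,\frp}(h) = N_{\bF/\bF_\frp}\!\left(\det_\bF(\pi_{\ucE,\frp}(h))\right) = N_{\bF/\bF_\frp}\!\left(\rho_{\ucE,\frp}(h)^{(|\frp|^d-1)/(|\frp|-1)}\right).
\]
The inner expression lies in $\bF_\frp^\times$ (again by Lemma \ref{LemCanIsogCharNorm}), so the norm on $\bF_\frp^\times$ is simply the $d$-th power, and the right-hand side becomes $\rho_{\ucE,\frp}(h)^{d(|\frp|^d-1)/(|\frp|-1)}$, as required. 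No serious obstacle is anticipated; the argument is essentially a bookkeeping check once the restriction-of-scalars determinant formula is in hand.
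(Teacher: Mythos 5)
Your argument is correct and is essentially the same as the paper's: identify $\delta_{\ucE,\frp}$ with $\det_{\bF_\frp}\circ\,\pi_{\ucE,\frp}$, apply the restriction-of-scalars identity $\det_{\bF_\frp}=N_{\bF/\bF_\frp}\circ\det_{\bF}$, note via Lemma \ref{LemCanIsogCharNorm} that $\det_{\bF}(\pi_{\ucE,\frp})$ takes values in $\bF_\frp^\times$ so the norm is the $d$-th power, and conclude with Lemma \ref{LemCanIsogCharNorm}. The only cosmetic difference is that the paper cites \cite[Lemma 9.20.4]{Stacks} for the determinant--norm identity, whereas you sketch its proof by base change to $\overline{\bF_\frp}$.
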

\begin{proof}
	By Lemma \ref{LemCanIsogCharNorm}, we see that the image of the character $\det_{\bF}(\pi_{\ucE,\frp})$ lies in $\bF_\frp^\times$.
	Thus \cite[Lemma 9.20.4]{Stacks} gives
	\[
	\delta_{\ucE,\frp}=N_{\bF/\bF_\frp}\circ \det_{\bF}(\pi_{\ucE,\frp})=\det_{\bF}(\pi_{\ucE,\frp})^d.
	\]
	Then Lemma \ref{LemCanIsogCharNorm} concludes the proof.
\end{proof}

\begin{lem}\label{LemRedCharPolyModP}
	Suppose that $L=k$ is a finite field. Let $\Fr_k\in G_k$ be the $|k|$-th power Frobenius automorphism, as in \S\ref{SubsecRCP}. Then we have
	\[
	P_{\ucE,k}(X)\equiv \prod_{i=0}^{d-1} \left(X-\rho_{\ucE,\frp}(\Fr_k)^{|\frp|^i}\right) \bmod \frp.
	\]
\end{lem}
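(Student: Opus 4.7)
The plan is to compare the characteristic polynomial of $\Fr_k$ acting $\bF_\frp$-linearly on the $d^2$-dimensional $\bF_\frp$-vector space $V := \ucE[\frp](k^\sep)$, computed in two independent ways.

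For the first computation, I use that $T_\frp(\ucE)$ is free of rank one over $\cO_{D_\frp}$ by Lemma~\ref{LemTPfree}. After fixing a generator, the action of $\Fr_k$ becomes right multiplication by a unit of $\cO_{D_\frp}$, which viewed in $D_\frp^\opp$ is exactly $i_\frp(\Fr_k)$. Applying the standard identity that for a central simple $F$-algebra $A$ of degree $n$ and $b \in A$ the characteristic polynomial over $F$ of right multiplication by $b$ on $A$ itself equals $\Nrd_{A/F}(X-b)^n$, to $A = D_\frp^\opp$ of degree $d$, the characteristic polynomial of $\Fr_k$ acting on the $F_\frp$-vector space $T_\frp(\ucE)\otimes_{\cO_\frp} F_\frp$ equals $P_{\ucE,k}(X)^d$. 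Since $T_\frp(\ucE)$ is a free $\cO_\frp$-module of rank $d^2$ stable under $\Fr_k$ and $P_{\ucE,k}(X) \in A[X] \subseteq \cO_\frp[X]$ by Proposition~\ref{PropRedCharPolyMinPoly}, this identity lives already in $\cO_\frp[X]$, and reducing modulo $\frp$ gives that the characteristic polynomial of $\Fr_k$ on $V$ equals $(P_{\ucE,k}(X) \bmod \frp)^d$.

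For the second computation, I use the $\Pi$-adic filtration $V \supseteq \Pi V \supseteq \cdots \supseteq \Pi^{d-1}V \supseteq 0$ from the proof of Lemma~\ref{LemCanIsogCharNorm}. Each graded piece $\Pi^i V/\Pi^{i+1}V$ is a one-dimensional $\bF$-vector space on which $\Fr_k$ acts as multiplication by the scalar $\rho_{\ucE,\frp}(\Fr_k)^{|\frp|^{i+1-d}} \in \bF^\times$. For any $\alpha \in \bF^\times$, multiplication by $\alpha$ on the $d$-dimensional $\bF_\frp$-vector space $\bF$ has characteristic polynomial $\prod_{j=0}^{d-1}(X-\alpha^{|\frp|^j})$, the product over the Galois conjugates of $\alpha$ in $\bF/\bF_\frp$. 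Substituting $\alpha = \rho_{\ucE,\frp}(\Fr_k)^{|\frp|^{i+1-d}}$ and using $\rho_{\ucE,\frp}(\Fr_k)^{|\frp|^d} = \rho_{\ucE,\frp}(\Fr_k)$, for every $i$ the resulting product equals $\prod_{k=0}^{d-1}(X-\rho_{\ucE,\frp}(\Fr_k)^{|\frp|^k})$. Multiplying over the $d$ graded pieces, the characteristic polynomial of $\Fr_k$ on $V$ therefore equals
\[
\left(\prod_{k=0}^{d-1}\bigl(X-\rho_{\ucE,\frp}(\Fr_k)^{|\frp|^k}\bigr)\right)^d.
\]

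Combining the two computations yields an equality of $d$-th powers of monic polynomials in $\bF_\frp[X]$, and by unique factorization I can extract the equality of the $d$-th roots themselves, which is exactly the claim. The only point that requires care is the first computation: identifying $i_\frp(\Fr_k)$ as a right-multiplication operator on $\cO_{D_\frp}$, invoking the central-simple-algebra identity, and confirming that the whole calculation takes place over $\cO_\frp$ so that it commutes with reduction modulo $\frp$. Once that is secured, the rest reduces to the filtration bookkeeping already established in the proof of Lemma~\ref{LemCanIsogCharNorm}.
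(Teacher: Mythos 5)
Your argument is correct and follows essentially the same route as the paper: one side is the regular-representation identity (\cite[Theorem 9.5]{Rei}) giving $P_{\ucE,k}(X)^d \bmod \frp$ as the $\bF_\frp$-linear characteristic polynomial of Frobenius on $\ucE[\frp]$, the other side is the $\Pi$-adic filtration computation that is exactly the content of Lemma \ref{LemCanIsogCharNorm}, and the conclusion follows by extracting $d$-th roots of monic polynomials. The only differences are cosmetic: you compute on $T_\frp(\ucE)$ and reduce modulo $\frp$, and you re-derive the graded-piece scalars rather than citing Lemma \ref{LemCanIsogCharNorm} together with the Galois-conjugate product, as the paper does.
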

\begin{proof}
	By \cite[Theorem 9.5]{Rei}, we see that $P_{\ucE,k}(X)^d\bmod \frp$ is equal to 
	the characteristic polynomial of $\pi_{\ucE,\frp}(\Fr_k)$ when we regard $\ucE[\frp](L^\sep)$ as an $\bF_\frp$-vector
	space.
	Since $\bF$ is Galois over $\bF_\frp$, the characteristic polynomial is the product of 
	all conjugates over $\bF_\frp$ of the characteristic polynomial of $\pi_{\ucE,\frp}(\Fr_k)$ 
	when we regard $\ucE[\frp](L^\sep)$ as an $\bF$-vector
	space.
	By Lemma \ref{LemCanIsogCharNorm}, we obtain
	\[
		P_{\ucE,k}(X)^d\bmod \frp=\prod_{i=0}^{d-1}\left(X-\rho_{\ucE,\frp}(\Fr_k)^{|\frp|^i}\right)^d,
	\]
	from which the lemma follows.
\end{proof}

\subsection{Control at infinity}\label{SubsecInfty}

\begin{lem}\label{LemInftyCharPoly}
	Let $k/\bF_q$ be a finite extension and let $m>0$ be an integer. 
	Let $\cF$ be a locally free $\cO_{X\otimes k}$-module of rank $m$, equipped with an isomorphism of 
	$\cO_{X\otimes k}$-modules $\tau:(1\otimes\sigma)^*\cF\to \cF$.
	Let $f:\cF\to \cF$ be an isomorphism of $\cO_{X\otimes k}$-modules which is compatible with $\tau$.
	Put $P=H^0((X\setminus\{\infty\})\otimes k,\cF)$, which is a free $A\otimes k$-module of rank $m$.
	Then the characteristic polynomial $\chr(f;P)$ of $f$ acting on the $A\otimes k$-module $P$ has coefficients in $\bF_q$.
\end{lem}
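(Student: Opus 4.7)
The plan is to pass to matrix representations with respect to a basis of $P$ as a free $A\otimes k$-module, so that the compatibility condition becomes a conjugation relation involving Frobenius, which forces the characteristic polynomial to be Frobenius-invariant on its coefficients.

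More precisely, first I would take the $H^0$ of the data. Since $\cF$ is locally free and the complement of $\infty$ in $X$ is affine, the module $P$ is a free $A\otimes k$-module of rank $m$. The isomorphism $\tau:(1\otimes\sigma)^*\cF\to\cF$ induces an $A\otimes k$-linear isomorphism $(1\otimes\sigma)^* P\to P$, which I translate into a $(1\otimes\sigma)$-semilinear bijection $\tau_P:P\to P$ (acting trivially on $A$, as $\sigma$ on $k$). The map $f$ restricts to an $A\otimes k$-linear endomorphism $f_P$ of $P$, and the compatibility in the lemma reads $\tau_P\circ f_P=f_P\circ \tau_P$.

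Next I would pick a basis $e_1,\dots,e_m$ of $P$ over $A\otimes k$, and let $M\in M_m(A\otimes k)$ be the matrix of $f_P$ and $U\in \mathit{GL}_m(A\otimes k)$ be the matrix representing $\tau_P$ on this basis. A direct computation using the semilinearity of $\tau_P$ shows that the equality $\tau_P\circ f_P=f_P\circ \tau_P$ amounts to the identity
\[
U\,\sigma(M)=M\,U\quad\text{in }M_m(A\otimes k),
\]
where $\sigma$ acts on $A\otimes k$ by $\mathrm{id}_A\otimes \sigma$ and entrywise on matrices. Since $U$ is invertible, this gives $\sigma(M)=U^{-1}MU$, so $M$ and $\sigma(M)$ are conjugate in $M_m(A\otimes k)$.

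Finally, conjugate matrices have equal characteristic polynomials, hence
\[
\chr(f;P)(X)=\det(XI_m-M)=\det(XI_m-\sigma(M))=\sigma\bigl(\chr(f;P)(X)\bigr),
\]
so every coefficient of $\chr(f;P)$ is fixed by $\sigma$ and therefore lies in $(A\otimes k)^{\sigma}=A\otimes k^{\sigma}=A\otimes \bF_q=A$. Since $A\subseteq A\otimes k$ reduces to $\bF_q$ on the $k$-factor, this is exactly the stated conclusion. The only mildly subtle step is keeping track of the semilinearity of $\tau_P$ when converting the commuting-square condition into the conjugation relation $\sigma(M)=U^{-1}MU$; everything else is formal.
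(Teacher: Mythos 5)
Your Frobenius-conjugation argument is correct as far as it goes, but it proves a strictly weaker statement than the lemma claims, and the final sentence contains a real error. Writing $\sigma=\id_A\otimes\sigma_q$ on $A\otimes k$, you correctly derive $\sigma(M)=U^{-1}MU$, hence $\chr(f;P)$ is $\sigma$-invariant and has coefficients in $(A\otimes k)^\sigma=A\otimes\bF_q=A$. But $A=\bF_q[t]$, not $\bF_q$: the lemma asserts that the coefficients are \emph{constants}, and being fixed by $\id_A\otimes\sigma_q$ gives no control at all in the $A$-direction. The closing assertion ``Since $A\subseteq A\otimes k$ reduces to $\bF_q$ on the $k$-factor, this is exactly the stated conclusion'' is simply false; you have only shown the coefficients lie in $A$.

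The missing ingredient is the data at $\infty$. You pass immediately to $P=H^0((X\setminus\{\infty\})\otimes k,\cF)$, which throws away the hypothesis that $f$ is an isomorphism of the sheaf $\cF$ on all of $X\otimes k$, including the fiber over $\infty$. The paper's proof uses this crucially: after reducing to $k=\bF_q$ (which they do via Drinfeld's descent lemma rather than your conjugation trick, but either works), the matrix $B$ of $f$ on $P$ lies in $\mathit{GL}_m(A)$; and because $f$ is an automorphism of an $\cO_\infty$-lattice inside $F_\infty\otimes_A P$ (coming from $H^0$ of $\cF$ near $\infty$), there is $C\in\mathit{GL}_m(F_\infty)$ with $C^{-1}BC\in\mathit{GL}_m(\cO_\infty)$, so the coefficients of $\chr(f;P)$ also lie in $\cO_\infty$; hence they lie in $A\cap\cO_\infty=\bF_q$. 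Your descent step can replace the paper's appeal to \cite[Proposition 1.1]{Dri_F} and is a nice simplification there, but without the $\infty$-adic step the proof is incomplete: the lemma is genuinely false if one only assumes $f$ and $\tau$ are defined over the affine open $\Spec(A\otimes k)$, as any $f=\varphi(a)$ for $a\in A\setminus\bF_q$ already shows.
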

\begin{proof}
	Let $\bar{k}$ be an algebraic closure of $k$. 
	Then the restriction to $\bar{k}$ defines an isomorphism $f|_{\bar{k}}:\cF|_{\bar{k}}\to \cF|_{\bar{k}}$
	which is compatible with $\tau|_{\bar{k}}$.
	By \cite[Proposition 1.1]{Dri_F}, there exist a locally free
	$\cO_X$-module $\cF_0$ of rank $m$ and an isomorphism of $\cO_X$-modules $f_0:\cF_0\to \cF_0$
	such that the pair $(\cF_0|_{\bar{k}},f_0|_{\bar{k}})$ is isomorphic to the pair $(\cF|_{\bar{k}},f|_{\bar{k}})$. Since the characteristic polynomial of $f|
	_{\bar{k}}$ acting on the $A\otimes \bar{k}$-module 
	$P|_{\bar{k}}$
	agrees with $\chr(f;P)$,
	replacing $(\cF,f)$ by $(\cF_0,f_0)$ we may assume $k=\bF_q$.
	
	Since the $A$-module $P$ is free of rank $m$, with some basis the map $f|_P$ is represented by a matrix $B\in \mathit{GL}_{m}(A)$.
On the other hand, since $f$ is also an isomorphism at $\infty$, 
it follows that the restriction of $f|_P$ to $F_\infty$
defines an automorphism on an $\cO_\infty$-lattice of $F_\infty\otimes_A P$. Hence there exists a matrix $C\in \mathit{GL}_m(F_\infty)$ satisfying
$C^{-1}B C\in \mathit{GL}_m(\cO_\infty)$. This implies that any coefficient of $\chr(f;P)$ lies in $A\cap \cO_\infty=\bF_q$,
which concludes the proof.
\end{proof}

\begin{prop}\label{PropControlInftyGoodRed}
	Let $L/F_\infty$ be a finite extension and let $\ucE$ be a sound $\sD$-elliptic sheaf over $L$ of generic characteristic.
	Suppose that $\ucE$ has good reduction.
	Then we have
	\[
	\rho_{\ucE,\frp}(G_{L})^{l_q(d)}=1.
	\]
\end{prop}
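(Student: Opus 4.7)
The plan is as follows. By good reduction, extend $\ucE$ to a $\sD$-elliptic sheaf $\ucE_{\cO_L}$ over $\cO_L$; since $L/F_\infty$, the special fiber $\ucE_k:=\ucE_{\cO_L}|_k$ (with $k$ the residue field of $\cO_L$) is a sound $\sD$-elliptic sheaf of characteristic $\infty$. Because $\frp\neq\infty$, the group scheme $\ucE_{\cO_L}[\frp]$ is \'{e}tale over $\cO_L$, so the inertia $I_L\subseteq G_L$ acts trivially on $\ucE[\frp](L^\sep)$ and $\rho_{\ucE,\frp}|_{G_L}$ factors through $G_L/I_L\simeq G_k$. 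It therefore suffices to prove that $\rho_{\ucE,\frp}(\Fr_k)\in\bF^\times$ has order dividing $l_q(d)$, where $\Fr_k\in G_k$ is the Frobenius.

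I then plan to show that the reduced characteristic polynomial of $\Fr_k$ on $T_\frp(\ucE_k)$ lies in $\bF_q[X]$. Because the characteristic of $\ucE_k$ is $\infty$, the map $\tau\colon\sigma^*P\to P$ on the $t$-motive $P=H^0((X\setminus\{\infty\})\otimes k,\cE_0)$ of $\ucE_k$ is an isomorphism: its cokernel is supported at $\infty$, hence vanishes on $X\setminus\{\infty\}$. The Frobenius $\Fr_k$ is then identified with $\tau^n$ for $n=[k:\bF_q]$. The idea is to apply Lemma \ref{LemInftyCharPoly} after producing a coherent sheaf $\cF$ on $X\otimes k$ extending $P$ on which $\tau$ becomes a global isomorphism, by combining the lemma of the critical index \cite[Lemma 3.3.1]{BS}---which describes the behavior of $\ucE_k$ at $\infty$ in characteristic $\infty$---with the descent lemma of Drinfeld \cite[Proposition 1.1]{Dri_F}. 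Lemma \ref{LemInftyCharPoly} would then yield the characteristic polynomial of $\tau^n$ on $P$ in $\bF_q[X]$, and a characteristic-$\infty$ analogue of Proposition \ref{PropRedCharPolyMinPoly} expressing it as a $d$-th power of the reduced characteristic polynomial would imply that the latter is itself in $\bF_q[X]$.

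Finally, by a characteristic-$\infty$ analogue of Lemma \ref{LemRedCharPolyModP}, the reduced characteristic polynomial reduces modulo $\frp$ to $\prod_{i=0}^{d-1}(X-\rho_{\ucE,\frp}(\Fr_k)^{|\frp|^i})\in\bF[X]$. Since its coefficients lie in $\bF_q\subseteq\bF_\frp$, each root is algebraic over $\bF_q$ of degree at most $d$; in particular $\rho_{\ucE,\frp}(\Fr_k)\in\bF_{q^j}^\times$ for some $j\leq d$, so its order divides $q^j-1\mid l_q(d)$. The main obstacle I expect is the middle step: the naive candidate $\cF=\cE_0$ fails because $t_0\colon{}^\tau\cE_0\to\cE_1$ lands in $\cE_1\neq\cE_0$ with non-trivial cokernel at $\infty$, so the construction of a globally $\tau$-iso extension to $X\otimes k$---via the structure at $\infty$ supplied by the critical index lemma and then Drinfeld's descent---constitutes the heart of the argument.
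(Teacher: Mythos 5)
Your plan follows the paper's proof in all essential respects: good reduction gives unramifiedness, the critical index lemma produces the global sheaf $\bar{\cE}_i$ with $\tilde{t}_i$ an isomorphism so that Lemma \ref{LemInftyCharPoly} (Drinfeld's descent) applies, and the $\bF_q[X]$ coefficients of the reduced characteristic polynomial force each eigenvalue of Frobenius into $\bF_{q^j}$ with $j\leq d$. One small clarification: the critical index lemma alone supplies $\cF = \bar{\cE}_i$ with $\tau=\tilde{t}_i$; Drinfeld's descent enters inside Lemma \ref{LemInftyCharPoly} rather than in the construction of $\cF$ — and the paper sidesteps your ``char-$\infty$ analogues'' of Proposition \ref{PropRedCharPolyMinPoly} and Lemma \ref{LemRedCharPolyModP} by extracting the reduced characteristic polynomial directly over $F'_0=k_L((t))$ via the $D_0$-action, which avoids re-deriving the Tate-module structure theory for a $\sD$-elliptic sheaf with $\infty\in\cZ$.
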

\begin{proof}
	Let $k_L$ be the residue field of $L$.
	By assumption,
	there exists a sound $\sD$-elliptic sheaf $\ucE_{\cO_{L}}=(\cE_{\cO_{L},i})_{i\in \bZ}$ over $\cO_{L}$
	satisfying $\cZ(\ucE_{\cO_{L}})\cap |X|=\{\infty\}$
	with an isomorphism $\ucE\simeq \ucE_{\cO_{L}}|_{L}$.
	Put $\bar{\ucE}:=\ucE_{\cO_{L}}|_{k_L}$, which is a $\sD$-elliptic sheaf over $k_L$ satisfying $\cZ(\bar{\ucE})\cap |X|=\{\infty\}$.

	First we claim that the $G_{L}$-module $\ucE[\frp](L^\sep)$ is unramified.
	Indeed, let $P$ be the $t$-motive associated with $\ucE$ and put
	\[
	\cP_i=H^0((X\setminus\{\infty\})\otimes \cO_{L},\cE_{\cO_{L},i}),\quad \bar{\cP}_i=\cP_i\otimes_{\cO_{L}}k_L.
	\]
	Since $\Coker(j_i)$ is supported on $\{\infty\}\times \Spec(\cO_{L})$, the map $j_i$ induces an isomorphism
	\[
	j_i: \cP_i\to  \cP_{i+1}.
	\]
	Similarly, since $\cZ(\ucE_{\cO_{L}})\cap |X|=\{\infty\}$, the map $t_i$ induces an isomorphism
	\[
	t_i: (1\otimes\sigma)^*(A/(\frp)\otimes_A \cP_i) \to A/(\frp)\otimes_A \cP_{i+1}.
	\]
	Hence the map $j_i^{-1}\circ t_i$ defines a structure of an \'{e}tale $\varphi$-sheaf over $\cO_{L}$ on $A/(\frp)\otimes_A \cP_i$ 
	which agrees over $L$ with 
	that on $A/(\frp)\otimes_A P|_{L}$. Then it follows that the group scheme 
	\[
	\ucE[\frp]|_{L}=\Gr_{L}(A/(\frp)\otimes_A P|_{L})\simeq \Gr_{L}(A/(\frp)\otimes_A \cP_i|_{L})
	\]
	is the generic fiber of the finite \'{e}tale group scheme $\Gr_{\cO_{L}}(A/(\frp)\otimes_A \cP_i)$ over $\cO_{L}$. This shows the claim.

	Let $\Fr_{k_L}$ be the $|k_L|$-th power Frobenius element of $G_{k_L}=\Gal(\bar{\bF}_q/k_L)$
	and let $\Fr_L\in G_{L}$ be its lift. To prove the proposition, it is enough to show 
	\[
	\rho_{\ucE,\frp}(\Fr_L)^{l_q(d)}=1.
	\]
	
	As explained in \cite[p.~170]{BS}, the lemma of the critical index \cite[Lemma 3.3.1]{BS} is valid for 
	the $\sD$-elliptic sheaf $\bar{\ucE}$ over $k_L$. In particular, for some index $i$ the map $t_i$ of $\bar{\ucE}=(\bar{\cE}_i)_{i\in \bZ}$ factors as
	\[
	\xymatrix{
		(1\otimes\sigma)^*(\bar{\cE}_i)\ar[r]_-{\tilde{t}_i}^-{\sim} &\bar{\cE}_i \ar[r]_-{j_i}& \bar{\cE}_{i+1},
	}
	\]
	where $\tilde{t}_i$ is an isomorphism.

	Write $|k_L|=q^s$. Since we have natural isomorphisms
	\[
	\ucE[\frp](L^\sep)\simeq \Gr_{\cO_{L}}(A/(\frp)\otimes_A \cP_i)(\cO_{L^\sep})\simeq \Gr_{k_L}(A/(\frp)\otimes_A \bar{\cP}_i)(\bar{\bF}_q),
	\]
	the action of $\Fr_L$ on $\ucE[\frp](L^\sep)$ is identified with the action of $\Fr_{k_L}$ on $\Gr_{k_L}(A/(\frp)\otimes_A \bar{\cP}_i)(\bar{\bF}_q)$,
	and by (\ref{EqnGrVP}) the latter agrees with the map induced by the $s$-th iteration $f$ of the map $\tilde{t}_i$, namely
	\[
	f=\tilde{t}_i\circ (1\otimes \sigma)^*\tilde{t}_i\circ \cdots\circ (1\otimes \sigma^{s-1})^*\tilde{t}_i: (1\otimes \sigma^{s})^*\bar{\cE}_i=\bar{\cE}_i\to 
	\bar{\cE}_i.
	\]
	
	Since $f$ is an isomorphism of $\cO_{X\otimes k_L}$-modules compatible with $\tilde{t}_i$, Lemma \ref{LemInftyCharPoly} shows that the characteristic 
	polynomial $
	\chr(f;\bar{\cP}_i)$ is a 
	polynomial 
	over $\bF_q$ of degree $d^2$. 
	
	Put $F'_0=k_L((t))$ and $V_0=F'_0\otimes_{A\otimes k_L} \bar{\cP}_i$. It is an $F'_0$-vector space of dimension $d^2$ which admits a right action of 
	$D_0:=F'_0\otimes_F D$. Since $D_0$ is a central simple $F'_0$-algebra of rank $d^2$, the right $D_0$-module $V_0$ is free of rank one.
	Since $f|_{\bar{\cP}_i}$ commutes with the right action of $\cO_D$, the map which $f$ induces on $V_0$ 
	can be identified with the left translation $\delta_l$ 
	for some 
	$\delta\in 
	D_0$.
	
	Let $R(X)\in F'_0[X]$ be the reduced characteristic polynomial of $\delta$. By \cite[Theorem 9.5]{Rei}, we have
	\begin{equation}\label{EqnRedCharF}
		R(X)^d=\chr(f;\bar{\cP}_i)\in \bF_q[X],
	\end{equation}
	which yields $R(X)\in k_L[[t]][X]$ since the ring $k_L[[t]][X]$ is normal. 
	Put $\bar{R}(X):=R(X)\bmod t\in k_L[X]$. Reducing (\ref{EqnRedCharF}) modulo $t$ shows $R(X)^d=\bar{R}(X)^d$ in $k_L[[t]][X]$ and thus $R(X)=\bar{R}(X)$.
	Then (\ref{EqnRedCharF}) implies that each irreducible factor of $R(X)\in k_L[X]$ has the same multiplicity as any of its conjugates over $\bF_q$.
	Hence we obtain $R(X)\in \bF_q[X]$.

	Therefore, the action of $\Fr_L$ on $\ucE[\frp](L^\sep)$ satisfies the equation $\chr(f;\bar{\cP}_i)=R(X)^d=0$. 
	In particular, each eigenvalue of it lies 
	in a finite extension of $\bF_q$ of degree no more than $\deg(R(X))=d$, which implies
	$\Fr_L^{l_q(d)p^{m}}=\id$ on $\ucE[\frp](L^\sep)$ for some integer $m$ 
	and also on its $G_{L}$-subrepresentation $\rho_{\ucE,\frp}$.
	Since the target of the latter is $\bF^\times$, we obtain $\rho_{\ucE,\frp}(\Fr_L)^{l_q(d)}=1$.
\end{proof}

\begin{cor}\label{CorControlInfty}
	Let $K/F$ be a finite extension and let $\ucE$ be a sound $\sD$-elliptic sheaf over $K$ of generic characteristic.
	Let $v$ be a place of $K$ over $\infty$. 
	Then we have
	\[
	(\rho_{\ucE,\frp}|_{G_{K_v}})^{l_q(d)^2}=1.
	\]
\end{cor}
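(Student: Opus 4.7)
The plan is to reduce to Proposition \ref{PropControlInftyGoodRed} via potential good reduction of $\ucE$ over an extension of $K_v$ of degree dividing $q^d-1$, and then to observe that the $l_q(d)$-th power of $\rho_{\ucE,\frp}$ has image of correspondingly bounded order.

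First I would establish the analogue of Proposition \ref{PropPotGoodRed} (\ref{PropPotGoodRed-Tot}) in the setting $v\mid\infty$: there exists a finite extension $K'/K_v$ with $[K':K_v]$ dividing $q^d-1$ such that $\ucE|_{K'}$ has good reduction (Definition \ref{DefGoodRed}). The construction mirrors Lemma \ref{LemLevelxStrOverTame} and Proposition \ref{PropPotGoodRed}, but now the level structure is added at a finite place in order to rigidify: pick a closed point $x\in|X|\setminus\{\infty\}$ of degree one, let $L/K_v$ be the Galois extension cut out by the $G_{K_v}$-action on $\ucE[x](K_v^\sep)$, and apply Lemma \ref{LemDescentDataIntegral} (whose hypotheses explicitly allow $v=\infty$, with $z=x\ne\infty$) to produce a sound integral model $\ucE_{\cO_L}$ with $\cZ(\ucE_{\cO_L})\cap|X|=\{\infty\}$. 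As in Lemma \ref{LemLevelxStrOverTame}, the inertia of $\Gal(L/K_v)$ embeds into $\Aut(\ucE_{\cO_L}|_{k_L})$, the automorphism group of a sound $\sD$-elliptic sheaf over a finite field whose zero is $\infty$; granting the $\infty$-analogue of Proposition \ref{PropAutFinite} for such sheaves bounds the inertia order by $q^d-1$, and the totally-ramified descent of Proposition \ref{PropPotGoodRed} (\ref{PropPotGoodRed-Tot}) then produces $K'/K_v$ totally ramified of degree dividing $q^d-1$.

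With such $K'$ in hand, Proposition \ref{PropControlInftyGoodRed} applied to $\ucE|_{K'}$ gives $\rho_{\ucE,\frp}^{l_q(d)}|_{G_{K'}}=1$. The kernel of the character $\rho_{\ucE,\frp}^{l_q(d)}:G_{K_v}\to\bF^\times$ is an open normal subgroup of $G_{K_v}$ containing $G_{K'}$, so its fixed field $L'$ is contained in $K'$ and $\Gal(L'/K_v)$ is naturally identified with the image of $\rho_{\ucE,\frp}^{l_q(d)}$. Since $[L':K_v]$ divides $[K':K_v]$, which divides $q^d-1$ and hence $l_q(d)$, the image is a cyclic subgroup of $\bF^\times$ of order dividing $l_q(d)$. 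Therefore $(\rho_{\ucE,\frp}|_{G_{K_v}})^{l_q(d)^2}=1$, as desired.

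The main obstacle is the $\infty$-analogue of Proposition \ref{PropAutFinite}, bounding the automorphism group of a sound $\sD$-elliptic sheaf over a finite field whose zero lies at $\infty$. The structure theory of \S\ref{SecDEllFinField} is formulated throughout under the assumption that the characteristic is a finite place $\fry\ne\infty$; adapting it requires swapping the roles of $\infty$ and a finite place $\fry\notin\cR$ in the analysis of the endomorphism ring and its local invariants, so as to inject $\Aut(\bar\ucE)$ into the unit group of a residue field of degree dividing $d$. Once this swap is carried out, the remaining steps above are routine.
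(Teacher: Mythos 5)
Your second paragraph is fine, and your observation that Lemma \ref{LemDescentDataIntegral} applies with $v=\infty$ (so that $\ucE$ acquires good reduction over the completion of the global level-$x$ field at a place above $v$) is exactly the paper's starting point. The genuine gap is the step you yourself flag: the ``$\infty$-analogue of Proposition \ref{PropAutFinite}'', i.e.\ a bound of the form $q^d-1$ on $\Aut(\bar{\ucE})$ for a sound $\sD$-elliptic sheaf $\bar{\ucE}$ over a finite field whose zero is $\infty$. This is not obtainable by a routine ``swap of $\infty$ and $\fry$'': the entire structure theory of \S\ref{SecDEllFinField} rests on \cite[\S9, Appendices A--B]{LRS} (and \cite{Hau}), which treat characteristic $\fry\neq\infty$, where the zero of the sheaf and the pole imposed by Definition \ref{DfnDEll} (\ref{DfnPeriodicity}) are distinct places; in characteristic $\infty$ both degenerations sit at the same place, the $\varphi$-pair/Newton-polygon analysis behind Lemma \ref{LemFtildeUniquePlaces_infty} and Proposition \ref{PropAutFinite} is not available as stated, and indeed the paper has to resort to the lemma of the critical index and Drinfeld's descent lemma in Proposition \ref{PropControlInftyGoodRed} precisely because the finite-characteristic machinery does not transfer. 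So as written your argument rests on an unproved (and nontrivial) input, and the claim that the remaining adaptation is ``routine'' is not justified.

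Moreover, the detour is unnecessary: no bound on the ramification of the good-reduction extension is needed. The paper takes the global extension $L/K$ cut out by $E_x(K^\sep)$ for a degree-one place $x$, a place $w\mid v$, and gets good reduction over $L_w$ by Lemma \ref{LemDescentDataIntegral} (\ref{LemDescentDataIntegral_Good}), hence $(\rho_{\ucE,\frp}|_{G_{L_w}})^{l_q(d)}=1$ by Proposition \ref{PropControlInftyGoodRed}. Then for $g\in G_{K_v}$ one only uses that $\Gal(L_w/K_v)\subseteq\Gal(L/K)$ embeds into $G=(\cO_D/\pi\cO_D)^\times$: the cyclic group generated by the image of $g$ splits as a prime-to-$p$ part, of order dividing $l_q(d)$ by Lemma \ref{LemGCyclicSub}, times a $p$-group of order $p^m$, so $g^{p^m l_q(d)}\in G_{L_w}$ and $\rho_{\ucE,\frp}(g)^{p^m l_q(d)^2}=1$; since $\bF^\times$ has order prime to $p$ this gives $\rho_{\ucE,\frp}(g)^{l_q(d)^2}=1$. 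If you want to salvage your route, you would have to actually prove the automorphism bound in characteristic $\infty$ (new work beyond this paper), whereas the group-theoretic argument above closes the proof with what is already established.
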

\begin{proof}
	Let $x=(\pi)$ be a prime ideal of $A$ of degree one and put $G=(\cO_D/\pi\cO_D)^\times$ as before. 
	Let $L/K$ be the finite Galois extension cut out by the $G_K$-module $E_x(K^\sep)$,
	so that $\ucE|_L$ admits a level $x$ structure $\iota$.
	Note that we have $E_x(K^\sep)=E_x(L)$ and the action of $\Gal(L/K)$ gives an injective homomorphism 
	\[
	\psi:\Gal(L/K)\to G,\quad g\circ \iota=\iota\circ \psi(g)_l,
	\]
	where $\psi(g)_l$ denotes the left translation of $\psi(g)$ as before.
	
	Let $w$ be any place of $L$ over $v$ with residue field $k_w$.
	We consider the ring $\cO_{L_w}$ naturally as an $\cO_{\infty}$-algebra.
	Then Lemma \ref{LemDescentDataIntegral} (\ref{LemDescentDataIntegral_Good}) implies that
	$\ucE|_{L_w}$ has good reduction. By Proposition \ref{PropControlInftyGoodRed}, we have
	\begin{equation}\label{EqnControlInftyGoodRed}
	(\rho_{\ucE,\frp}|_{G_{L_w}})^{l_q(d)}=1.
	\end{equation}

	Note that $L_w/K_v$ is a finite Galois extension satisfying
	\[
	\Gal(L_w/K_v)\subseteq \Gal(L/K)\subseteq G.
	\]
	Take any element $g\in G_{K_v}$ and let $\bar{g}$ be its image in $\Gal(L_w/K_v)$. 
	Let $H$ be the finite cyclic subgroup of $\Gal(L_w/K_v)$ generated by $\bar{g}$. Write $H=H'\times H_p$ with a subgroup $H'$ 
	of order prime to $p$ and a $p$-group $H_p$ of order $p^{m}$.
	By Lemma \ref{LemGCyclicSub}, the order of $H'$ divides $l_q(d)$.
	Then we have $g^{p^{m}l_q(d)}\in G_{L_w}$ and (\ref{EqnControlInftyGoodRed}) yields
	\[
	\rho_{\ucE,\frp}(g^{p^{m}l_q(d)})^{l_q(d)}=\rho_{\ucE,\frp}(g)^{l_q(d)^2}=1.
	\]
	This concludes the proof of the corollary.
\end{proof}

\subsection{Local class field theory}\label{SubsecLCFT}

Let $K/F$ be a finite extension. 
We denote by $K^\ab$ the maximal abelian extension of $K$ in $K^\sep$.
Let $G_K^\ab=\Gal(K^\ab/K)$. For any place $v$ of $K$, let
\[
\omega_v: K_v^\times\to G_K^\ab
\]
be the local Artin map.

Let $\ucE$ be a sound $\sD$-elliptic sheaf over $K$ of generic characteristic. 
For the fixed element $\frp\in \cR$, the canonical isogeny character
$\rho_{\ucE,\frp}$ factors though $G_K^\ab$.
Put $\cO_v=\cO_{K_v}$ and
\[
\tilde{r}_{\ucE,\frp}(v)=\rho_{\ucE,\frp}\circ \omega_v: K_v^\times\to \bF^\times,\quad 
r_{\ucE,\frp}(v)=\tilde{r}_{\ucE,\frp}(v)|_{\cO_v^\times}: \cO_v^\times\to \bF^\times.
\]

\begin{prop}\label{PropCanIsogCharArtin}
	\begin{enumerate}
		\item\label{PropCanIsogCharArtin_fin} If $v\nmid \frp\infty$, then $r_{\ucE,\frp}(v)^{q^d-1}=1$.
		\item\label{PropCanIsogCharArtin_inf} 
		If $v\mid\infty$, then $\tilde{r}_{\ucE,\frp}(v)^{l_q(d)^2}=1$.
	\end{enumerate}
\end{prop}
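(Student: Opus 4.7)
The plan is to deduce both assertions from the local analyses of $\rho_{\ucE,\frp}$ already carried out in \S\ref{SecP-adic} and \S\ref{SecCanIsogChar}, combined with local class field theory. Since $\bF^\times$ is abelian, the character $\rho_{\ucE,\frp}:G_K\to\bF^\times$ factors through $G_K^\ab$, so the composite $\rho_{\ucE,\frp}\circ\omega_v$ makes sense and is controlled by the restriction $\rho_{\ucE,\frp}|_{G_{K_v}}$.

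For part (\ref{PropCanIsogCharArtin_fin}), suppose $v\nmid\frp\infty$. By local class field theory, $\omega_v(\cO_v^\times)$ is contained in (the image in $G_K^\ab$ of) the inertia subgroup $I_v\subseteq G_{K_v}$, so it suffices to show that $\rho_{\ucE,\frp}(I_v)^{q^d-1}=1$. The point is that the $\bF$-line $\ucE[\frp](K^\sep)[\Pi]$ on which $\rho_{\ucE,\frp}$ is defined is by construction a $G_K$-stable $\bF_\frp$-subspace of $\ucE[\frp](K^\sep)$; therefore any $g\in I_v$ lying in the kernel of $\psi_{\frp,v}:I_v\to\Aut(\ucE[\frp](K^\sep))$ satisfies $\rho_{\ucE,\frp}(g)=1$. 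Hence $\rho_{\ucE,\frp}|_{I_v}$ factors through $\psi_{\frp,v}(I_v)$, which by Proposition \ref{PropBoundMonodromy} is cyclic of order dividing $q^d-1$. This gives $r_{\ucE,\frp}(v)^{q^d-1}=1$.

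For part (\ref{PropCanIsogCharArtin_inf}), let $v\mid\infty$. The map $\omega_v$ carries $K_v^\times$ into the image of the full decomposition group $G_{K_v}$ in $G_K^\ab$, so $\tilde{r}_{\ucE,\frp}(v)$ factors through $\rho_{\ucE,\frp}|_{G_{K_v}}$. Corollary \ref{CorControlInfty} asserts that this restriction has order dividing $l_q(d)^2$, and the conclusion follows. There is no substantive obstacle: both parts amount to reading off the image of the local Artin map and quoting the appropriate local bound already proved. The only point requiring a moment of care is the factorization step in part (\ref{PropCanIsogCharArtin_fin}), where one must invoke Galois-stability of the $\Pi$-torsion line in $\ucE[\frp](K^\sep)$ to pass the bound from $\psi_{\frp,v}(I_v)$ to $\rho_{\ucE,\frp}(I_v)$.
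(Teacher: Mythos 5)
Your proof is correct and follows essentially the same route as the paper: for part (\ref{PropCanIsogCharArtin_fin}), identify $r_{\ucE,\frp}(v)$ via the Artin map with the restriction $\rho_{\ucE,\frp}|_{I_v}$, observe that the $\Pi$-torsion line is a $G_K$-stable subspace of $\ucE[\frp](K^\sep)$ so the bound of Proposition \ref{PropBoundMonodromy} passes to it, and for part (\ref{PropCanIsogCharArtin_inf}) quote Corollary \ref{CorControlInfty}. The paper states this more tersely but the content is identical; you have merely made explicit the local class field theory bookkeeping.
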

\begin{proof}
	Since $\rho_{\ucE,\frp}$ is a $G_K$-subrepresentation of $\ucE[\frp](K^\sep)$,
	(\ref{PropCanIsogCharArtin_fin}) follows from Proposition \ref{PropBoundMonodromy}.
	Corollary \ref{CorControlInfty} yields (\ref{PropCanIsogCharArtin_inf}).
\end{proof}

Now we consider a place $v$ of $K$ over $\frp$. Let $k_v$ be the residue field of $K_v$ with $f_v=[k_v:\bF_\frp]$.
Let $\pi_v$ be a uniformizer of $K_v$ and let $e_v$ be the ramification index of $v$ over $\frp$. Put
\[
t_v=\gcd(f_v,d).
\]
We denote by $\bF_{\frp}^{(t_v)}$ the subextension of $\bF/\bF_\frp$ of degree $t_v$.
Similarly, let $k'_v$ be the subextension of $k_v/\bF_\frp$ of degree $t_v$ and 
we fix an $\bF_\frp$-linear isomorphism $j:k'_v\to \bF_\frp^{(t_v)}$.
For any $u\in \cO_v^\times$, we denote by $\bar{u}$ its image in $k_v$.

\begin{lem}\label{LemDefCv}
	There exists a unique integer $c_v\in [0,|\frp|^{t_v}-2]$ satisfying
	\[
	r_{\ucE,\frp}(v)(u)=j(N_{k_v/k'_v}(\bar{u}))^{-c_v}
	\]
	for any $u\in \cO_v^\times$.
\end{lem}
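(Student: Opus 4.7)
The plan is to reduce the statement to a purely group-theoretic fact about characters between finite cyclic groups, so there is really no geometric or arithmetic input needed beyond the filtration on $\cO_v^\times$ and the fact that the norm between finite field extensions is surjective.

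First I would use that $\bF^\times$ has order $|\frp|^d - 1$, which is coprime to $p$. The subgroup $1 + \pi_v \cO_v \subseteq \cO_v^\times$ is a pro-$p$ group, so $r_{\ucE,\frp}(v)$ is trivial on it and factors through the quotient $\cO_v^\times/(1+\pi_v \cO_v) \simeq k_v^\times$. Write $\bar r: k_v^\times \to \bF^\times$ for the induced character; then $\bar r(\bar u) = r_{\ucE,\frp}(v)(u)$ for every $u \in \cO_v^\times$.

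Next I would analyze the image of $\bar r$. Both $k_v^\times$ and $\bF^\times$ are cyclic, of orders $|\frp|^{f_v} - 1$ and $|\frp|^d - 1$ respectively. By the standard fact that a homomorphism between cyclic groups of orders $a$ and $b$ has image of order dividing $\gcd(a,b)$, the image of $\bar r$ has order dividing
\[
\gcd(|\frp|^{f_v}-1,\, |\frp|^d - 1) = |\frp|^{\gcd(f_v,d)}-1 = |\frp|^{t_v}-1.
\]
Since $\bF^\times$ is cyclic, it has a unique subgroup of each order dividing $|\frp|^d - 1$, and the subgroup of order $|\frp|^{t_v}-1$ is precisely $\bF_\frp^{(t_v),\times}$. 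Hence $\bar r$ takes values in $\bF_\frp^{(t_v),\times}$.

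Finally I would construct $c_v$ explicitly. The norm $N_{k_v/k'_v}: k_v^\times \to k'_v{}^\times$ is surjective (as always for a finite extension of finite fields), and composing with the fixed isomorphism $j: k'_v \xrightarrow{\sim} \bF_\frp^{(t_v)}$ yields a surjection $\tilde j := j \circ N_{k_v/k'_v}: k_v^\times \twoheadrightarrow \bF_\frp^{(t_v),\times}$ of cyclic groups. Pick a generator $g$ of $k_v^\times$; then $\tilde j(g)$ is a generator of $\bF_\frp^{(t_v),\times}$, while $\bar r(g) \in \bF_\frp^{(t_v),\times}$ by the previous paragraph. Therefore there is a unique $c_v \in [0, |\frp|^{t_v}-2]$ with $\bar r(g) = \tilde j(g)^{-c_v}$, and since $\bar r$ and $\tilde j^{-c_v}$ are both homomorphisms from the cyclic group $k_v^\times$ agreeing on a generator, they coincide on all of $k_v^\times$. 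This gives the claimed formula and its uniqueness. The only step that is more than bookkeeping is the observation that $\gcd(|\frp|^{f_v}-1, |\frp|^d-1) = |\frp|^{t_v}-1$, which forces the image to land in the correct subfield — and this is what the definition of $t_v = \gcd(f_v,d)$ is engineered to produce.
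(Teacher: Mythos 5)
Your proof is correct and follows essentially the same route as the paper: triviality on $1+\pi_v\cO_v$ so the character factors through $k_v^\times$, the $\gcd$ computation placing the image in $(\bF_\frp^{(t_v)})^\times$, and then surjectivity of $N_{k_v/k'_v}$ together with evaluation on a generator of $k_v^\times$ to pin down the unique exponent $c_v$. No gaps.
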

\begin{proof}
	Since the target $\bF^\times$ of the character $\rho_{\ucE,\frp}$ is a cyclic group of order $|\frp|^d-1$,
	the map $r_{\ucE,\frp}(v)$ is trivial on $1+\pi_v \cO_v$ and thus it factors through the reduction map
	$\cO_v^\times\to k_v^\times$.
	Hence its image lies in the unique cyclic subgroup of $\bF^\times$ of order
	\[
	\gcd(|\frp|^{f_v}-1,|\frp|^d-1)=|\frp|^{t_v}-1,
	\]
	which is equal to $(\bF_\frp^{(t_v)})^\times$.
	
	Since the norm map $N_{k_v/k'_v}:k_v^\times\to(k'_v)^\times$ is surjective, 
	for any $u_0\in \cO_v^\times$ such that $\bar{u}_0$ 
	generates the cyclic group
	$k_v^\times$, we can uniquely write $r_{\ucE,\frp}(v)(u_0)=j(N_{k_v/k'_v}(\bar{u}_0))^{-c_v}$ with some integer $c_v$ as in the lemma.
	This $c_v$ has the desired property.
\end{proof}

\begin{lem}\label{LemCarlitzBaseChange}
	For any $u\in \cO_v^\times$, we have
	\[
	\chi_{C,\frp}\circ \omega_v(u)=N_{k_v/\bF_\frp}(\bar{u})^{-e_v}.
	\]
\end{lem}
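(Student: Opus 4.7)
The plan is to reduce to the base case $K_v = F_\frp$ via norm-compatibility of local Artin maps, then apply explicit Lubin--Tate reciprocity for the Carlitz module, and finally compute the reduction modulo $\frp$ of the norm.

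First, I would observe that $\chi_{C,\frp}$ is a character of $G_{F_\frp}$, which is pulled back to $G_{K_v}$ via the restriction induced by a fixed embedding $F_\frp^\sep\hookrightarrow K_v^\sep$. Since the local Artin maps are compatible with the norm in the sense that $\omega_v(x)|_{F_\frp^\ab}=\omega_\frp(N_{K_v/F_\frp}(x))$ for every $x\in K_v^\times$, this yields
\[
\chi_{C,\frp}(\omega_v(u))=\chi_{C,\frp}(\omega_\frp(N_{K_v/F_\frp}(u)))
\]
for all $u\in K_v^\times$, and the problem is reduced to computing $\chi_{C,\frp}\circ \omega_\frp$ on the unit group $\cO_\frp^\times$.

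Second, I would compute $\chi_{C,\frp}\circ\omega_\frp$ on $\cO_\frp^\times$ by explicit Lubin--Tate theory. The shape $[\frp]_C(Z)=\frp Z+\sum_{i=1}^{r-1}b_i Z^{q^i}+Z^{q^r}$ with $b_i\in \frp\cO_\frp$ (recalled in the proof of Lemma \ref{LemCarlitzTameChar}) shows that, after completion at the zero section over $\cO_\frp$, the Carlitz module is a Lubin--Tate formal $\cO_\frp$-module for the uniformizer $\frp$. The standard Lubin--Tate reciprocity (with arithmetic normalization of the Artin map) then gives $\omega_\frp(a)|_{C[\frp]}=[a^{-1}]_C$ for $a\in \cO_\frp^\times$, so
\[
\chi_{C,\frp}(\omega_\frp(a))=\bar{a}^{-1}\in\bF_\frp^\times.
\]

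Third, I would compute the reduction modulo $\frp$ of $N_{K_v/F_\frp}(u)\in \cO_\frp^\times$ for $u\in \cO_v^\times$. By factoring $K_v/F_\frp$ through its maximal unramified subextension (on which the norm of a unit reduces to the residue-field norm) and recognizing that a totally ramified norm of a unit reduces to the $e_v$-th power on residue fields, one obtains the standard identity
\[
\overline{N_{K_v/F_\frp}(u)}=N_{k_v/\bF_\frp}(\bar{u})^{e_v}.
\]
Combining the three steps gives $\chi_{C,\frp}(\omega_v(u))=\overline{N_{K_v/F_\frp}(u)}^{-1}=N_{k_v/\bF_\frp}(\bar{u})^{-e_v}$, as desired. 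The only delicate point is fixing the sign convention for the local Artin map; the exponent $-e_v$ in the conclusion forces the arithmetic normalization, which must be tracked consistently through both the norm-compatibility in the first step and the explicit Lubin--Tate reciprocity in the second.
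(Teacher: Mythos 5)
Your proposal is correct and follows essentially the same route as the paper: Lubin--Tate theory applied to the Carlitz module (using the coefficient valuations of $[\frp]_C$ recalled in the proof of Lemma \ref{LemCarlitzTameChar}) to get $\chi_{C,\frp}\circ\omega_\frp(a)=a^{-1}\bmod\frp$, norm-compatibility of the local Artin maps to descend to $K_v$, and the identity $N_{K_v/F_\frp}(u)\bmod\frp=N_{k_v/\bF_\frp}(\bar{u})^{e_v}$. The paper's proof is exactly these three ingredients, so no further comment is needed.
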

\begin{proof}
	We recorded the polynomial $[\frp]_C(Z)$ giving the action of $\frp$ on the Carlitz module $C$ in the proof of Lemma \ref{LemCarlitzTameChar}.
	The information about the valuations of the coefficients tells us that the formal $\cO_\frp$-module $C[\frp^\infty]$ is a Lubin--Tate group.
	By Lubin--Tate theory \cite[\S3.4, Theorem 3]{Serre_LCFT}, for the local Artin map $\omega_\frp:\cO_\frp^\times\to G_{F_\frp}^\ab$ of $F_\frp$, 
	we see that the composite $\chi_{C,\frp}\circ \omega_\frp$ 
	sends
	$a\in \cO_\frp^\times$ to $a^{-1}\bmod \frp$.
	Since the inclusion $G_{K_v}^\ab\to G_{F_\frp}^\ab$ corresponds to the norm map via local Artin maps \cite[\S2.4]{Serre_LCFT}, 
	the lemma follows from 
	\[
	N_{K_v/F_\frp}(u)\bmod \frp=N_{k_v/\bF_\frp}(\bar{u})^{e_v},\quad u\in \cO_v.
	\]
\end{proof}

\begin{lem}\label{LemTvCv}
	\[
	\frac{d^2}{t_v}(q-1)c_v\equiv d(q-1)e_v \bmod |\frp|-1.
	\]
\end{lem}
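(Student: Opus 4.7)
The strategy is to evaluate the character $\rho_{\ucE,\frp}^{d\frac{|\frp|^d-1}{|\frp|-1}(q-1)}$ on $\omega_v(u)$ for $u\in \cO_v^\times$ in two different ways and compare.

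First, I would combine Lemma \ref{LemCanIsogCharDet} (identifying $\rho_{\ucE,\frp}^{d(|\frp|^d-1)/(|\frp|-1)}$ with $\delta_{\ucE,\frp}$), Corollary \ref{CorLafCarlitz} (which, since $\omega_v(\cO_v^\times)$ lies in an inertia subgroup, gives $\delta_{\ucE,\frp}^{q-1}\circ\omega_v = \chi_{C,\frp}^{d(q-1)}\circ \omega_v$ on $\cO_v^\times$), and Lemma \ref{LemCarlitzBaseChange}, to obtain
\[
r_{\ucE,\frp}(v)(u)^{d\frac{|\frp|^d-1}{|\frp|-1}(q-1)} = N_{k_v/\bF_\frp}(\bar u)^{-e_v d(q-1)}.
\]

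Second, by the definition of $c_v$ in Lemma \ref{LemDefCv}, the same quantity equals $j(N_{k_v/k'_v}(\bar u))^{-c_v d\frac{|\frp|^d-1}{|\frp|-1}(q-1)}$. Since $j(N_{k_v/k'_v}(\bar u))\in(\bF_\frp^{(t_v)})^\times$ has order dividing $|\frp|^{t_v}-1$, I reduce the exponent modulo $|\frp|^{t_v}-1$. The key arithmetic identity, using $t_v\mid d$, is
\[
\frac{|\frp|^d-1}{|\frp|-1} = \frac{|\frp|^{t_v}-1}{|\frp|-1}\cdot\sum_{k=0}^{d/t_v-1}|\frp|^{kt_v} \equiv \frac{d}{t_v}\cdot\frac{|\frp|^{t_v}-1}{|\frp|-1} \pmod{|\frp|^{t_v}-1},
\]
which converts the exponent on $j(N_{k_v/k'_v}(\bar u))$ into $-c_v\frac{d^2}{t_v}\cdot\frac{|\frp|^{t_v}-1}{|\frp|-1}(q-1)$. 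Raising $j(N_{k_v/k'_v}(\bar u))$ to the power $\frac{|\frp|^{t_v}-1}{|\frp|-1}$ produces $N_{\bF_\frp^{(t_v)}/\bF_\frp}(j(N_{k_v/k'_v}(\bar u)))$, and since $j$ is an $\bF_\frp$-algebra isomorphism (hence commutes with the $|\frp|$-th power Frobenius and fixes $\bF_\frp$) this norm equals $N_{k'_v/\bF_\frp}(N_{k_v/k'_v}(\bar u)) = N_{k_v/\bF_\frp}(\bar u)$ by transitivity. So the left-hand side simplifies to $N_{k_v/\bF_\frp}(\bar u)^{-c_v\frac{d^2}{t_v}(q-1)}$.

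Equating the two expressions, I obtain $N_{k_v/\bF_\frp}(\bar u)^{c_v\frac{d^2}{t_v}(q-1) - e_v d(q-1)} = 1$ in $\bF_\frp^\times$ for every $u\in \cO_v^\times$. Choosing $\bar u$ to be a generator of $k_v^\times$, the element $N_{k_v/\bF_\frp}(\bar u) = \bar u^{(|\frp|^{f_v}-1)/(|\frp|-1)}$ has order exactly $|\frp|-1$ and thus generates $\bF_\frp^\times$. Comparing exponents yields the desired congruence modulo $|\frp|-1$. There is no serious obstacle: the only care required is bookkeeping of exponents across the cyclic groups of orders $|\frp|-1$, $|\frp|^{t_v}-1$, and $|\frp|^d-1$, together with the compatibility of $j$ with norms.
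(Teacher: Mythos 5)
Your proof is correct and follows essentially the same route as the paper: evaluate $\rho_{\ucE,\frp}^{d(q-1)(|\frp|^d-1)/(|\frp|-1)}\circ\omega_v$ on $\cO_v^\times$ once via Lemma \ref{LemCanIsogCharDet}, Corollary \ref{CorLafCarlitz} and Lemma \ref{LemCarlitzBaseChange}, and once via Lemma \ref{LemDefCv}, then compare exponents of a generator of $\bF_\frp^\times$ using surjectivity of $N_{k_v/\bF_\frp}$. The only cosmetic difference is that the paper treats the passage from $N_{\bF/\bF_\frp}\circ j\circ N_{k_v/k'_v}$ to $N_{k_v/\bF_\frp}^{d/t_v}$ as a single norm-transitivity step, while you unwind the $\bF$-norm into the exponent $(|\frp|^d-1)/(|\frp|-1)$ and reduce it modulo $|\frp|^{t_v}-1$; both yield the identical exponent $-c_v\tfrac{d^2}{t_v}(q-1)$.
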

\begin{proof}
	By Lemma \ref{LemCanIsogCharDet} and Corollary \ref{CorLafCarlitz}, we have
	\[
	N_{\bF/\bF_\frp}\circ (\rho_{\ucE,\frp}|_{I_v})^{d(q-1)}=(\chi_{C,\frp}|_{I_v})^{d(q-1)}.
	\]
	
	Take any $u\in \cO_v^\times$. On one hand, Lemma \ref{LemDefCv} gives
	\[
	\begin{aligned}
		N_{\bF/\bF_\frp}\circ\rho_{\ucE,\frp}^{d(q-1)}\circ \omega_v(u)&=N_{\bF/\bF_\frp}\circ j\circ N_{k_v/k'_v}(\bar{u})^{-d(q-1)c_v}\\
		&=N_{k_v/\bF_\frp}(\bar{u})^{-\frac{d^2}{t_v}(q-1)c_v}.
	\end{aligned}
	\]
	On the other hand, Lemma \ref{LemCarlitzBaseChange} yields
	\[
	\chi_{C,\frp}^{d(q-1)}\circ \omega_v(u)=N_{k_v/\bF_\frp}(\bar{u})^{-d(q-1)e_v}.
	\]
	Since $N_{k_v/\bF_\frp}$ is surjective and the group $\bF_\frp^\times$ is cyclic of order $|\frp|-1$,
	the lemma follows.
\end{proof}

\begin{prop}\label{PropCanIsogCharArtinCong}
	Let $v$ be a place of $K$ satisfying $v\mid \frp$. 
	Let $\frq\in A$ be an irreducible polynomial which is coprime to $\frp$. Then we have
	\[
	r_{\ucE,\frp}(v)(\frq^{-1})^{d^2(q-1)}\equiv \frq^{d (q-1)e_v f_v} \bmod \frp.
	\]
\end{prop}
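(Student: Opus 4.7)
The plan is to reduce the statement to an identity in $\bF_\frp^\times$ by direct computation, using Lemma \ref{LemDefCv} to express $r_{\ucE,\frp}(v)(\frq^{-1})$ explicitly and then applying Lemma \ref{LemTvCv} to replace the exponent $\frac{d^2}{t_v}(q-1)c_v$ by $d(q-1)e_v$ modulo $|\frp|-1$.

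First, I would observe that since $\frq\in A$ is coprime to $\frp$ and $v\mid \frp$, the element $\frq$ lies in $\cO_v^\times$, so $\frq^{-1}\in \cO_v^\times$ and the expression $r_{\ucE,\frp}(v)(\frq^{-1})$ makes sense. Applying Lemma \ref{LemDefCv} to $u=\frq^{-1}$ yields
\[
r_{\ucE,\frp}(v)(\frq^{-1}) = j(N_{k_v/k'_v}(\bar{\frq}^{-1}))^{-c_v} = j(N_{k_v/k'_v}(\bar{\frq}))^{c_v}.
\]
The key observation is that $\bar{\frq}$ lies in the prime subfield: indeed, the reduction map $A\to k_v$ factors through $A\to \bF_\frp\subseteq k'_v\subseteq k_v$, so $\bar{\frq}\in \bF_\frp$. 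Consequently $N_{k_v/k'_v}(\bar{\frq}) = \bar{\frq}^{[k_v:k'_v]} = \bar{\frq}^{f_v/t_v}$, which is a well-defined element of $\bF_\frp$ since $t_v=\gcd(f_v,d)$ divides $f_v$. Viewing $j$ as an isomorphism of field extensions of $\bF_\frp$, it fixes $\bF_\frp$ pointwise, so
\[
r_{\ucE,\frp}(v)(\frq^{-1}) = \bar{\frq}^{c_v f_v/t_v}\in \bF_\frp^\times.
\]

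Raising to the $d^2(q-1)$-th power gives
\[
r_{\ucE,\frp}(v)(\frq^{-1})^{d^2(q-1)} = \bar{\frq}^{\frac{d^2}{t_v}(q-1)c_v \cdot f_v}.
\]
Since $\bar{\frq}\in \bF_\frp^\times$, which is cyclic of order $|\frp|-1$, the exponent only matters modulo $|\frp|-1$. By Lemma \ref{LemTvCv} we have $\frac{d^2}{t_v}(q-1)c_v\equiv d(q-1)e_v \bmod |\frp|-1$, hence
\[
\bar{\frq}^{\frac{d^2}{t_v}(q-1)c_v} = \bar{\frq}^{d(q-1)e_v}
\]
in $\bF_\frp^\times$. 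Raising both sides to the $f_v$-th power and rewriting $\bar{\frq}$ as $\frq\bmod \frp$ yields the desired congruence
\[
r_{\ucE,\frp}(v)(\frq^{-1})^{d^2(q-1)}\equiv \frq^{d(q-1)e_v f_v}\bmod \frp.
\]

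There is no substantive obstacle; the proof is a direct bookkeeping exercise combining the two preceding lemmas. The only point requiring care is making sure that the auxiliary isomorphism $j$ acts as the identity on the prime subfield $\bF_\frp$, which follows from the convention that $j$ is an isomorphism of field extensions of $\bF_\frp$ (so in particular $\bF_\frp$-algebra linear), and that $f_v/t_v\in\bZ$ so the exponent arithmetic is unambiguous.
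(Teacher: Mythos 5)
Your proof is correct and follows essentially the same route as the paper: apply Lemma \ref{LemDefCv} with $u=\frq^{-1}$, use that $\bar{\frq}\in\bF_\frp$ so the norm becomes the power $\bar{\frq}^{f_v/t_v}$ and $j$ acts trivially, then invoke Lemma \ref{LemTvCv} modulo $|\frp|-1$. The extra care you take about $j$ fixing $\bF_\frp$ and $f_v/t_v\in\bZ$ is exactly the bookkeeping the paper leaves implicit.
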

\begin{proof}
	Lemma \ref{LemDefCv} gives
	\[
	r_{\ucE,\frp}(v)(\frq^{-1})^{d^2(q-1)}=j(N_{k_v/k'_v}(\bar{\frq}))^{d^2(q-1)c_v}.
	\]
	Since $\bar{\frq}\in \bF_\frp$, by Lemma \ref{LemTvCv} we obtain
	\[
	j(N_{k_v/k'_v}(\bar{\frq}))^{d^2(q-1)c_v}=\bar{\frq}^{\frac{f_v}{t_v}d^2(q-1)c_v}=\bar{\frq}^{d(q-1)e_vf_v}.
	\]
\end{proof}

Put
\begin{equation}\label{EqnDefn}
	\begin{gathered}
		n=l_q(d)^2\left(\frac{d^2}{\gcd(d^2,q^2-1)}\right).
\end{gathered}
\end{equation}
Since $d\geq 2$, we see that $q^2-1$ divides $l_q(d)$ and $n$ satisfies
\begin{equation}\label{EqnDividen}
d^2(q-1)\mid n,\quad l_q(d)^2\mid n.
\end{equation}

\begin{cor}\label{CorCanIsogChar-n}
	Let $v$ be a place of $K$ satisfying $v\mid \frp$. 
	Let $\frq\in A$ be an irreducible polynomial which is coprime to $\frp$. Then we have
	\[
	r_{\ucE,\frp}(v)(\frq^{-1})^n\equiv \frq^{\frac{n}{d}[K_v:F_\frp]} \bmod \frp.
	\]
\end{cor}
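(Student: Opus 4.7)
The plan is to deduce the corollary directly from Proposition \ref{PropCanIsogCharArtinCong} by raising both sides to a suitable power, using the divisibility properties of $n$ recorded in (\ref{EqnDividen}).

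More precisely, I will first note that by (\ref{EqnDividen}) we have $d^2(q-1)\mid n$, so one may set $N=n/(d^2(q-1))\in \bZ_{>0}$. Raising the congruence of Proposition \ref{PropCanIsogCharArtinCong},
\[
r_{\ucE,\frp}(v)(\frq^{-1})^{d^2(q-1)} \equiv \frq^{d(q-1)e_v f_v} \bmod \frp,
\]
to the $N$-th power inside the finite field $\bF=\cO_D/\frp\cO_D$-valued setting (equivalently, inside $\bF_\frp^\times$ on the right, where everything is well-defined) gives
\[
r_{\ucE,\frp}(v)(\frq^{-1})^{n} \equiv \frq^{\,d(q-1)e_v f_v\cdot N}\bmod \frp.
\]

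The exponent on the right simplifies:
\[
d(q-1)e_v f_v\cdot N \;=\; d(q-1)e_v f_v\cdot \frac{n}{d^2(q-1)} \;=\; \frac{n}{d}\, e_v f_v \;=\; \frac{n}{d}\,[K_v:F_\frp],
\]
using the standard identity $[K_v:F_\frp]=e_v f_v$. This yields exactly the desired congruence.

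The only thing to double-check is that $n/d$ is an integer, so that the exponent $\frac{n}{d}[K_v:F_\frp]$ makes literal sense (otherwise one would have to reinterpret $\frq^{n/d\cdot [K_v:F_\frp]}$ via a choice of root). But from $d^2(q-1)\mid n$ we get in particular $d\mid n$, so this is automatic. I do not expect any real obstacle here: the entire content is packaged in Proposition \ref{PropCanIsogCharArtinCong}, and the corollary is simply a convenient reformulation after exponentiating by the integer $n/(d^2(q-1))$ that has been introduced in (\ref{EqnDefn}) precisely to absorb both the factor $d^2(q-1)$ coming from the $\frp$-adic argument and the factor $l_q(d)^2$ needed elsewhere (e.g. for Corollary \ref{CorControlInfty}).
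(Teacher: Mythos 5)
Your proof is correct and is essentially the paper's own argument: one raises the congruence of Proposition \ref{PropCanIsogCharArtinCong} to the integer power $n/(d^2(q-1))$, which exists by (\ref{EqnDividen}), and simplifies the exponent via $[K_v:F_\frp]=e_vf_v$ (with $d\mid n$ guaranteeing $\frac{n}{d}\in\bZ$, as you note). The only blemish is the parenthetical identification of $\bF$ with $\cO_D/\frp\cO_D$ --- in the paper $\bF$ is the degree-$d$ extension of $\bF_\frp$ embedded in $\cO_D/\frp\cO_D$ --- but this is immaterial to the argument.
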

\begin{proof}
	By Proposition \ref{PropCanIsogCharArtinCong} and (\ref{EqnDividen}), we have
	\[
	r_{\ucE,\frp}(v)(\frq^{-1})^n\equiv \frq^{\frac{n}{d^2(q-1)}d (q-1)e_v f_v} \equiv \frq^{\frac{n}{d}[K_v:F_\frp]} \bmod \frp.
	\]
\end{proof}

\begin{rmk}\label{RmkCanIsogChar-n-d2}
	When $d=2$ and $q$ is odd, we have 
	\[
	n=l_q(2)^2=(q^2-1)^2.
	\]
\end{rmk}

\section{Global points on Drinfeld--Stuhler varieties}\label{SecGlobalPoints}

\subsection{Key global property of the canonical isogeny character}\label{SubsecKeyGlobalProp}

Let $K/F$ be an extension of degree $d$ satisfying the following conditions.

\begin{itemize}
	\item $D\otimes_F K\simeq M_d(K)$.
	\item There exists $\fry\in |X|\setminus(\cR\cup\{\infty\})$ which totally ramifies in $K$. 
\end{itemize}

Let $\frY$ be the unique place of $K$ over $\fry$.
For any integer $N\geq 1$, we denote by $\bF_\fry^{(N)}$ the finite extension of $\bF_\fry$ of degree $N$.

Let $\frp\in \cR$ so that $\inv(D_\frp)=1/d$ by (\ref{EqnAssumpDx}). 
Since $D\otimes_F K\simeq M_d(K)$, for any place $v$ of $K$ over $\frp$
we have that $D_\frp\otimes_{F_\frp}K_v$ splits and $d\mid [K_v:F_\frp]$. Hence there exists a unique place $\frP$ of $K$ over $\frp$.

Let $\ucE$ be a sound $\sD$-elliptic sheaf over $K$ of generic characteristic. 
Let $\rho_{\ucE,\frp}:G_{K}\to \bF^\times$ be the canonical isogeny
character and let $n$ be the integer from (\ref{EqnDefn}). 
Proposition \ref{PropCanIsogCharArtin} (\ref{PropCanIsogCharArtin_fin}) and (\ref{EqnDividen})
imply that
$\rho_{\ucE,\frp}^n$ is unramified at $\frY$.
We choose a Frobenius element $\Fr_{\frY}\in G_{K}$ at $\frY$. Then $\rho_{\ucE,\frp}^n(\Fr_{\frY}^d)$ is independent of the choice of $\Fr_{\frY}$.

\begin{prop}\label{PropCanIsogCharFrY}
	Under the assumptions above, we have
	\[
	\rho_{\ucE,\frp}^n(\Fr_{\frY}^d)\equiv \fry^n \bmod \frp.
	\]
\end{prop}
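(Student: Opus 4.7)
The plan is to use global class field theory: I would apply the product formula $\prod_v \omega_v(\fry) = 1$ in $G_K^{\ab}$ to the element $\fry \in F^\times \subseteq K^\times$, thereby expressing $\rho_{\ucE,\frp}^n(\omega_\frY(\fry))$ as a product over the remaining places of the local terms $\tilde{r}_{\ucE,\frp}(v)(\fry)^{-n}$. The proof amounts to evaluating each factor using the local results of Section \ref{SecCanIsogChar}, and identifying $\omega_\frY(\fry)$ with $\Fr_\frY^d$ after raising by $n$.

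First I would dispose of the obvious places. At any $v \nmid \fry\frp\infty$, the element $\fry$ is a unit so $\tilde{r}_{\ucE,\frp}(v)(\fry) = r_{\ucE,\frp}(v)(\fry)$, and Proposition \ref{PropCanIsogCharArtin}(\ref{PropCanIsogCharArtin_fin}) combined with $(q^d-1) \mid l_q(d) \mid n$ makes this trivial after raising to the $n$-th power. At any $v \mid \infty$, Proposition \ref{PropCanIsogCharArtin}(\ref{PropCanIsogCharArtin_inf}) together with $l_q(d)^2 \mid n$ from (\ref{EqnDividen}) kills $\tilde{r}_{\ucE,\frp}(v)(\fry)^n$. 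So only the factors at $\frP$ and $\frY$ survive.

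The factor at $\frP$ is handled by Corollary \ref{CorCanIsogChar-n}, which requires knowing $[K_\frP : F_\frp]$. Here I would use that $\frp \in \cR$ has invariant $1/d$ and $D \otimes_F K \simeq M_d(K)$: splitting forces $d \mid [K_v : F_\frp]$ for every place $v$ of $K$ over $\frp$, and then $\sum_{v \mid \frp}[K_v:F_\frp] = [K:F] = d$ forces there to be a unique $\frP$ with $[K_\frP : F_\frp] = d$. Corollary \ref{CorCanIsogChar-n} applied with $\frq = \fry$ then gives $r_{\ucE,\frp}(\frP)(\fry^{-1})^n \equiv \fry^n \bmod \frp$, i.e.\ $\tilde{r}_{\ucE,\frp}(\frP)(\fry)^{-n} \equiv \fry^n \bmod \frp$. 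Combining with the product formula yields $\tilde{r}_{\ucE,\frp}(\frY)(\fry)^n \equiv \fry^n \bmod \frp$.

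Finally I would identify the left-hand side with $\rho_{\ucE,\frp}^n(\Fr_\frY^d)$. Since $\fry$ totally ramifies in $K$, the valuation $v_\frY(\fry) = d$; writing $\fry = \pi_\frY^d u$ with $u \in \cO_\frY^\times$, the local Artin map (in its standard normalization matching Lemma \ref{LemCarlitzBaseChange}) sends $\pi_\frY$ to a Frobenius lift and $u$ into the image of $\cO_\frY^\times$, which lies in inertia. Raising to the $n$-th power kills the inertia contribution by Proposition \ref{PropCanIsogCharArtin}(\ref{PropCanIsogCharArtin_fin}) since $\frY \nmid \frp\infty$ and $q^d-1 \mid n$, so $\rho_{\ucE,\frp}^n(\omega_\frY(\fry)) = \rho_{\ucE,\frp}^n(\Fr_\frY^d)$, completing the proof. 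The only subtle point is bookkeeping for the Artin map convention used in Section \ref{SubsecLCFT}, but no real obstacle arises: all the hard local input has already been packaged into Corollary \ref{CorCanIsogChar-n} and Proposition \ref{PropCanIsogCharArtin}.
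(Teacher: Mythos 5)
Your proposal is correct and is essentially the paper's own argument: the product formula for the global Artin map applied to $\fry\in K^\times$ is exactly the paper's manipulation of $\rho_{\ucE,\frp}^n$ as a character of the id\`ele class group, with the same trivialization of the factors away from $\frP$ via Proposition \ref{PropCanIsogCharArtin} and (\ref{EqnDividen}), the same use of Corollary \ref{CorCanIsogChar-n} at the unique place $\frP$ with $[K_\frP:F_\frp]=d$, and the same identification at $\frY$ through $\varpi_\frY^d=u\fry$ with the inertia contribution killed by $q^d-1\mid n$.
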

\begin{proof}
	By class field theory, we consider $\rho_{\ucE,\frp}^n$ as a character of the id\`{e}le class group $\bA_{K}^\times/{K}^\times$ of $K$.
	Let $\varpi_{\frY}$ be a uniformizer of the completion $K_{\frY}$ of $K$ at $\frY$ so that $\varpi_{\frY}^{d}=u \fry$ with some $u\in \cO_{K_\frY}^\times$.
	We write $((a)_{\frY},(b)^{\frY})$ for the id\`{e}le such that the component at $\frY$ is $a$ and the other components are $b$.
	Then we have
	\[
	\begin{aligned}
	\rho_{\ucE,\frp}^n(\Fr^{d}_{\frY})&=\rho_{\ucE,\frp}^n((\varpi_{\frY}^{d})_{\frY},(1)^{\frY})
	=\rho_{\ucE,\frp}^n((u\fry)_{\frY},(1)^{\frY})\\
	&=\rho_{\ucE,\frp}^n((u)_{\frY},(\fry^{-1})^{\frY})\\
	&=r_{\ucE,\frp}(\frY)(u)^n\prod_{v\neq \frY}\tilde{r}_{\ucE,\frp}(v)(\fry^{-1})^n.
	\end{aligned}
	\]
	By Proposition \ref{PropCanIsogCharArtin} (\ref{PropCanIsogCharArtin_fin}) and (\ref{EqnDividen}),
	we have
	\[
	r_{\ucE,\frp}(\frY)(u)^n=\tilde{r}_{\ucE,\frp}(v)(\fry^{-1})^n=1\quad (v\nmid \frp\fry\infty).
	\]
	On the other hand, Proposition \ref{PropCanIsogCharArtin} (\ref{PropCanIsogCharArtin_inf}) and (\ref{EqnDividen})
	give
	\[
	\tilde{r}_{\ucE,\frp}(v)(\fry^{-1})^n=1\quad (v\mid\infty).
	\]
	Thus we obtain
	\[
	\rho_{\ucE,\frp}^n(\Fr^{d}_{\frY})=\tilde{r}_{\ucE,\frp}(\frP)(\fry^{-1})^n=r_{\ucE,\frp}(\frP)(\fry^{-1})^n.
	\]
	Since $[K_{\frP}:F_\frp]=d$, Corollary \ref{CorCanIsogChar-n} yields
	\[
	\rho_{\ucE,\frp}^n(\Fr^{d}_{\frY})\equiv \fry^{n}\bmod \frp,
	\]
	which concludes the proof.
\end{proof}

\subsection{Criterion for the non-existence of global points}\label{SubsecCriterionNEx}

Let $\bar{F}$ be an algebraic closure of $F$.
\begin{dfn}\label{DefWy}
	Let $\cW(\fry)$ be the set of elements $\pi\in \bar{F}$ such that
	\begin{enumerate}
		\item\label{DefWy_int} $\pi$ is integral over $A$.
		\item\label{DefWy_deg} $[F(\pi):F]=d$.
		\item\label{DefWy_pure} There is only one place $\tilde{\infty}$ of $F(\pi)$ dividing $\infty$.
		\item\label{DefWy_norm} $N_{F(\pi)/F}(\pi)\in \bF_q^\times \fry$.
	\end{enumerate}
\end{dfn}

Note that if $\pi\in \cW(\fry)$, then the reasoning as in the proof of Lemma \ref{LemNPInfty} shows that
the minimal polynomial of $\pi$ over $F$
\[
M_\pi(X)=X^d+a_1 X^{d-1}+\cdots+a_d
\]
has the following properties:
\begin{itemize}
	\item\label{CondWtMiddle} $a_i\in A$ and $\deg(a_i)\leq i\deg(\fry)/d$ for any integer $i\in [1,d]$.
	\item\label{CondWtConst} $a_d=\mu \fry$ for some $\mu\in \bF_q^\times$.
\end{itemize}
In particular, $\cW(\fry)$ is a finite set.

\begin{rmk}\label{RmkWt}
	When $d=2$ and $\fry=t$, the set $\cW(t)$ agrees with the set of roots in $\bar{F}$ of quadratic polynomials $X^2+a_1 X+a_2$
	with the two properties above. Indeed, since we have
	\[
	\frac{1}{t^2}((tX)^2+a_1(tX)+a_2)=X^2+\frac{a_1}{t} X +\frac{a_2}{t^2},
	\]
	the polynomial is Eisenstein over $\cO_\infty$. Thus it is irreducible over $F_\infty$ and its roots lie in $\cW(t)$.
\end{rmk}

\begin{dfn}\label{DefPy}
Put 
\[
\cD(\fry)=\{ N_{F(\pi)/F}(\pi^{dn}-\fry^{n})\mid \pi \in \cW(\fry)\}\subseteq A.
\]
Let $\cP(\fry)$ be the set of prime divisors of nonzero elements of $\cD(\fry)$.
\end{dfn}
Note that for any $\pi\in \cW(\fry)$, by Definition \ref{DefWy} (\ref{DefWy_norm}) 
we have $N_{F(\pi)/F}(\pi^{dn}-\fry^{n})\notin \bF_q^\times$. Thus we obtain
\begin{equation}\label{EqnPyEmpty}
	\cP(\fry)=\emptyset\quad\Leftrightarrow\quad \pi^{dn}=\fry^{n}\text{ for all }\pi \in \cW(\fry).
\end{equation}

\begin{thm}\label{ThmNonExistSplit}
	Let $K/F$ be a field extension of degree $d$. Assume
	\begin{itemize}
		\item $D\otimes_F K\simeq M_d(K)$,
		\item there exists $\fry\in |X|\setminus(\cR\cup\{\infty\})$ which totally ramifies in $K$, 
		\item there exists $\frp\in \cR\setminus \cP(\fry)$,
		\item $D\otimes_F F(\sqrt[d]{\mu\fry})\not\simeq M_d(F(\sqrt[d]{\mu\fry}))$ for any $\mu\in \bF_q^\times$.
	\end{itemize} 
Then $X^D(K)=\emptyset$.
\end{thm}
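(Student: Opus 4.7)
Suppose for contradiction that $X^D(K)\neq \emptyset$. The assumption $D\otimes_F K\simeq M_d(K)$ lets me apply Theorem \ref{ThmDefField} to obtain a sound $\sD$-elliptic sheaf $\ucE$ over $K$ of generic characteristic, to which I attach the canonical isogeny character $\rho_{\ucE,\frp}:G_K\to \bF^\times$. Let $\frY$ be the unique place of $K$ over $\fry$. Since $\fry\notin\cR\cup\{\infty\}$, Proposition \ref{PropPotGoodRed} (\ref{PropPotGoodRed-Tot}) produces a totally ramified finite extension $L/K_\frY$ with $e(L/K_\frY)\mid q^d-1$ over which $\ucE$ acquires good reduction. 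Because $\fry$ totally ramifies in $K$, the residue field of $L$ is $\bF_\fry$, so the reduction $\bar\ucE$ is a sound $\sD$-elliptic sheaf over $\bF_\fry$ of characteristic $\fry$. Applying Corollary \ref{CorYTotRam}, the Frobenius endomorphism $\pi$ of $\bar\ucE$ generates an extension $\tilde F=F(\pi)$ of degree $d$ over $F$ with a unique place above $\infty$, and its minimal polynomial $M_\pi(X)$ has constant term $\mu\fry$ with $\mu\in\bF_q^\times$; hence $\pi\in\cW(\fry)$.

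Next I compare $\pi$ to $\rho_{\ucE,\frp}$. Since $\frp\neq \fry$, the $\frp$-torsion $\ucE[\frp]$ extends to a finite \'etale group scheme over $\cO_L$, so the $G_L$-action on $\ucE[\frp](K_\frY^\sep)$ factors through $G_{\bF_\fry}$. Because $L/K_\frY$ is totally ramified, I may choose a Frobenius $\Fr_\frY\in G_{K_\frY}$ that lies in $G_L$, and then $\rho_{\ucE,\frp}(\Fr_\frY)=\rho_{\bar\ucE,\frp}(\Fr_{\bF_\fry})$. Lemma \ref{LemRedCharPolyModP} applied to $\bar\ucE$, combined with the identity $P_{\bar\ucE,\bF_\fry}(X)=M_\pi(X)$ from Corollary \ref{CorYTotRam}, identifies $\rho_{\ucE,\frp}(\Fr_\frY)\in\bF^\times$ as a root of $M_\pi(X)\bmod\frp$. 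Consequently there exist a prime $\frP'$ of $\tilde F$ above $\frp$ and an integer $i\geq 0$ such that, via the induced embedding of the residue field of $\frP'$ into $\bF$, one has $\pi\equiv \rho_{\ucE,\frp}(\Fr_\frY)^{|\frp|^i}\pmod{\frP'}$.

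Now I invoke Proposition \ref{PropCanIsogCharFrY}, which gives $\rho_{\ucE,\frp}(\Fr_\frY)^{dn}\equiv \fry^n\pmod{\frp}$. Raising the previous congruence to the $dn$-th power and using that $\fry^n\bmod\frp$ lies in $\bF_\frp$ (and is therefore fixed by $|\frp|$-th power iterations), I deduce $\pi^{dn}\equiv \fry^n\pmod{\frP'}$. Hence $\frp$ divides $N_{\tilde F/F}(\pi^{dn}-\fry^n)$, which belongs to $\cD(\fry)$. Since the hypothesis $\frp\notin\cP(\fry)$ excludes this whenever that norm is nonzero, I conclude $\pi^{dn}=\fry^n$ in $\tilde F$.

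Write $\pi^d=\xi\fry$ with $\xi\in\tilde F$ an $n$-th root of unity; I claim $\xi\in\bF_q^\times$. Let $f=[\bF_q(\xi):\bF_q]$, so that $F(\xi)=\bF_{q^f}(t)\subseteq \tilde F$. Any place of $F(\xi)$ above $\fry$ is unramified over $\fry$, and for any place $v$ of $\tilde F$ above such a place $v'$, the identity $\pi^d=\xi\fry$ forces $d\,v(\pi)=v(\fry)=e(v/v')$; integrality of $v(\pi)$ then gives $d\mid e(v/v')$, while $e(v/v')\leq [\tilde F:F(\xi)]=d/f$. This forces $f=1$, whence $\xi\in\bF_q^\times$ and $\tilde F=F(\sqrt[d]{\xi\fry})$. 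Finally, Proposition \ref{PropFtildeD} provides an $F$-embedding $\tilde F\hookrightarrow D$, making $\tilde F$ a maximal subfield of $D$; equivalently, $D\otimes_F F(\sqrt[d]{\xi\fry})$ splits, contradicting the fourth hypothesis. The most delicate points will be the compatibility of $\rho_{\ucE,\frp}$ with the reduction in the second paragraph, and the ramification argument at $\fry$ ruling out nonconstant roots of unity $\xi$ in the last.
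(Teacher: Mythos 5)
Your proposal is correct and follows the paper's skeleton almost exactly: Theorem \ref{ThmDefField} to get $\ucE$ over $K$, Proposition \ref{PropPotGoodRed} (\ref{PropPotGoodRed-Tot}) to reduce at the totally ramified place $\frY$, Corollary \ref{CorYTotRam} to place the Frobenius $\pi$ in $\cW(\fry)$, Lemma \ref{LemRedCharPolyModP} plus Proposition \ref{PropCanIsogCharFrY} to get $\pi^{dn}\equiv\fry^n$ modulo a prime above $\frp$, the hypothesis $\frp\notin\cP(\fry)$ to upgrade this to $\pi^{dn}=\fry^n$, and Proposition \ref{PropFtildeD} for the final contradiction. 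You deviate in two places, both legitimately. First, instead of base-changing to $\bF_\fry^{(dn)}$ and comparing reduced characteristic polynomials there (the paper's route via \cite[Theorem 9.5]{Rei}), you reduce $\pi$ directly modulo $\frP'$, match it with a conjugate $\rho_{\ucE,\frp}(\Fr_\frY)^{|\frp|^i}$, and raise to the $dn$-th power, using that $\fry^n\bmod\frp$ is $\bF_\frp$-rational and hence Frobenius-fixed; this is equivalent and slightly more direct. (One wording caveat: the full residue field of $\frP'$ need not embed into $\bF$ since $f(\frP'/\frp)$ need not divide $d$; only $\bF_\frp(\bar\pi)$ does, but that is all your congruence actually uses, so this is cosmetic.) Second, and more substantially, your endgame differs: the paper combines the coefficient degree bounds $\deg(a_i)\leq i\deg(\fry)/d$ from Corollary \ref{CorYTotRam} with the $\fry$-adic Newton polygon to force $P_{\bar\ucE,\bF_\fry}(X)=X^d-\mu\fry$, whereas you set $\xi=\pi^d/\fry$, observe it is a root of unity hence a constant, and rule out $\xi\notin\bF_q$ by the ramification count $d\,v(\pi)=e(v/v')\leq[\tilde F:F(\xi)]=d/f$ at a place above $\fry$ (valid, since constant field extensions are unramified and $e\cdot f$ equals the local degree over finite residue fields). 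Your argument is arguably cleaner and needs only the norm statement $N_{\tilde F/F}(\pi)\in\bF_q^\times\fry$, not the degree bounds; the paper's Newton polygon argument is more computational but stays entirely inside $A[X]$. Both correctly land on $\tilde F=F(\sqrt[d]{\mu\fry})$ with $\mu\in\bF_q^\times$, contradicting the fourth hypothesis.
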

\begin{proof}
	Suppose $X^D(K)\neq\emptyset$. 
	By Theorem \ref{ThmDefField}, there exists a sound $\sD$-elliptic sheaf $\ucE$ over $K$ 
	of generic characteristic.
	Let $\frY$ be the unique place of $K$ lying over $\fry$. Then Proposition \ref{PropPotGoodRed} (\ref{PropPotGoodRed-Tot}) implies 
	that there exist a totally ramified 
	extension $L/K_{\frY}$ and a sound $\sD$-elliptic sheaf $\ucE_{\cO_L}$ over $\cO_L$ satisfying
	$\cZ(\ucE_{\cO_L})\cap|X|=\{\fry\}$ and $\ucE_{\cO_L}|_L\simeq \ucE|_L$. 
	We denote by $\bar{\ucE}$ the reduction of $\ucE_{\cO_L}$ 
	modulo the maximal ideal of $\cO_L$. 
	Note that the residue field of $\cO_L$ is $\bF_\fry$ and $\bar{\ucE}$ is a sound $\sD$-elliptic sheaf over $\bF_\fry$
	of characteristic $\fry$.
	
	Let $P_{\bar{\ucE},\bF_\fry}(X)$ be the reduced characteristic polynomial of the $|\fry|$-th power Frobenius automorphism 
	$\Fr_{\bF_\fry}\in G_{\bF_\fry}$
	acting on $T_\frp(\bar{\ucE})$, as in 
	\S\ref{SubsecRCP}.
	Let $\pi$ be the $|\fry|$-th power Frobenius endomorphism of $\bar{\ucE}$ and consider the subfield $F(\pi)=F[\pi]$ of $F\otimes_A\End(\bar{\ucE})$.
	By Corollary \ref{CorYTotRam}, we have 
	$P_{\bar{\ucE},\bF_\fry}(X)\in A[X]$ and it is irreducible of degree $d$.
	Write
	\[
	P_{\bar{\ucE},\bF_\fry}(X)=\prod_{i=1}^d (X-\pi_i),\quad \pi_i\in \bar{F}.
	\]
	Since $F(\pi_i)$ is conjugate to $F(\pi)$ over $F$, 
	Corollary \ref{CorYTotRam} implies $\pi_i\in \cW(\fry)$ for any $i$.
	
	Consider the integer $n$ of (\ref{EqnDefn}).
	Let $\bF_\fry^{(dn)}$ be the finite extension of $\bF_\fry$ of degree $d n$.
	Let $P_{\bar{\ucE},\bF_\fry^{(dn)}}(X)$ be the reduced characteristic polynomial
	of $\Fr_{\bF_\fry^{(dn)}}\in G_{\bF_\fry^{(dn)}}$ acting on $T_\frp(\bar{\ucE}|_{\bF_\fry^{(dn)}})$.
	Note that the $G_{\bF_\fry^{(dn)}}$-module $T_\frp(\bar{\ucE}|_{\bF_\fry^{(dn)}})$ 
	is the restriction to $G_{\bF_\fry^{(dn)}}$ of the $G_{\bF_\fry}$-module $T_\frp(\bar{\ucE})$,
	and similarly the canonical isogeny character $\rho_{\bar{\ucE}|_{\bF_\fry^{(dn)}},\frp}$
	of $\bar{\ucE}|_{\bF_\fry^{(dn)}}$ equals $\rho_{\bar{\ucE},\frp}|_{G_{\bF_\fry^{(dn)}}}$.
	Since $\Fr_{\bF_\fry^{(dn)}}=\Fr_{\bF_\fry}^{dn}$, \cite[Theorem 9.5]{Rei} shows
	\[
	P_{\bar{\ucE},\bF_\fry^{(dn)}}(X)=\prod_{i=1}^d (X-\pi_i^{dn}).
	\]
	On the other hand, by Lemma \ref{LemRedCharPolyModP} we have
	\[
	P_{\bar{\ucE},\bF_\fry^{(dn)}}(X)\equiv \prod_{i=0}^{d-1}\left(X-\rho_{\bar{\ucE},\frp}(\Fr_{\bF_\fry^{(dn)}})^{|\frp|^i}\right)
	\equiv\prod_{i=0}^{d-1}\left(X-\rho_{\bar{\ucE},\frp}(\Fr_{\bF_\fry}^{dn})^{|\frp|^i}\right) \bmod \frp.
	\]
	
	Since $L/F_\fry$ is totally ramified and the natural isomorphisms
	\[
	\ucE[\frp](K_{\frY}^\sep)\simeq (\ucE_{\cO_L})[\frp](\cO_{K_{\frY}^\sep})\simeq \bar{\ucE}[\frp](\bar{\bF}_\fry)
	\]
	are compatible with the actions of $\cO_D/\frp\cO_D$, we have 
	\[
	\rho_{\bar{\ucE},\frp}(\Fr_{\bF_\fry}^{dn})=\rho_{\ucE,\frp}(\Fr_{\frY}^{dn}).
	\]
	By Proposition \ref{PropCanIsogCharFrY}, 
	\[
	\rho_{\ucE,\frp}(\Fr_{\frY}^{dn})\equiv \fry^n\bmod \frp.
	\]
	Thus we obtain
	\[
	\prod_{i=1}^d (X-\pi_i^{dn})\equiv \prod_{i=1}^d (X-\fry^n)\bmod \frp.
	\]
	This congruence implies that for any integer $i\in [1,d]$, 
	there exists a prime $\frP'$ of $F(\pi_i)$ lying over $\frp$ satisfying $\pi_i^{dn}\equiv \fry^n \bmod \frP'$.
	Therefore, $\frp$ divides $N_{F(\pi_i)/F}(\pi_i^{dn}-\fry^n)$ for all $i$. 
	This yields $\pi_i^{dn}=\fry^n$ for all $i$, since the equality follows from (\ref{EqnPyEmpty}) when $\cP(\fry)=\emptyset$,
	and otherwise $\pi_i^{dn}-\fry^n\neq 0$ contradicts the assumption $\frp\notin \cP(\fry)$.
	Hence all $\pi_i$ have the same $\fry$-adic valuation $1/d$ with respect to any place of $\bar{F}$ above $\fry$.
	
	Write $P_{\bar{\ucE},\bF_\fry}(X)=X^d+a_1X^{d-1}+\cdots+a_d$ with $a_i\in A$. By Corollary \ref{CorYTotRam}, 
	$a_d=-\mu\fry$ for some $\mu\in \bF_q^\times$ and any other coefficient $a_i$ is not divisible by $\fry$ unless $a_i=0$.
	Then inspecting the $\fry$-adic Newton polygon shows $a_i=0$ for all $i\in [1,d-1]$. Namely, we have
	\[
	P_{\bar{\ucE},\bF_\fry}(X)=X^d-\mu\fry,\quad F(\pi)=F(\sqrt[d]{\mu\fry}).
	\]
	
	Now Proposition \ref{PropFtildeD} gives an $F$-linear embedding $F(\sqrt[d]{\mu\fry})\to D$.
	Then $F(\sqrt[d]{\mu\fry})$ is isomorphic to a maximal commutative subfield of $D$, and thus
	it splits $D$. This contradicts the assumption. Therefore, we obtain $X^D(K)=\emptyset$.
\end{proof}

\begin{exmp}\label{ExNE3}
	Let $d=2$, $q=3$ and $\fry=t$. A computer calculation with PARI/GP using Remark \ref{RmkWt}
	shows that the following monic irreducible polynomials $\frp$ are not in 
	$\cP(\fry)$:
	\[
	t^3 + t^2 + t + 2,\quad t^4 + t^3 + 2t + 1,\quad t^5 + 2t + 1.
	\]
	Let $\frq$ be a monic irreducible polynomial which is coprime to $\frp$ satisfying 
	\begin{itemize}
		\item $(\frac{t}{\frp})=1$ or $(\frac{t}{\frq})=1$, and
		\item $(\frac{-t}{\frp})=1$ or $(\frac{-t}{\frq})=1$.
	\end{itemize}
	Let $D$ be the quaternion division algebra over $F$ with $\cR=\{\frp,\frq\}$.
	Then neither of $D\otimes_F F(\sqrt{\pm t})$ splits.
	
	For a square-free element $\frem\in A$ which is coprime to $t$, put $K=F(\sqrt{t\frem})$.
	If neither $\frp$ nor $\frq$ splits in $K$, then $K$ splits $D$.
	Therefore, Theorem \ref{ThmNonExistSplit} yields $X^D(K)=\emptyset$.
	
	For example, let
	\[
	(\frp,\frq)\in\left\{\begin{gathered}
		(t^3 + t^2 + t + 2,\ t+1),\\
		(t^4 + t^3 + 2t + 1,\ t^2 + 1),\quad (t^5 + 2t + 1,\ t+2) 
	\end{gathered}\right\}.
	\]
	Let $\frn\in A$ be any square-free element which is coprime to $t\frp\frq$. 
	Then we have $X^D(K)=\emptyset$ for $K=F(\sqrt{t\frp\frq\frn})$. 
\end{exmp}

\begin{exmp}\label{ExNE5}
		Let $d=2$, $\fry=t$ and
		\[
			(q,\frp,\frq)\in\{(5,t^3 + t^2 + 4 t + 1,\ t+2),\quad (5,t^4 + 2,\ t^2 + t + 1),\quad (7,t^3+2,t+3)\}.
		\]
		Then a computer calculation with PARI/GP shows $\frp\notin\cP(\fry)$, and as in Example \ref{ExNE3} we obtain	
		$X^D(K)=\emptyset$ for $K=F(\sqrt{t\frp\frq\frn})$ with any square-free element $\frn\in A$ which is coprime to $t\frp\frq$.
\end{exmp}

\section{Counterexamples to the Hasse principle}\label{SecHPV}

In this section, using Theorem \ref{ThmNonExistSplit}, we construct examples of curves violating the Hasse principle.
The main auxiliary tool that we will use are results from \cite{Pap_loc} on the existence of local points on Drinfeld--Stuhler curves,
which are function field analogues of results of Jordan--Livn\'{e} for Shimura curves \cite{JL}. For the convenience of the reader, we summarize 
these results specialized to the case that will be of particular interest for us.

\subsection{Local points on Drinfeld--Stuhler curves}\label{SubsecLocalPointsDSC}

Let $K/F$ be a quadratic extension. Let $D$ be the quaternion algebra over $F$ with $\cR=\{\frp,\frq\}$, where $\frp$ and $\frq$ are two distinct
monic irreducible polynomials of $A$.
For a place $v$ of $K$, we denote by $K_v$ the completion of $K$ at $v$. For the place $\frl$ of $F$ below $v$, we denote 
by $\deg(v/\frl)$ the residue degree and by $e(v/\frl)$ the ramification index of $v$ over $\frl$, as before.

\begin{lem}[\cite{Pap_loc}, Theorem 5.10]\label{LemPapInfty}
	Let $v$ be a place of $K$ over $\infty$.
	\begin{enumerate}
		\item If $\infty$ does not split in $K$, then $X^D(K_{v})\neq\emptyset$.
		\item If $\infty$ splits in $K$, then $X^D(K_{v})\neq\emptyset$ if and only if both of $\deg(\frp)$ and $\deg(\frq)$ are odd.
	\end{enumerate}
\end{lem}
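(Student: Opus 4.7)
The plan is to invoke the rigid-analytic uniformization of $X^D$ at $\infty$, which is the function-field analogue of the complex uniformization of Shimura curves. Since $D$ splits at $\infty$, the base change $X^D_{F_\infty}$ has a rigid analytic description as an arithmetic quotient $\Gamma\backslash\Omega$, where $\Omega=\mathbb{P}^1_{F_\infty}\setminus\mathbb{P}^1(F_\infty)$ is the Drinfeld upper half-plane and $\Gamma\subseteq PGL_2(F_\infty)$ is commensurable with the image of $\cO_D[1/\infty]^\times$ under an identification $D\otimes_F F_\infty\simeq M_2(F_\infty)$. The existence of $K_v$-points for a finite extension $K_v/F_\infty$ is then detected on the analytic space $(\Gamma\backslash\Omega)(K_v)$.

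For Case (1), where $\infty$ does not split in $K$, the completion $K_v$ is a proper quadratic extension of $F_\infty$. The plan here is to produce a CM point explicitly: fix any embedding $\phi:K_v\hookrightarrow M_2(F_\infty)\simeq D\otimes_F F_\infty$, which exists because $K_v$ is a quadratic field extension of $F_\infty$. The induced $K_v^\times$-action on $\mathbb{P}^1_{F_\infty}$ has exactly two fixed points, which are $F_\infty$-conjugate and therefore cannot lie in $\mathbb{P}^1(F_\infty)$; hence both fixed points belong to $\Omega(K_v)$. Their image in $\Gamma\backslash\Omega$ provides the required $K_v$-point, giving the result unconditionally in this case.

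For Case (2), where $\infty$ splits in $K$, we have $K_v=F_\infty$, and since $\Omega(F_\infty)=\emptyset$, any $F_\infty$-rational point of $\Gamma\backslash\Omega$ must arise from an \emph{elliptic point}, i.e., from a point $z\in\Omega$ with non-trivial $\Gamma$-stabilizer whose $\mathrm{Gal}(F_\infty^{\mathrm{sep}}/F_\infty)$-orbit is contained in its $\Gamma$-orbit. Such elliptic points are classified by $\Gamma$-conjugacy classes of embeddings $L\hookrightarrow D$ where $L/F$ is a quadratic extension non-split at $\infty$, via the usual torus-fixed-point dictionary. By the Hasse--Schilling--Maass embedding criterion, $L$ embeds in $D$ iff $L$ is non-split at both $\frp$ and $\frq$, i.e., iff the local extensions $L_\frp/F_\frp$ and $L_\frq/F_\frq$ are fields. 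The plan is then to show, via the product formula for the quadratic Hilbert symbol on $F=\bF_q(t)$, that such an $L$ with prescribed non-splitting behavior at $\infty$, $\frp$, and $\frq$ exists if and only if both $\deg(\frp)$ and $\deg(\frq)$ are odd: the residue parity controls which local quadratic characters at $\frp$, $\frq$ can be matched by a global class, and an even degree at either place forces an obstruction via reciprocity.

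The main obstacle is the parity condition in Case (2). One must establish both implications: that when $\deg(\frp)$ and $\deg(\frq)$ are both odd, one can produce a global quadratic extension $L/F$ with the required local behavior and an optimal embedding $\cO_L\hookrightarrow\cO_D[1/\infty]$ whose fixed point in $\Omega$ descends to an $F_\infty$-rational point of $\Gamma\backslash\Omega$ (and not merely to a point over a quadratic extension); and conversely, that when either degree is even, every elliptic orbit is permuted non-trivially by $\mathrm{Gal}(F_\infty^{\mathrm{sep}}/F_\infty)$, so no $F_\infty$-rational point exists. Both directions rest on a careful analysis of local invariants, analogous to the treatment of \cite{JL} in the number field setting, and this analysis is carried out in \cite{Pap_loc}, from which the statement is quoted.
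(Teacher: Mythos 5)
The paper does not prove this lemma; it is imported verbatim from \cite{Pap_loc}, Theorem 5.10. So there is no internal proof to compare against, and any assessment must be of whether your sketch is a correct route to the result.

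Your identification of the rigid-analytic uniformization $\Gamma\backslash\Omega$ at $\infty$ as the governing tool is right, and your Case (1) is correct (indeed overkill: since $K_v\supsetneq F_\infty$, simply $\Omega(K_v)=\mathbb{P}^1(K_v)\setminus\mathbb{P}^1(F_\infty)\neq\emptyset$, and the quotient map sends any such point to a $K_v$-point of $X^D$; there is no need to manufacture CM points).

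Case (2) has a genuine gap, in two places. First, your dichotomy ``an $F_\infty$-rational point of $\Gamma\backslash\Omega$ must come from an elliptic point with Galois orbit inside its $\Gamma$-orbit'' is not the correct descent mechanism over a non-archimedean local field. Because $\Omega(F_\infty)=\emptyset$, an $F_\infty$-point is a $\Gamma$-orbit $\Gamma z$ that is merely Galois-stable, which means there is a homomorphism $\rho:\Gal(L/F_\infty)\to\Gamma$ (for some finite extension $L/F_\infty$ with $z\in\Omega(L)$) satisfying $\sigma(z)=\rho(\sigma)z$; the point $z$ itself need not have nontrivial $\Gamma$-stabilizer, and generically is \emph{not} a fixed point of the torsion element $\rho(\sigma)$. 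The existence of such a configuration hinges on $\Gamma$ having nontrivial torsion, not on the geometry of fixed points per se. Second, and more seriously, your reciprocity argument is pitched at the wrong target. You propose to run the product formula over all quadratic $L/F$ non-split at $\infty,\frp,\frq$; but such $L$ \emph{always} exist by weak approximation, so that argument would never produce the parity obstruction. The constraint that makes the parity condition emerge is that $\Gamma$ arises as (the image of) the unit group of an $A$-order in $D$, so any torsion element is a root of unity, and the only quadratic extension of $F$ relevant to the torsion is the constant-field extension $\mathbb{F}_{q^2}(t)$. By the Hasse--Schilling--Maass criterion, $\mathbb{F}_{q^2}(t)$ embeds into $D$ iff $\frp$ and $\frq$ are both inert in $\mathbb{F}_{q^2}(t)$, which happens precisely when $\deg(\frp)$ and $\deg(\frq)$ are both odd. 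Without singling out the constant-field extension, your reciprocity step cannot close, and the claimed equivalence would in fact be false.
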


\begin{rmk}\label{RmkSplitInfty}
Assume $q$ is odd and $K=F(\sqrt{\frd})$ for a square-free polynomial $\frd\in A$. Then
$\infty$ splits in $K$ if and only if $\deg(\frd)$ is even and its leading coefficient is a square in $\bF_q^\times$.
\end{rmk}

\begin{lem}[\cite{Pap_loc}, Theorem 4.1]\label{LemPapP}
	Let $v$ be a place of $K$ over $\frp$. Put $e=e(v/\frp)$ and $f=\deg(v/\frp)$.
	\begin{enumerate}
		\item If $f=2$, then $X^D(K_{v})\neq\emptyset$.
		\item If $e=2$, then $X^D(K_{v})\neq\emptyset$ if and only if 
		there exists $\mu\in \bF_q^\times$ such that neither $\frq$ nor $\infty$ splits in $F(\sqrt{\mu \frp})$.
	\end{enumerate}
\end{lem}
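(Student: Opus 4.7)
The plan is to invoke the non-Archimedean uniformization of $X^D$ at the ramified place $\frp$, following the approach of Jordan--Livn\'{e} for Shimura curves \cite{JL} as adapted to Drinfeld--Stuhler curves in \cite{Pap_loc}. Since $\frp\in\cR$, Drinfeld's theorem yields an isomorphism of rigid analytic spaces $X^D\otimes_F F_\frp^\ur \simeq \Gamma\backslash\Omega^2$, where $\Omega^2$ is the Drinfeld upper half plane over $F_\frp$ and $\Gamma$ is a discrete cocompact subgroup of $\mathit{PGL}_2(F_\frp)$ arising from the units of a maximal $A[\frp^{-1}]$-order in a definite quaternion $F$-algebra $\bar D$ whose ramification set is $(\cR\setminus\{\frp\})\cup\{\infty\}$. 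This produces a Mumford-type integral model $\cX$ of $X^D$ over $\cO_\frp^\ur$ whose special fiber is a semistable curve: a union of projective lines over $\bF_\frp^\ur$ meeting transversally at ordinary double points, with dual graph the finite quotient $\Gamma\backslash\cT$ of the Bruhat--Tits tree $\cT$ of $\mathit{PGL}_2(F_\frp)$.

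For part (1), when $f=\deg(v/\frp)=2$, the residue field $k_v$ is the unramified quadratic extension of $\bF_\frp$, over which each irreducible component $\bP^1_{\bF_\frp}$ of the special fiber has $|\frp|^2+1$ rational points. Since the valence of $\Gamma\backslash\cT$ is finite at each vertex, only finitely many of these points are nodes, so each component contains smooth $k_v$-rational points. Any such point lifts to an element of $X^D(K_v)$ by Hensel's lemma, giving $X^D(K_v)\neq\emptyset$ unconditionally.

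For part (2), when $e(v/\frp)=2$ and thus $k_v=\bF_\frp$, a $K_v$-point of $\cX$ reduces either to a smooth $\bF_\frp$-rational point of the special fiber (already yielding an $F_\frp$-rational point) or to a node, whose complete local ring $\cO_\frp[[x,y]]/(xy-\frp\cdot u)$ admits a $K_v$-point precisely when a suitable unit becomes a square in $K_v$. Via the Bruhat--Tits dictionary, smooth $\bF_\frp$-points correspond to vertices of $\Gamma\backslash\cT$ whose stabilizer in $\Gamma$ contains nontrivial elements with eigenvalues in $F_\frp$, while the ramified quadratic contribution from nodes corresponds to edge stabilizers containing the image of an optimal embedding $\cO_{F(\sqrt{\mu\frp})}\to \bar D$ for some $\mu\in\bF_q^\times$. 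One shows that $X^D(K_v)\neq\emptyset$ is equivalent to the existence, for some $\mu\in\bF_q^\times$, of an $F$-algebra embedding $F(\sqrt{\mu\frp})\hookrightarrow \bar D$, which by the Hasse--Schilling criterion for definite quaternion algebras amounts to the condition that $F(\sqrt{\mu\frp})$ is a field that is nonsplit at every place ramifying $\bar D$ other than $\frp$ itself; since $\bar D$ ramifies at $\frq$ and $\infty$, this translates exactly to the stated criterion. The main obstacle is the precise bookkeeping identifying ramified $K_v$-points with such optimal embeddings via local deformation theory at the nodes; this combinatorial-arithmetic analysis is carried out in \cite[\S 4]{Pap_loc}, and I would appeal to it directly rather than reprove it here.
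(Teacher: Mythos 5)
The paper offers no independent argument for this lemma---it is quoted verbatim from \cite[Theorem 4.1]{Pap_loc}---and your proposal ultimately rests on the same citation, so you are taking essentially the same route. Your sketch of the method behind that citation (Cherednik--Drinfeld-type uniformization of $X^D$ at $\frp$ by $\Gamma\backslash\Omega^2$ with $\Gamma$ coming from the quaternion algebra $\bar{D}$ ramified at $\{\frq,\infty\}$, the counting of smooth points on the Mumford-type special fiber for the unramified quadratic case, and the translation of ramified quadratic points into embeddings $F(\sqrt{\mu\frp})\hookrightarrow\bar{D}$) is an accurate outline of how the cited theorem is proved, modulo the Frobenius-twist and optimal-embedding bookkeeping that you explicitly defer to \cite[\S 4]{Pap_loc}.
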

\begin{rmk}\label{RmkSplitP}
	If $K$ splits $D$, then for any place $v\mid \frp$ of $K$ we have $[K_v:F_\frp]=2$ and thus one of the cases in Lemma \ref{LemPapP} occurs.
\end{rmk}

\begin{lem}[\cite{Pap_loc}, Theorem 3.1]\label{LemPapL}
	Let $\frl\notin\{\frp,\frq,\infty\}$ be a place of $F$ and let $v$ be a place of $K$ over $\frl$ with $f=\deg(v/\frl)$.
	We denote the monic irreducible polynomial defining $\frl$ also by $\frl$.
	\begin{enumerate}
		\item\label{LemPapL-IN} If $f=2$, then $X^D(K_{v})\neq\emptyset$.
		\item\label{LemPapL-NIN} If $f=1$, then $X^D(K_{v})\neq\emptyset$ if and only if there exist $a\in A$ and $c\in \bF_q^\times$ such that 
		the minimal splitting field $L$ of the polynomial $x^2-ax+c\frl$ is quadratic over $F$ and no place $w$ in $\{\frp,\frq,\infty\}$ splits in $L$.
	\end{enumerate} 
\end{lem}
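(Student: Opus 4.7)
The plan is to reduce the existence of $K_v$-points to a combinatorial/number-theoretic condition on admissible Frobenius polynomials via Honda--Tate theory, using the good reduction of $X^D$ at $\frl$.

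\textbf{Step 1: Good reduction and Hensel.} Since $\frl \notin \cR \cup \{\infty\}$, the stack $\sEll_{\sD}|_{\cO_\frl}$ is smooth of relative dimension $d-1 = 1$ over $\cO_\frl$ by \cite[Theorem 4.1]{LRS}. After introducing an auxiliary level structure $I$ coprime to $\frl\frp\frq$ to rigidify, $\Ell_{\sD,I}$ is a smooth projective scheme over $\cO_\frl$, and $X^D_{\cO_\frl}$ is its quotient by the finite group $H^0(I,\sD)^\times/\bF_q^\times$. In particular $X^D$ is smooth over $\cO_\frl$ at every point, and by Hensel's lemma
\[
X^D(K_v) \neq \emptyset \iff X^D_{\cO_\frl}(\bF_{\frl^f}) \neq \emptyset,
\]
where $\bF_{\frl^f}$ is the residue field of $v$.

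\textbf{Step 2: Translation via Honda--Tate.} A point in $X^D_{\cO_\frl}(\bF_{\frl^f})$ corresponds, after possibly enlarging the base to kill automorphisms as in \S\ref{SubsecGaloisDescent}, to a sound $\sD$-elliptic sheaf $\ucE$ over $\bF_{\frl^f}$ of characteristic $\frl$. By Corollary \ref{CorYTotRam} (applied with $k = \bF_\frl$ when $f=1$), the $|\bF_{\frl^f}|$-th power Frobenius $\pi$ of $\ucE$ has minimal polynomial $M_{\ucE}(X) = X^2 - aX + b \in A[X]$ with $b = \mu \frl^{f}$ for some $\mu \in \bF_q^\times$ and $\deg(a) \leq f\deg(\frl)/2$, and $\tilde F := F(\pi)$ is a quadratic extension of $F$ with a unique place over $\infty$. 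By Proposition \ref{PropFtildeD}, $\tilde F$ embeds into $D$, i.e., no place in $\cR \cup \{\infty\} = \{\frp,\frq,\infty\}$ splits in $\tilde F$. Conversely, the function field Honda--Tate theorem \cite[Appendix B]{LRS} produces such an $\ucE$ from any quadratic extension $\tilde F = F(\pi)$ embeddable in $D$ with Frobenius satisfying the appropriate Newton-polygon normalization. Thus the existence of a point reduces to the existence of a quadratic extension $\tilde F/F$ embedding into $D$ whose generator has minimal polynomial of the prescribed shape.

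\textbf{Step 3: The case $f=2$.} Here we seek $a \in A$ (with $\deg(a) \leq \deg(\frl)$) and $\mu \in \bF_q^\times$ such that $\tilde F$, the splitting field of $X^2 - aX + \mu\frl^2$, embeds in $D$. One can write $\pi = \frl \pi'$ where $\pi'$ satisfies $X^2 - a' X + \mu = 0$ with $a' = a/\frl$ (after choosing $a = a'\frl$), so $\tilde F$ becomes any quadratic extension of $F$ of the form $F(\sqrt{(a')^2 - 4\mu})$. Playing with the discriminant shows one can always realize a $\tilde F$ inert at $\infty$ and split (or ramified) elsewhere to avoid splitting at $\frp,\frq$; this is a consequence of weak approximation on the discriminant, and matches the Eichler mass formula count which is positive for $f=2$.

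\textbf{Step 4: The case $f=1$.} This is the crux. An $\bF_\frl$-point exists iff there is $\pi \in \bar F$ satisfying $\pi^2 - a\pi + c\frl = 0$ for some $a \in A$ and $c \in \bF_q^\times$, such that $L := F(\pi)$ is quadratic over $F$ (equivalently $a^2 - 4c\frl$ is not a square in $F$, or in characteristic $2$, $X^2-aX+c\frl$ is separable irreducible) and embeds in $D$. The embeddability is equivalent, by the Albert--Brauer--Hasse--Noether theorem applied to the quaternion algebra $D$ whose ramification set is $\{\frp,\frq\}$, to the condition that neither $\frp$, $\frq$, nor $\infty$ splits in $L$. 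This is exactly the stated criterion.

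The hard part will be the surjectivity side of the Honda--Tate correspondence, i.e., constructing $\ucE$ from the Frobenius datum $\pi$; this is handled by the function-field Honda--Tate theorem of \cite{LRS}, combined with the Dieudonn\'e-module computations needed to realize the correct invariants of $D'$ at $\frl$ and $\infty$. Everything else is a bookkeeping exercise combining Corollary \ref{CorYTotRam}, Proposition \ref{PropFtildeD}, and the classical classification of quadratic extensions of $F$ by their behavior at a finite set of places.
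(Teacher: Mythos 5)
Your proposal misreads what the paper is actually proving here. This lemma is explicitly cited as \cite[Theorem 3.1]{Pap_loc}: the paper does not reprove it, but instead gives a two-line \emph{reduction} to the cited result. The only content of the paper's proof is a small translation check: the statement of \cite[Theorem 3.1]{Pap_loc} involves a hypothesis on the behaviour of the conjugates of the Frobenius $\alpha$ modulo $\frl$, and the authors observe that if both conjugates of a root of $x^2-ax+c\frl$ lie in $\frl\cO_{F(\alpha)}$, then $\frl\mid a$ and hence $\frl$ ramifies in $F(\alpha)$; this degenerate case is therefore absorbed into the "quadratic and no place of $\{\frp,\frq,\infty\}$ splits" phrasing used in the lemma. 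That is the whole proof. You, by contrast, are attempting to re-derive \cite[Theorem 3.1]{Pap_loc} itself from first principles via good reduction, Hensel, and Honda--Tate for $\sD$-elliptic sheaves.

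As a sketch of how \cite[Theorem 3.1]{Pap_loc} is proved, your Steps 1--2 and 4 have the right shape (this is indeed the Jordan--Livn\'e / Honda--Tate strategy used in the cited paper), but there are genuine gaps if one takes your argument at face value as a proof. In Step 1, "quotient of a smooth scheme by a finite group, hence smooth" is false in general; smoothness of $X^D$ over $\cO_\frl$ needs its own argument (it is in the cited literature, but not for the reason you give, since the inertia groups can have order divisible by $p$). In Step 2, passing from a $K(\frl^f)$-valued point of the coarse scheme $X^D$ to a $\sD$-elliptic sheaf over $\bF_{\frl^f}$ is not automatic and your parenthetical "after possibly enlarging the base" undermines the very $f=1$ dichotomy you need; moreover Corollary~\ref{CorYTotRam} is stated for $k=\bF_\fry$ exactly, so you would have to re-derive the analogue for $f=2$. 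Step 3 is the weakest: the assertions "playing with the discriminant shows one can always realize $\tilde F$ inert at $\infty$ and not split at $\frp,\frq$" and "matches the Eichler mass formula count which is positive" are not arguments, and in fact the $f=2$ case (part (\ref{LemPapL-IN})) requires producing an admissible Weil number of the correct Newton slope at $\frl$, which is what \cite[Theorem 3.1]{Pap_loc} establishes nontrivially. None of these gaps are fatal to the overall strategy --- they are precisely the content of \cite[Theorem 3.1]{Pap_loc} --- but they mean your proposal does not constitute a self-contained proof, and more to the point, it does not match the paper's route, which is simply a hypothesis-matching reduction to the reference.
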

\begin{proof}
	Let $a,c$ be as in (\ref{LemPapL-NIN}) and let $\alpha$ be a root of $x^2-ax+c\frl=0$ in an algebraic closure of $F$. 
	Let $\cO_{F(\alpha)}$ be the integral closure of $A$ in $F(\alpha)$.
	If both of the conjugates of $\alpha$ lie in $\frl \cO_{F(\alpha)}$, 
	then $a$ is divisible by $\frl$ and thus $\frl$ ramifies in $F(\alpha)$.
	This is enough to deduce the lemma from \cite[Theorem 3.1]{Pap_loc}. 
\end{proof}

\begin{rmk}\label{RmkNonSplitL}
	Suppose $q$ is odd. Then we can write
	\[
	x^2-ax+c\frl=\left(x-\frac{a}{2}\right)^2-\frac{a^2-4 c\frl}{4}.
	\]
	This implies that when $q$ is odd, the place $\frp$ does not split in the minimal splitting field of this polynomial if either of the following conditions 
	holds:
	\begin{itemize}
		\item $(\frac{a^2-4c\frl}{\frp})=-1$, or
		\item the normalized $\frp$-adic valuation of $a^2-4c\frl$ is odd.
	\end{itemize}
	Indeed, under either of these conditions the polynomial $x^2-ax+c\frl$ is irreducible over $F_\frp$.
\end{rmk}

\begin{lem}\label{LemELocalL}
	Let $\frl\notin\{\frp,\frq,\infty\}$ be a place of $F$ and let $v$ be a place of $K$ over $\frl$.
	Suppose $q$ is odd and
	\[
	\deg(\frl)\geq 2(\deg(\frp)+\deg(\frq))-1.
	\] 
	Then $X^D(K_v)\neq \emptyset$.
\end{lem}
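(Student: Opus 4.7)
The plan is to reduce to Lemma \ref{LemPapL}. If $\deg(v/\frl)=2$, then Lemma \ref{LemPapL} (\ref{LemPapL-IN}) applies directly. Thus I may assume $\deg(v/\frl)=1$, in which case the task reduces to producing $a\in A$ and $c\in\bF_q^\times$ such that the splitting field $L$ of $x^2-ax+c\frl$ is quadratic over $F$ and none of $\frp$, $\frq$, $\infty$ splits in $L$. Equivalently, writing $\Delta=a^2-4c\frl$, I must ensure that $\Delta$ is a nonsquare in each of $F_\frp^\times$, $F_\frq^\times$, $F_\infty^\times$ (which in particular forces $\Delta$ not to be a square in $F^\times$, so that $L$ is truly quadratic).

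First I would fix $c\in\bF_q^\times$: any $c$ if $\deg(\frl)$ is odd; if $\deg(\frl)$ is even, choose $c$ such that $-c$ is a nonsquare in $\bF_q^\times$ (possible since $q$ is odd). Next I would look for $a\in A$ of degree at most $\deg(\frp)+\deg(\frq)-1$; the hypothesis $\deg(\frl)\geq 2(\deg\frp+\deg\frq)-1$ then forces $\deg(\Delta)=\deg(\frl)$ with leading coefficient $-4c$, which handles the $\infty$-adic condition automatically (either $\deg(\frl)$ is odd so $\infty$ ramifies in $L$, or $\deg(\frl)$ is even and the leading coefficient $-4c$ is a nonsquare in $\bF_q^\times$, since $4$ is a square). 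Since the number of such $a$ is $q^{\deg(\frp)+\deg(\frq)}=|A/(\frp\frq)|$, reduction modulo $\frp\frq$ is a bijection, so I may impose independent residue conditions modulo $\frp$ and modulo $\frq$ via the Chinese remainder theorem.

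The core local input is a character sum estimate. For $g(X)=X^2-4c\frl\in(A/\frp)[X]$, whose discriminant $16c\frl$ is nonzero modulo $\frp$ (as $\frl\neq\frp$ and $c\neq 0$), the classical identity $\sum_{x\in A/\frp}\chi_\frp(g(x))=-1$ for the quadratic character $\chi_\frp$ of $(A/\frp)^\times$, combined with the trivial bound that $g$ has at most $2$ zeros modulo $\frp$, yields at least $(|\frp|-1)/2$ residues $[a]\in A/\frp$ at which $g(a)$ is a nonsquare modulo $\frp$; since $q\geq 3$ is odd this count is at least one. The identical argument works modulo $\frq$, so by CRT I obtain a residue $[a]\in A/(\frp\frq)$ for which $\Delta$ is a nonsquare simultaneously at $\frp$ and at $\frq$. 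The representative of degree at most $\deg(\frp)+\deg(\frq)-1$ then satisfies $v_\frp(\Delta)=v_\frq(\Delta)=0$, and its reductions in the residue fields at $\frp$ and $\frq$ are nonsquares, so $\Delta$ is a nonsquare in $F_\frp^\times$ and in $F_\frq^\times$.

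Remark \ref{RmkNonSplitL}, together with its evident analogue at $\infty$ (namely, $w$ does not split in $L$ as soon as $x^2-ax+c\frl$ is irreducible over $F_w$, which is equivalent to $\Delta$ being a nonsquare in $F_w^\times$), shows that none of $\frp,\frq,\infty$ splits in $L$. Lemma \ref{LemPapL} (\ref{LemPapL-NIN}) then concludes the proof. The most delicate point is ensuring that the character sum count remains positive even when $|\frp|$ or $|\frq|$ is as small as $q=3$; this is exactly why the argument must rest on the sharp identity $\sum\chi_\frp(g)=-1$ rather than merely the Weil bound $O(\sqrt{|\frp|})$.
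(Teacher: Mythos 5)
Your proof is correct and follows the same overall skeleton as the paper's: reduce via Lemma~\ref{LemPapL} to the residue degree~$1$ case, fix $c\in\bF_q^\times$ according to the parity of $\deg(\frl)$, solve independent residue conditions at $\frp$ and $\frq$ by CRT using a representative of degree at most $\deg(\frp)+\deg(\frq)-1$, and use the hypothesis $\deg(\frl)\geq 2(\deg(\frp)+\deg(\frq))-1$ to force $\deg(a^2-4c\frl)=\deg(\frl)$ with leading coefficient $-4c$, taking care of $\infty$. The one genuine difference is in how the two arguments establish that a suitable residue class of $a$ exists at each of $\frp,\frq$. You invoke the classical quadratic character sum identity $\sum_{x}\chi_\frr(x^2-4c\frl)=-1$ (valid since the discriminant $16c\frl$ is nonzero modulo $\frr$) to show that at least $(|\frr|-1)/2\geq 1$ residues make $a^2-4c\frl$ a nonsquare. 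The paper instead counts directly the image $I_{c,\frr}$ of the map $x\mapsto x+c\frl/x$ on $\bF_\frr^\times$ (each fiber has size $1$ or $2$, the size-$1$ fibers being at solutions of $x^2=c\frl$), reaching the same lower bound $(|\frr|-1)/2$ for the complement $J_{c,\frr}$. The two counts are dual: $a\in J_{c,\frr}$ iff $a^2-4c\frl$ is a nonsquare, and both yield exactly $(|\frr|\mp1)/2$ depending on whether $c\frl$ is a square modulo $\frr$. The paper's version is slightly more self-contained (no appeal to complete character sum evaluations), while yours has the virtue of being the ``standard'' algebraic number theory reflex; either is perfectly adequate, and you are right that this exact counting is what saves the day at $q=3$, where the crude Weil/Lang--Weil bound would not suffice.
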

\begin{proof}
	It is enough to prove that there exist $a\in A$ and $c\in \bF_q^\times$ as in Lemma \ref{LemPapL} (\ref{LemPapL-NIN}).
	We denote the monic irreducible polynomial defining $\frl$ also by $\frl$.
	
	For any $c\in \bF_q^\times$ and any irreducible polynomial $\frr\in A$ which is coprime to $\frl$, let $I_{c,\frr}$ be the image of the map
	\[
	\bF_\frr^\times\to \bF_\frr,\quad x\mapsto x+\frac{c\frl}{x}
	\]
	and put $J_{c,\frr}=\bF_\frr\setminus I_{c,\frr}$. 
	Since $\frl\not\equiv 0\bmod \frr$, the quadratic polynomial $x^2-ax+c\frl$ is irreducible over $\bF_\frr$
	if and only if $a\bmod \frr\in J_{c,\frr}$.
	Thus the polynomial $x^2-ax+c\frl$ is irreducible over $F_\frr$ if
	$a\bmod \frr\in J_{c,\frr}$.

	For any $x,y\in \bF_\frr^\times$, we have
	\[
	x+\frac{c\frl}{x}=y+\frac{c\frl}{y}\Leftrightarrow (x-y)\left(1-\frac{c\frl}{xy}\right)=0\Leftrightarrow y\in\left\{x,\frac{c\frl}{x}\right\}.
	\]
	Since $q$ is odd, this shows
	\[
	|I_{c,\frr}|=\left\{\begin{array}{ll}
		\frac{|\frr|+1}{2} & (c \frl\in(\bF_\frr^\times)^2)\\
		\frac{|\frr|-1}{2} & (c \frl\notin(\bF_\frr^\times)^2),\\
	\end{array}\right.\quad
	|J_{c,\frr}|=\left\{\begin{array}{ll}
		\frac{|\frr|-1}{2} & (c \frl\in(\bF_\frr^\times)^2)\\
		\frac{|\frr|+1}{2} & (c \frl\notin(\bF_\frr^\times)^2).\\
	\end{array}\right.
	\]
	In particular, we have $J_{c,\frr}\neq \emptyset$.
	
	Put $r=\deg(\frp)$ and $s=\deg(\frq)$.
	Since the natural map $A/(\frp \frq)\to A/(\frp)\times A/(\frq)$ is an isomorphism,
	for any $c\in \bF_q^\times$ there exists a polynomial $a_c\in A$ satisfying 
	\[
	\deg(a_c)\leq r+s-1,\quad a_c\bmod \frp\in J_{c,\frp},\quad a_c\bmod \frq \in J_{c,\frq}.
	\]
	Then the polynomial $x^2-a_c x+ c\frl$ is irreducible over $F_\frp$ and $F_\frq$.
	
	Put $n=\deg(\frl)$ and $\pi_\infty=1/t$. If $n=2m+1$ is odd, then the assumption yields
	$\deg(a_c)\leq r+s-1 \leq m$ and the equality
	\[
	\left(\frac{x}{\pi_\infty^{m+1}}\right)^2-a_c \left(\frac{x}{\pi_\infty^{m+1}}\right)+c\frl=
	\frac{1}{\pi_\infty^{2m+2}}(x^2-a_c \pi_\infty^{m+1} x+c\frl \pi_\infty^{2m+2})
	\]
	shows that $x^2-a_cx+c\frl$ is irreducible over $F_\infty$ for any $c\in \bF_q^\times$.
	
	If $n=2m$ is even, then we have $\deg(a_c)\leq r+s-1 \leq m-1$.
	From the equality
	\[
	\left(\frac{x}{\pi_\infty^{m}}\right)^2-a_c \left(\frac{x}{\pi_\infty^{m}}\right)+c\frl=
	\frac{1}{\pi_\infty^{2m}}(x^2-a_c \pi_\infty^{m} x+c\frl \pi_\infty^{2m}),
	\]
	we see that $x^2-a_cx+c\frl$ is irreducible over $F_\infty$ for any $c\in \bF_q^\times$ satisfying
	$-c\notin (\bF_q^\times)^2$. Since $q$ is odd, such $c$ always exists.
	This concludes the proof.
\end{proof}
\begin{rmk}
	We can prove a slightly better result than Lemma \ref{LemELocalL} 
	by combining a genus formula for $X^D$ \cite[Theorem 5.4]{Pap_gen} and the Weil bound. 
	However, we decided not to rely 
	on it since Lemma \ref{LemELocalL} is much easier to prove and sufficient for our computation.
\end{rmk}

Let $\Lambda$ be the finite set of monic irreducible polynomials $\frl\neq \frp,\frq$
satisfying $\deg(\frl)\leq 2(\deg(\frp)+\deg(\frq))-2$. 
For $\frr\in\{\frp,\frq\}$, we denote by $v_\frr$ the normalized $\frr$-adic valuation on $A$.

\begin{prop}\label{PropLocalPoints}
	Assume that $q$ is odd and 
	\begin{enumerate}
		\item\label{PropLocalPoints_split} $K$ splits $D$,
		\item\label{PropLocalPoints_infty} $\infty$ does not split in $K$,
		\item\label{PropLocalPoints_ram} there exist $\mu,\mu'\in \bF_q^\times$ such that neither $\frq$ nor $\infty$ splits in $F(\sqrt{\mu \frp})$ and 
		neither $\frp$ nor $\infty$ splits in $F(\sqrt{\mu' \frq})$,
		\item\label{PropLocalPoints_unr} for any $\frl\in \Lambda$, there exist $a\in A$ with $\deg(a)\leq \deg(\frl)/2$ and 
		$c\in\bF_q^\times$
		such that
		\begin{itemize}
			\item $a^2-4c\frl$ has odd degree or its leading coefficient is not a square in $\bF_q^\times$, and
			\item for any $\frr\in\{\frp,\frq\}$, we have
			\[
			\left(\frac{a^2-4c\frl}{\frr}\right)=-1\quad\text{or}\quad v_\frr(a^2-4c\frl)\equiv 1\bmod 2.
			\]
		\end{itemize} 
	\end{enumerate}
Then we have $X^D(K_v)\neq \emptyset$ for any place $v$ of $K$.
\end{prop}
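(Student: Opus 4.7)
The plan is to verify $X^D(K_v) \neq \emptyset$ place by place, invoking the local criteria in Lemmas \ref{LemPapInfty}, \ref{LemPapP}, \ref{LemPapL}, and \ref{LemELocalL}. For $v \mid \infty$, hypothesis (\ref{PropLocalPoints_infty}) supplies exactly the input of Lemma \ref{LemPapInfty} (1). For $v \mid \frp$, hypothesis (\ref{PropLocalPoints_split}) together with Remark \ref{RmkSplitP} forces either $\deg(v/\frp) = 2$, handled by Lemma \ref{LemPapP} (1), or $e(v/\frp) = 2$, in which case the element $\mu$ from hypothesis (\ref{PropLocalPoints_ram}) matches the condition of Lemma \ref{LemPapP} (2). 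The case $v \mid \frq$ is entirely symmetric, using $\mu'$ in place of $\mu$ and swapping the roles of $\frp$ and $\frq$ throughout.

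For $v$ lying above a monic irreducible polynomial $\frl \notin \{\frp,\frq,\infty\}$, I split on the degree of $\frl$. When $\deg(\frl) \ge 2(\deg(\frp)+\deg(\frq))-1$, so $\frl \notin \Lambda$, Lemma \ref{LemELocalL} closes the case at once. Otherwise $\frl \in \Lambda$: if $\deg(v/\frl) = 2$, Lemma \ref{LemPapL} (\ref{LemPapL-IN}) applies, and if $\deg(v/\frl) = 1$, I feed the pair $(a,c)$ from hypothesis (\ref{PropLocalPoints_unr}) into Lemma \ref{LemPapL} (\ref{LemPapL-NIN}). The polynomial $x^2 - ax + c\frl$ has discriminant $a^2 - 4c\frl$, whose degree is at most $\deg(\frl)$ since $\deg(a) \le \deg(\frl)/2$. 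The dichotomy in (\ref{PropLocalPoints_unr}), namely either odd degree or non-square leading coefficient, makes $a^2-4c\frl$ a non-square in $F$, so $L := F(\sqrt{a^2-4c\frl})$ is a genuine quadratic extension and the splitting field of $x^2-ax+c\frl$; Remark \ref{RmkSplitInfty} then says $\infty$ does not split in $L$. The remaining conditions in (\ref{PropLocalPoints_unr}) on the Legendre symbol and $\frr$-adic valuation of $a^2 - 4c\frl$ for $\frr \in \{\frp,\frq\}$ combine with Remark \ref{RmkNonSplitL} to ensure that neither $\frp$ nor $\frq$ splits in $L$, completing the hypothesis of Lemma \ref{LemPapL} (\ref{LemPapL-NIN}).

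The main obstacle lies not in any single lemma but in the bookkeeping for $\frl \in \Lambda$ with $\deg(v/\frl) = 1$: one has to be sure that the degree bound $\deg(a) \le \deg(\frl)/2$, together with the leading-coefficient dichotomy, is strong enough both to make $L/F$ a genuine quadratic extension and to rule out $\infty$-splitting via Remark \ref{RmkSplitInfty}. This reduces to a short inspection of the leading terms of $a^2$ versus $4c\frl$ (using that $q$ is odd, so $4 \in \bF_q^\times$), but it is the only delicate point in the argument; no input beyond the four stated hypotheses and the cited lemmas is required.
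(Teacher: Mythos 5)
Your proposal is correct and follows essentially the same route as the paper: a place-by-place check using Lemma \ref{LemPapInfty} at $\infty$, Lemma \ref{LemPapP} together with Remark \ref{RmkSplitP} at $\frp$ and $\frq$, Lemma \ref{LemELocalL} for $\frl\notin\Lambda$, and Lemma \ref{LemPapL} combined with hypothesis (\ref{PropLocalPoints_unr}) and Remarks \ref{RmkSplitInfty}, \ref{RmkNonSplitL} for $\frl\in\Lambda$. Your extra verification that $a^2-4c\frl$ is a non-square (so $L/F$ is genuinely quadratic and $\infty$ does not split) just spells out what the paper leaves implicit.
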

\begin{proof}
	By the condition (\ref{PropLocalPoints_infty}), Lemma \ref{LemPapInfty} yields $X^D(K_v)\neq\emptyset$ when $v\mid \infty$. 
	By the conditions (\ref{PropLocalPoints_split}) and (\ref{PropLocalPoints_ram}),
	Lemma \ref{LemPapP} and Remark \ref{RmkSplitP} give $X^D(K_v)\neq\emptyset$ when $v\mid \frp\frq$.
	
	Let $\frl\in A$ be any monic irreducible polynomial satisfying $\frl\neq \frp,\frq$.
	We claim $X^D(K_v)\neq \emptyset$ when $v\mid\frl$.
	By Lemma \ref{LemELocalL}, we may assume $\frl\in\Lambda$. 
	By Lemma \ref{LemPapL}, to show $X^D(K_v)\neq\emptyset$ it is enough to find $a\in A$ and $c\in \bF_q^\times$ such that 
	$x^2-ax+c\frl$ is irreducible over $F_w$ for any $w\in \{\frp,\frq,\infty\}$.
	Note that if $\deg(a)>\deg(\frl)/2$, then Remark \ref{RmkSplitInfty} shows that $x^2-ax+c\frl$ is not irreducible over $F_\infty$.
	By the condition (\ref{PropLocalPoints_unr}), the claim follows from Remarks \ref{RmkSplitInfty} and \ref{RmkNonSplitL}. 
	This concludes the proof of the proposition.
\end{proof}

The following lemma makes it easier to check the conditions of Proposition \ref{PropLocalPoints} (\ref{PropLocalPoints_unr}).
\begin{lem}\label{LemFastAlgorithm}
	Assume that $q$ is odd. 
	Let $m$ be an integer satisfying the following conditions:
	\begin{itemize}
		\item $0\leq m\leq \deg(\frp)+\deg(\frq)-2$.  
		\item For any $b\in A$ with $\deg(b)\leq \deg(\frp)+\deg(\frq)-1$ such that $b$ is coprime to $\frp\frq$,
		there exists $a\in A$ with $\deg(a)\leq m$ satisfying
		\begin{equation}\label{EqnFastAlgorithm}
			\left(\frac{a^2-b}{\frp}\right)=\left(\frac{a^2-b}{\frq}\right)=-1.
		\end{equation}
	\end{itemize} 
	Then the conditions of Proposition \ref{PropLocalPoints} (\ref{PropLocalPoints_unr}) 
	are satisfied for any $\frl\in \Lambda$ with $\deg(\frl)\geq 2m+1$.
\end{lem}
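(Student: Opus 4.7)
The plan is to directly construct, for each $\frl \in \Lambda$ with $\deg(\frl) \geq 2m+1$, a pair $(a, c) \in A \times \bF_q^\times$ witnessing the conditions of Proposition \ref{PropLocalPoints} (\ref{PropLocalPoints_unr}). The hypothesis on $m$ will be applied to $b := 4c\frl \bmod \frp\frq$ for a suitably chosen $c$.

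First I would fix the leading-coefficient condition by choosing $c$. Since $q$ is odd, $\bF_q^\times/(\bF_q^\times)^2$ has order two, so if $\deg(\frl)$ is even I can pick $c \in \bF_q^\times$ with $-c \notin (\bF_q^\times)^2$; if $\deg(\frl)$ is odd, any $c$ will do (say $c = 1$). The reason this choice is relevant is that $\deg(a^2) \leq 2m < 2m+1 \leq \deg(\frl)$, so $\deg(a^2 - 4c\frl) = \deg(\frl)$ and its leading coefficient is $-4c$; since $4 \in (\bF_q^\times)^2$, the number $-4c$ is a non-square in $\bF_q^\times$ exactly when $-c$ is. Thus the first bullet in Proposition \ref{PropLocalPoints} (\ref{PropLocalPoints_unr}) is built in regardless of which $a$ we pick, provided $\deg(a) \leq m$.

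Next I would produce $a$ from the hypothesis. Set $b \in A$ to be the unique representative of $4c\frl \bmod \frp\frq$ with $\deg(b) \leq \deg(\frp)+\deg(\frq)-1$. Because $c \in \bF_q^\times$ and $\frl \notin \{\frp, \frq\}$, the polynomial $4c\frl$ is coprime to $\frp\frq$, hence so is $b$. The standing hypothesis on $m$ then yields $a \in A$ with $\deg(a) \leq m$ and
\[
\left(\frac{a^2-b}{\frp}\right) = \left(\frac{a^2-b}{\frq}\right) = -1.
\]
The bound $\deg(a) \leq m \leq (2m+1)/2 \leq \deg(\frl)/2$ gives the degree constraint demanded by Proposition \ref{PropLocalPoints} (\ref{PropLocalPoints_unr}).

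Finally I would transfer the Legendre-symbol information from $a^2-b$ to $a^2 - 4c\frl$. By construction $a^2 - 4c\frl \equiv a^2 - b \pmod{\frp\frq}$, so for each $\frr \in \{\frp, \frq\}$ the residue $a^2 - 4c\frl \bmod \frr$ is a nonzero non-square, which gives $\bigl(\tfrac{a^2-4c\frl}{\frr}\bigr) = -1$ and in particular satisfies the second bullet of Proposition \ref{PropLocalPoints} (\ref{PropLocalPoints_unr}). Combined with the first bullet, already secured by the choice of $c$, this completes the verification. There is no real obstacle in the argument; the only point requiring attention is aligning the degree bound $\deg(b) \leq \deg(\frp)+\deg(\frq)-1$ imposed by the hypothesis with the possibly larger degree of $\frl$, which is precisely what the reduction modulo $\frp\frq$ accomplishes.
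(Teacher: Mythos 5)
Your proof is correct and follows essentially the same route as the paper: choose $c$ so that the leading coefficient $-4c$ of $a^2-4c\frl$ handles the first bullet (the paper simply always takes $-4c$ a non-square, rather than branching on the parity of $\deg(\frl)$), reduce $4c\frl$ modulo $\frp\frq$ to a representative $b$ of bounded degree, apply the hypothesis to get $a$, and transfer the Legendre-symbol conditions back via the congruence $a^2-4c\frl\equiv a^2-b \bmod \frp\frq$. No gaps.
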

\begin{proof}
	Let $\frl\in \Lambda$ be as in the lemma.
	Take $c\in \bF_q^\times$ such that $-4c\notin (\bF_q^\times)^2$.
	We also take $b\in A$ satisfying 
	\[
	\deg(b) \leq \deg(\frp)+\deg(\frq)-1\quad\text{and}\quad b\equiv 4 c\frl\bmod \frp\frq.
	\]
	Since $\frl\neq \frp,\frq$, we see that $b$ is coprime to $\frp\frq$.
	By assumption we can choose $a\in A$ with $\deg(a)\leq m$ satisfying (\ref{EqnFastAlgorithm}).
	Since $\deg(\frl)\geq 2m+1$, we have $\deg(a^2)<\deg(\frl)$. This yields $\deg(a^2-4c\frl)=\deg(\frl)$ and 
	the leading coefficient of $a^2-4c\frl$ is $-4c$.
	Thus the first condition of Proposition \ref{PropLocalPoints} (\ref{PropLocalPoints_unr})
	is satisfied. The second condition follows from (\ref{EqnFastAlgorithm}).
\end{proof}

\subsection{Violation of the Hasse principle}\label{SubsecHPV}

To give examples of curves violating the Hasse principle, we concentrate on the case $\fry=t$.

\begin{thm}\label{ThmExVHP}
	Let
	\[
	(q,\frp,\frq)\in\left\{\begin{gathered}
		(3,t^3 + t^2 + t + 2,\ t+1),\\
		(3,t^4 + t^3 + 2t + 1,\ t^2 + 1),\quad (3,t^5 + 2t + 1,\ t+2),\\
		(5,t^3 + t^2 + 4 t + 1,\ t+2),\quad (5,t^4 + 2,\ t^2 + t + 1),\\(7,t^3+2,t+3)
	\end{gathered}\right\}
	\]
	and let $D$ be the quaternion division algebra over $F$ with $\cR=\{\frp,\frq\}$.
	Let $\frn\in A$ be any monic square-free polynomial which is coprime to $t\frp\frq$. Put
	\[
	S_\frn=\left\{\begin{array}{ll}
		\bF_q^\times\setminus (\bF_q^\times)^2& (\deg(\frn)\equiv 1\bmod 2),\\
		\bF_q^\times &  (\deg(\frn)\equiv 0\bmod 2).
	\end{array}\right.
	\]
	Define
	\[
	K=K_{\frn,\vep}:=F(\sqrt{\vep t\frp\frq\frn}),\quad \vep\in S_\frn.
	\]
	Then we have $X^D(K)=\emptyset$ and $X^D(K_v)\neq\emptyset$ for any place $v$ of $K$. 
\end{thm}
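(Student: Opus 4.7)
The plan is to derive $X^D(K)=\emptyset$ from Theorem \ref{ThmNonExistSplit} applied with $\fry=t$, and to obtain $X^D(K_v)\neq\emptyset$ for every place $v$ of $K$ by verifying the hypotheses of Proposition \ref{PropLocalPoints}. For each of the six triples $(q,\frp,\frq)$ listed in the statement, the verification reduces to a finite computation which we carry out with PARI/GP (see \cite{Ha_codes}).

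First I would check the four hypotheses of Theorem \ref{ThmNonExistSplit}. Because $K=F(\sqrt{\vep t\frp\frq\frn})$ and the radicand has odd $\frr$-adic valuation for each $\frr\in\{t,\frp,\frq\}$, these three places ramify in $K$; in particular $t$ is totally ramified, while the ramification of $\frp$ and $\frq$ prevents them from splitting, which forces $D\otimes_F K\simeq M_2(K)$ since $\cR=\{\frp,\frq\}$. The assumption $\frp\in\cR\setminus\cP(t)$ is precisely what was computed in Examples \ref{ExNE3} and \ref{ExNE5}. Finally, for the condition that $D\otimes_F F(\sqrt{\mu t})$ be non-split, I would let $\mu$ range over representatives of $\bF_q^\times/(\bF_q^\times)^2$ and verify, via the Legendre symbols $(\tfrac{\mu t}{\frp})$ and $(\tfrac{\mu t}{\frq})$, that for each such $\mu$ at least one of $\frp,\frq$ splits in $F(\sqrt{\mu t})$, so that $F(\sqrt{\mu t})$ cannot split $D$.

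Next I would verify the four hypotheses of Proposition \ref{PropLocalPoints}. Hypothesis (1) was established above. For (2), by Remark \ref{RmkSplitInfty} the place $\infty$ splits in $K$ iff $\deg(\vep t\frp\frq\frn)$ is even and its leading coefficient $\vep$ is a square; in each of the six triples the integer $1+\deg(\frp)+\deg(\frq)$ turns out to be odd, so that when $\deg(\frn)$ is even the total degree is odd and $\infty$ is automatically non-split, whereas when $\deg(\frn)$ is odd the restriction $\vep\in\bF_q^\times\setminus(\bF_q^\times)^2$ makes the leading coefficient a non-square, explaining the definition of $S_\frn$. Hypothesis (3) amounts to a finite search: one looks for $\mu,\mu'\in\bF_q^\times$ such that $(\tfrac{\mu\frp}{\frq})=-1$ and $\deg(\frp)$ is odd or $\mu$ is a non-square (handling $\infty$ via Remark \ref{RmkSplitInfty}), and symmetrically for $\mu'$. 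Hypothesis (4), the most delicate, is handled via Lemma \ref{LemFastAlgorithm}: for each triple one exhibits a small integer $m\leq\deg(\frp)+\deg(\frq)-2$ such that for every $b\in A$ coprime to $\frp\frq$ with $\deg(b)\leq\deg(\frp)+\deg(\frq)-1$ there exists $a\in A$ of degree $\leq m$ with $(\tfrac{a^2-b}{\frp})=(\tfrac{a^2-b}{\frq})=-1$; this takes care of all $\frl\in\Lambda$ of degree $\geq 2m+1$, while the remaining $\frl\in\Lambda$ of smaller degree are dispatched by a direct search producing a pair $(a,c)$ satisfying Proposition \ref{PropLocalPoints}(4).

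The main obstacle is computational rather than conceptual: once the structural results Theorem \ref{ThmNonExistSplit} and Proposition \ref{PropLocalPoints} are in hand, the theorem reduces to sizeable but routine verifications involving Legendre symbols over $\bF_q[t]$, the finite set $\cW(t)$ entering the definition of $\cP(t)$, and exhaustive enumerations over short polynomials modulo $\frp\frq$. The delicate point is to ensure that each enumeration in Lemma \ref{LemFastAlgorithm} and each verification of $\frp\notin\cP(t)$ is genuinely exhaustive for the particular triples listed; these checks are documented in the code at \cite{Ha_codes}.
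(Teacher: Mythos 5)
Your proposal follows the same route as the paper's proof: derive $X^D(K)=\emptyset$ from Theorem~\ref{ThmNonExistSplit} with $\fry=t$ (the paper cites Examples~\ref{ExNE3} and~\ref{ExNE5}, which apply that theorem), and deduce $X^D(K_v)\neq\emptyset$ from Proposition~\ref{PropLocalPoints}, delegating hypothesis (4) to a PARI/GP computation sped up by Lemma~\ref{LemFastAlgorithm}. Your hand-checks of the split/ramification conditions and the parity argument explaining the definition of $S_\frn$ are correct and match the paper.
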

\begin{proof}
	Consider the case
	\[
	(q,\frp,\frq)=(3,t^3 + t^2 + t + 2,\ t+1).
	\]
	By Example \ref{ExNE3}, the extension $K/F$ splits $D$ and 
	we have $X^D(K)=\emptyset$.
	
	We check that the conditions (\ref{PropLocalPoints_infty}), (\ref{PropLocalPoints_ram}) and (\ref{PropLocalPoints_unr})
	of Proposition \ref{PropLocalPoints} hold true. The condition (\ref{PropLocalPoints_infty}) follows from our choice of $\vep$ and 
	Remark \ref{RmkSplitInfty}. Since $(\frac{-\frp}{\frq})=(\frac{\frq}{\frp})=-1$ and 
	$\deg(\frp)$ and $\deg(\frq)$ are odd,
	we see that neither $\frq$ nor $\infty$ splits in $F(\sqrt{-\frp})$ and that neither $\frp$ nor $\infty$ splits in $F(\sqrt{\frq})$.
	Thus (\ref{PropLocalPoints_ram}) also follows.
	
	For (\ref{PropLocalPoints_unr}), we use computer calculations. 
	Since we chose $\frp$ and $\frq$ with small degrees,
	we can carry out the computation with a reasonable execution time and memory consumption.
	Our PARI/GP program confirms that $a$ and $c$ for which the necessary conditions are satisfied always exist. 
	Hence Proposition \ref{PropLocalPoints} yields the theorem for this case.

	We can prove the theorem for the other cases of $(q,\frp,\frq)$ in the same way, using Examples \ref{ExNE3} and \ref{ExNE5}.
	Note that the execution time is reduced by first 
	looking for an integer $m$ satisfying the assumptions of Lemma \ref{LemFastAlgorithm}
	and then checking (\ref{PropLocalPoints_unr}) for any $\frl\in \Lambda$ with $\deg(\frl)\leq 2m$.
\end{proof}

\begin{rmk}\label{RmkNonSplitCase}
	Let $D$ be a quaternion division algebra over $F$ which splits at $\infty$ and let $K/F$ be a quadratic extension. Suppose that $K$ does not split $D$. 
	Then there exists a place $v$ of $K$ over an element $\frp\in\cR$ satisfying $K_v=F_\frp$, and \cite[Theorem 4.1 (3)]{Pap_loc} implies 
	$X^D(K_v)=X^D(K)=\emptyset$. Hence, in the non-split case there is no 
	quadratic extension $K/F$
	such that $X^D$ violates the Hasse principle over $K$, in contrast to 
	Theorem \ref{ThmExVHP} in the split case.
\end{rmk}


\begin{thebibliography}{9a}

\bibitem[And]{Anderson}
G.~W.~Anderson:
\emph{$t$-motives},
Duke Math. J. \textbf{53} (1986), no.2, 457--502.
 

\bibitem[Ara]{Arai}
K.~Arai:
\emph{Rational points on Shimura curves and the Manin obstruction},
Nagoya Math. J. \textbf{230} (2018), 144--159.







\bibitem[BS]{BS}
A.~Blum and U.~Stuhler:
\emph{Drinfeld modules and elliptic sheaves}, Vector bundles on curves---new directions (Cetraro, 1995), 110--193.
Lecture Notes in Math. \textbf{1649}, Springer-Verlag, Berlin, 1997.







\bibitem[Ces]{Ces}
K.~\v{C}esnavi\v{c}ius:
\emph{A modular description of $\sX_0(n)$}, 
Algebra Number Theory \textbf{11} (2017), no. 9, 2001--2089.




\bibitem[CES]{CES}
B.~Conrad, B.~Edixhoven and W.~Stein:
\emph{$J_1(p)$ has connected fibers},
Doc. Math. \textbf{8} (2003), 331--408.

\bibitem[DR]{DeRa}
P.~Deligne and M.~Rapoport: \emph{Les sch\'{e}mas de modules de courbes elliptiques}, Modular functions of one variable, II (Proc. Internat. Summer School, 
Univ. Antwerp, Antwerp, 1972), pp. 143--316. Lecture Notes in Math. \textbf{349}, Springer, Berlin, 1973.

\bibitem[Dri]{Dri_F}
V.~G.~Drinfeld:
\emph{Moduli varieties of $F$-sheaves},
Functional Anal. Appl. \textbf{21} (1987), no. 2, 107--122. 


\bibitem[Fon]{Fon}
J.-M.~Fontaine:
\emph{Repr\'{e}sentations $p$-adiques des corps locaux. I},
The Grothendieck Festschrift, Vol. II, 249--309, Progr. Math. \textbf{87}, Birkh\"{a}user Boston, Boston, MA, 1990. 




\bibitem[Gos]{Goss_BOOK}
D.~Goss:
\emph{Basic structures of function field arithmetic},
Ergebnisse der Mathematik und ihrer Grenzgebiete (3), \textbf{35}, 
Springer-Verlag, Berlin, 1996. 




\bibitem[Hat]{Ha_codes}
S.~Hattori: Pari/gp codes, available at \url{https://www.comm.tcu.ac.jp/shinh/DSHasse/}.







\bibitem[Hau]{Hau}
T.~Hausberger:
\emph{Uniformisation des vari\'{e}t\'{e}s de Laumon-Rapoport-Stuhler et conjecture de Drinfeld-Carayol},
Ann. Inst. Fourier (Grenoble) \textbf{55} (2005), no. 4, 1285--1371.

\bibitem[Hay]{Hayes}
D.~R.~Hayes:
\emph{Explicit class field theory for rational function fields},
Trans. Amer. Math. Soc. \textbf{189} (1974), 77--91.





\bibitem[Jor]{Jordan}
B.~W.~Jordan:
\emph{Points on Shimura curves rational over number fields}, J. Reine Angew. Math. \textbf{371} (1986), 92--114.

\bibitem[JL]{JL}
B.~W.~Jordan and R.~A.~Livn\'{e}:
\emph{Local Diophantine properties of Shimura curves},
Math. Ann. \textbf{270} (1985), no. 2, 235--248. 

\bibitem[KM]{KM}
N.~M.~Katz and B.~Mazur: \emph{Arithmetic moduli of elliptic curves}, Annals of Mathematics Studies \textbf{108}, Princeton University Press, Princeton, NJ, 
1985.


\bibitem[Lau]{Lau}
G.~Laumon:
\emph{Cohomology of Drinfeld modular varieties}, I. Cambridge Studies in Advanced Mathematics \textbf{41}, 
Cambridge University Press, Cambridge, 1996.

\bibitem[LRS]{LRS}
G.~Laumon, M.~Rapoport and U.~Stuhler: 
\emph{$\mathscr{D}$-elliptic sheaves and the Langlands correspondence},
Invent. Math. \textbf{113} (1993), no. 2, 217--338. 



\bibitem[Maz]{Mazur}
B.~Mazur:
\emph{Rational isogenies of prime degree (with an appendix by D.~Goldfeld)},
Invent. Math. \textbf{44} (1978), no. 2, 129--162.


\bibitem[Pap1]{Pap_gen}
M.~Papikian:
\emph{Genus formula for modular curves of $\sD$-elliptic sheaves},
Arch. Math. (Basel) \textbf{92} (2009), no. 3, 237--250. 

\bibitem[Pap2]{Pap_loc}
M.~Papikian:
\emph{Local Diophantine properties of modular curves of $\sD$-elliptic sheaves},
J. Reine Angew. Math. \textbf{664} (2012), 115--140. 

\bibitem[Pap3]{Pap}
M.~Papikian:
\emph{Drinfeld--Stuhler modules}, Res. Math. Sci. \textbf{5} (2018), no. 4, Paper No. 40, 33 pp.



\bibitem[Rei]{Rei}
I.~Reiner: 
\emph{Maximal orders}. Corrected reprint of the 1975 original, 
London Mathematical Society Monographs. New Series \textbf{28}, The Clarendon Press, Oxford University Press, Oxford, 2003. 


\bibitem[RdVP]{RdVP}
V.~Rotger and C.~de Vera-Piquero:
\emph{Galois representations over fields of moduli and rational points on Shimura curves},
Canad. J. Math. \textbf{66} (2014), no. 5, 1167--1200.




\bibitem[Ser1]{Serre_LCFT}
J.-P.~Serre:
\emph{Local class field theory}, Algebraic Number Theory (Proc. Instructional Conf., Brighton, 1965), 128--161.
Academic Press, London, 1967.

\bibitem[Ser2]{Serre_Local}
J.-P.~Serre:
\emph{Local fields},
Translated from the French by Marvin Jay Greenberg,
Grad. Texts in Math. \textbf{67},
Springer--Verlag, New York--Berlin, 1979.

\bibitem[Shi]{Shimura}
G.~Shimura:
\emph{On the real points of an arithmetic quotient of a bounded symmetric domain},
Math. Ann. \textbf{215} (1975), 135--164.

\bibitem[Sko]{Skoro}
A.~Skorobogatov:
\emph{Shimura coverings of Shimura curves and the Manin obstruction},
Math. Res. Lett. \textbf{12} (2005), no.5--6, 779--788.

\bibitem[Spi]{Spi}
M.~Spiess:
\emph{Twists of Drinfeld-Stuhler modular varieties},
Doc. Math., Extra vol.: Andrei A. Suslin sixtieth birthday (2010), 595--654.

\bibitem[Sta]{Stacks}
The Stacks Project Authors:
\emph{Stacks Project}, \url{http://stacks.math.columbia.edu}.

\bibitem[Tae]{Taelman}
L.~Taelman:
\emph{On $t$-Motifs}, PhD thesis, University of Groningen (2007).

\bibitem[Tag]{Tag_ss}
Y.~Taguchi:
\emph{Semi-simplicity of the Galois representations attached to Drinfel'd modules over fields of ``infinite characteristics''},
J. Number Theory \textbf{44} (1993), no. 3, 292--314. 













\end{thebibliography}
\end{document}